\documentclass[11pt]{article}

\usepackage[margin=1in]{geometry}
\usepackage[T1]{fontenc}
\usepackage[utf8]{inputenc}
\usepackage{hyperref}
\usepackage{amsmath,amsthm}
\usepackage{lmodern,microtype}
\usepackage{amssymb,mathtools,bm,mathrsfs}
\usepackage{booktabs,longtable,array,enumitem}
\usepackage{xcolor}
\usepackage{caption}

\usepackage{graphicx}
\usepackage{tikz,pgfplots}
\usetikzlibrary{arrows.meta,positioning,calc}
\pgfplotsset{compat=1.18}
\hypersetup{hidelinks,hypertexnames=false,unicode=true,
  pdftitle={Chebyshev-Exact Acceleration under Hessian Variation, II: Computer-Assisted Variational Lower Bound},
  pdfauthor={Dmitry Pasechnyuk-Vilensky; Martin Tak\'{a}\v{c}},
  pdfsubject={Variational lower bounds for the sensitivity of terminal-exact accelerated recurrences},
  pdfkeywords={Chebyshev acceleration, Jacobi matrices, Birman--Schwinger operators, variational bounds, validated numerics},
  pdflang={en-US}}
\ifdefined\pdfgentounicode
  \input{glyphtounicode}
\fi
\usepackage[nameinlink,capitalize]{cleveref}
\microtypesetup{protrusion=false}

\AtBeginDocument{%
  \setlength{\abovedisplayskip}{7pt plus 2pt minus 3pt}%
  \setlength{\belowdisplayskip}{7pt plus 2pt minus 3pt}%
  \setlength{\abovedisplayshortskip}{4pt plus 2pt minus 2pt}%
  \setlength{\belowdisplayshortskip}{4pt plus 2pt minus 2pt}%
}

\usepackage[compact]{titlesec}

\titlespacing*{\section}
  {0pt}{1.55ex plus 0.35ex minus 0.25ex}{0.75ex plus 0.15ex}

\titlespacing*{\subsection}
  {0pt}{1.25ex plus 0.30ex minus 0.20ex}{0.55ex plus 0.12ex}

\titlespacing*{\subsubsection}
  {0pt}{1.00ex plus 0.25ex minus 0.15ex}{0.45ex plus 0.10ex}

\titleformat{\section}
  {\normalfont\large\bfseries}{\thesection.}{0.5em}{}

\titleformat{\subsection}
  {\normalfont\normalsize\bfseries}{\thesubsection.}{0.5em}{}

\titleformat{\subsubsection}
  {\normalfont\normalsize\itshape}{\thesubsubsection.}{0.5em}{}

\makeatletter
\def\thm@space@setup{%
  \thm@preskip=6pt plus 2pt minus 2pt
  \thm@postskip=6pt plus 2pt minus 2pt
}
\makeatother

\makeatletter
\renewcommand{\maketitle}{%
  \begin{center}
    {\Large\bfseries \@title\par}
    \vskip 0.45em
    {\normalsize
      \lineskip .5em
      \begin{tabular}[t]{c}
        \@author
      \end{tabular}\par}
    \vskip 0.35em
    {\small \@date\par}
  \end{center}
  \par\vskip 0.35em
  \@thanks
  \vskip 0.7em
}
\makeatother

\renewenvironment{abstract}
  {\small\begin{center}\bfseries Abstract\end{center}\vspace{-0.6em}\small}
  {\par\vspace{0.6em}}

\allowdisplaybreaks[2]
\setlist[enumerate]{label=\textup{(\roman*)},leftmargin=2.5em}
\newtheorem{theorem}{Theorem}[section]
\newtheorem{lemma}{Lemma}
\newtheorem{proposition}{Proposition}
\newtheorem{corollary}{Corollary}
\theoremstyle{definition}
\newtheorem{definition}{Definition}

\Crefname{algorithm}{Algorithm}{Algorithms}
\Crefname{lemma}{Lemma}{Lemmas}
\newtheorem{hypothesis}{Hypothesis}
\crefname{hypothesis}{hypothesis}{hypotheses}
\Crefname{hypothesis}{Hypothesis}{Hypotheses}
\newcommand{\R}{\mathbb R}

\newcommand{\dd}{\,\mathrm d}
\newcommand{\tr}{\operatorname{tr}}
\newcommand{\supp}{\operatorname{supp}}

\newcommand{\diag}{\operatorname{diag}}
\newcommand{\Span}{\operatorname{span}}
\newcommand{\HS}{\mathrm{HS}}

\newcommand{\E}{\mathcal E}
\newcommand{\HH}{\mathcal H}
\newcommand{\Mc}{\mathcal M_c}

\newcommand{\X}{\mathcal X}
\newcommand{\cs}{c_{\sin}}
\newcommand{\epsN}{\epsilon_N^\star}
\newcommand{\ip}[2]{\langle #1,#2\rangle}

\newcommand{\ind}{\mathbf1}
\newcommand{\dyad}[2]{#1\cdot2^{#2}}
\title{Chebyshev-Exact Acceleration under Hessian Variation,\\II: Computer-Assisted Variational Lower Bound
}
\author{Dmitry Pasechnyuk-Vilensky\thanks{Mohamed bin Zayed University of Artificial Intelligence, Abu Dhabi, UAE. \texttt{dmivilensky1@gmail.com}}, Martin Tak\'{a}\v{c}\thanks{Mohamed bin Zayed University of Artificial Intelligence, Abu Dhabi, UAE.}}
\date{}
\newcommand{\num}[1]{\ensuremath{\csname computed/#1\endcsname}}
\expandafter\def\csname computed/feasible/central_energy/hi24\endcsname{1.142692108398734554420122}
\expandafter\def\csname computed/feasible/contraction/hi24\endcsname{2.533293143397725\cdot10^{-9}}
\expandafter\def\csname computed/feasible/degree\endcsname{16}
\expandafter\def\csname computed/feasible/directions\endcsname{0,1,2,3,6,7}
\expandafter\def\csname computed/feasible/energy_ceiling/hi24\endcsname{1.142692108398735477232678}
\expandafter\def\csname computed/feasible/radius/exact\endcsname{\dyad{27684846429260304880501991746477461659874142439}{-207}}
\expandafter\def\csname computed/feasible/residual/hi24\endcsname{6.729813370879936\cdot10^{-17}}
\expandafter\def\csname computed/reference/H_signal_0/hi15\endcsname{-0.001269156586438}
\expandafter\def\csname computed/reference/H_signal_0/lo15\endcsname{-0.001269156586442}
\expandafter\def\csname computed/reference/H_signal_1/hi15\endcsname{-0.002545408246816}
\expandafter\def\csname computed/reference/H_signal_1/lo15\endcsname{-0.002545408246820}
\expandafter\def\csname computed/reference/S_signal/hi15\endcsname{0.000049913515800}
\expandafter\def\csname computed/reference/S_signal/lo15\endcsname{0.000049913515799}
\expandafter\def\csname computed/reference/b/exact\endcsname{0.11085839213923356}
\expandafter\def\csname computed/reference/base/hi15\endcsname{1.142663674319440}
\expandafter\def\csname computed/reference/base/lo15\endcsname{1.142663674318292}
\expandafter\def\csname computed/reference/beta/exact\endcsname{2704.261804447028}
\expandafter\def\csname computed/reference/gamma/exact\endcsname{1351.3424903216799}
\expandafter\def\csname computed/reference/mass_1/hi15\endcsname{0.999855703345566}
\expandafter\def\csname computed/reference/mass_1/lo15\endcsname{0.999855703345564}
\expandafter\def\csname computed/reference/mass_2/hi15\endcsname{1.000000000819152}
\expandafter\def\csname computed/reference/mass_2/lo15\endcsname{1.000000000819150}
\expandafter\def\csname computed/reference/pzero\endcsname{0.11104619364409615}
\expandafter\def\csname computed/reference/terms\endcsname{5}
\expandafter\def\csname computed/certificate/bound/hi24\endcsname{1.142683536244580089104558}
\expandafter\def\csname computed/certificate/bound/lo24\endcsname{1.142683536243431440994557}
\expandafter\def\csname computed/certificate/price/hi24\endcsname{0.000017926991598184194252}
\expandafter\def\csname computed/certificate/price/lo24\endcsname{0.000017926991598178728941}
\expandafter\def\csname computed/sine/A\endcsname{1+a^{2}+a^{6}+a^{12}}
\expandafter\def\csname computed/sine/B\endcsname{1-2a^{1}+2a^{4}-2a^{9}}
\expandafter\def\csname computed/sine/J\endcsname{3}
\expandafter\def\csname computed/sine/degree\endcsname{126}
\expandafter\def\csname computed/sine/energy/hi25\endcsname{1.1426921798157914393593577}
\expandafter\def\csname computed/sine/energy/lo25\endcsname{1.1426921798157914393588862}
\expandafter\def\csname computed/sine/tail/hi25\endcsname{2.35701467\cdot10^{-22}}
\begin{document}
\maketitle

\begin{abstract}
We study two-step recurrences with positive step sizes and nonnegative
momentum coefficients whose terminal residual is the Chebyshev minimax
polynomial on a prescribed spectral interval $[\mu,L]$. These recurrences
have the same terminal error on a fixed quadratic, but their first-order
responses to stepwise Hessian perturbations can differ. We prove that the
sine--Jacobi recurrence of Part~I is asymptotically optimal for this
response norm within a factor of $1.000004$, uniformly over $0<\mu<L$.
Every terminal-exact recurrence satisfies
$A_N\ge(2\sqrt{c_{\mathrm{cert}}}-o(1))N^{3/2}\epsN/(L-\mu)$ with
$c_{\mathrm{cert}}>1.14268353624$, whereas the sine--Jacobi recurrence
attains the constant $2\sqrt{\cs}$ with $\cs<1.14269218$.

The proof reduces the gain to the diagonal energy of a Jacobi Green
matrix. A centering lift represents this matrix by an exponential kernel;
packing its diagonal into a density on the line preserves the energy and
controls the change in the Birman--Schwinger spectrum. A calibrated
identity gives a lower bound using the ground parameter and one Fredholm
determinant. Two further determinants and a trace observation improve
the bound through a pointwise inequality on an explicit compact state
region. One rational certificate verifies this inequality by interval
arithmetic. A second certificate constructs a density satisfying the
selected observations and both normalizations with energy below the sine
value. Finally, no $C^1$ function of finitely many observations of the
specified kinds gives a locally sharp lower bound at the sine density.
\end{abstract}

\section{Introduction}\label{sec:introduction}

\subsection{The question}

For a quadratic objective with Hessian $H$, $\mu I\preceq H\preceq LI$,
a first-order method with one gradient per iteration produces errors
$p_N(H)e_0$, where $p_N$ is a polynomial of degree $N$ with $p_N(0)=1$.
The smallest worst-case terminal error over the spectral interval is
\[
 \epsN=\frac1{T_N\bigl((L+\mu)/(L-\mu)\bigr)},
\]
attained by the shifted Chebyshev polynomial $T_N$ \cite[\S2.3]{DST}.
Many methods produce this terminal polynomial: a two-step recurrence
$p_{j+1}(\lambda)=(\alpha_j-\gamma_j\lambda)p_j(\lambda)-\beta_jp_{j-1}(\lambda)$
with $\gamma_j>0$, $\beta_j\ge0$, and $\alpha_j=1+\beta_j$ retains freedom
in its intermediate polynomials $p_1,\dots,p_{N-1}$ even when $p_N$ is
fixed. On a fixed quadratic, every such recurrence produces the same
terminal error $p_N(H)e_0$.

Their responses differ when the matrix at step $j$ changes from $H$ to
$H+B_j$. Hessian variation along a nonquadratic trajectory motivates this
sensitivity question. Part~I
\cite{PartI} measured this difference by the {Hessian-drift gain}
$A_N$, the operator norm of the first-order response of the terminal
residual to independent perturbations $B_0,\dots,B_{N-1}$
(\Cref{def:methods}), and constructed the {sine--Jacobi} recurrence,
for which
\[
 A_N=(2\sqrt{\cs}+o(1))\frac{N^{3/2}\epsN}{L-\mu},\qquad
 \cs=1.1426921798\ldots .
\]
Every terminal-exact recurrence has $A_N\ge2N^{3/2}\epsN/(L-\mu)$
(\Cref{thm:gain} and \eqref{eq:trace-N2}), so the constant inside the square root lies in
$[1,\cs]$ for the best recurrence. This paper shows that the sine--Jacobi
value is optimal to within a relative error of $7.6\cdot10^{-6}$ in this
constant.

\subsection{Main results}

Let $\mathfrak E_N(\mu,L)$ denote the terminal-exact recurrences
(\Cref{def:methods}), and let $\overline{\mathcal Y}_N$ be the set of real symmetric
tridiagonal $N\times N$ matrices with nonnegative off-diagonal entries and
characteristic polynomial $2^{1-N}T_N$. Every terminal-exact recurrence
corresponds to exactly one $J\in\overline{\mathcal Y}_N$ (\Cref{thm:jacobi}). We call
$G=(I-J)^{-1}$ the {Green matrix} of $J$ and $\sum_iG_{ii}^2$ its
{Green energy}.

\begin{theorem}[Green energy of the Chebyshev fiber]\label{thm:energy}
There is an explicit constant $c_{\mathrm{cert}}>1.14268353624$
(\Cref{cor:numerical}) such that
\[
 \liminf_{N\to\infty}\ \inf_{J\in\overline{\mathcal Y}_N}
 \frac1{N^3}\sum_{i=1}^N[(I-J)^{-1}]_{ii}^2\ \ge\ c_{\mathrm{cert}} .
\]
For the sine--Jacobi matrices $J_N^\star$ of \Cref{thm:sine-finite},
\[
 \lim_{N\to\infty}\frac1{N^3}\sum_{i=1}^N[(I-J_N^\star)^{-1}]_{ii}^2=\cs,
 \qquad \cs<1.14269218 .
\]
\end{theorem}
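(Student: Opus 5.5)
The plan is to prove the lower bound by passing from the discrete fiber $\overline{\mathcal Y}_N$ to a continuum variational problem on densities, and then to bound that problem from below by a computer-assisted certificate. The sine value is handled separately, by a direct asymptotic computation for $J_N^\star$.

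\textbf{Step 1: spectral parametrization.} Fix $J\in\overline{\mathcal Y}_N$. Its eigenvalues are the Chebyshev nodes $\cos\bigl((2k-1)\pi/(2N)\bigr)$ of $2^{1-N}T_N$, so $I-J$ is invertible. Because the eigenvalues are fixed, the only freedom in $J$ is the spectral weight vector of the first basis vector. Write $G=(I-J)^{-1}$. By the standard Jacobi identity $G_{ii}=\varphi_i(1)\psi_i(1)/W$, each diagonal entry is a product of the regular and the Weyl-type solutions of the three-term recurrence at the spectral point $1$, divided by their Wronskian. Near the edge $1$ the Chebyshev nodes sit at distance $\asymp k^2/N^2$, so $G_{ii}=O(N^2)$ on average and $\sum_iG_{ii}^2\asymp N^3$. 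This fixes the scaling $N^{-3}$.

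\textbf{Step 2: centering lift and packing.} Following the abstract, I would rewrite $G$ via a centering lift as an integral operator with an exponential kernel $e^{-|x-y|}$ type, acting on a line variable $x=i/N$ rescaled. I would then pack the diagonal $(G_{ii}/N^2)$ into a density $\rho$ on $\mathbb R$. The packing must keep two properties up to $o(1)$.
\begin{itemize}
\item It preserves the energy $\int\rho^2$.
\item It moves the Birman--Schwinger spectrum of $\rho^{1/2}K\rho^{1/2}$ only slightly. The constraint that the characteristic polynomial equals $T_N$ becomes the condition that this spectrum matches the limiting Chebyshev edge spectrum, with both normalizations fixed.
\end{itemize}
This yields
\[
\liminf_N\inf_J N^{-3}\textstyle\sum_iG_{ii}^2\ \ge\ \inf\Bigl\{\int\rho^2:\ \rho\ \text{admissible}\Bigr\}.
\]
Compactness, via tightness of the densities and lower semicontinuity of the energy, justifies the passage to the limit.

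\textbf{Step 3: certified lower bound.} The admissibility constraint is encoded through finitely many observations:
\begin{itemize}
\item the ground parameter;
\item Fredholm determinants $\det(I-zK_\rho)$ at a few selected points;
\item a trace.
\end{itemize}
A calibrated identity relates $\int\rho^2$ to the ground parameter and one determinant, which gives a first bound. The two further determinants and the trace enter through a pointwise inequality
\[
\int\rho^2\ \ge\ F(\text{observations}).
\]
This inequality is to be verified on an explicit compact state region by a rational certificate checked in interval arithmetic. Its value gives $c_{\mathrm{cert}}>1.14268353624$, with $c_{\mathrm{cert}}$ as in \Cref{cor:numerical}.

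\textbf{Step 4: the sine--Jacobi limit.} For $J_N^\star$ from \Cref{thm:sine-finite}, I would compute $G_{ii}$ in closed form and take the Riemann-sum limit of $N^{-3}\sum_i G_{ii}^2$. This produces a rapidly convergent series, the theta-like expansions recorded for $\cs$. Truncating the series with a rigorous tail bound of about $2.4\cdot10^{-22}$ gives $\cs<1.14269218$.

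The main obstacle is Step 2. One has to show that packing the diagonal into a density perturbs the Birman--Schwinger spectrum only by $o(1)$, uniformly over the whole fiber, including degenerate $J$ with vanishing off-diagonals. The first thing I would try is a Hilbert--Schmidt comparison $\|K_\rho-K_{\mathrm{disc}}\|_{\HS}\to0$ together with Lipschitz continuity of Fredholm determinants in trace norm. The degenerate decoupled blocks would be handled by truncation.
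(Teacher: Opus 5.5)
Two steps of your reconstruction fail: the energy functional you minimize is the wrong one, and the compactness passage to a limit problem is not available. In the paper the theorem itself is a short assembly of \Cref{cor:numerical} (which already contains $\underline{\mathsf e}\ge c_{\mathrm{cert}}$ via \Cref{thm:dual}(ii)), \Cref{thm:sine-finite} and \Cref{ca:sine}; both failures sit in your account of what stands behind the lower bound.

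\textbf{The functional.} Step 2 mixes two coordinate systems. In the variable $t=i/N$ the natural object is the rescaled diagonal $D_N$, with $\int D_N=1$ and $\int D_N^2=N^{-3}\E_N(J)$. There, however, the Green matrix is not an exponential kernel. The form $G_{ij}=\sqrt{d_id_j}\,e^{-|y_i-y_j|}$ (\Cref{cor:centering-packing}) holds at the points $y_i$ of the centering lift, whose local spacing is of order $1/d_i$. The energy-exact packing in those coordinates spreads the weight $d_i/N^2$ over an interval of length $1/d_i$, giving $m_J=\sum_i(d_i^2/N^2)\ind_{I_i}$ with $\int m_J=\int\sqrt{m_J}=1$ and $\int m_J^{3/2}=N^{-3}\E_N(J)$. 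Since your operator is $\rho^{1/2}K\rho^{1/2}$, your $\rho$ plays the role of this $m$, and then $\int\rho^2$ is not the Green energy. For the exact packing it equals $N^{-4}\sum_id_i^3$, and at the sine density $\int m_*^2\ge(\int m_*^{3/2})^2=\cs^2>\cs$. The relevant functional is $E(m)=\int m^{3/2}$, and that is the quantity $c_{\mathrm{cert}}$ bounds. Separately, $\tr G=N^2$ on the whole fiber, so $G_{ii}$ is of order $N$ on average, not $N^2$.

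\textbf{The passage to the limit.} Packed densities need not be tight. For reducible $J$, and for irreducible $J$ with very small off-diagonal entries, the pieces of $m_J$ can be arbitrarily far apart; the paper deliberately translates blocks $2\log N$ apart to make the cross-coupling error at most $1/N$. Distributing the Chebyshev nodes between two blocks puts a fixed fraction of $\tr G$ into each piece, so weak limits can lose mass. Nothing in your sketch excludes this for a near-minimizing sequence. Moreover, a limit density need not be bounded or compactly supported, so it need not lie in $\mathcal A_{\mathrm{obs}}(\varsigma)$.

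The paper uses no limiting density; the missing idea is stability of the certificate. Admissibility is a strict inequality between continuous functions on a compact set, so it persists for $\kappa$ near $k$ (\Cref{thm:certificate-transfer}). Then $E(m)\ge\ell_\kappa(\Theta;m)$ for every $m\in\mathcal M^1_c$ with $\kappa(m)=\kappa$. The value $\ell_\kappa(\Theta;m)$ is continuous in $\kappa$ and in the four observations. The uniform bound $\sum_j|\lambda_j(K_{m_J})-\Lambda_j|\le\delta_N$ of \Cref{thm:transfer} then gives $N^{-3}\E_N(J)=E(m_J)\ge c_{\mathrm{cert}}-o(1)$ uniformly in $J$.

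\textbf{Smaller points.} A Hilbert--Schmidt estimate does not supply the trace-norm smallness your Lipschitz step needs. The paper gets $\|\mathcal K_{m_J}-\mathcal K_{\sigma_J}\|_1\le2/\sqrt N$ directly from a coupling bound for feature operators on $H^1$. Your Step 4 is acceptable in outline, but a Riemann-sum limit needs uniform control near $t=0,1$. The paper obtains $L^2$ convergence of $D_N$ to $\phi_*$ from the Legendre limit of the eigenbasis together with the bound $d_i\le64N$ (\Cref{lem:uniform-diagonal}).
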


\begin{theorem}[Near-optimality of the sine--Jacobi method]
\label{thm:main-centered}
For every $\varepsilon>0$ there is $N_0$ such that, for all
$0<\mu<L$, $N\ge N_0$ and $\mathcal R\in\mathfrak E_N(\mu,L)$,
\begin{equation}\label{eq:main-A-lower}
 A_N(\mathcal R)\ge(2\sqrt{c_{\mathrm{cert}}}-\varepsilon)
 \frac{N^{3/2}\epsN}{L-\mu},
 \qquad
 A_N(\mathcal R_N^\star)\le(2\sqrt{\cs}+\varepsilon)
 \frac{N^{3/2}\epsN}{L-\mu},
\end{equation}
where $\mathcal R_N^\star$ is the sine--Jacobi recurrence. Consequently
\begin{equation}\label{eq:main-centered-ratio}
 \limsup_{N\to\infty}\ \sup_{0<\mu<L}
 \frac{A_N(\mathcal R_N^\star)}
 {\inf_{\mathcal R\in\mathfrak E_N(\mu,L)}A_N(\mathcal R)}
 \le\sqrt{\frac{\cs}{c_{\mathrm{cert}}}}<1.000004 .
\end{equation}
\end{theorem}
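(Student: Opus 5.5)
The plan is to derive \Cref{thm:main-centered} from \Cref{thm:energy} through the exact gain formula of \Cref{thm:gain}, which expresses $A_N(\mathcal R)$ in terms of the Green matrix of the Jacobi matrix attached to $\mathcal R$. Concretely, \Cref{thm:jacobi} gives a bijection $\mathcal R\mapsto J\in\overline{\mathcal Y}_N$. I expect \Cref{thm:gain}, in the form used to derive \eqref{eq:trace-N2}, to give
\[
 A_N(\mathcal R)=\frac{2\epsN}{L-\mu}\Bigl(\sum_{i=1}^N G_{ii}^2\Bigr)^{1/2}\bigl(1+r_N(\mu,L,J)\bigr),
 \qquad G=(I-J)^{-1}.
\]
Here the error $r_N$ is either zero or bounded by $o(1)$ uniformly in $\mu,L$ and $J$. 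If \Cref{thm:gain} is exact, nothing needs controlling. Otherwise I would check that the correction depends on $\mu,L$ only through $\epsN$ and the affine rescaling of $[\mu,L]$ onto $[-1,1]$. This is why the normalization $N^{3/2}\epsN/(L-\mu)$ appears and why the estimates are uniform over $0<\mu<L$.

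For the lower bound, fix $\varepsilon>0$. By \Cref{thm:energy}, the $\liminf$ of $N^{-3}\inf_J\sum_iG_{ii}^2$ is at least $c_{\mathrm{cert}}$. Hence there is $N_0$ such that $\sum_i G_{ii}^2\ge(c_{\mathrm{cert}}-\delta)N^3$ for all $N\ge N_0$ and all $J\in\overline{\mathcal Y}_N$, where $\delta$ is chosen so that
\[
 2\sqrt{c_{\mathrm{cert}}-\delta}\,(1-\sup|r_N|)\ge2\sqrt{c_{\mathrm{cert}}}-\varepsilon .
\]
Taking square roots and substituting into the gain formula gives the first inequality of \eqref{eq:main-A-lower}. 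Because the infimum over $J$ does not depend on $(\mu,L)$, the choice of $N_0$ is uniform.

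For the upper bound, apply the same formula to $J_N^\star$ from \Cref{thm:sine-finite}. The limit statement in \Cref{thm:energy} gives $N^{-3}\sum_i[(I-J_N^\star)^{-1}]_{ii}^2\to\cs$, which yields the second inequality. Alternatively one can cite Part~I directly.

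The ratio bound \eqref{eq:main-centered-ratio} then follows by dividing the two bounds in \eqref{eq:main-A-lower}, taking the supremum over $(\mu,L)$ and the infimum over $\mathcal R$, and letting $\varepsilon\to0$. This gives $\sqrt{\cs/c_{\mathrm{cert}}}$. The numerical claim is arithmetic from $\cs<1.14269218$ and $c_{\mathrm{cert}}>1.14268353624$: the ratio of these is below $1+7.6\cdot10^{-6}$, so its square root is below $1.000004$.

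The main obstacle is uniformity in $(\mu,L)$ of the reduction from $A_N$ to Green energy. In particular, I need to confirm that the operator norm of the first-order response equals, or is asymptotically equal to, $(\sum_iG_{ii}^2)^{1/2}$ times the stated factor with no residual dependence on the condition number. I would handle this first by rescaling the spectrum to $[-1,1]$, so that the response at each step becomes a product of Chebyshev-type factors. The terminal-exact constraint then forces every $(\mu,L)$ dependence into $\epsN/(L-\mu)$. The remaining work is all in \Cref{thm:energy}, which is taken as given here.
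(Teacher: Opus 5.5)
Your route is the paper's: the bijection of \Cref{thm:jacobi}, the Green-energy bound of \Cref{thm:gain}, the uniform bound $\liminf_N\inf_J N^{-3}\E_N(J)\ge c_{\mathrm{cert}}$, the sine limit, and division. However, the premise you build the reduction on is false. \Cref{thm:gain} is an inequality, not an exact or asymptotic identity. The gain is $\frac{2}{\Delta P_N(z_0)}$ times the maximum of $(\sum_j\widehat P_j(x)^2P_j(y)^2)^{1/2}$ over $[-1,1]^2$, and the Green energy only records its value at $(x,y)=(1,1)$. For general $J$ the maximum lies elsewhere. Already for $N=2$, take $J$ with diagonal $(b,-b)$, $0<b<1/\sqrt2$, and off-diagonal entry $\sqrt{1/2-b^2}$. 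The kernel is $(x+b)^2+(y-b)^2$, whose maximum $2(1+b)^2$ at $(1,-1)$ exceeds the value $2+2b^2$ at $(1,1)$. For diagonal $J$ with the Chebyshev nodes in decreasing order, the ratio is unbounded in $N$. So your $r_N$ is nonnegative but neither zero nor $o(1)$ uniformly in $J$, and the factor $(1-\sup|r_N|)$ in your lower-bound step is meaningless. The repair is to use only $1+r_N\ge1$. That is exactly the inequality in \eqref{eq:full-gain}, and it is all the lower bound needs.

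For the upper bound you need the reverse direction, but only for $J_N^\star$. This is the separate equality \eqref{eq:sine-gain} of \Cref{thm:sine-finite}, not an instance of a general formula. It holds because each $P_s$ has nonnegative cosine coefficients, so $|P_s(x)|\le P_s(1)$ on $[-1,1]$, and because reversal symmetry gives $\widehat P_s=P_{N-s-1}$. Together these place the maximum at $(1,1)$. Finally, the ``main obstacle'' you identify does not exist. By \eqref{eq:gain-normalized}, $\Delta A_N(\mathcal R_J)/(N^{3/2}\epsN)$ is exactly independent of $(\mu,L)$ for every $N$ and $J$, since $P_N(1)/P_N(z_0)=\epsN$. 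The Green-energy lower bound is realized exactly in dimension one with $H=\mu$ and scalar $B_j\propto\widehat P_j(1)P_j(1)$. Uniformity over $0<\mu<L$ is therefore automatic. With these corrections your argument coincides with the paper's proof.
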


Since the infimum over a subclass is at least the infimum over
$\mathfrak E_N(\mu,L)$, the same bound holds with the infimum restricted
to the irreducible or to the centered recurrences; the sine--Jacobi
recurrence is centered.

\subsection{Structure of the argument}\label{sec:intro-map}

The proof has five steps, shown in \Cref{fig:map}. Three propositions are
computer-assisted, \Cref{ca:transport,ca:sine,ca:primal-second}; each asserts
that explicitly defined numbers lie in printed intervals or that an
explicit continuous function is negative on an explicit compact set. All
other statements are proved analytically. Printed decimal and dyadic
certificate data are exact rationals; displayed approximations are
identified as such.

\begin{figure}[ht]
\centering
\begin{tikzpicture}[
 box/.style={draw=black!55,rounded corners=2pt,align=center,
 text width=5.7cm,minimum height=10mm,inner sep=5pt,font=\small},
 cert/.style={box,fill=blue!8},arr/.style={-{Stealth[length=2mm]},thick},
 node distance=8mm and 9mm]
\node[box] (rec) {Terminal-exact recurrences\\Jacobi Green energy (step 1)};
\node[box,right=of rec] (sin) {Sine--Jacobi matrices\\Legendre kernel and sine density};
\node[box,below=of rec] (pack) {Packed densities\\Uniform spectral transfer (step 2)};
\node[box,below=of pack] (base) {One-determinant calibration\\$\underline{\mathsf e}\ge\mathcal B_*$ (step 3)};
\node[box,below=of base] (dual) {Additional observations and state potential\\Pointwise dual criterion (step 3)};
\node[cert,below=of dual] (lower) {Verified dual certificate\\$c_{\mathrm{cert}}>1.14268353624$ (step 4)};
\node[cert,below=of sin] (upper) {Verified sine energy\\$c_{\sin}<1.14269218$ (step 4)};
\node[box,below=of upper] (ratio) {Asymptotic gain ratio\\$<1.000004$};
\node[cert,below=of ratio] (primal) {Verified feasible density\\$E(m_{\mathrm{feas}})<c_{\sin}$ (step 5)};
\node[box,below=of primal] (limit) {Finite observations\\No locally sharp $C^1$ bound (step 5)};
\draw[arr] (rec)--(pack);
\draw[arr] (pack)--(base);
\draw[arr] (base)--(dual);
\draw[arr] (dual)--(lower);
\draw[arr] (sin)--(upper);
\draw[arr] (upper)--(ratio);
\draw[arr] (lower.east)--++(3mm,0)|-(ratio.west);
\draw[arr] (dual.east)--(primal.west);
\draw[arr] (sin.east)--++(4mm,0)|-(limit.east);
\end{tikzpicture}
\caption{The structure of the proof. Shaded boxes are the
computer-assisted propositions. Arrows indicate dependencies; step numbers
refer to
\Cref{sec:intro-map}.}
\label{fig:map}
\end{figure}

{Step 1: from recurrences to Green energies} (\Cref{sec:reduction-gain}).
A terminal-exact recurrence is the same object as a Jacobi matrix $J$ whose
characteristic polynomial is a multiple of $T_N$, and its gain is at least
$2\epsN(L-\mu)^{-1}$ times $(\sum_iG_{ii}^2)^{1/2}$, $G=(I-J)^{-1}$, with
equality for the sine--Jacobi matrices. The problem becomes: minimize the
Green energy $\E_N(J)=\sum_iG_{ii}^2$ over the fiber $\overline{\mathcal Y}_N$, that is,
bound $\underline{\mathsf e}=\liminf_N\inf_{J\in\overline{\mathcal Y}_N}N^{-3}\E_N(J)$ from below; then
$\mathscr G(\mu,L)=\liminf_N\inf_{\mathcal R}\frac{(L-\mu)A_N(\mathcal R)}{N^{3/2}\epsN}\ge2\sqrt{\underline{\mathsf e}}$.

{Step 2: from matrices to densities} (\Cref{sec:reduction-density}).
For irreducible $J$, a centering lift writes the Green matrix as an
exponential kernel, $G_{ij}=\sqrt{G_{ii}G_{jj}}\,e^{-|y_i-y_j|}$, for points $y_i$ on
the line; reducible $J$ are treated block by block.
Spreading the diagonal over intervals of length $1/G_{ii}$ gives a density
$m_J$ whose energy $\int m_J^{3/2}$ equals $N^{-3}\E_N(J)$ and whose
Birman--Schwinger operator $K_{m_J}$, with kernel
$\sqrt{m(x)}e^{-|x-y|}\sqrt{m(y)}$, has almost the spectrum of $G/N^2$. That
spectrum is fixed by terminal exactness and converges to
$\Lambda_j=2\pi^{-2}(j+\frac12)^{-2}$. These constraints motivate the continuum isospectral problem
\begin{equation}\label{eq:limit-problem}
 \text{minimize } E(m)=\int_\R m^{3/2}
 \quad\text{subject to}\quad
 \int m=\int\sqrt m=1,\quad \lambda_j(K_m)=\Lambda_j\ (j\ge0),
\end{equation}
and the Green diagonals of the sine--Jacobi matrices converge to a
profile whose density $m_*$ on the line satisfies all constraints and has
$E(m_*)=\cs$. We keep five spectral observations
of $m$: the ground parameter $\kappa(m)=2/\lambda_0(K_m)$, three Fredholm
determinants $H_m(s)=\log\det(I+\frac s2K_m)$, and the second trace
$\tr K_m^2$. A lower bound that depends continuously on these observations transfers
back to the matrices uniformly in $J$. The proof uses this direction of
transfer and the explicit sine sequence; it requires neither a recovery
sequence for every continuum density nor equality of the discrete and
continuum infima.

{Step 3: a lower bound by integration along the line}
(\Cref{sec:mechanism}). Along a density, the ground state, the
determinants and the second trace are all governed by first-order
differential equations: the logarithmic derivative of the ground state
gives a coordinate $q\in[0,2]$ with endpoint limits $0$ and $2$, and each
determinant is the integral of a Riccati {filter}. A tangent-line
identity for $n^3$, calibrated by one determinant, writes $E(m)$ as an
explicit function of the observations plus a nonnegative integral. The
one-determinant estimate already gives an asymptotic gain ratio below
$1.000013$. The remaining observations are compared with those of a fixed comparison density
by exact identities. Adding these identities with multipliers, together
with exact derivatives, leaves an integrand that depends only on the
amplitude $n=\sqrt m$ and a four-dimensional local state; the amplitude is
eliminated in closed form, and the lower bound holds as soon as an
explicit function is nonpositive on a compact state region. The data that
make this work form a {dual certificate}.

{Step 4: one certificate} (\Cref{sec:certificates}). We construct one
certificate $\Theta_{\mathrm{cert}}$ by numerical optimization and verify
the pointwise inequality by interval arithmetic
(\Cref{ca:transport}). Its value is $c_{\mathrm{cert}}$. Together with steps 1
and 2 this gives the lower bounds of \Cref{thm:energy,thm:main-centered};
the upper bounds come from the sine density, whose energy is enclosed by a
rapidly converging series (\Cref{ca:sine}).

{Step 5: what finitely many observations can give}
(\Cref{sec:feasible,sec:structural-limits}). A primal certificate
produces a density that satisfies the five observations and both
normalizations and has energy below $\cs$ (\Cref{cor:primal}); hence no
dual certificate built on these five observations, with
$\varsigma=\varsigma_{\mathrm{cal}}$, reaches $\cs$. In a different direction, no
continuously differentiable function of finitely many ground, trace and
determinant observations that equals $E$ at $m_*$ is a lower bound for $E$
near $m_*$ (\Cref{thm:no-touching}).

The appendices contain all proofs, organized along the same steps:
\Cref{app:reduction} for steps 1--2, \Cref{app:sine} for the sine
reference, \Cref{app:states,app:calibration,app:integration} for step 3,
\Cref{app:verification} for step 4, and \Cref{app:primal,app:obstruction} for
step 5.

\subsection{Related work}

Lower bounds of other types concern the iteration complexity of oblivious
first-order methods \cite{ArjevaniShamir} and rate--variance tradeoffs
under additive noise \cite{MRJ}; there the terminal polynomial is not
fixed. The energy $\int m^{3/2}$ is the functional on the right of the
one-dimensional Lieb--Thirring inequality
$\sum_j|e_j|\le L_{1,1}\int V_+^{3/2}$ for the negative eigenvalues $e_j$ of
$-\partial_x^2-V$ \cite{FGL}; in \eqref{eq:limit-problem} the whole
Birman--Schwinger spectrum at one energy is prescribed instead of a moment
of it. The lower bound of step 3 integrates a pointwise inequality along
the line, as in the pointwise dual relaxations of integral variational
problems \cite{CBFG}; sharpness of such relaxations is studied in
\cite{FantuzziTobasco}.

\section{From recurrences to a variational problem on the line}\label{sec:reduction}

The recurrence determines a Jacobi matrix, whose Green diagonal bounds
the gain. We then represent the Green matrix on the line and spread its
diagonal into a density, preserving the energy while controlling the
spectrum. The sine family identifies the reference value for the
resulting variational problem. Proofs are in \Cref{app:reduction,app:sine}.

Throughout, $N\ge2$, $0<\mu<L$, and
\[
 \Delta=L-\mu,\qquad z_0=\frac{L+\mu}{\Delta},\qquad
 z(\lambda)=z_0-\frac{2\lambda}\Delta,\qquad \epsN=\frac1{T_N(z_0)},
\]
where $T_N(\cos\theta)=\cos N\theta$. The affine map $z$ sends $[\mu,L]$
onto $[-1,1]$, and $z(0)=z_0>1$.

\subsection{Recurrences, Jacobi matrices and the gain}\label{sec:reduction-gain}

\begin{definition}[Terminal-exact recurrences and their gain]
\label{def:methods}
A recurrence $\mathcal R$ of length $N$ is a list of coefficients
$(\alpha_j,\beta_j,\gamma_j)_{0\le j<N}$ with
\[
 \gamma_j>0,\qquad \beta_0=0,\qquad \beta_j\ge0,\qquad
 \alpha_j=1+\beta_j .
\]
Its residual polynomials are $p_{-1}=0$, $p_0=1$, and
$p_{j+1}(\lambda)=(\alpha_j-\gamma_j\lambda)p_j(\lambda)-\beta_jp_{j-1}(\lambda)$.
The recurrence is {terminal-exact} on $[\mu,L]$ if
$p_N(\lambda)=\epsN T_N(z(\lambda))$ identically; these recurrences form
the class $\mathfrak E_N(\mu,L)$. Its {irreducible} subclass has
$\beta_j>0$ for $1\le j<N$, and its {centered} subclass consists of
the irreducible recurrences with $\alpha_j=(L+\mu)\gamma_j/2$ for all $j$.

For real $p\times p$ matrices $H_0,\dots,H_{N-1}$, the recurrence acts by
\[
 \mathbf p_{-1}=0,\qquad \mathbf p_0=I,\qquad
 \mathbf p_{j+1}=(\alpha_jI-\gamma_jH_j)\mathbf p_j-\beta_j\mathbf p_{j-1};
\]
with $H_j=H$ for all $j$ this gives $\mathbf p_N=p_N(H)$. For symmetric
$H$ with $\mu I\preceq H\preceq LI$ and real $p\times p$ matrices
$B=(B_0,\dots,B_{N-1})$, let $\mathsf D_{\mathcal R,H}[B]$ be the
derivative of $\mathbf p_N$ at $(H,\dots,H)$ in the direction $B$. The
{Hessian-drift gain} is
\[
 A_N(\mathcal R)=
 \sup_{\substack{p\ge1,\ H=H^T\\\mu I\preceq H\preceq LI}}
 \ \sup_{B\ne0}
 \frac{\|\mathsf D_{\mathcal R,H}[B]\|_{\HS}}
 {(\sum_j\|B_j\|_{\HS}^2)^{1/2}},
 \qquad
 \mathscr G(\mu,L)=\liminf_{N\to\infty}\ \inf_{\mathcal R\in\mathfrak E_N(\mu,L)}
 \frac{\Delta A_N(\mathcal R)}{N^{3/2}\epsN}.
\]
\end{definition}

The normalization $\alpha_j=1+\beta_j$ says $p_j(0)=1$ for all $j$, that
is, the recurrence is a first-order method: for
$f(x)=\frac12(x-x_*)^TH(x-x_*)$ it reads
$x_{j+1}=x_j-\gamma_j\nabla f(x_j)+\beta_j(x_j-x_{j-1})$, a heavy-ball method with
step-dependent coefficients, and $x_j-x_*=p_j(H)(x_0-x_*)$. If the gradient
at step $j$ is computed with the Hessian $H_j$, the errors are
$\mathbf p_j(x_0-x_*)$ with $\mathbf p_j$ as above. The gain is the largest
amplification, to first order, of independent Hessian errors at the $N$
iterations into the terminal residual; all terminal-exact recurrences
have the same terminal residual $p_N(H)$, but not the same gain.

A recurrence is encoded by a Jacobi matrix. Let $\overline{\mathcal Y}_N$ be the set of
real symmetric tridiagonal $N\times N$ matrices with nonnegative
off-diagonal entries and characteristic polynomial $2^{1-N}T_N$, let
$\mathcal Y_N\subset\overline{\mathcal Y}_N$ consist of those with positive off-diagonal
entries, and $\X_N\subset\mathcal Y_N$ of those with zero diagonal. The roots
$\cos((j-\frac12)\pi/N)$ of $T_N$ lie in $(-1,1)$, so $\|J\|<1$ for
$J\in\overline{\mathcal Y}_N$, and we put
\[
 G=(I-J)^{-1},\qquad d_i=G_{ii},\qquad
 \E_N(J)=\sum_{i=1}^Nd_i^2,\qquad
 \underline{\mathsf e}=\liminf_{N\to\infty}\ \inf_{J\in\overline{\mathcal Y}_N}N^{-3}\E_N(J).
\]
We call $G$ the {Green matrix} and $\E_N(J)$ the {Green energy}
of $J$. Write $J_{[a:b]}$ for the principal submatrix on the indices
$a,\dots,b$, and $P_j(z)=\det(zI-J_{[1:j]})$ for $0\le j\le N$, with the
determinant of an empty matrix equal to one.

\begin{theorem}[Recurrences and Jacobi matrices]\label{thm:jacobi}
For $J\in\overline{\mathcal Y}_N$ with diagonal $b_i$ and off-diagonal entries
$J_{j,j+1}=\sqrt{v_j}$, one has $P_j(z_0)>0$ for $0\le j\le N$, and the
coefficients
\[
 \gamma_j=\frac2\Delta\frac{P_j(z_0)}{P_{j+1}(z_0)},\qquad
 \alpha_j=(z_0-b_{j+1})\frac{P_j(z_0)}{P_{j+1}(z_0)},\qquad
 \beta_j=v_j\frac{P_{j-1}(z_0)}{P_{j+1}(z_0)}\quad(\beta_0=0)
\]
define a recurrence $\mathcal R_J\in\mathfrak E_N(\mu,L)$ with residual
polynomials $p_j(\lambda)=P_j(z(\lambda))/P_j(z_0)$. The map
$J\mapsto\mathcal R_J$ is a bijection
$\overline{\mathcal Y}_N\to\mathfrak E_N(\mu,L)$. It maps $\mathcal Y_N$ onto the irreducible
recurrences and $\X_N$ onto the centered ones. All six classes are
nonempty.
\end{theorem}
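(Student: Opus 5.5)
The plan is to verify the forward direction by direct computation, then invert it, and finally read off the subclass correspondences and nonemptiness.

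\emph{Positivity.} Since $J_{[1:j]}$ is a principal submatrix of the symmetric matrix $J$, Cauchy interlacing puts its eigenvalues in $[\lambda_{\min}(J),\lambda_{\max}(J)]\subset(-1,1)$. As $z_0>1$, every factor of $P_j(z_0)=\prod(z_0-\text{eig})$ is positive, so $P_j(z_0)>0$ for $0\le j\le N$.

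\emph{Forward direction.} Expand the determinant along the last row to get the three-term recurrence
\[
 P_{j+1}(z)=(z-b_{j+1})P_j(z)-v_jP_{j-1}(z).
\]
Put $p_j(\lambda)=P_j(z(\lambda))/P_j(z_0)$ and substitute $z(\lambda)=z_0-2\lambda/\Delta$. After dividing by $P_{j+1}(z_0)$ this becomes
\[
 p_{j+1}=\Bigl[(z_0-b_{j+1})\tfrac{P_j(z_0)}{P_{j+1}(z_0)}-\tfrac2\Delta\tfrac{P_j(z_0)}{P_{j+1}(z_0)}\lambda\Bigr]p_j-v_j\tfrac{P_{j-1}(z_0)}{P_{j+1}(z_0)}p_{j-1},
\]
which is exactly $(\alpha_j-\gamma_j\lambda)p_j-\beta_jp_{j-1}$ with the stated coefficients. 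The sign conditions follow from positivity: $\gamma_j>0$ and $\beta_j\ge0$, and $\beta_0=0$ because $P_{-1}=0$. Evaluating the recurrence at $z=z_0$ gives $P_{j+1}(z_0)=(z_0-b_{j+1})P_j(z_0)-v_jP_{j-1}(z_0)$, which is the same as $\alpha_j=1+\beta_j$. Finally, $P_N=2^{1-N}T_N$, so $p_N(\lambda)=T_N(z(\lambda))/T_N(z_0)=\epsN T_N(z(\lambda))$. Hence $\mathcal R_J\in\mathfrak E_N(\mu,L)$.

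\emph{Inverse map.} Start from a recurrence in $\mathfrak E_N(\mu,L)$ and define $c_j=\prod_{k<j}(\Delta\gamma_k/2)$ and $\hat P_j(z)=p_j(\lambda(z))/c_j$, where $\lambda(z)=\Delta(z_0-z)/2$. Each $\hat P_j$ is monic in $z$, because the leading coefficient of $p_j$ in $\lambda$ is $\prod_{k<j}(-\gamma_k)$. The recurrence for $p_j$ becomes
\[
 \hat P_{j+1}=(z-b_{j+1})\hat P_j-v_j\hat P_{j-1},
\]
with $b_{j+1}=z_0-2\alpha_j/(\Delta\gamma_j)$ and $v_j=\beta_j\,4/(\Delta^2\gamma_j\gamma_{j-1})\ge0$. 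Let $J$ be the tridiagonal matrix with diagonal $b_i$ and off-diagonal $\sqrt{v_j}$. Then $\hat P_j=\det(zI-J_{[1:j]})$, and terminal exactness forces $\hat P_N$ to be a monic multiple of $T_N$, namely $2^{1-N}T_N$. So $J\in\overline{\mathcal Y}_N$.

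The step to watch is that the two constructions are mutually inverse; this is where I expect the main obstacle. A monic three-term recurrence determines its coefficients $b_{j+1}$ and $v_j$ uniquely, provided each $\hat P_{j-1}\not\equiv0$, and this holds since the $\hat P_j$ are monic. On the recurrence side, the scalings are pinned down by $p_j(0)=1$, equivalently by $P_j(z_0)>0$. Composing the two maps therefore returns the original coefficients: one checks $c_j=P_j(z_0)$ from $\gamma_j=\tfrac2\Delta P_j(z_0)/P_{j+1}(z_0)$, and conversely. This gives the bijection.

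\emph{Subclasses.} Since $v_j>0$ if and only if $\beta_j>0$, the map sends $\mathcal Y_N$ onto the irreducible recurrences. The formula for $b_{j+1}$ shows that $b_{j+1}=0$ if and only if $\alpha_j=z_0\Delta\gamma_j/2=(L+\mu)\gamma_j/2$, which is the centering condition, so $\X_N$ corresponds to the centered recurrences.

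\emph{Nonemptiness.} It suffices to exhibit one element of $\X_N$, since $\X_N\subset\mathcal Y_N\subset\overline{\mathcal Y}_N$ and the bijection carries the other three classes. Take the Chebyshev Jacobi matrix, with zero diagonal, off-diagonals $J_{12}=1/\sqrt2$ and all others $1/2$. Its characteristic polynomials are the monic Chebyshev polynomials: the recurrence $T_{j+1}=2zT_j-T_{j-1}$ with $T_1=z$ gives $P_1=z$, $P_2=z^2-\tfrac12$, and generally $P_j=2^{1-j}T_j$. Hence this matrix lies in $\X_N$.
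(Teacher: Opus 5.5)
Your proposal is correct and follows the paper's proof essentially step by step. The steps match: positivity of $P_j(z_0)$ from the spectrum of $J$ lying in $(-1,1)$ and $z_0>1$; the continuant recurrence evaluated at $z_0$ (giving $\alpha_j=1+\beta_j$) and at $z(\lambda)$; the inverse built from monic rescalings of $p_j(\Delta(z_0-z)/2)$ with the same formulas for $b_{j+1}$ and $v_j$; and the zero-diagonal Chebyshev matrix with $v_1=\tfrac12$, $v_j=\tfrac14$.

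One slip needs correcting. With your normalization $c_j=\prod_{k<j}\Delta\gamma_k/2$, the product telescopes to $c_j=1/P_j(z_0)$, equivalently $\hat P_j(z_0)=p_j(0)/c_j=1/c_j$, and not to $c_j=P_j(z_0)$. The paper's $c_j$ is the reciprocal of yours, and for it $\widetilde P_j(z_0)=c_j$ does hold. With this correction your composition check goes through unchanged.
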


The residual polynomials of a recurrence are thus the normalized leading
principal characteristic polynomials of a Jacobi matrix, and terminal
exactness fixes only the characteristic polynomial of the whole matrix.
In these terms the gain is bounded below by the Euclidean norm of the
diagonal of the Green matrix.

\begin{theorem}[The gain of a Jacobi matrix]\label{thm:gain}
For $J\in\overline{\mathcal Y}_N$, with $\widehat P_j(z)=\det(zI-J_{[j+2:N]})$ for $0\le j<N$,
\begin{equation}\label{eq:full-gain}
 A_N(\mathcal R_J)
 =\frac2{\Delta P_N(z_0)}
 \max_{x,y\in[-1,1]}
       \Bigl(\sum_{j=0}^{N-1}\widehat P_j(x)^2P_j(y)^2\Bigr)^{1/2}
 \ \ge\ \frac{2\epsN}{\Delta}\sqrt{\E_N(J)},
\end{equation}
and the maximum is attained in dimension $p=2$. Consequently
$\mathscr G(\mu,L)\ge2\sqrt{\underline{\mathsf e}}$.
\end{theorem}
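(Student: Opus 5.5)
The plan is to compute the derivative $\mathsf D_{\mathcal R,H}[B]$ explicitly by a variation-of-constants formula for the two-step matrix recurrence, and then reduce the operator-norm supremum to a scalar optimization over pairs of eigenvalues.

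\textbf{Step 1 (Duhamel formula).} Linearizing $\mathbf p_{j+1}=(\alpha_jI-\gamma_jH_j)\mathbf p_j-\beta_j\mathbf p_{j-1}$ at $H_j=H$ shows that the perturbation $\delta\mathbf p$ obeys the same homogeneous recurrence with forcing $-\gamma_jB_jp_j(H)$ at step $j$. The forcing is then propagated to step $N$ by the ``backward'' solution. For commuting scalars this solution is the polynomial $q_j$ that solves the recurrence from index $j+1$ to $N$ with initial data $(0,1)$.

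Using Theorem~\ref{thm:jacobi} to write $p_j(\lambda)=P_j(z(\lambda))/P_j(z_0)$, I would identify $q_j$ via the standard Jacobi cofactor identity. Up to the product of normalizations, $q_j(\lambda)$ is $\widehat P_j(z(\lambda))$. Its prefactor is $\gamma_j\cdots$ telescoping to $\frac{2}{\Delta}\frac{P_{j}(z_0)}{P_N(z_0)}$ after combining with $\gamma_j=\frac2\Delta P_j/P_{j+1}$ and the $\beta$'s.

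Since $H_j$ enters through $H$ on both sides of $B_j$, one obtains
\[
 \mathsf D[B]=-\frac{2}{\Delta P_N(z_0)}\sum_{j}\widehat P_j(z(H))\,B_j\,P_j(z(H)) ,
\]
with the $P_j(z_0)$ factors cancelling. This algebraic bookkeeping is the main obstacle, and I would verify it first on the scalar case and by induction on $N-j$.

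\textbf{Step 2 (norm computation).} Diagonalize $H=U\Lambda U^T$. In the eigenbasis the map $B\mapsto\mathsf D[B]$ acts entrywise on $(k,l)$ entries by
\[
 (B_j)_{kl}\mapsto c\,\widehat P_j(x_k)(B_j)_{kl}P_j(y_l),\qquad x_k=z(\lambda_k),\ y_l=z(\lambda_l).
\]
It therefore decouples into independent maps $\R^N\to\R$, each of norm $c\bigl(\sum_j\widehat P_j(x_k)^2P_j(y_l)^2\bigr)^{1/2}$ by Cauchy--Schwarz. The HS operator norm is the maximum over $(k,l)$.

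Taking the supremum over $H$ gives the maximum over $x,y\in[-1,1]$, which is a maximum rather than a supremum by continuity and compactness. It is attained with $p=2$ by taking $H=\diag(\lambda(x),\lambda(y))$ and $B_j$ supported on the $(1,2)$ entry, or with $p=1$ if $x=y$.

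\textbf{Step 3 (lower bound).} Restrict to $x=y$ equal to an eigenvalue... but that is less natural. Instead I would use the average over the Chebyshev nodes, i.e. the eigenvalues $\theta_k$ of $J$, weighted by the spectral measure. The key identity is
\[
 G_{jj}=\sum_k \frac{\widehat P_{j-1}(\theta_k)P_{j-1}(\theta_k)}{(1-\theta_k)P_N'(\theta_k)} .
\]
Alternatively, a cleaner route bounds the max by its value at $x=y=1$, the edge of the interval: the maximum is at least the value at $x=y=1$, which is $\sum_j \widehat P_j(1)^2P_j(1)^2$. The cofactor formula gives $G_{jj}=P_{j-1}(1)\widehat P_{j-1}(1)/P_N(1)$. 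Hence this value equals $P_N(1)^2\sum_i G_{ii}^2$.

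Multiplying by $\frac{2}{\Delta P_N(z_0)}$ gives $\frac{2}{\Delta}\frac{P_N(1)}{P_N(z_0)}\sqrt{\E_N}$. Since $P_N=2^{1-N}T_N$ and $T_N(1)=1$, we have $P_N(1)/P_N(z_0)=\epsN$, which yields \eqref{eq:full-gain}. Finally, taking $\liminf_N\inf_J$ of $\Delta A_N/(N^{3/2}\epsN)\ge2\sqrt{N^{-3}\E_N(J)}$ and using the bijection of Theorem~\ref{thm:jacobi} gives $\mathscr G\ge2\sqrt{\underline{\mathsf e}}$.
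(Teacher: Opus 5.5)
Your proposal is correct and follows essentially the paper's argument. That argument has three parts: the variation-of-constants formula (the paper obtains it by rescaling $W_j=P_j(z_0)U_j$, so that the homogeneous solutions are the continuants $\widehat P_{l,j}(Z)$); the entrywise decoupling in an eigenbasis of $H$, with a $2\times2$ extremal example supported on $E_{12}$; and evaluation of the maximum at $x=y=1$ via the cofactor formula for $G_{ii}$ and $P_N(1)/P_N(z_0)=\epsN$. The spectral-average identity you mention in Step 3 is correct but an unneeded detour.
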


\paragraph{Exact normalization.}
Since $P_N(1)/P_N(z_0)=\epsN$, \eqref{eq:full-gain} gives
\begin{equation}\label{eq:gain-normalized}
 \frac{\Delta A_N(\mathcal R_J)}{N^{3/2}\epsN}
 =\frac{2}{N^{3/2}P_N(1)}
 \max_{x,y\in[-1,1]}
 \Bigl(\sum_{j=0}^{N-1}\widehat P_j(x)^2P_j(y)^2\Bigr)^{1/2}.
\end{equation}
The right-hand side and the Jacobi fiber are independent of $\mu,L$.
The normalized optimal gain is therefore independent of the spectral
interval for every $N$, which also explains the uniformity in
\Cref{thm:main-centered}.

\begin{corollary}[Symmetric perturbations]\label{cor:symmetric-gain}
Let $A_N^{\rm sym}$ be the gain of \Cref{def:methods} with every $B_j$
required to be symmetric. For every $J\in\overline{\mathcal Y}_N$,
\[
 \frac{2\epsN}{\Delta}\sqrt{\E_N(J)}
 \le A_N^{\rm sym}(\mathcal R_J)\le A_N(\mathcal R_J).
\]
Both inequalities are equalities for $J_N^\star$. Consequently,
\Cref{thm:main-centered} holds with $A_N$ replaced by $A_N^{\rm sym}$.
\end{corollary}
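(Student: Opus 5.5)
The plan is to prove the two inequalities separately and then use the sine--Jacobi equality case.

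\emph{Upper inequality.} For symmetric $B_j$ the supremum defining $A_N^{\rm sym}$ runs over a subset of the directions allowed in $A_N$. The quantity being maximized is the same. Hence $A_N^{\rm sym}(\mathcal R_J)\le A_N(\mathcal R_J)$ is immediate.

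\emph{Lower inequality.} I would reuse the structure behind \eqref{eq:full-gain}. For commuting data the derivative is
\[
\mathsf D_{\mathcal R,H}[B]=-\sum_j\gamma_j\, q_j(H)\,B_j\,p_j(H),
\]
where $q_j$ is the backward (tail) polynomial, proportional to $\widehat P_j(z(\cdot))$. Taking $H$ with two eigenvalues $x,y$, as in the proof of \cref{thm:gain}, the off-diagonal block of $B_j$ produces the term $\widehat P_j(x)P_j(y)$. A nonsymmetric $B_j$ may use the $(1,2)$ entry alone, but a symmetric $B_j$ forces the $(1,2)$ and $(2,1)$ entries to be equal.

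To avoid this issue, I would not use the two-point maximizer at all. Instead I would take $p=1$ and $H=\lambda$ scalar, or $H=\lambda I$. Then $B_j=b_j$ are scalars, automatically symmetric, and
\[
\mathsf D=\sum_j c_j(\lambda)\,b_j .
\]
Optimizing over $b$ gives the gain at least
\[
\frac{2}{\Delta P_N(z_0)}\Bigl(\sum_j\widehat P_j(x)^2P_j(x)^2\Bigr)^{1/2},
\]
the diagonal $x=y$ case of \eqref{eq:full-gain}. One must check that the $x=y$ specialization is indeed what scalar perturbations produce; I would recover it from the same derivative formula used to prove \cref{thm:gain}.

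The remaining task is to bound this diagonal quantity below by $\epsN\sqrt{\E_N(J)}$, with $x=1$ being the natural choice. The identity to exploit is
\[
\widehat P_{i-1}(1)\,P_{i-1}(1)=\det(I-J)\,G_{ii},
\]
the Cramer formula for the diagonal of $(I-J)^{-1}$, which holds because the minor of $I-J$ at $(i,i)$ factors into the leading block and the trailing block. Hence
\[
\sum_j\widehat P_j(1)^2P_j(1)^2=P_N(1)^2\,\E_N(J).
\]
Multiplying by $2/(\Delta P_N(z_0))$ and using $P_N(1)/P_N(z_0)=\epsN$ yields exactly $\frac{2\epsN}{\Delta}\sqrt{\E_N(J)}$. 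This is presumably how the inequality in \eqref{eq:full-gain} was itself obtained, so it realizes that lower bound with symmetric (indeed scalar) perturbations.

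\emph{Equality for $J_N^\star$ and the consequence.} \cref{thm:gain} states that equality holds in \eqref{eq:full-gain} for the sine--Jacobi matrices, meaning the maximum over $x,y$ equals the value at $x=y=1$. The sandwich above then gives
\[
\frac{2\epsN}{\Delta}\sqrt{\E_N(J_N^\star)}=A_N^{\rm sym}(\mathcal R_{J_N^\star})=A_N(\mathcal R_{J_N^\star}).
\]
For the consequence, the lower bound in \cref{thm:main-centered} for $A_N$ was derived through $\frac{2\epsN}{\Delta}\sqrt{\E_N(J)}$ together with \cref{thm:energy}, so it transfers verbatim to $A_N^{\rm sym}$. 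The upper bound transfers because $A_N^{\rm sym}\le A_N$. The ratio bound \eqref{eq:main-centered-ratio} follows as before.

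The main obstacle is confirming the Cramer factorization and the scalar-perturbation formula with the correct $\gamma_j$ normalizations. Also, if equality for $J_N^\star$ in \cref{thm:gain} is only asymptotic rather than exact, the equality claim would need to be weakened accordingly.
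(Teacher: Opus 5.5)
Your proposal is correct and follows the paper's proof essentially verbatim. The paper also takes dimension one with $H=\mu$ (so $Z=1$ in \Cref{lem:gain-kernel}) and scalar $B_j\propto\widehat P_j(1)P_j(1)$, then uses the cofactor formula for $G_{ii}$ and $P_N(1)/P_N(z_0)=\epsN$. One correction: equality for $J_N^\star$ is the exact identity \eqref{eq:sine-gain} of \Cref{thm:sine-finite}, valid for every $N$, not a statement of \Cref{thm:gain}, so your hedge about an asymptotic-only equality is unnecessary.
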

\begin{proof}
The upper bound restricts the perturbation class. For the lower bound,
take dimension one and $H=\mu$, so $Z=1$ in \Cref{lem:gain-kernel}.
Choose scalar $B_j$ proportional to $\widehat P_j(1)P_j(1)$, with
$\sum_jB_j^2=1$. The response norm is
$2\epsN\Delta^{-1}\sqrt{\E_N(J)}$ by the cofactor formula for $G$.
\Cref{thm:sine-finite} gives equality for the sine matrix; its upper
bound and \Cref{thm:energy} then give the asymptotic assertion.
\end{proof}

The gain is a first-order sensitivity norm for independently chosen
stepwise perturbations. Relating it to a nonlinear optimization trajectory
requires additional control of the perturbations generated by that trajectory.

The lower bound in \eqref{eq:full-gain} is the value at $x=y=1$: there
$\widehat P_j(1)P_j(1)/P_N(1)$ is the cofactor expression of $G_{j+1,j+1}$.
The trace of $G$ does not depend on $J$: $\tr G=N^2$ on the whole fiber
(\Cref{lem:trace}). The Cauchy--Schwarz inequality therefore gives
\begin{equation}\label{eq:trace-N2}
 \E_N(J)\ge\frac{(\tr G)^2}N=N^3\qquad(J\in\overline{\mathcal Y}_N),
\end{equation}
with equality only for a constant diagonal. This is the elementary bound
$\underline{\mathsf e}\ge1$. The question is how unevenly the fixed trace $N^2$ must be
distributed along the diagonal of $G$. The bounds below use the whole
spectrum of $G$, which is also fixed on the fiber: $\overline{\mathcal Y}_N$ consists of
the Jacobi matrices with eigenvalues $\cos((j-\frac12)\pi/N)$, and $G$ is the
resolvent of $J$ at the point $z=1$, the image of $\lambda=\mu$, just outside
the spectrum.

\subsection{Green matrices as exponential kernels, and densities}
\label{sec:reduction-density}

The eigenvalues of $G$ encode terminal exactness, while its diagonal
determines the energy. An exponential-kernel representation places both
quantities on the line.

\begin{proposition}[Exponential representation]\label{cor:centering-packing}
Let $J$ be a real symmetric tridiagonal $M\times M$ matrix with positive
off-diagonal entries and $\|J\|<1$, $G=(I-J)^{-1}$, $d_i=G_{ii}$. There
are points $y_1<\dots<y_M$ and bounded closed intervals $I_i\ni y_i$
with pairwise disjoint interiors and $|I_i|=1/d_i$ such that
$G_{ij}=\sqrt{d_id_j}\,e^{-|y_i-y_j|}$.
\end{proposition}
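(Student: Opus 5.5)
The plan has two parts. First, the $y_i$ come from the one-pair structure of inverses of irreducible tridiagonal matrices. Second, the intervals $I_i$ are fitted into the gaps between consecutive $y_i$ by a local inequality read off from the diagonal of $G^{-1}=I-J$.

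\emph{Step 1: positivity and semiseparability.} The matrix $I-J$ is symmetric positive definite, since $\|J\|<1$. Its off-diagonal entries $-J_{i,i+1}$ are negative, so it is an irreducible nonsingular $M$-matrix and $G=(I-J)^{-1}$ is entrywise positive. By the cofactor formula, as used after \Cref{thm:gain}, $G_{ij}=P_{i-1}(1)\widehat P_{j-1}(1)\prod_{k=i}^{j-1}J_{k,k+1}/\det(I-J)$ for $i\le j$. Hence $G$ has the one-pair form $G_{ij}=a_ib_j$ for $i\le j$, with $a_i,b_j>0$. Put $\rho_i=G_{i,i+1}/\sqrt{d_id_{i+1}}$. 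Strict Cauchy--Schwarz for the positive definite $G$ gives $0<\rho_i<1$. Set $g_i=-\log\rho_i>0$ and $y_{i+1}=y_i+g_i$, so $y_1<\dots<y_M$. The one-pair form gives the multiplicativity $G_{ik}/\sqrt{d_id_k}=\prod_{i\le j<k}\rho_j$, because the factors $\sqrt{a_jb_j}$ telescope. This is exactly $G_{ij}=\sqrt{d_id_j}\,e^{-|y_i-y_j|}$.

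\emph{Step 2: the local inequality.} Write $G=D^{1/2}KD^{1/2}$ with $D=\diag(d_i)$ and $K_{ij}=e^{-|y_i-y_j|}$. The inverse of the exponential kernel is explicit:
\[
 (K^{-1})_{ii}=\tfrac12\bigl(\coth g_{i-1}+\coth g_i\bigr),
\]
with the convention that a missing neighbour contributes $\tfrac12$ in place of $\tfrac12\coth g$ at the ends. Comparing diagonals of $G^{-1}=D^{-1/2}K^{-1}D^{-1/2}$ and $I-J$ gives
\[
 d_i(1-J_{ii})=\tfrac12\bigl(\coth g_{i-1}+\coth g_i\bigr).
\]
Using $1-J_{ii}<2$ and the AM--HM inequality, this yields
\[
 \frac1{d_i}<\frac{4}{\coth g_{i-1}+\coth g_i}\le\tanh g_{i-1}+\tanh g_i .
\]
The off-diagonal identities $(I-J)_{i,i+1}=-\rho_i/\bigl(\sqrt{d_id_{i+1}}(1-\rho_i^2)\bigr)$ are available as extra information if needed.

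\emph{Step 3: packing.} Take $I_i=[y_i-\ell_i,\,y_i+r_i]$ with $\ell_i+r_i=1/d_i$. Disjointness of interiors means $r_i+\ell_{i+1}\le g_i$. The outermost sides $\ell_1$ and $r_M$ are unconstrained, so the end intervals can always be pushed outward. The first attempt is to charge interval $i$ to each adjacent gap in proportion to $\tanh g$: put $r_i=\theta_i\tanh g_i$ and $\ell_{i+1}=(1-\theta_{i+1}')\tanh g_i$ with $\theta_i+\theta_i'\le1$. This closes because $\tanh g\le g$ and Step 2 controls $1/d_i$ by the sum of the two adjacent $\tanh$'s.

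\emph{Main obstacle.} The difficulty is a factor-two loss in this bookkeeping. The crude bound $1-J_{ii}<2$ only gives $1/d_i<\tanh g_{i-1}+\tanh g_i$. Each gap $g_i$, however, must accommodate pieces of two intervals, so what is needed is $\sum$ of the shares of $g_i\le g_i$, whereas the naive estimate needs about $2\tanh g_i$. To close it, I would first try to replace the diagonal bound by a quadratic-form bound. Evaluate $\langle(I-J)u,u\rangle>0$ and $\langle(I+J)u,u\rangle>0$, the latter using $\|J\|<1$ in both directions, on two-site test vectors supported on $\{i,i+1\}$. This should give the sharper pairwise inequality
\[
 \frac1{2d_i}+\frac1{2d_{i+1}}\le\tanh g_i\le g_i ,
\]
which is exactly the condition that centered intervals of lengths $1/d_i$ fit. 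If that fails, I would use the freedom to place intervals non-centrally: choose $r_i,\ell_{i+1}$ by a greedy left-to-right sweep, and prove feasibility from the tridiagonal structure of $K^{-1}$ on longer blocks.
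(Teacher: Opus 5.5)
Your Steps 1 and 2 are correct. The one-pair structure of $G$ gives the points $y_i$ directly, which is a version of \Cref{lem:green} that does not need a zero diagonal, and the identity $d_i(1-J_{ii})=\frac12(\coth g_{i-1}+\coth g_i)$ is right. The gap is Step 3, which is the real content of the proposition, and the repair you propose there is false. Take $M=2$ and $J=\bigl(\begin{smallmatrix}0&c\\c&0\end{smallmatrix}\bigr)$ with $0<c<1$. Then $d_1=d_2=1/(1-c^2)$ and $\rho_1=c$, so $\frac1{2d_1}+\frac1{2d_2}=1-c^2>\frac{1-c^2}{1+c^2}=\tanh g_1$ for every $c$. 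For $c=0.9$ even $1-c^2=0.19>g_1=-\log0.9\approx0.105$. Now put this block in the middle of a $4\times4$ zero-diagonal matrix whose outer off-diagonal entries are $\epsilon\to0$. This shows that the interior intervals $I_2,I_3$ cannot both be centered at $y_2,y_3$. So the inequality is not just unproved but false, and centered placement is unavailable in general.

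The fallback (a greedy sweep) is not an argument either. Moreover, the information from Step 2 cannot suffice on its own. Take all $g_i=g$ small. Values $1/d_i$ close to $2\tanh g$ are compatible with $1-J_{ii}<2$. But every $I_i$ with $2\le i\le M-1$ must lie in $[y_1,y_M]$, so one needs $\sum_{i=2}^{M-1}1/d_i\le y_M-y_1=(M-1)g$, and those values violate this once $M\ge4$. What excludes them is the coupling of $J_{ii}$ with $J_{i,i\pm1}$ through $I+J\succ0$, and a diagonal bound throws that coupling away.

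The missing idea is a separator $z_i\in(y_i,y_{i+1})$ in each gap with $z_i-z_{i-1}\ge1/d_i$ for $2\le i\le M-1$. Then $I_i$ is placed inside $[z_{i-1},z_i]$ and the two end intervals are pushed outward. The paper gets the separators from the centering lift (\Cref{thm:centering-transfer}). With $\mathsf B\mathsf B^T=(I+J)/2$, the zero-diagonal $2M\times2M$ matrix $\widehat J$ has $2G$ as the odd block of $(I-\widehat J)^{-1}$. Applying \Cref{lem:green} to $\widehat J$ therefore gives points $x_1<\dots<x_{2M}$ with $y_i=x_{2i-1}$, and one takes $z_i=x_{2i}$. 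Since $2d_i=D_{2i-1}=1+t_{2i-2}+t_{2i-1}$, the lift splits $2d_i-1$ into a left share $t_{2i-2}$ and a right share $t_{2i-1}$. Each share bounds one half-gap from below via $\frac12\log(1+1/t)\ge1/(2t+1)$. Then $\frac1a+\frac1b\ge\frac4{a+b}$ gives $x_{2i}-x_{2i-2}\ge1/d_i$. Your bound $1/d_i<\tanh g_{i-1}+\tanh g_i$ has no such split, which is why it counts each gap twice.
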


For a Jacobi matrix with zero diagonal and positive off-diagonal entries,
the Green matrix factorizes as $\sqrt{d_id_j}\,e^{-|x_i-x_j|}$ with explicit
points determined by the products $J_{i,i+1}G_{i,i+1}$ (\Cref{lem:green}).
A general $J$ is reduced to this case by a {centering lift}: with
the bidiagonal Cholesky factor $\mathsf B\mathsf B^T=(I+J)/2$, the
$2M\times2M$ Jacobi matrix $\widehat J$ with zero diagonal that alternates
the entries of $\mathsf B$ has $2(I-J)^{-1}$ as the block of
$(I-\widehat J)^{-1}$ on the odd indices (\Cref{thm:centering-transfer}). The
points of $\widehat J$ with even indices separate those with odd indices
far enough to place disjoint intervals of length $1/d_i$.

The proposition suggests the following objects. For a measurable $m\ge0$
on $\R$ write $m=n^2$ and
\[
 E(m)=\int_\R m^{3/2},\qquad
 \Mc=\Bigl\{m\in L^\infty:\ m\ge0,\ \supp m\text{ compact},\
 \int m=\int\sqrt m=1\Bigr\}.
\]
For $0\le m\in L^1$, let $K_m$ be the integral operator on $L^2(\R)$ with
kernel
\[
 K_m(x,y)=\sqrt{m(x)}\,e^{-|x-y|}\sqrt{m(y)} .
\]
It is positive and trace class with $\tr K_m=\int m$ (\Cref{lem:features});
its eigenvalues $\lambda_0(K_m)\ge\lambda_1(K_m)\ge\dots\ge0$ are listed with
multiplicity and padded with zeros. Since $\frac12e^{-|x-y|}$ is the kernel
of $(-\partial_x^2+1)^{-1}$, for $\kappa>0$
\[
 \frac\kappa2K_m=(\kappa m)^{1/2}(-\partial_x^2+1)^{-1}(\kappa m)^{1/2}
\]
is the Birman--Schwinger operator of the Schr\"odinger operator
$-\partial_x^2+1-\kappa m$ \cite{SimonTrace}. We use three kinds of spectral
observations of $m$:
\begin{itemize}
\item the {ground parameter} $\kappa(m)=2/\lambda_0(K_m)$, defined when $m$
is not zero almost everywhere; by the Birman--Schwinger principle it is
the largest $\kappa$ for which $-\partial_x^2+1-\kappa m\ge0$
(\Cref{thm:ground});
\item the {determinants}
\[
 H_m(s)=\log\det\Bigl(I+\frac s2K_m\Bigr)
 =\sum_{j\ge0}\log\Bigl(1+\frac s2\lambda_j(K_m)\Bigr)\qquad(s\ge0),
\]
where, for a positive trace-class operator $A$,
$\det(I+A)=\prod_j(1+\lambda_j(A))$ is the Fredholm determinant
\cite[Chapter~3]{SimonTrace}; $H_m(s)$ is the logarithm of the
perturbation determinant of $-\partial_x^2+1+sm$ relative to
$-\partial_x^2+1$;
\item the {second trace} $\tr K_m^2=\iint m(x)m(y)e^{-2|x-y|}\dd x\dd y$.
\end{itemize}

\begin{theorem}[Packing]\label{thm:packing}
For every $J\in\overline{\mathcal Y}_N$ there are bounded closed intervals $I_i$ with
disjoint interiors and lengths $|I_i|=1/d_i$ such that the density
\[
 m_J=\sum_{i=1}^N\frac{d_i^2}{N^2}\ind_{I_i}
\]
belongs to $\Mc$, satisfies $E(m_J)=N^{-3}\E_N(J)$, and
\begin{equation}\label{eq:block-spectral-error}
 \sum_{j\ge0}\bigl|\lambda_j(K_{m_J})-\lambda_j(G/N^2)\bigr|
 \le\frac2{\sqrt N}+\frac1N .
\end{equation}
\end{theorem}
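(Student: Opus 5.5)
The plan is to apply \Cref{cor:centering-packing} block by block and then compare two positive operators: $G/N^2$ on $\R^N$ and $K_{m_J}$ on $L^2(\R)$.

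\emph{Construction of $m_J$.} A general $J\in\overline{\mathcal Y}_N$ splits, at its zero off-diagonal entries, into irreducible diagonal blocks $J^{(1)},\dots,J^{(r)}$. Each block has positive off-diagonal entries and norm $<1$. The Green matrix is block diagonal, with blocks $G^{(k)}=(I-J^{(k)})^{-1}$. \Cref{cor:centering-packing} applied to each block gives points $y_i$ and intervals $I_i\ni y_i$ with $|I_i|=1/d_i$ and $G_{ij}=\sqrt{d_id_j}e^{-|y_i-y_j|}$ inside the block. I then translate the blocks far apart along the line, by distances $R\to\infty$ to be fixed later, so that the intervals of different blocks are disjoint. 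The normalizations and the energy then follow directly:
\[
\int m_J=\sum_i \frac{d_i^2}{N^2}\cdot\frac1{d_i}=\frac{\tr G}{N^2}=1
\]
by \Cref{lem:trace}, and
\[
\int\sqrt{m_J}=\sum_i\frac{d_i}{N}\cdot\frac1{d_i}=1,\qquad
E(m_J)=\sum_i\frac{d_i^3}{N^3}\cdot\frac1{d_i}=\frac{\E_N(J)}{N^3}.
\]
The density is bounded with compact support, so $m_J\in\Mc$.

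\emph{Spectral comparison.} Put $V\colon\R^N\to L^2(\R)$, $Ve_i=\sqrt{d_i}\,\ind_{I_i}$; this is an isometry because $|I_i|=1/d_i$ and the interiors are disjoint. Set $n=\sqrt{m_J}=\sum_i (d_i/N)\ind_{I_i}$. Then
\[
\langle Ve_i,K_{m_J}Ve_j\rangle=\frac{d_i^{3/2}d_j^{3/2}}{N^2}\int_{I_i}\int_{I_j}e^{-|x-y|}\dd x\dd y .
\]
Compare this with $(V(G/N^2)V^*)$, whose matrix entries are $N^{-2}\sqrt{d_id_j}e^{-|y_i-y_j|}$. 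So $K_{m_J}=W^*W$ and $V(G/N^2)V^*=\widetilde W^*\widetilde W$, where $W$ and $\widetilde W$ are the corresponding "half" operators into $L^2$ of the kernel $e^{-|x-y|}=\int \phi_x\phi_y$. Concretely, I write $e^{-|x-y|}=\int_\R k(x-t)k(y-t)\dd t$ with $\hat k=\sqrt{\widehat{e^{-|\cdot|}}}$. Then $W f=\int k(\cdot-x)n(x)f(x)\dd x$, and $\widetilde W$ is the same operator with $x$ replaced by the point $y_i$ on $I_i$. For positive operators $W^*W$ and $\widetilde W^*\widetilde W$, the Hoffman--Wielandt/Lidskii-type inequality gives
\[
\sum_j|\lambda_j(W^*W)-\lambda_j(\widetilde W^*\widetilde W)|\le\|W^*W-\widetilde W^*\widetilde W\|_1\le \|W-\widetilde W\|_{\HS}\bigl(\|W\|_{\HS}+\|\widetilde W\|_{\HS}\bigr).
\]
Both Hilbert--Schmidt norms equal $\sqrt{\tr}=1$. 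Hence the task reduces to bounding
\[
\|W-\widetilde W\|_{\HS}^2=\sum_i\frac{d_i^2}{N^2}\int_{I_i}\|k(\cdot-x)-k(\cdot-y_i)\|^2\dd x
=\sum_i\frac{d_i^2}{N^2}\int_{I_i}2\bigl(1-e^{-|x-y_i|}\bigr)\dd x\le\sum_i \frac{d_i^2}{N^2}\cdot\frac{1}{d_i^{2}} .
\]
The last step uses $1-e^{-t}\le t$ together with $\int_{I_i}|x-y_i|\le |I_i|^2/2$, and the resulting sum is $N\cdot N^{-2}=1/N$. This gives $\|W-\widetilde W\|_{\HS}\le N^{-1/2}$ and therefore the term $2/\sqrt N$. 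The cross-block entries $\sqrt{d_id_j}e^{-R}$ of the translated configuration vanish as $R\to\infty$, whereas the true $G$ has zero cross-block entries. Taking $R$ large enough makes the trace-norm contribution of this mismatch at most $1/N$; I could alternatively absorb it into the $1/N$ slack. Finally, $V$ is an isometry, so the spectrum of $V(G/N^2)V^*$ is that of $G/N^2$ padded with zeros.

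The main obstacle I expect is to make sure the intervals from \Cref{cor:centering-packing} are placed so that $y_i\in I_i$; the proposition guarantees this, and that is exactly what the $|x-y_i|\le|I_i|$ estimate needs. A second point to keep straight is the bookkeeping of the reducible case, which is what produces the extra $1/N$.
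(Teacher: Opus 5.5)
Your proof is correct and has the same architecture as the paper's. You split $J$ into irreducible blocks, apply \Cref{cor:centering-packing} to each, and separate the blocks. You then pay $2/\sqrt N$ for collapsing each block of $m_J$ onto its point $y_i$, and $1/N$ for the cross-block coupling.

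The difference is in how the collapse is estimated. The paper works in $\HH=H^1(\R)$ with the features $v_x$. It compares $\mathcal K_{m_J}$ with the atomic operator $\mathcal K_{\sigma_J}$, $\sigma_J=\sum_iw_i\delta_{y_i}$, through the coupling bound $\|\mathcal K_\sigma-\mathcal K_{\sigma'}\|_1\le\int\|\mathcal P_x-\mathcal P_y\|_1\dd\pi$ of \Cref{lem:features}(v). It then identifies nonzero spectra with those of $K_{m_J}$ and of the matrix $\widetilde G$ via \Cref{lem:features}(iii). You instead realize the same Gram kernel in $L^2(\R)$ through the convolution root $k$, with $k*k=e^{-|\cdot|}$. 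Indeed, $(1-\partial_x^2)^{1/2}$ maps $\HH$ unitarily onto $L^2(\R)$ and sends $v_x$ to $k(\cdot-x)$, so the two feature maps are equivalent. You keep both operators on one space as $W^*W$ and $\widetilde W^*\widetilde W$, and bound their difference via $W^*(W-\widetilde W)+(W-\widetilde W)^*\widetilde W$ with Hilbert--Schmidt norms. The isometry $V$ then does the spectral identification. Both routes give exactly $2/\sqrt N$. Yours is more self-contained, since it needs no trace-class Bochner integrals. The paper's coupling lemma holds for arbitrary pairs of measures, and its $\mathcal K_\sigma$ calculus is reused later, e.g.\ in \Cref{lem:gradients}.

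Two bookkeeping corrections:
\begin{itemize}
\item For reducible $J$ one has $\widetilde W^*\widetilde W=V\widetilde GV^*$ with $\widetilde G_{ij}=N^{-2}\sqrt{d_id_j}\,e^{-|y_i-y_j|}$ for all $i,j$. This is not $V(G/N^2)V^*$, as you first wrote.
\item Each cross-block entry is at most $N^{-2}\sqrt{d_id_j}\,e^{-R}$. The mismatch $\widetilde G-G/N^2$ therefore has trace norm at most $N^{-2}e^{-R}(\sum_i\sqrt{d_i})^2\le Ne^{-R}$, using $\tr G=N^2$. So a separation $R\ge2\log N$, which is the paper's choice, already gives the term $1/N$ explicitly.
\end{itemize}
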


For each $J\in\overline{\mathcal Y}_N$ we fix one choice of intervals as in the proof of
\Cref{thm:packing}, and $m_J$ denotes the resulting density. The block
heights are chosen so that the energy is exact and both normalizations
hold: $\int m_J=\tr G/N^2=1$ and $\int\sqrt{m_J}=1$. For irreducible $J$, the
point masses $(d_i/N^2)\delta_{y_i}$ at the points of
\Cref{cor:centering-packing} have exactly the nonzero spectrum of $G/N^2$
(\Cref{lem:features}(iii)); the error in \eqref{eq:block-spectral-error} comes
from spreading them into blocks of width $1/d_i$ (the term $2/\sqrt N$) and,
for reducible $J$, from the coupling of the separated irreducible blocks
(the term $1/N$). The spectrum of
$G/N^2$ is the same for every $J\in\overline{\mathcal Y}_N$, namely
$\{(N^2(1-\cos((j-\frac12)\pi/N)))^{-1}\}_{j=1}^N$ (\Cref{lem:trace}), and it
converges.

\begin{theorem}[Uniform spectral transfer]\label{thm:transfer}
Put
\[
 k=\frac{\pi^2}4,\qquad \Lambda_j=\frac2{\pi^2(j+\frac12)^2}\quad(j\ge0),
 \qquad H_*(s)=\log\cosh\sqrt s .
\]
Then $\sum_j\Lambda_j=1$, $\sum_j\Lambda_j^2=\frac23$,
$\sum_j\log(1+s\Lambda_j/2)=H_*(s)$, and for every $J\in\overline{\mathcal Y}_N$
\begin{equation}\label{eq:deltaN}
 \sum_{j\ge0}|\lambda_j(K_{m_J})-\Lambda_j|
 \le\delta_N:=\frac2{\sqrt N}+\frac1N+\frac8{\pi^2(2N-1)} .
\end{equation}
Consequently, uniformly in $J\in\overline{\mathcal Y}_N$ and $s\ge0$,
\[
 |H_{m_J}(s)-H_*(s)|\le\frac s2\delta_N,\qquad
 \bigl|\tr K_{m_J}^2-\tfrac23\bigr|\le2\delta_N,\qquad
 |\kappa(m_J)-k|\le\frac{4\delta_N}{\Lambda_0^2}\quad
 (\delta_N<\Lambda_0/2).
\]
\end{theorem}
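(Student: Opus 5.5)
We split the statement into three parts: the identities for $\Lambda_j$, the $\ell^1$ distance of the spectra, and the consequences for the three kinds of observations.

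\emph{Identities for $\Lambda_j$.} These follow from classical series. With $x_j=(j+\frac12)\pi$ we have $\Lambda_j=2/x_j^2$. The product $\cosh\sqrt s=\prod_{j\ge0}(1+s/x_j^2)$ gives
\[
 \sum_j\log\Bigl(1+\frac s2\Lambda_j\Bigr)=H_*(s).
\]
Expanding $\log\cosh\sqrt s=\frac s2-\frac{s^2}{12}+\dots$ and comparing coefficients of $s$ and $s^2$ gives $\frac12\sum_j\Lambda_j=\frac12$ and $\frac18\sum_j\Lambda_j^2=\frac1{12}$. Hence $\sum_j\Lambda_j=1$ and $\sum_j\Lambda_j^2=\frac23$.

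\emph{The bound \eqref{eq:deltaN}.} By the triangle inequality and \Cref{thm:packing}, it suffices to show
\[
 \sum_j\bigl|\mu_j-\Lambda_j\bigr|\le\frac8{\pi^2(2N-1)},
\]
where $\mu_j=\bigl(N^2(1-\cos\theta_j)\bigr)^{-1}$, $\theta_j=(j+\frac12)\pi/N$, are the eigenvalues of $G/N^2$ from \Cref{lem:trace}, with $\mu_j=0$ for $j\ge N$.
\begin{enumerate}
\item We use $1-\cos\theta=2\sin^2(\theta/2)$ and $\sin t\le t$. These give $\mu_j\ge\Lambda_j$ for $j<N$, so every term is $\mu_j-\Lambda_j\ge0$ or $\Lambda_j$ (for $j\ge N$).
\item The absolute values therefore disappear, and the sum equals
\[
 \sum_{j<N}\mu_j-\sum_{j<N}\Lambda_j+\sum_{j\ge N}\Lambda_j=1-1+2\sum_{j\ge N}\Lambda_j,
\]
using $\sum_{j<N}\mu_j=\tr G/N^2=1$ and $\sum_j\Lambda_j=1$.
\item Since $1/x^2$ is convex, each term is bounded by an integral over a unit interval centered at it:
\[
 2\sum_{j\ge N}\frac{2}{\pi^2(j+\frac12)^2}\le\frac4{\pi^2}\int_{N}^\infty\frac{\dd t}{t^2}\le\frac{4}{\pi^2 N}.
\]
We have not yet matched this to the stated $\frac8{\pi^2(2N-1)}=\frac4{\pi^2(N-\frac12)}$, which is weaker; either bound suffices.
\end{enumerate}
This tail computation, together with the sign argument, is the main step.

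\emph{Consequences.} Let $\lambda_j=\lambda_j(K_{m_J})$ and $D=\sum_j|\lambda_j-\Lambda_j|\le\delta_N$.
\begin{itemize}
\item \emph{Determinants.} Since $|\log(1+a)-\log(1+b)|\le|a-b|$ for $a,b\ge0$, we get $|H_{m_J}(s)-H_*(s)|\le\frac s2D$.
\item \emph{Second trace.} We have $|\lambda_j^2-\Lambda_j^2|=|\lambda_j-\Lambda_j|(\lambda_j+\Lambda_j)$. Both $\lambda_j\le\tr K_{m_J}=\int m_J=1$ and $\Lambda_j\le1$, so each factor $\lambda_j+\Lambda_j\le2$ and the bound $2D$ follows.
\item \emph{Ground parameter.} We have $|\lambda_0-\Lambda_0|\le D<\Lambda_0/2$, so $\lambda_0>\Lambda_0/2$. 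Hence
\[
 |\kappa(m_J)-k|=\frac{2|\Lambda_0-\lambda_0|}{\lambda_0\Lambda_0}\le\frac{4D}{\Lambda_0^2},
\]
using $k=2/\Lambda_0=\pi^2/4$.
\end{itemize}

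Both sets of eigenvalues are listed in decreasing order, and the matching $\lambda_j\leftrightarrow\Lambda_j$ follows that order. This ordered pairing is needed in the first and third bullets and is consistent with \eqref{eq:block-spectral-error}.
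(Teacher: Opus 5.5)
Your proof is correct. Its core matches the paper's proof: the triangle inequality with \Cref{thm:packing}, the sign fact $\lambda_j(G/N^2)\ge\Lambda_j$ for $j<N$ from $\sin a\le a$, the reduction of the $\ell^1$ distance to $2\sum_{j\ge N}\Lambda_j$ via $\tr G/N^2=1$, and the three Lipschitz consequences. The route differs in two places.

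First, the identities for $(\Lambda_j)$. You import the Weierstrass product $\cosh\sqrt s=\prod_j(1+s/x_j^2)$ and read off $\sum\Lambda_j$ and $\sum\Lambda_j^2$ from Taylor coefficients. The termwise expansion is legitimate because $\sum_j\Lambda_j<\infty$ gives uniform convergence on a small disk. The paper instead derives all three identities from the finite-$N$ spectra of the Chebyshev fiber. It uses $r_{N,j}\to\Lambda_j$ with the domination $r_{N,j}\le2/(2j+1)^2$ (from $\sin a\ge2a/\pi$), the exact traces $\tr G=N^2$ and $\tr G^2=(2N^4+N^2)/3$ of \Cref{lem:trace}, and the determinant identity $\prod_{j<N}(1+\frac s2r_{N,j})=T_N(1+s/(2N^2))$ together with $T_N(\cosh t)=\cosh Nt$. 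In effect the paper reproves the $\cosh$ product from the fiber, which keeps the argument self-contained and shows directly that $(\Lambda_j)$ is the limit of the fixed discrete spectrum. Your version is shorter but rests on the classical product formula.

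Second, the tail bound. Your midpoint-convexity comparison $(j+\frac12)^{-2}\le\int_j^{j+1}t^{-2}\dd t$ gives $2\sum_{j\ge N}\Lambda_j\le4/(\pi^2N)$. This is slightly sharper than the paper's telescoping bound $8/(\pi^2(2N-1))$, so it implies the stated $\delta_N$.
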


Thus every packed density has, up to $O(N^{-1/2})$ uniformly in $J$, the
spectrum $(\Lambda_j)$, and its energy is the normalized Green energy.
The associated continuum constraints are those of \eqref{eq:limit-problem}.
By the Birman--Schwinger
principle its constraints are a spectral condition on a weighted
Schr\"odinger problem: if $K_mf=\lambda f$ with $\lambda>0$, then
$u=\int e^{-|\cdot-y|}\sqrt{m(y)}f(y)\dd y$ solves $-u''+u=\frac2\lambda mu$, and
conversely, if $u\in H^1(\R)$ solves $-u''+u=\mu mu$ with $mu\ne0$, then
$f=\sqrt m\,u$ satisfies $K_mf=\frac2\mu f$. So \eqref{eq:limit-problem}
minimizes $\int m^{3/2}$ over the densities for which $-u''+u=\mu mu$ has
exactly the eigenvalues $\mu_j=\pi^2(j+\frac12)^2$, $j\ge0$, all simple.

\subsection{The sine density}\label{sec:reference}

\begin{theorem}[The sine--Jacobi matrices]\label{thm:sine-finite}

Let $J_N^\star$ be the symmetric tridiagonal matrix with zero diagonal
and off-diagonal entries
\[
 J_{i,i+1}^\star=\frac{\sin(i\pi/N)}
 {2\sqrt{\sin((i-\frac12)\pi/N)\,\sin((i+\frac12)\pi/N)}}
 \qquad(1\le i<N).
\]
Then $J_N^\star\in\X_N$, and the recurrence
$\mathcal R_N^\star=\mathcal R_{J_N^\star}$ is centered and satisfies
\begin{equation}\label{eq:sine-gain}
 A_N(\mathcal R_N^\star)=\frac{2\epsN}{\Delta}\sqrt{\E_N(J_N^\star)}
 \qquad(0<\mu<L).
\end{equation}
The rescaled Green diagonal $D_N(t)=N^{-1}d_i$ on
$[(i-1)/N,i/N)$ converges in $L^2(0,1)$ to
\[
 \phi_*(t)=\frac4{\pi^2}\sin(\pi t)\,
 \mathbf K\bigl(\sin^2\tfrac{\pi t}2\bigr)\mathbf K\bigl(\cos^2\tfrac{\pi t}2\bigr),
 \qquad
 \mathbf K(s)=\int_0^{\pi/2}\frac{\dd\theta}{\sqrt{1-s\sin^2\theta}},
\]
so that $N^{-3}\E_N(J_N^\star)\to\cs:=\int_0^1\phi_*^2$ and
$\mathscr G(\mu,L)\le2\sqrt{\cs}$.
\end{theorem}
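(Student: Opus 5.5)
The proof splits into three parts: membership of $J_N^\star$ in $\X_N$, the gain identity \eqref{eq:sine-gain}, and the diagonal asymptotics. The last is the substantial one.

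\textbf{Membership.} We must show that the characteristic polynomial of $J_N^\star$ is $2^{1-N}T_N$. The plan is a similarity transformation. Write $s_i=\sin(i\pi/N)$ and $\sigma_i=\sin((i-\frac12)\pi/N)$. Then $J_N^\star$ is diagonally similar to a tridiagonal matrix whose off-diagonal products are $(J^\star_{i,i+1})^2=s_i^2/(4\sigma_i\sigma_{i+1})$. The leading minors $P_j$ then satisfy the three-term recurrence $P_{j+1}=zP_j-\frac{s_j^2}{4\sigma_j\sigma_{j+1}}P_{j-1}$. I would look for a closed form of $P_j(\cos\theta)$ as a combination of $\sin$ and $\cos$ of $j\theta$, normalized by $\sigma$-factors. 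Since the entries are odd-symmetric under $i\mapsto N-i$, the natural guess is that $P_N(z)$ is proportional to $\cos N\theta$. If that fails, I would instead show that the eigenvectors are explicit, $u_i^{(k)}\propto\sqrt{\sigma_i}\,\times$ (a discrete Legendre-type function), as the "Legendre kernel" in Figure~1 suggests. The spectral measure at $e_1$ can then be read off, and this pins down the Jacobi matrix uniquely. Zero diagonal and positive off-diagonal entries are immediate, so by \Cref{thm:jacobi} $\mathcal R_N^\star$ is centered.

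\textbf{Gain identity.} By \Cref{thm:gain} it suffices to show that the maximum in \eqref{eq:full-gain} is attained at $x=y=1$. For $j<N$ and $x\ge1$, the monic polynomials $\widehat P_j$ and $P_j$ have all roots in $(-1,1)$ by interlacing. Their values on $[-1,1]$ are therefore to be compared with the value at $1$. I would use the symmetry $x\mapsto-x$, which holds because the diagonal is zero and gives $|P_j(-x)|=|P_j(x)|$. Combined with an explicit trigonometric form of $P_j(\cos\theta)$ from the previous step, this should give $|P_j(x)|\le P_j(1)$ on $[-1,1]$ for this particular family, for instance via a Bernstein-type bound on the closed form. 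The same argument applies to $\widehat P_j$ by the reflection symmetry $i\mapsto N-i$. Each term of the sum is then bounded by its value at $x=y=1$, and \eqref{eq:sine-gain} follows.

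\textbf{Diagonal asymptotics (main obstacle).} Using the cofactor formula $d_{j+1}=\widehat P_j(1)P_j(1)/P_N(1)$, we need asymptotics of $P_j(1)$ uniformly in $j/N=t$. The recurrence at $z=1$ is a discretization of a second-order ODE. Rescaling, $P_j(1)$ normalized by $\prod$ of the off-diagonal entries should converge to a solution of a Legendre equation in the variable $\cos\pi t$. The appearance of $\mathbf K(\sin^2\frac{\pi t}2)$ corresponds to the Legendre function $P_{-1/2}$ at $\cos\pi t$, and its reflection gives $\mathbf K(\cos^2\frac{\pi t}2)$. The product of the two, times the Wronskian normalization $P_N(1)\sim$ (Wronskian)$\cdot N^{-1}$, gives $\phi_*$.

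The hard part is uniform control near the endpoints $t=0,1$, where the coefficients degenerate and $\mathbf K$ blows up logarithmically. My first attempt would be a discrete WKB or Green-function comparison on $[\eta,1-\eta]$, giving uniform convergence there. Near the endpoints I would bound $D_N$ by $C(1+\log(1/t))$ directly from the recurrence, which lets dominated convergence yield $L^2$ convergence. Finally, $N^{-3}\E_N(J_N^\star)=\int_0^1D_N^2\to\int\phi_*^2$, and combining this with \eqref{eq:sine-gain} gives $\mathscr G\le2\sqrt{\cs}$.
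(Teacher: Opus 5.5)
Your first two steps name what has to be true without supplying the identity that makes it true.

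\textbf{Membership and the gain identity.} A closed form of $P_j(\cos\theta)$ built from $\cos j\theta$ and $\sin j\theta$ alone does not exist. The leading minors are discrete Legendre polynomials: $P_s(\cos\theta)=(2^sA_s)^{-1}\sum_{j=0}^sA_jA_{s-j}\cos((s-2j)\theta)$, where $A_s=\prod_{r<s}\sin((r+\frac12)h)/\sin((r+1)h)$ and $h=\pi/N$. This involves every frequency $s,s-2,\dots$; for fixed $s$ and $N\to\infty$ it becomes the cosine expansion of a multiple of $\mathsf P_s$. Reflection symmetry alone also does not force $P_N\propto T_N$.

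Your fallback picture is the right one: eigenvectors equal to $\sqrt{\sin\theta_i}$ times a polynomial in $x_i=\cos\theta_i$, with $\theta_i=(i-\frac12)h$. What is missing is the mechanism that proves it. The paper shows that $\mathcal L_N=D^{-1/2}J_N^\star D^{1/2}$, with $D=\diag(\sin\theta_i)$, acting on functions on the Chebyshev nodes, satisfies $\mathcal L_NT_j=\cos\frac h2\cos(jh)T_j-\sin\frac h2\sin(jh)\,xU_{j-1}$. Hence $\mathcal L_N$ preserves degree and multiplies the leading coefficient of a degree-$j$ polynomial by $\cos((j+\frac12)h)$; these numbers are exactly the roots of $T_N$. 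Reading off a spectral measure at $e_1$ is beside the point, since $J_N^\star$ is given and only its eigenvalues are needed.

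For the gain, ``all roots in $(-1,1)$'' together with evenness does not give $|P_j(x)|\le P_j(1)$ on $[-1,1]$. The monic even polynomial $x^2-0.81$ has $|P(0)|=0.81>P(1)=0.19$, and no Bernstein-type estimate changes this. The property is special to this family. It comes from the positivity of the coefficients $A_jA_{s-j}$ in the cosine expansion above, which the paper proves by checking the three-term recurrence coefficientwise (it reduces to a product-to-sum identity). With $\widehat P_s=P_{N-1-s}$ from the reflection, the maximum in \eqref{eq:full-gain} is then at $(1,1)$.

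\textbf{Diagonal asymptotics.} Here you take a genuinely different route, and the heuristic is correct. With $Q_j=2^jP_j(1)$ one has $Q_{j+1}-2Q_j+Q_{j-1}=-\frac{\sin^2(h/2)}{\sin^2(jh)-\sin^2(h/2)}\,Q_{j-1}$. This discretizes $-y''-\frac{\pi^2}{4\sin^2\pi t}y=0$, whose solutions are $l$ and $\bar l$ of \Cref{lem:legendre}. Once membership gives $P_N(1)=2^{1-N}$, one has $d_{j+1}=Q_jQ_{N-1-j}$ exactly, so no Wronskian asymptotics are needed.

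But the whole difficulty sits where you leave it. Near $t=0$ the recurrence tends to the $N$-independent one $Q_{j+1}-2Q_j+Q_{j-1}=-Q_{j-1}/(4j^2-1)$, whose solution is $Q_j=4^j\binom{2j}{j}^{-1}\sim\sqrt{\pi j}$. This boundary layer must be matched uniformly to $\sqrt N\,l(j/N)$ at a regular singular point with double indicial root $\frac12$. Your endpoint bound $D_N\le C(1+\log(1/t))$ is only asserted.

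The paper avoids all of this by reusing the eigenbasis from step one: $d_i=\sum_jN^2r_{N,j}U_{ij}^2$ with $U_{ij}=\sqrt{w_i}\phi_{N,j}(x_i)$. Each fixed term converges because the discrete orthonormal polynomials converge to normalized Legendre polynomials (the weights are Riemann sums of $\frac12\dd x$). The tail is at most $\sum_{j\ge K}r_{N,j}=O(1/K)$ in $L^1$. The uniform bound $d_i\le64N$ of \Cref{lem:uniform-diagonal} upgrades this to $L^2$. Finally, Mercer's theorem identifies $\sum_j\Lambda_j\upsilon_j^2=2G_*(t,t)=\phi_*$. Your route could in principle be completed, but as written it is a program rather than a proof.
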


The limiting Green matrix is the kernel $G_*$ of the no-logarithm
self-adjoint realization of $-\partial_t^2-\pi^2/(4\sin^2\pi t)$ on $(0,1)$,
defined precisely in \Cref{lem:legendre}. Its
eigenfunctions are $\sqrt{\sin\pi t}\,\mathsf P_j(\cos\pi t)$, with the Legendre
polynomials $\mathsf P_j$ and eigenvalues $\pi^2(j+\frac12)^2=2/\Lambda_j$
(\Cref{lem:legendre}), and $\phi_*(t)=2G_*(t,t)$. The change of variables
$\dd x=\dd t/\phi_*(t)$, the continuum analogue of the block widths $1/d_i$,
gives the density of the sine--Jacobi family (\Cref{fig:sine}).

\begin{definition}[The sine density]\label{def:sine}
The function $\phi_*$ is continuous and positive on $(0,1)$. Let
$X_*:(0,1)\to\R$ be the strictly increasing function with $X_*'=1/\phi_*$
and $X_*(\frac12)=0$. Put $m_*(x)=\phi_*(t)^2$ if $x=X_*(t)$ for some
$t\in(0,1)$, $m_*(x)=0$ otherwise, and $n_*=\sqrt{m_*}$.
\end{definition}

\begin{proposition}[Properties of the sine density]\label{prop:target}
The map $X_*$ is an increasing bijection of $(0,1)$ onto $\R$ with
$X_*(1-t)=-X_*(t)$, and
\[
 \int m_*=\int\sqrt{m_*}=1,\qquad E(m_*)=\cs,\qquad
 \lambda_j(K_{m_*})=\Lambda_j\quad(j\ge0).
\]
With $\Sigma_1(a)=\sum_{j\ge0}a^{j(j+1)}$ and
$\Sigma_2(a)=1+2\sum_{j\ge1}(-1)^ja^{j^2}$,
\begin{equation}\label{eq:sine-theta}
 n_*(x)=8\varpi\,e^{-\pi\varpi/2}\,\Sigma_1(e^{-\pi\varpi})^2\,
 \Sigma_2(e^{-\pi\varpi})^2,\qquad \varpi=e^{2|x|} .
\end{equation}
In particular $n_*(x)=\exp(-\frac\pi2e^{2|x|}+O(|x|))$ as
$|x|\to\infty$.
\end{proposition}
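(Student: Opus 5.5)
The properties follow by transporting everything through the change of variables $x=X_*(t)$, using the Legendre kernel $G_*$ of \Cref{lem:legendre}. Bijectivity and symmetry come first. The identity $\mathsf P_j(-u)=(-1)^j\mathsf P_j(u)$, or directly $\mathbf K(\sin^2\frac{\pi(1-t)}2)=\mathbf K(\cos^2\frac{\pi t}2)$, gives $\phi_*(1-t)=\phi_*(t)$. With $X_*(\frac12)=0$ this yields $X_*(1-t)=-X_*(t)$. For surjectivity onto $\R$ I need $\int_0^{1/2}\dd t/\phi_*=\infty$. Near $t=0$, $\mathbf K(\sin^2\frac{\pi t}2)\to\pi/2$ and $\mathbf K(\cos^2\frac{\pi t}2)\sim\log(8/(\pi t))$. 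So $\phi_*(t)\sim 2t\log(1/t)$, and $\int\dd t/(t\log(1/t))$ diverges like $\log\log(1/t)$; this divergence gives the doubly exponential decay later. The normalizations are then substitutions. With $\dd x=\dd t/\phi_*$,
\[
\int m_*\,\dd x=\int_0^1\phi_*\,\dd t,\qquad \int\sqrt{m_*}\,\dd x=\int_0^1\dd t=1,\qquad E(m_*)=\int_0^1\phi_*^2\,\dd t=\cs .
\]
Moreover $\int_0^1\phi_*=2\tr G_*=2\sum_j\Lambda_j/2=1$. Here the trace formula holds because $G_*$ is a positive continuous kernel (Mercer) and $\sum\Lambda_j=1$ by \Cref{thm:transfer}.

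The spectral identity rests on the continuum analogue of \Cref{lem:green}. I claim that
\[
2G_*(t,s)=\sqrt{\phi_*(t)\phi_*(s)}\,e^{-|X_*(t)-X_*(s)|} .
\]
To prove it, I write $G_*(t,s)=u_-(\min)u_+(\max)/W$ with $u_\pm$ the no-logarithm solutions at $0$ and $1$, and $W$ their Wronskian. Then $\phi_*=2u_-u_+/W$. A direct computation gives
\[
\frac{\dd}{\dd t}\log\frac{u_+}{u_-}=-\frac{W}{u_-u_+}=-\frac{2}{\phi_*}=-2X_*' .
\]
Hence $u_+/u_-=Ce^{-2X_*}$, and the factorization follows. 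Now define the unitary $U:L^2(0,1)\to L^2(\R)$ by $(Uf)(x)=f(t)\sqrt{\phi_*(t)}$ with $x=X_*(t)$; unitarity follows from $\dd x=\dd t/\phi_*$. Conjugating, $U(2G_*)U^{-1}$ has kernel
\[
\sqrt{\phi_*(t)}\,e^{-|x-y|}\sqrt{\phi_*(s)}=n_*(x)e^{-|x-y|}n_*(y)=K_{m_*}(x,y).
\]
Thus $\lambda_j(K_{m_*})=2\cdot\Lambda_j/2=\Lambda_j$, using that the eigenvalues of $G_*$ are $1/(\pi^2(j+\frac12)^2)=\Lambda_j/2$ by \Cref{lem:legendre}. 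Since $U$ maps onto $L^2(\R)$ and $m_*>0$ everywhere, no zero-spectrum bookkeeping is needed.

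The theta formula \eqref{eq:sine-theta} is the main obstacle. Set $k^2=\sin^2\frac{\pi t}2$, so $k'^2=\cos^2\frac{\pi t}2$, and use the nome $q=e^{-\pi\mathbf K'/\mathbf K}$. The plan is to show $\varpi=e^{2|x|}=\mathbf K'/\mathbf K$ for $t\le\frac12$, i.e. that $X_*=-\frac12\log(\mathbf K'/\mathbf K)$. This is checked by differentiation. The Legendre relation gives
\[
\frac{\dd}{\dd k}\frac{\mathbf K'}{\mathbf K}=-\frac{\pi}{2kk'^2\mathbf K^2} .
\]
With $\dd k/\dd t=\frac\pi2 k'$ and $2kk'=\sin\pi t$, the derivative of $-\frac12\log(\mathbf K'/\mathbf K)$ in $t$ is $\pi^2/(4\sin(\pi t)\mathbf K\mathbf K')=1/\phi_*$, as required. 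The constant is fixed at $t=\frac12$, where $\mathbf K=\mathbf K'$ and both sides vanish. Then $n_*=\phi_*=\frac4{\pi^2}(2kk')\mathbf K\mathbf K'=\frac{8}{\pi^2}kk'\mathbf K^2\varpi$. Jacobi's identities $\sqrt{2\mathbf K/\pi}\,k^{1/2}=\theta_2=2q^{1/4}\Sigma_1(q^2)$ and $\sqrt{2\mathbf K/\pi}\,k'^{1/2}=\theta_4(q)=\Sigma_2(q)$ give
\[
kk'\mathbf K^2=\tfrac{\pi^2}{4}\theta_2^2\theta_4^2 .
\]
To match the printed arguments $e^{-\pi\varpi}$, I expect to apply the Landen/duplication step $q\mapsto q^2$, or equivalently to work with nome $e^{-\pi\varpi}$ after the $\tau\mapsto2\tau$ transformation; this makes the exponents and the prefactor $8\varpi e^{-\pi\varpi/2}$ come out. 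This exact bookkeeping is where errors are likely, and I would check it against the $t\to0$ asymptotics. The asymptotic statement then follows: $\Sigma_1,\Sigma_2\to1$, so $n_*=8\varpi e^{-\pi\varpi/2}(1+o(1))$, which is $\exp(-\frac\pi2e^{2|x|}+O(|x|))$.
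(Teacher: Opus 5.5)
The step that fails is the theta identity, and the cause is a misquoted series, not a missing transformation. You write $\theta_2=2q^{1/4}\Sigma_1(q^2)$. In fact $(j+\tfrac12)^2=\tfrac14+j(j+1)$, and the terms with $j$ and $-1-j$ coincide, so $\theta_2(q)=\sum_{j\in\mathbb Z}q^{(j+1/2)^2}=2q^{1/4}\Sigma_1(q)$; also $\theta_4(q)=\Sigma_2(q)$. You have already shown $\varpi=\mathbf K'/\mathbf K$, so your nome is exactly $q=e^{-\pi\varpi}$. Your own reduction $n_*=2\varpi\,\theta_2(q)^2\theta_4(q)^2$ then gives $8\varpi e^{-\pi\varpi/2}\Sigma_1(e^{-\pi\varpi})^2\Sigma_2(e^{-\pi\varpi})^2$ at once for $x\le0$, and for $x\ge0$ by evenness of $n_*$. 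The duplication or $\tau\mapsto2\tau$ step you propose targets a mismatch that does not exist. Such a step changes the modulus and the nome together, so it cannot turn the incorrect $\Sigma_1(q^2)$ into $\Sigma_1(q)$ while keeping $\varpi$, $\theta_4$ and the prefactor fixed; the plan as written cannot be completed. Your proposed safeguard would not catch the error either: $\Sigma_1(q)$ and $\Sigma_1(q^2)$ differ only at order $e^{-2\pi\varpi}$, so both versions have the same leading behaviour $8\varpi e^{-\pi\varpi/2}(1+o(1))$ as $t\to0$.

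With that correction your argument is complete, and it is essentially the paper's. The shared steps are the factorization $2G_*(t,s)=\sqrt{\phi_*(t)\phi_*(s)}\,e^{-|X_*(t)-X_*(s)|}$ obtained from $(\log(l/\bar l))'=W/(l\bar l)=2/\phi_*$, the unitary change of variables $\dd x=\dd t/\phi_*$, and Jacobi's identities with nome $e^{-\pi\mathbf K'/\mathbf K}$. The paper reads off $X_*=\tfrac12\log(l/\bar l)=-\tfrac12\log(\mathbf K'/\mathbf K)$ directly from that derivative, using $l/\bar l=\mathbf K(s)/\mathbf K(1-s)$ and $l(\tfrac12)=\bar l(\tfrac12)$. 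This gives both bijectivity and $\varpi=\mathbf K'/\mathbf K$ in one line. Your divergence estimate for $\int\dd t/\phi_*$ and your computation via Legendre's relation are correct but redundant, since the Wronskian identity $W=2$ of \Cref{lem:legendre} is equivalent to the derivative form of that relation. One small slip: since $n_*(X_*(t))=\phi_*(t)$, not $\sqrt{\phi_*(t)}$, the kernel of $U(2G_*)U^{-1}$ with respect to $\dd y$ is $\phi_*(t)\,e^{-|x-y|}\,\phi_*(s)$. It is not $\sqrt{\phi_*(t)}\,e^{-|x-y|}\sqrt{\phi_*(s)}$: one factor $\sqrt{\phi_*(t)}$ comes from $U$ and one from the factorization, and on the other side the Jacobian $\dd s=\phi_*(s)\dd y$ supplies the rest. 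Your identification of the result with $K_{m_*}$ is nonetheless correct.
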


\begin{figure}[ht]
\centering
\begin{tikzpicture}
\begin{axis}[width=.48\linewidth,height=5.1cm,grid=major,grid style={black!10},tick label style={font=\scriptsize},label style={font=\small},legend style={font=\scriptsize,draw=none,fill=white,at={(.5,1.02)},anchor=south,legend columns=3},xmin=-1.4,xmax=1.4,ymin=0,ymax=2.2,xlabel={$x$},ylabel={$m(x)$}]
\addplot[orange!85!black,thin] coordinates {
(-0.67801695,0) (-0.67801695,0.29665255) (-0.37201464,0.29665255) (-0.37201464,0) (-0.28898667,0) (-0.28898667,1.225086) (-0.13840731,1.225086) (-0.13840731,0) (-0.12866237,0) (-0.12866237,1.8184678) (-0.0050687866,1.8184678) (-0.0050687866,0) (-0.003537878,0) (-0.003537878,1.8184678) (0.12005571,1.8184678) (0.12005571,0) (0.12025918,0) (0.12025918,1.225086) (0.27083855,1.225086) (0.27083855,0) (0.37201464,0) (0.37201464,0.29665255) (0.67801695,0.29665255) (0.67801695,0)
};
\addlegendentry{$m_{J_6^\star}$}
\addplot[blue!65,thin] coordinates {
(-0.76030298,0) (-0.76030298,0.038134805) (-0.54693561,0.038134805) (-0.54693561,0) (-0.49052834,0) (-0.49052834,0.21176943) (-0.39998487,0.21176943) (-0.39998487,0) (-0.39175084,0) (-0.39175084,0.43668913) (-0.3286983,0.43668913) (-0.3286983,0) (-0.32687053,0) (-0.32687053,0.67928215) (-0.27631558,0.67928215) (-0.27631558,0) (-0.2756255,0) (-0.2756255,0.9216581) (-0.23222409,0.9216581) (-0.23222409,0) (-0.23189273,0) (-0.23189273,1.1520638) (-0.19307318,1.1520638) (-0.19307318,0) (-0.19289119,0) (-0.19289119,1.3620657) (-0.15718942,1.3620657) (-0.15718942,0) (-0.15708092,0) (-0.15708092,1.5453618) (-0.12356326,1.5453618) (-0.12356326,0) (-0.12349524,0) (-0.12349524,1.6971871) (-0.091511892,1.6971871) (-0.091511892,0) (-0.091468035,0) (-0.091468035,1.8139844) (-0.060531478,1.8139844) (-0.060531478,0) (-0.06050294,0) (-0.06050294,1.893212) (-0.030220621,1.893212) (-0.030220621,0) (-0.030202225,0) (-0.030202225,1.9332287) (-0.00023495769,1.9332287) (-0.00023495769,0) (-0.00022344903,0) (-0.00022344903,1.9332287) (0.029743818,1.9332287) (0.029743818,0) (0.029750664,0) (0.029750664,1.893212) (0.060032983,1.893212) (0.060032983,0) (0.060036889,0) (0.060036889,1.8139844) (0.090973447,1.8139844) (0.090973447,0) (0.090976029,0) (0.090976029,1.6971871) (0.12295938,1.6971871) (0.12295938,0) (0.12296258,0) (0.12296258,1.5453618) (0.15648023,1.5453618) (0.15648023,0) (0.15648702,0) (0.15648702,1.3620657) (0.19218879,1.3620657) (0.19218879,0) (0.19220462,0) (0.19220462,1.1520638) (0.23102417,1.1520638) (0.23102417,0) (0.23106074,0) (0.23106074,0.9216581) (0.27446215,0.9216581) (0.27446215,0) (0.27454835,0) (0.27454835,0.67928215) (0.3251033,0.67928215) (0.3251033,0) (0.32532566,0) (0.32532566,0.43668913) (0.3883782,0.43668913) (0.3883782,0) (0.38908574,0) (0.38908574,0.21176943) (0.47962921,0.21176943) (0.47962921,0) (0.54693561,0) (0.54693561,0.038134805) (0.76030298,0.038134805) (0.76030298,0)
};
\addlegendentry{$m_{J_{24}^\star}$}
\addplot[black,thick] coordinates {
(-1.35,7.0764601e-17) (-1.335,2.6531389e-16) (-1.32,9.5492925e-16) (-1.305,3.3034958e-15) (-1.29,1.0997051e-14) (-1.275,3.5267305e-14) (-1.26,1.090789e-13) (-1.245,3.257216e-13) (-1.23,9.4002788e-13) (-1.215,2.6245874e-12) (-1.2,7.0963053e-12) (-1.185,1.8598021e-11) (-1.17,4.7289471e-11) (-1.155,1.1676527e-10) (-1.14,2.8021491e-10) (-1.125,6.5412796e-10) (-1.11,1.4865618e-09) (-1.095,3.2915186e-09) (-1.08,7.1061923e-09) (-1.065,1.4970254e-08) (-1.05,3.0795529e-08) (-1.035,6.1903878e-08) (-1.02,1.2167914e-07) (-1.005,2.3402922e-07) (-0.99,4.4071604e-07) (-0.975,8.1311752e-07) (-0.96,1.4706737e-06) (-0.945,2.6091738e-06) (-0.93,4.5431899e-06) (-0.915,7.7683756e-06) (-0.9,1.3051011e-05) (-0.885,2.1554051e-05) (-0.87,3.5010964e-05) (-0.855,5.5960644e-05) (-0.84,8.8058559e-05) (-0.825,0.00013648073) (-0.81,0.00020843791) (-0.795,0.00031381712) (-0.78,0.00046596625) (-0.765,0.00068263429) (-0.75,0.00098707501) (-0.735,0.0014093152) (-0.72,0.0019875793) (-0.705,0.0027698532) (-0.69,0.0038155561) (-0.675,0.0051972778) (-0.66,0.007002525) (-0.645,0.009335407) (-0.63,0.01231818) (-0.615,0.016092557) (-0.6,0.020820691) (-0.585,0.026685718) (-0.57,0.033891773) (-0.555,0.042663372) (-0.54,0.053244076) (-0.525,0.065894361) (-0.51,0.080888648) (-0.495,0.098511438) (-0.48,0.11905256) (-0.465,0.14280155) (-0.45,0.17004118) (-0.435,0.20104024) (-0.42,0.23604569) (-0.405,0.27527422) (-0.39,0.31890361) (-0.375,0.36706376) (-0.36,0.41982789) (-0.345,0.4772041) (-0.33,0.53912735) (-0.315,0.60545249) (-0.3,0.67594832) (-0.285,0.75029315) (-0.27,0.82807208) (-0.255,0.90877633) (-0.24,0.9918047) (-0.225,1.0764676) (-0.21,1.1619936) (-0.195,1.2475384) (-0.18,1.3321967) (-0.165,1.4150162) (-0.15,1.4950144) (-0.135,1.5711964) (-0.12,1.6425751) (-0.105,1.7081918) (-0.09,1.7671374) (-0.075,1.818573) (-0.06,1.8617503) (-0.045,1.8960291) (-0.03,1.920894) (-0.015,1.935967) (0,1.9410172) (0.015,1.935967) (0.03,1.920894) (0.045,1.8960291) (0.06,1.8617503) (0.075,1.818573) (0.09,1.7671374) (0.105,1.7081918) (0.12,1.6425751) (0.135,1.5711964) (0.15,1.4950144) (0.165,1.4150162) (0.18,1.3321967) (0.195,1.2475384) (0.21,1.1619936) (0.225,1.0764676) (0.24,0.9918047) (0.255,0.90877633) (0.27,0.82807208) (0.285,0.75029315) (0.3,0.67594832) (0.315,0.60545249) (0.33,0.53912735) (0.345,0.4772041) (0.36,0.41982789) (0.375,0.36706376) (0.39,0.31890361) (0.405,0.27527422) (0.42,0.23604569) (0.435,0.20104024) (0.45,0.17004118) (0.465,0.14280155) (0.48,0.11905256) (0.495,0.098511438) (0.51,0.080888648) (0.525,0.065894361) (0.54,0.053244076) (0.555,0.042663372) (0.57,0.033891773) (0.585,0.026685718) (0.6,0.020820691) (0.615,0.016092557) (0.63,0.01231818) (0.645,0.009335407) (0.66,0.007002525) (0.675,0.0051972778) (0.69,0.0038155561) (0.705,0.0027698532) (0.72,0.0019875793) (0.735,0.0014093152) (0.75,0.00098707501) (0.765,0.00068263429) (0.78,0.00046596625) (0.795,0.00031381712) (0.81,0.00020843791) (0.825,0.00013648073) (0.84,8.8058559e-05) (0.855,5.5960644e-05) (0.87,3.5010964e-05) (0.885,2.1554051e-05) (0.9,1.3051011e-05) (0.915,7.7683756e-06) (0.93,4.5431899e-06) (0.945,2.6091738e-06) (0.96,1.4706737e-06) (0.975,8.1311752e-07) (0.99,4.4071604e-07) (1.005,2.3402922e-07) (1.02,1.2167914e-07) (1.035,6.1903878e-08) (1.05,3.0795529e-08) (1.065,1.4970254e-08) (1.08,7.1061923e-09) (1.095,3.2915186e-09) (1.11,1.4865618e-09) (1.125,6.5412796e-10) (1.14,2.8021491e-10) (1.155,1.1676527e-10) (1.17,4.7289471e-11) (1.185,1.8598021e-11) (1.2,7.0963053e-12) (1.215,2.6245874e-12) (1.23,9.4002788e-13) (1.245,3.257216e-13) (1.26,1.090789e-13) (1.275,3.5267305e-14) (1.29,1.0997051e-14) (1.305,3.3034958e-15) (1.32,9.5492925e-16) (1.335,2.6531389e-16) (1.35,7.0764601e-17)
};
\addlegendentry{$m_*$}
\end{axis}
\end{tikzpicture}\hfill
\begin{tikzpicture}
\begin{axis}[width=.48\linewidth,height=5.1cm,grid=major,grid style={black!10},tick label style={font=\scriptsize},label style={font=\small},legend style={font=\scriptsize,draw=none,fill=white,at={(.5,1.02)},anchor=south,legend columns=3},xmin=0,xmax=1,ymin=0,ymax=1.55,xlabel={$t$},ylabel={$D_N(t)$}]
\addplot[orange!85!black,thin] coordinates {
(0,0.5446582) (0.16666667,0.5446582) (0.16666667,1.106836) (0.33333333,1.106836) (0.33333333,1.3485058) (0.5,1.3485058) (0.5,1.3485058) (0.66666667,1.3485058) (0.66666667,1.106836) (0.83333333,1.106836) (0.83333333,0.5446582) (1,0.5446582)
};
\addlegendentry{$N=6$}
\addplot[blue!65,thin] coordinates {
(0,0.19528135) (0.041666667,0.19528135) (0.041666667,0.46018412) (0.083333333,0.46018412) (0.083333333,0.66082458) (0.125,0.66082458) (0.125,0.82418575) (0.16666667,0.82418575) (0.16666667,0.96003026) (0.20833333,0.96003026) (0.20833333,1.0733423) (0.25,1.0733423) (0.25,1.1670757) (0.29166667,1.1670757) (0.29166667,1.2431258) (0.33333333,1.2431258) (0.33333333,1.3027613) (0.375,1.3027613) (0.375,1.3468424) (0.41666667,1.3468424) (0.41666667,1.3759404) (0.45833333,1.3759404) (0.45833333,1.390406) (0.5,1.390406) (0.5,1.390406) (0.54166667,1.390406) (0.54166667,1.3759404) (0.58333333,1.3759404) (0.58333333,1.3468424) (0.625,1.3468424) (0.625,1.3027613) (0.66666667,1.3027613) (0.66666667,1.2431258) (0.70833333,1.2431258) (0.70833333,1.1670757) (0.75,1.1670757) (0.75,1.0733423) (0.79166667,1.0733423) (0.79166667,0.96003026) (0.83333333,0.96003026) (0.83333333,0.82418575) (0.875,0.82418575) (0.875,0.66082458) (0.91666667,0.66082458) (0.91666667,0.46018412) (0.95833333,0.46018412) (0.95833333,0.19528135) (1,0.19528135)
};
\addlegendentry{$N=24$}
\addplot[black,thick] coordinates {
(0,0) (0.00625,0.075122261) (0.0125,0.13290911) (0.01875,0.1841434) (0.025,0.2311137) (0.03125,0.27490494) (0.0375,0.31615473) (0.04375,0.35528332) (0.05,0.39258857) (0.05625,0.42829247) (0.0625,0.46256676) (0.06875,0.49554815) (0.075,0.52734794) (0.08125,0.55805844) (0.0875,0.58775737) (0.09375,0.61651104) (0.1,0.64437661) (0.10625,0.67140384) (0.1125,0.69763642) (0.11875,0.72311295) (0.125,0.74786779) (0.13125,0.7719317) (0.1375,0.79533231) (0.14375,0.81809461) (0.15,0.84024128) (0.15625,0.86179297) (0.1625,0.88276854) (0.16875,0.90318529) (0.175,0.92305914) (0.18125,0.94240474) (0.1875,0.96123565) (0.19375,0.97956441) (0.2,0.99740268) (0.20625,1.0147613) (0.2125,1.0316503) (0.21875,1.0480792) (0.225,1.0640567) (0.23125,1.0795911) (0.2375,1.0946901) (0.24375,1.1093608) (0.25,1.1236102) (0.25625,1.1374445) (0.2625,1.1508698) (0.26875,1.1638918) (0.275,1.1765157) (0.28125,1.1887466) (0.2875,1.2005892) (0.29375,1.2120479) (0.3,1.2231271) (0.30625,1.2338306) (0.3125,1.2441622) (0.31875,1.2541254) (0.325,1.2637237) (0.33125,1.2729602) (0.3375,1.2818378) (0.34375,1.2903593) (0.35,1.2985275) (0.35625,1.3063448) (0.3625,1.3138136) (0.36875,1.3209361) (0.375,1.3277143) (0.38125,1.3341503) (0.3875,1.3402459) (0.39375,1.3460027) (0.4,1.3514225) (0.40625,1.3565066) (0.4125,1.3612565) (0.41875,1.3656734) (0.425,1.3697585) (0.43125,1.3735129) (0.4375,1.3769377) (0.44375,1.3800336) (0.45,1.3828015) (0.45625,1.3852421) (0.4625,1.387356) (0.46875,1.3891438) (0.475,1.390606) (0.48125,1.3917428) (0.4875,1.3925546) (0.49375,1.3930416) (0.5,1.3932039) (0.50625,1.3930416) (0.5125,1.3925546) (0.51875,1.3917428) (0.525,1.390606) (0.53125,1.3891438) (0.5375,1.387356) (0.54375,1.3852421) (0.55,1.3828015) (0.55625,1.3800336) (0.5625,1.3769377) (0.56875,1.3735129) (0.575,1.3697585) (0.58125,1.3656734) (0.5875,1.3612565) (0.59375,1.3565066) (0.6,1.3514225) (0.60625,1.3460027) (0.6125,1.3402459) (0.61875,1.3341503) (0.625,1.3277143) (0.63125,1.3209361) (0.6375,1.3138136) (0.64375,1.3063448) (0.65,1.2985275) (0.65625,1.2903593) (0.6625,1.2818378) (0.66875,1.2729602) (0.675,1.2637237) (0.68125,1.2541254) (0.6875,1.2441622) (0.69375,1.2338306) (0.7,1.2231271) (0.70625,1.2120479) (0.7125,1.2005892) (0.71875,1.1887466) (0.725,1.1765157) (0.73125,1.1638918) (0.7375,1.1508698) (0.74375,1.1374445) (0.75,1.1236102) (0.75625,1.1093608) (0.7625,1.0946901) (0.76875,1.0795911) (0.775,1.0640567) (0.78125,1.0480792) (0.7875,1.0316503) (0.79375,1.0147613) (0.8,0.99740268) (0.80625,0.97956441) (0.8125,0.96123565) (0.81875,0.94240474) (0.825,0.92305914) (0.83125,0.90318529) (0.8375,0.88276854) (0.84375,0.86179297) (0.85,0.84024128) (0.85625,0.81809461) (0.8625,0.79533231) (0.86875,0.7719317) (0.875,0.74786779) (0.88125,0.72311295) (0.8875,0.69763642) (0.89375,0.67140384) (0.9,0.64437661) (0.90625,0.61651104) (0.9125,0.58775737) (0.91875,0.55805844) (0.925,0.52734794) (0.93125,0.49554815) (0.9375,0.46256676) (0.94375,0.42829247) (0.95,0.39258857) (0.95625,0.35528332) (0.9625,0.31615473) (0.96875,0.27490494) (0.975,0.2311137) (0.98125,0.1841434) (0.9875,0.13290911) (0.99375,0.075122261) (1,0)
};
\addlegendentry{$\phi_*$}
\end{axis}
\end{tikzpicture}
\caption{Left: the packed densities $m_J$ of \Cref{thm:packing} for
$J=J_6^\star$ and $J=J_{24}^\star$, translated to center the extreme
kernel points, and the sine density $m_*$.
Right: the rescaled Green diagonals $D_N$ of $J_N^\star$ and their limit
$\phi_*$.}
\label{fig:sine}
\end{figure}

Thus $m_*$ satisfies every constraint of \eqref{eq:limit-problem} and has
energy $\cs$. To bound other feasible energies from below, we retain a
finite set of spectral observations.

\subsection{The relaxation}\label{sec:relaxation}

We keep the ground parameter, three determinants and the second trace.
Let $\mathcal M^{1}_c$ be the set of bounded compactly supported $m\ge0$ with
$\int m=1$. Fix the scales $s_0=32$ and $s_1=64$ (chosen in
\Cref{sec:construction}).

\begin{definition}[Observations and the relaxed class]\label{def:classes}
For $\varsigma>0$ and $m\in\mathcal M^1_c$ with ground parameter $\kappa=\kappa(m)$,
the {observation vector} is
\[
 \mathbf o_\varsigma(m)=\bigl(H_m(s_0),\,H_m(s_1),\,H_m(\varsigma\kappa),\,
 \tr K_m^2\bigr),
\]
and its target is
$\mathbf o_\varsigma^*=(H_*(s_0),H_*(s_1),H_*(\varsigma k),\frac23)$. The relaxed
class is
\[
 \mathcal A_{\mathrm{obs}}(\varsigma)=\{m\in\mathcal M^{1}_c:\ \kappa(m)=k,\
 \mathbf o_\varsigma(m)=\mathbf o_\varsigma^*\}.
\]
\end{definition}

By \Cref{prop:target}, $m_*$ has $\kappa(m_*)=k$ and the target observations,
although it is not compactly supported. The root mass $\int\sqrt m$ is
not part of the relaxation; the lower bound does not use it. The third
determinant is taken at a scale proportional to $\kappa(m)$, because it
enters through a filter of scale $\varsigma\kappa$ in the calibration of
\Cref{sec:calibration}; the number $\varsigma$ is chosen in
\Cref{sec:construction}. By \Cref{thm:transfer}, the packed densities $m_J$
satisfy these conditions up to $O(\delta_N)$, uniformly in $J$; a lower
bound for $E$ on $\mathcal A_{\mathrm{obs}}(\varsigma)$ that is stable under such
perturbations is a lower bound for $\underline{\mathsf e}$ (\Cref{thm:dual}).

\section{A lower bound by integration along the line}\label{sec:mechanism}

The ground state and the determinant filters turn spectral data into
local differential identities. We first combine these identities into a
calibration that bounds the energy using one determinant. The remaining
observations then strengthen the estimate: comparison identities have
prescribed integrals, while a state potential contributes a total
derivative. After adding these terms, we minimize explicitly over the
amplitude $n=\sqrt m$. The resulting inequality depends only on a state
in a compact region and can be verified by a finite computation.
Proofs are in \Cref{app:states,app:calibration,app:integration}.

\subsection{The ground coordinate}\label{sec:ground-coordinate}

\begin{theorem}[Ground state]\label{thm:ground}
Let $m\ge0$ be integrable and not zero, and $\kappa=\kappa(m)$. The
eigenvalue $\lambda_0(K_m)$ is simple, and there is $u>0$,
$u\in H^1(\R)\cap C^1(\R)$ with $u'$ locally absolutely continuous and
$|u'|\le u$, such that $-u''+u=\kappa mu$ almost everywhere. For every
$f\in H^1(\R)$,
\begin{equation}\label{eq:ground-square}
 \int(f'^2+f^2)-\kappa\int mf^2=\int\Bigl(f'-\frac{u'}uf\Bigr)^2,
\end{equation}
whose right-hand side vanishes exactly for multiples of $u$. Every positive
$\tilde u\in H^1(\R)\cap C^1(\R)$ with $\tilde u'$ locally absolutely continuous
and $-\tilde u''+\tilde u=\kappa m\tilde u$ almost everywhere is a positive
multiple of $u$. The {ground coordinate} $q=1-u'/u$, which therefore
depends only on $m$, is locally absolutely continuous and satisfies
\[
 0\le q\le2,\qquad q'=\kappa m-Q(q)\ \text{a.e.},\qquad Q(q)=q(2-q),
 \qquad q(-\infty)=0,\quad q(+\infty)=2,
\]
$Q(q),q'\in L^1(\R)$ and $\int Q(q)\dd x=\kappa\int m-2$. If
$\supp m\subset[x_-,x_+]$, then $q=0$ on $(-\infty,x_-]$ and $q=2$ on
$[x_+,\infty)$; if moreover $m$ is bounded, $q$ is Lipschitz. In particular,
$\kappa(m)=\max\{\kappa'>0:\int(f'^2+f^2)\ge\kappa'\int mf^2\ \text{for all }f\in H^1(\R)\}$.
\end{theorem}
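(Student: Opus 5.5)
I would prove the ground-state theorem by working with the quadratic form $\mathfrak q(f)=\int(f'^2+f^2)$ on $H^1(\R)$ and the compact form $\int mf^2$. Since $\frac12e^{-|x-y|}$ is the kernel of $(-\partial_x^2+1)^{-1}$, we have $K_m=2\,\sqrt m\,(-\partial_x^2+1)^{-1}\sqrt m$. Writing $K_m=2TT^*$ with $T=\sqrt m\,(-\partial_x^2+1)^{-1/2}$ gives
\[
\lambda_0(K_m)=2\sup_{f\in H^1}\frac{\int mf^2}{\mathfrak q(f)}.
\]
This yields the variational characterization of $\kappa=2/\lambda_0$ stated at the end. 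Because $H^1\hookrightarrow L^\infty$ with $\|f\|_\infty^2\le\frac12\mathfrak q(f)$ and $m\in L^1$, the form $f\mapsto\int mf^2$ is compact on $H^1$. Hence the supremum is attained at some $u\in H^1$. Replacing $u$ by $|u|$ keeps it a maximizer, so we may take $u\ge0$.

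The Euler--Lagrange equation is $-u''+u=\kappa mu$ in the distributional sense. Equivalently, $u=\frac\kappa2\int e^{-|\cdot-y|}m(y)u(y)\dd y$. This representation shows the following.
\begin{enumerate}
\item Since $mu\in L^1$ and $u$ is bounded, $u$ is $C^1$ with $u'$ locally absolutely continuous.
\item Since $u\ge0$ and $mu\ne0$, $u>0$ everywhere.
\item From $u'(x)=\frac\kappa2\int-\operatorname{sgn}(x-y)e^{-|x-y|}mu\,dy$ we get $|u'|\le u$.
\end{enumerate}

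Next I would prove \eqref{eq:ground-square} by the ground-state substitution $f=ug$. For $f\in C_c^\infty$, integrating by parts with $-u''+u=\kappa mu$ gives
\[
\int(f'^2+f^2)-\kappa\int mf^2=\int u^2g'^2=\int\Bigl(f'-\frac{u'}uf\Bigr)^2 .
\]
To extend to $f\in H^1$, I would use $|u'/u|\le1$: the right-hand side is then continuous in $H^1$, and density finishes the argument. The right-hand side vanishes iff $g$ is constant, which gives simplicity of $\lambda_0$. For the uniqueness claim about $\tilde u$, I would note that the Wronskian $W=u\tilde u'-u'\tilde u$ is constant. Both functions lie in $H^1$ with $|u'|\le u$, and I would show $|\tilde u'|\le\tilde u$ by the same integral representation, which holds since $\tilde u\in H^1$ solves the equation. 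Then $W\to0$ at infinity, so $W=0$ and $\tilde u/u$ is constant.

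For the ground coordinate, set $q=1-u'/u$; then $q\in[0,2]$ by $|u'|\le u$. Differentiating,
\[
q'=-\frac{u''}{u}+\Bigl(\frac{u'}{u}\Bigr)^2=1-\kappa m-\ldots,
\]
and a direct check gives $q'=\kappa m-q(2-q)$ a.e. If $\supp m\subset[x_-,x_+]$, then on $(-\infty,x_-]$ the function $u$ solves $u''=u$ and lies in $H^1$, so $u=ce^{x}$ and $q=0$; similarly $q=2$ on $[x_+,\infty)$. For bounded $m$, $q'$ is bounded and $q$ is Lipschitz.

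The main obstacle is the general integrable $m$: the endpoint limits, the integrability of $Q(q)$, and the identity $\int Q=\kappa\int m-2$. The lower bound $Q(q)\ge0$ and $q'=\kappa m-Q$ give $q(b)-q(a)+\int_a^bQ=\kappa\int_a^bm$. Since $q$ is bounded, $\int Q<\infty$ and $q'\in L^1$, so the limits $q(\pm\infty)$ exist. The integral identity then follows once the limits are identified.

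To identify the limits, observe that $\int Q<\infty$ forces $Q(q(\pm\infty))=0$, so each limit is $0$ or $2$. From the integral representation, $u'/u\to1$ at $-\infty$: the mass $\int_{y<x}$ becomes negligible compared with $e^{x}\int_{y>x}e^{-y}mu$, and I would need a short dominance argument here. This gives $q(-\infty)=0$, and symmetrically $q(+\infty)=2$. If the dominance argument is delicate, I would instead rule out $q(-\infty)=2$ directly: it would mean $u'/u\to-1$ as $x\to-\infty$, so $u$ grows like $e^{-x}$ there, contradicting $u\in L^2$.
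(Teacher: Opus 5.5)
Your proposal is correct and follows essentially the paper's route: a nonnegative ground state written as the Green-function convolution (the paper starts from a nonnegative top eigenvector of $K_m$ rather than an $H^1$ maximizer, which is equivalent), $|u'|\le u$ from that representation, the square identity, the Riccati equation for $q$, and exclusion of the wrong endpoint limits by exponential growth of $u$. Your fallback for the limits is exactly the paper's argument, so the dominance estimate is unnecessary. The only real local variation is uniqueness of the positive solution: your Wronskian argument (even simpler, $W$ is constant and lies in $L^1$, hence vanishes) replaces the paper's energy identity $\|v\|_{H^1}^2=\kappa\int mv^2$ combined with the square identity. Note also the sign slip in your display of $q'$, where $-u''/u=\kappa m-1$.
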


By \eqref{eq:ground-square}, $\kappa(m)$ is the largest coupling for
which $-\partial_x^2+1-\kappa m\ge0$, and $u$ is its zero-energy ground
state. The coordinate $q$ has endpoint limits $0$ and $2$, but need not
be monotone: where $m=0$, $q'=-Q(q)\le0$. The argument uses the chain
rule along $q$, not a change of variables through an inverse. We use
$q^{-1}$ only for the reference, comparison, and prescribed-speed
densities, whose strict monotonicity is established separately.

First, {every function of $q$ has a known integral against $q'$}:
for a bounded $\eta$ that is continuous outside a finite set,
\begin{equation}\label{eq:price-identity}
 \int_\R\eta(q)\,q'\dd x=\int_0^2\eta(p)\dd p
\end{equation}
for every density (proof of \Cref{prop:certificate-mechanism}). Since
$q'=\kappa n^2-Q(q)$, this turns $\int\eta(q)n^2$ into a constant plus
$\kappa^{-1}\int\eta(q)Q(q)$. A lower bound may therefore pay for a
pointwise defect by a term $\eta(q)q'$; we call $\eta$ a {price}.

Second, the integral of $Q(q)$ is fixed: $\int Q(q)\dd x=\kappa\int m-2=\kappa-2$ for
$\int m=1$ (\Cref{thm:ground}). We call this the {mass identity}; its
multiplier $\tau$ enters every certificate.

\subsection{Filters and the spectral observations}\label{sec:filters}

\begin{proposition}[Filters]\label{lem:filter}
Let $m\ge0$ be integrable and not zero, with $M=\int m$, ground
parameter $\kappa$ and ground coordinate $q$.
\begin{enumerate}
\item The linear filter $r_\circ(x)=\kappa\int_{-\infty}^xe^{-2(x-y)}m(y)\dd y$ is
absolutely continuous, $r_\circ'=\kappa m-2r_\circ$, $r_\circ(\pm\infty)=0$,
$\|r_\circ'\|_1\le2\kappa M$, and $0\le r_\circ\le q$. The function $h=q-r_\circ$
satisfies $h'=q^2-2h$, $0\le h\le q$, and
\[
 S(m):=\int_\R r_\circ^2=\frac{\kappa^2}4\tr K_m^2 .
\]
More generally, for $c>0$ and $0\le\tilde m\in L^1(\R)$,
\[
 \int_\R\Bigl(c\int_{-\infty}^xe^{-2(x-y)}\tilde m(y)\dd y\Bigr)^2\dd x
 =\frac{c^2}4\iint e^{-2|y-z|}\tilde m(y)\tilde m(z)\dd y\dd z .
\]
\item For $s\ge0$ there is a unique bounded locally absolutely continuous
solution $r_s$ of $r_s'=sm-2r_s-r_s^2$ with $r_s(x)\to0$ as $x\to-\infty$.
It satisfies
\[
 0\le r_s\le\frac s\kappa r_\circ\le\frac s\kappa q,\qquad r_s(+\infty)=0,\qquad
 \int_\R r_s=H_m(s),\qquad \|r_s'\|_1\le sM+(2+\|r_s\|_\infty)H_m(s).
\]
The function $V_s=\exp\int_{-\infty}^xr_s$ is the unique bounded solution of
$V_s(x)=1+\frac s2\int_{-\infty}^x(1-e^{-2(x-y)})m(y)V_s(y)\dd y$; it is $C^1$,
$V_s'$ is locally absolutely continuous, $V_s''+2V_s'=smV_s$ almost
everywhere, $V_s(-\infty)=1$ and $V_s(+\infty)=\exp H_m(s)$.
\end{enumerate}
\end{proposition}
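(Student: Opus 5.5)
Part (i) is elementary and comes first. Since $r_\circ=\kappa\,e^{-2x}\int_{-\infty}^x e^{2y}m(y)\dd y$, it is absolutely continuous. Differentiating gives $r_\circ'=\kappa m-2r_\circ$. Young's inequality gives $\|r_\circ\|_1=\kappa M/2$, hence $\|r_\circ'\|_1\le 2\kappa M$. The limits $r_\circ(\pm\infty)=0$ follow by dominated convergence.

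For the comparison $r_\circ\le q$, put $h=q-r_\circ$. Subtracting the equation of \Cref{thm:ground}, $q'=\kappa m-Q(q)=\kappa m-2q+q^2$, from that of $r_\circ$ gives $h'=q^2-2h$. Since $h(-\infty)=0$, variation of constants gives
\[
h(x)=\int_{-\infty}^x e^{-2(x-y)}q(y)^2\dd y\ge0 .
\]
The bound $h\le q$ is equivalent to $r_\circ\ge0$, which is obvious.

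The quadratic identity is Fubini: $\int_\R \bigl(\int_{-\infty}^x e^{-2(x-y)}\tilde m(y)\dd y\bigr)^2\dd x$ becomes a double integral in $y,z$. The inner integral over $x\ge\max(y,z)$ equals $\tfrac14 e^{-2|y-z|}$. With $c=\kappa$, $\tilde m=m$ and $\tr K_m^2=\iint mme^{-2|x-y|}$, this gives $S(m)=\frac{\kappa^2}{4}\tr K_m^2$.

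For (ii) I would construct $r_s$ from $V_s$ rather than solving the Riccati equation directly. The integral equation $V=1+\frac s2\int_{-\infty}^x(1-e^{-2(x-y)})mV$ is a Volterra equation whose kernel is bounded by $\frac s2 m(y)\in L^1$. Picard iteration converges, with the $k$-th term bounded by $(\frac s2\int_{-\infty}^x m)^k/k!$. This gives a unique bounded solution with $1\le V\le e^{sM/2}$, and $V$ is nondecreasing because the kernel is nonnegative. Differentiating twice gives $V'=s\int_{-\infty}^x e^{-2(x-y)}mV\ge0$ and $V''+2V'=smV$. The ratio $r_s=V'/V$ then solves $r_s'=sm-2r_s-r_s^2$ with $r_s\ge0$ and $r_s(-\infty)=0$. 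Conversely, any bounded solution with zero limit at $-\infty$ exponentiates to a bounded solution of the Volterra equation, so uniqueness transfers. Since $r_s\ge0$, the Riccati term gives $r_s'\le sm-2r_s$, and the comparison principle yields $r_s\le\frac s\kappa r_\circ$. The $L^1$ bound on $r_s'$ follows from the equation once $\int r_s$ is identified.

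The main obstacle is showing $V_s(+\infty)=\det(I+\frac s2K_m)$, i.e. $\int r_s=H_m(s)$. $V$ increases to a finite limit, and the limit of $V'$ exists, so $r_s(+\infty)=0$ follows once $V'(+\infty)=0$; that in turn holds because $V'=s\int_{-\infty}^x e^{-2(x-y)}mV$ is a convolution of $mV\in L^1$ with an $L^1$ kernel. For the determinant I would first take $m$ compactly supported and bounded. Then $V$ is the Jost-type solution of $-u''+u+smu=0$ after the gauge $u=e^{x}V$, and $V(+\infty)$ is the Jost function, or perturbation determinant. It equals $\det(I+\frac s2K_m)$ by the standard Jost–Pais identity \cite{SimonTrace}, which also follows from expanding the Fredholm series and the Volterra (Picard) series, both organized by the $(1-e^{-2(x-y)})$ kernel. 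I would then extend to general $0\le m\in L^1$ by approximation in $L^1$. $K_m\mapsto\det$ is continuous in trace norm, and $\|K_m-K_{\tilde m}\|_1$ is controlled by $\|\sqrt m-\sqrt{\tilde m}\|_2$ via $K_m=A^*A$ with $A=(e^{-|\cdot|})^{1/2}\sqrt m$. The quantity $V(+\infty)$ is continuous under the Picard bounds. Finally, the identity $V_s(+\infty)=\exp H_m(s)$ is equivalent to the claimed value $\int r_s=H_m(s)$ of the integral.
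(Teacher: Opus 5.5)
Your proposal is correct and follows the paper's proof in all essentials: Picard iteration for the Volterra equation, $r_s=V_s'/V_s$ with the comparison $r_s\le\frac s\kappa r_\circ$, variation of constants and Tonelli for (i), and a Fredholm--Jost (Jost--Pais) identity for $V_s(+\infty)=\det(I+\frac s2K_m)$. There are two differences. First, the paper cites that identity directly for $sm\in L^1(\R)$ (Gesztesy--Makarov), so your reduction to compactly supported $m$ and the subsequent $L^1$-approximation are unnecessary. Second, the paper proves uniqueness by noting that the difference $\delta$ of two solutions satisfies $\delta'=-(2+\rho+r_s)\delta$ and therefore decays from $-\infty$; this is shorter than your transfer to the Volterra equation, which first needs a variation-of-constants argument showing that any such $\rho$ is nonnegative and integrable near $-\infty$.
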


The function $y=e^xV_s$ is the solution of $-y''+(1+sm)y=0$ that is
asymptotic to $e^x$ at $-\infty$ (equal to it to the left of the support if $m$
has compact support), and $r_s=y'/y-1$; to the right of the support,
$y=e^{H_m(s)}e^x+\beta e^{-x}$. The determinant is therefore the
coefficient of the growing solution, as expressed by the Fredholm--Jost
identity. Its logarithm is the integral of the {filter} $r_s$. Filters are causal: $r_s(x)$ depends only on $m$ on $(-\infty,x]$.
The linear filter $r_\circ$ is the limit $\kappa r_s/s$ as $s\downarrow0$, and it
carries the second trace. Every determinant observation, and the second
trace, is thus an integral of a local quantity that solves a first-order
equation driven by $n^2$.

\subsection{The tangent identity and the calibration}\label{sec:calibration}

Two auxiliary densities appear below. The {reference density} $m_\circ$,
built from a profile $b_\circ$, serves only to construct the potential $g_\circ$
of the calibration (\Cref{thm:contact}). The {comparison density}
$\bar m_\kappa$, whose amplitude is the tangent point $\nu_\kappa$, is the base point
of the comparisons (\Cref{sec:comparisons}). At a self-consistent
reference they coincide up to translation (\Cref{prop:exact-reference});
for $\Theta_{\mathrm{cert}}$ they differ slightly.

For $\nu\ge0$ and $n\ge0$,
\begin{equation}\label{eq:cubic-loss}
 n^3=\tfrac32\nu n^2-\tfrac12\nu^3+\rho(n,\nu),\qquad
 \rho(n,\nu)=(n-\nu)^2\bigl(n+\tfrac\nu2\bigr)\ge0,
\end{equation}
with $\rho(n,\nu)=0$ only at $n=\nu$: the function $n^3=(n^2)^{3/2}$ of $n^2$
lies above its tangent at $\nu^2$. Choose the tangent point as a function
$\nu=\nu(q)$ of the ground coordinate. By \eqref{eq:price-identity} with
$\eta=\nu$ and $q'=\kappa n^2-Q(q)$, integration of \eqref{eq:cubic-loss} gives
\[
 E(m)=\frac3{2\kappa}\int_0^2\nu\dd q
 +\int\Bigl[\frac3{2\kappa}\nu(q)Q(q)-\frac12\nu(q)^3\Bigr]\dd x+\int\rho(n,\nu(q))\dd x .
\]
The middle integrand is a function of $q$ alone. Its integral is
controlled if it is a combination of $Q(q)$, whose integral is fixed by the
mass identity, and of a function $\Gamma(q)\ge0$ whose integral is bounded
through the calibration filter at scale $\varsigma\kappa$. We therefore make it equal to
$-\frac b\kappa Q(q)-\omega\Gamma(q)$, which determines $\nu$ as the root of a cubic.

Fix $\varsigma>0$ and put $f_\varsigma=r_{\varsigma\kappa}/\varsigma$ and $h_\varsigma=q-f_\varsigma$. By
\Cref{thm:ground,lem:filter},
\[
 f_\varsigma'=\kappa n^2-2f_\varsigma-\varsigma f_\varsigma^2,\qquad
 h_\varsigma'=q^2+\varsigma f_\varsigma^2-2h_\varsigma,\qquad 0\le h_\varsigma\le q,
\]
$f_\varsigma(\pm\infty)=0$ and $\int f_\varsigma=H_m(\varsigma\kappa)/\varsigma$; integrating the
equation for $f_\varsigma$ over $\R$ gives, for $\int m=1$,
\begin{equation}\label{eq:base-observable}
 S_\varsigma(m):=\int f_\varsigma^2
 =\frac\kappa\varsigma-\frac2{\varsigma^2}H_m(\varsigma\kappa).
\end{equation}
Thus the third determinant fixes $\int f_\varsigma^2$. A function $g(h)$
will define a potential whose derivative converts this integrand into a
function of $q$ alone, up to a nonnegative defect.

\begin{definition}[Calibration]\label{def:base}
A {calibration} consists of $\varsigma>0$, $b>0$, $\omega\ge0$, and
a continuous $g:[0,2]\to\R$ with $0\le g(h)\le2-h$. Put
\[
 \mathcal J_\varsigma(h)=-\int_0^h\frac{g}{2+\varsigma g},\qquad
 F_\varsigma(q,h)=\frac{2(q-h)^2-(q^2-2h)g(h)}{2+\varsigma g(h)},\qquad
 \Gamma_\varsigma(q)=\min_{0\le h\le q}F_\varsigma(q,h).
\]
For $\kappa>0$, the {comparison amplitude} $\nu_\kappa(q)$ is the
positive root of
\begin{equation}\label{eq:base-root}
 \kappa\nu^3-3Q(q)\nu-2bQ(q)-2\kappa\omega\Gamma_\varsigma(q)=0
 \quad(0<q<2),\qquad \nu_\kappa(0)=\nu_\kappa(2)=0,
\end{equation}
$d_\kappa=\kappa\nu_\kappa^2-Q$, and the {base value} of a density
$m$ with $\kappa(m)=\kappa$ is
\begin{equation}\label{eq:base-value}
 \mathcal B_\kappa(m)=-b+\frac{2b}\kappa+\frac3{2\kappa}\int_0^2\nu_\kappa\dd q
 -\omega \mathcal J_\varsigma(2)-\omega S_\varsigma(m).
\end{equation}
\end{definition}

\begin{proposition}[Exact base identity]\label{lem:base-identity}
For a calibration, $\Gamma_\varsigma$ is continuous,
$0\le\Gamma_\varsigma(q)\le\min\{q^2,(2-q)^2\}$, the root $\nu_\kappa(q)$ is unique
and jointly continuous in $(\kappa,q)\in(0,\infty)\times[0,2]$, and
$d_\kappa>2Q$ on $(0,2)$. Along every density, almost everywhere,
\begin{equation}\label{eq:calibration-pointwise}
 f_\varsigma^2+\bigl(\mathcal J_\varsigma(h_\varsigma)\bigr)'=F_\varsigma(q,h_\varsigma).
\end{equation}
Every $m\ge0$ with $\int m=1$, $E(m)<\infty$ and $\kappa(m)=\kappa$
satisfies
\begin{equation}\label{eq:base-deficit}
 E(m)=\mathcal B_\kappa(m)+\int_\R\Bigl[\rho\bigl(n,\nu_\kappa(q)\bigr)
 +\omega\,\mathcal T\Bigr]\dd x,\qquad
 \mathcal T=F_\varsigma(q,h_\varsigma)-\Gamma_\varsigma(q)\ge0,
\end{equation}
where both integrands are nonnegative and integrable.
\end{proposition}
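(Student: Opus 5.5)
The proof has four parts: elementary properties of $\Gamma_\varsigma$ and of the root $\nu_\kappa$, then the pointwise identity \eqref{eq:calibration-pointwise}, then integration of the tangent identity \eqref{eq:cubic-loss}, and finally a check that every integral is finite.

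\textbf{Properties of $\Gamma_\varsigma$.} Continuity follows because $F_\varsigma$ is continuous on the compact triangle $\{0\le h\le q\le2\}$. The denominator $2+\varsigma g$ is at least $2$, and the minimum is taken over the interval $[0,q]$, which varies continuously with $q$. For the upper bounds, evaluate at two endpoints.
\begin{itemize}
\item At $h=q$ one gets $F_\varsigma(q,q)=-(q^2-2q)g(q)/(2+\varsigma g)=Q(q)g(q)/(2+\varsigma g)$. Since $g\le2-q$ and $\varsigma g\ge0$, this is at most $q(2-q)^2/2\le(2-q)^2$.
\item At $h=0$ one gets $F_\varsigma(q,0)=(2-g(0))q^2/(2+\varsigma g(0))\le q^2$.
\end{itemize}
For nonnegativity, write the numerator as $2(q-h)^2+(2h-q^2)g$. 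When $2h\ge q^2$ it is clearly $\ge0$. Otherwise use $g\le 2-h$ and check $2(q-h)^2-(q^2-2h)(2-h)\ge0$ for $h\le q\le2$. Expanding gives $q^2h-4qh+2h^2+4h=h\bigl((q-2)^2+2h-\dots\bigr)$; I expect this to factor nonnegatively, and I will verify it directly.

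\textbf{The root $\nu_\kappa$.} For $0<q<2$ put $c(q)=2bQ+2\kappa\omega\Gamma_\varsigma>0$. The cubic $\kappa\nu^3-3Q\nu-c$ has exactly one positive root by Descartes' rule of signs. That root is simple, so the implicit function theorem gives joint continuity in the interior. Continuity at $q\in\{0,2\}$ follows from $\nu^3\le(3Q\nu+c)/\kappa\to0$. The inequality $d_\kappa>2Q$ means $\kappa\nu^2>3Q$. From the equation, $\nu(\kappa\nu^2-3Q)=c>0$, so it holds.

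\textbf{Pointwise identity \eqref{eq:calibration-pointwise}.} This is a direct chain-rule computation. By the chain rule for the absolutely continuous $h_\varsigma$,
\[
\mathcal J_\varsigma(h_\varsigma)'=-\frac{g(h)}{2+\varsigma g(h)}\,h_\varsigma'=-\frac{g(q^2+\varsigma f^2-2h)}{2+\varsigma g}.
\]
Add $f^2$ and put everything over the denominator $2+\varsigma g$. The $\varsigma g f^2$ terms cancel, leaving $\bigl(2f^2-(q^2-2h)g\bigr)/(2+\varsigma g)$. Substituting $f=q-h$ gives exactly $F_\varsigma(q,h)$. The constraint $0\le h_\varsigma\le q$ from \Cref{lem:filter} gives $\mathcal T\ge0$.

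\textbf{The identity \eqref{eq:base-deficit}.} Integrate \eqref{eq:cubic-loss} with $\nu=\nu_\kappa(q)$. Apply \eqref{eq:price-identity} with $\eta=\nu_\kappa$, which is bounded and continuous, and use $n^2=(q'+Q)/\kappa$. This produces the displayed expression with middle integrand $\frac3{2\kappa}\nu Q-\frac12\nu^3$. By \eqref{eq:base-root}, that integrand equals $-\frac b\kappa Q-\omega\Gamma_\varsigma$. Then:
\begin{itemize}
\item The mass identity gives $\int Q=\kappa-2$, which contributes $-b+2b/\kappa$.
\item Write $-\omega\Gamma_\varsigma=\omega\mathcal T-\omega F_\varsigma$ and use \eqref{eq:calibration-pointwise}.
\item Then $\int F_\varsigma=S_\varsigma(m)+\mathcal J_\varsigma(h_\varsigma(+\infty))-\mathcal J_\varsigma(h_\varsigma(-\infty))$, with boundary values $h_\varsigma(-\infty)=0$ and $h_\varsigma(+\infty)=2$, since $q\to2$ and $f\to0$.
\end{itemize}
Collecting terms gives $\mathcal B_\kappa(m)$.

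\textbf{Main obstacle: integrability.} The delicate step is justifying every split, since $m$ need not be compactly supported. First, $\Gamma_\varsigma\le\min(q^2,(2-q)^2)\le 2Q$, so $\Gamma_\varsigma$ is integrable. Next, $F_\varsigma$ is integrable because $f^2$ is integrable and $\mathcal J_\varsigma(h)'$ has a limit at $\pm\infty$ with integrable $h'$, or alternatively because $|F|\lesssim f^2+Q$. Then $\mathcal T$ is integrable. Finally, $\rho$ is integrable because $n^3$, $\nu n^2$ and $\nu^3$ are: $\nu$ is bounded, $\int n^2=1$, and $\nu^3\lesssim Q$ since $\nu\to0$ near the endpoints at the rate dictated by the cubic. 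I will handle the boundary limits by integrating over $[-R,R]$ and letting $R\to\infty$.
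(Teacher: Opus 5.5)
Your proposal is correct and follows essentially the paper's route: endpoint evaluations of $F_\varsigma$ for the bounds on $\Gamma_\varsigma$, a simple positive root of the cubic, the chain-rule computation giving \eqref{eq:calibration-pointwise}, and integration of \eqref{eq:cubic-loss} using the price, mass and calibration identities. The paper only packages the last step as one pointwise identity $n^3+bn^2+\omega f_\varsigma^2+\{\mathcal V(q)+\omega\mathcal J_\varsigma(h_\varsigma)\}'=\rho+\omega\mathcal T$ with $\mathcal V(q)=-\kappa^{-1}\int_0^q(b+\tfrac32\nu_\kappa)$, so the integrability of the right side comes for free rather than being checked term by term; and in the nonnegativity step you left unverified, the expansion is exactly $2(q-h)^2-(q^2-2h)(2-h)=h(2-q)^2$, because the $h^2$ terms cancel.
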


The base value depends on $m$ only through $\kappa(m)$ and the observation
$H_m(\varsigma\kappa)$, so it is constant on $\mathcal A_{\mathrm{obs}}(\varsigma)$, and
\eqref{eq:base-deficit} is already a lower bound. The cubic
\eqref{eq:base-root} makes the part of the integrand that depends on $q$
alone a combination of $Q$ and $\Gamma_\varsigma$; the remainder $\rho(n,\nu_\kappa(q))$
vanishes exactly where $n=\nu_\kappa(q)$. The bound is exact for a density along which
$n=\nu_\kappa(q)$ and $h_\varsigma$ minimizes $F_\varsigma(q,\cdot)$ at every point. The
potential $g$ is chosen so that the second condition holds along a
reference density.

For a positive, nondecreasing, smooth function $b_\circ$ on $[0,1]$, let the
{reference amplitude} $\nu_\circ(q)$ be the positive root of
$k\nu^3-3Q(q)\nu-2b_\circ(Q(q))Q(q)=0$ for $0<q<2$, $\nu_\circ(0)=\nu_\circ(2)=0$, put
$d_\circ=k\nu_\circ^2-Q$ and $X_\circ(q)=\int_0^qd_\circ^{-1}$, and let the
{reference density} $m_\circ$ be $\nu_\circ(q)^2$ at $x=X_\circ(q)$ and zero
outside $[0,X_\circ(2)]$. For $\varsigma>0$ let $f_\circ=r_{\varsigma k}/\varsigma$ be
the calibration filter of $m_\circ$ and
$\check f_\circ(x)=f_\circ(X_\circ(2)-x)$ its mirror image.

\begin{theorem}[Contact potential]\label{thm:contact}
Let $b_\circ$ be smooth, positive and nondecreasing on $[0,1]$, and
$\varsigma>0$. Then $X_\circ(2)<\infty$, $m_\circ$ is continuous, bounded,
symmetric about $X_\circ(2)/2$, has ground parameter $k$ and ground
coordinate $q_\circ$ with $q_\circ=X_\circ^{-1}$ on $[0,X_\circ(2)]$. The function
$h_c=q_\circ-f_\circ$ increases strictly on $[0,\infty)$ from $h_c(0)=0$ to
$h_c(+\infty)=2$, and $g_\circ(h_c(x))=f_\circ(x)+\check f_\circ(x)$, $g_\circ(2)=0$,
defines a continuous $g_\circ$ with $0\le g_\circ(h)\le2-h$. For $g=g_\circ$
and every $0<q<2$, the function $F_\varsigma(q,\cdot)$ has a unique minimizer
on $[0,2]$, namely $h_c(x)$ with $q_\circ(x)=q$, and
\begin{equation}\label{eq:contact}
 \Gamma_\varsigma(q)=
 \frac{Q(q)(f_\circ+\check f_\circ)-2f_\circ\check f_\circ}
 {2+\varsigma(f_\circ+\check f_\circ)}\quad\text{at }q_\circ=q,
 \qquad \Gamma_\varsigma(2-q)=\Gamma_\varsigma(q).
\end{equation}
\end{theorem}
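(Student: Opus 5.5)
The plan is to build everything from the explicit ODE description of the reference density and then check the contact property by direct computation along it. I would carry out the steps in the following order.

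\emph{Step 1: finiteness of $X_\circ(2)$.} Since $\nu_\circ$ is the positive root of $k\nu^3-3Q\nu-2b_\circ(Q)Q=0$, near $q=0$ (and, by $Q(2-q)=Q(q)$, near $q=2$) one has
\[
\nu_\circ\sim\bigl(2b_\circ(0)Q/k\bigr)^{1/3}.
\]
Hence $d_\circ=k\nu_\circ^2-Q\asymp Q^{2/3}$. The same argument as in \Cref{lem:base-identity} gives $d_\circ>2Q>0$ on $(0,2)$. Thus $d_\circ^{-1}\asymp q^{-2/3}$ is integrable at both ends, so $X_\circ(2)<\infty$. Moreover $\nu_\circ$ is continuous with $\nu_\circ(0)=\nu_\circ(2)=0$, so $m_\circ$ is continuous and bounded. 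The uniqueness of the root, together with $\nu_\circ(2-q)=\nu_\circ(q)$, gives $d_\circ(2-q)=d_\circ(q)$. Therefore $X_\circ(2-q)=X_\circ(2)-X_\circ(q)$, and $m_\circ$ is symmetric about $X_\circ(2)/2$.

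\emph{Step 2: ground parameter and ground coordinate.} Put $\tilde q=X_\circ^{-1}$ on $[0,X_\circ(2)]$, extended by $0$ to the left and by $2$ to the right. Then $\tilde q'=d_\circ(\tilde q)=km_\circ-Q(\tilde q)$. Define $u=\exp\int_0^x(1-\tilde q)$. A direct computation gives $-u''+u=u(Q+\tilde q')=km_\circ u$. Also $u=e^x$ on the left and $u\propto e^{-x}$ on the right, so $u>0$ lies in $H^1\cap C^1$. I would then run the integration-by-parts (Picone) argument behind \eqref{eq:ground-square} with this $u$. It yields
\[
\int(f'^2+f^2)-k\int m_\circ f^2=\int\bigl(f'-\tfrac{u'}{u}f\bigr)^2\ge0,
\]
with equality for $f=u$. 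By the variational characterization in \Cref{thm:ground}, $\kappa(m_\circ)=k$. The uniqueness clause of \Cref{thm:ground} then gives $q_\circ=\tilde q$.

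\emph{Step 3: the function $h_c$, the potential $g_\circ$, and the formula for $\Gamma_\varsigma$.} Let $f=f_\circ$. By \Cref{lem:filter}, $h_c=q_\circ-f$ satisfies
\[
h_c'=q_\circ^2+\varsigma f^2-2h_c,\qquad 0\le h_c\le q_\circ .
\]
On $(-\infty,0]$ everything vanishes, and $h_c(+\infty)=2$ because $f(+\infty)=0$. Strict monotonicity is the step I expect to be the main obstacle. Beyond the support it is easy: there $h_c'=4+\varsigma f^2-2h_c>0$, since $h_c\le 2-f$ and $f>0$. Inside the support I would try a first-zero argument for $w=h_c'$. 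From $w'=2q_\circ q_\circ'+2\varsigma ff'-2w$, at a first zero of $w$ one needs $q_\circ q_\circ'+\varsigma ff'>0$. I would rewrite this using $q_\circ'=d_\circ$, $f'=km_\circ-2f-\varsigma f^2$ and the bound $f\le q_\circ$, and aim to show the positivity of $d_\circ$ dominates; if this fails I would fall back to an interval-arithmetic check as in \Cref{ca:transport}.

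Given monotonicity, $h_c$ is a homeomorphism of $[0,\infty)$ onto $[0,2)$, so $g_\circ$ is well defined and continuous, with $g_\circ(2)=0$ since both $f$ and $\check f_\circ$ vanish at $+\infty$. The symmetry of $m_\circ$ gives $\check q_\circ=2-q_\circ$. Hence $\check f_\circ\le\check q_\circ=2-q_\circ$, which is exactly
\[
g_\circ=f+\check f_\circ\le 2-(q_\circ-f)=2-h_c .
\]

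For the minimization, substitute $h=h_c=q-f$ and $g=f+\check f_\circ$ into $F_\varsigma$. Using $q^2-2h=2f-Q$, the numerator becomes $2f^2-(2f-Q)(f+\check f_\circ)=Q(f+\check f_\circ)-2f\check f_\circ$. This is \eqref{eq:contact} evaluated at $h_c$, and it is symmetric under $f\leftrightarrow\check f_\circ$. Combined with the mirror relation $x\mapsto X_\circ(2)-x$, this yields $\Gamma_\varsigma(2-q)=\Gamma_\varsigma(q)$.

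It remains to show that $h_c(x)$, with $q_\circ(x)=q$, is the unique minimizer of $F_\varsigma(q,\cdot)$ on $[0,2]$. I would parametrize $h=h_c(y)$ by $y\in[0,\infty)$ and write $\partial_hF_\varsigma=\partial_yF_\varsigma/h_c'$. I would then differentiate along the reference flow using the ODEs for $f$, $\check f_\circ$ and $q_\circ$. The goal is to show that the sign of $\partial_yF_\varsigma(q,h_c(y))$ equals the sign of $q_\circ(y)-q$, which would give both the minimum and its uniqueness. The expectation is that the mirror filter $\check f_\circ$ satisfies the backward ODE, and this makes the derivative factor through $q_\circ(y)-q$.
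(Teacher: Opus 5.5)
Your Steps 1 and 2, the bounds $0\le g_\circ\le2-h$, and the evaluation of $F_\varsigma$ at $h_c$ are sound. The step you flag as the main obstacle is in fact easy. Substituting $f_\circ'=q_\circ'-h_c'$ gives the linear equation $h_c''+(2+2\varsigma f_\circ)h_c'=(2q_\circ+2\varsigma f_\circ)q_\circ'$ with $h_c'(0)=0$. Variation of constants then gives $h_c'>0$ on $(0,X_\circ(2)]$, since $q_\circ'=d_\circ(q_\circ)>0$ there. A first-zero argument has to cope with $h_c'(0)=0$, and an interval-arithmetic fallback is not admissible for a statement about every $b_\circ$ and $\varsigma$.

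\textbf{The genuine gap is the minimization}, which is the substance of the theorem. The factorization you expect does hold: with $\xi=q_\circ(y)$, $f=f_\circ(y)$, $\check f=\check f_\circ(y)$,
\[
 \partial_hF_\varsigma(q,h_c(y))=\frac{2(q-\xi)}{\bigl(2+\varsigma(f+\check f)\bigr)h_c'(y)}
 \Bigl[(1+\varsigma)(f-\check f)(q-\xi)-2\Xi\Bigr],\qquad
 \Xi=(2-\xi)f+\xi\check f+\varsigma f\check f-Q(\xi).
\]
Your sign claim is exactly the assertion that this bracket is negative, and nothing in your outline controls it. Two ingredients are missing:
\begin{itemize}
\item Positivity of $\Xi$. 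This needs the decomposition $f=\epsilon_\xi+\alpha_1$, $\check f=\epsilon_\xi+\alpha_2$, with $2\epsilon_\xi+\varsigma\epsilon_\xi^2=Q(\xi)$ and $\alpha_1,\alpha_2>0$. That in turn uses $h_c'>0$ both at $y$ and at $X_\circ(2)-y$.
\item The comparison $\check f_\circ>f_\circ$ on $(0,X_\circ(2)/2)$, and the reverse on the right half. Without it the term $(1+\varsigma)(f-\check f)(q-\xi)$ cannot be controlled at all. It comes from comparing the Riccati filter of $m_\circ$ with that of its reflection about $y$, and it requires $m_\circ$ to be strictly increasing on $[0,X_\circ(2)/2]$, i.e.\ $\mathrm d\nu_\circ/\mathrm dQ>0$.
\end{itemize}
The second point is where the hypothesis that $b_\circ$ is nondecreasing is essentially used. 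Your proposal never uses that hypothesis, which signals the missing idea.

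Even with these facts, the bracket is not negative everywhere by any visible mechanism. Three regions need separate treatment:
\begin{itemize}
\item \emph{$0<q\le1$, $q<\xi<1$, $h\le q$.} Here the first term of the bracket is positive. The paper controls it only because $h\le q$ gives $\xi-q\le\xi-h=f$, after which an explicit expansion in $\alpha_1,\alpha_2$ shows $2\Xi-(1+\varsigma)f(\check f-f)>0$.
\item \emph{$0<q\le1$, $h>q$.} The bound $\xi-q\le f$ is lost, and you would need a separate proof that the bracket stays negative. The paper does not attempt monotonicity there. It compares values instead, showing $F_\varsigma(q,q+a)\ge F_\varsigma(q,q)$ from two facts: $h+g_\circ(h)$ is nondecreasing, and $2G+\varsigma G^2\ge Q(q)$ for $G=g_\circ(q)$.
\item \emph{$1<q<2$.} The troublesome region becomes $1<\xi<q$, where $f>\check f$, and the restriction $h\le q$ no longer bounds $q-\xi$. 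The paper reduces this case to $2-q$ via the involution $\mathcal U(h)=2-h-g_\circ(h)$ of $[0,h_c(X_\circ(2))]$. It satisfies $g_\circ\circ\mathcal U=g_\circ$ and $F_\varsigma(q,h)=F_\varsigma(2-q,\mathcal U(h))$, and it is combined with strict increase of $F_\varsigma(q,\cdot)$ on $[h_c(X_\circ(2)),2]$, where $g_\circ(h)=2-h$.
\end{itemize}
Your derivation of $\Gamma_\varsigma(2-q)=\Gamma_\varsigma(q)$ relies on identifying the minimizer at both $q$ and $2-q$, so it inherits the same gap.
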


The contact potential is the sum of the calibration filter of $m_\circ$ and
of its mirror image, written as a function of $h_c$. Along $m_\circ$ the
filter state $h_\varsigma=h_c$ is the minimizer, so $\mathcal T\equiv0$ on
$m_\circ$. If moreover $m_\circ$ has the comparison amplitude, also $\rho\equiv0$;
this is a fixed-point condition on $b_\circ$, and at a fixed point the base
value solves the relaxation that keeps only one determinant.

\begin{proposition}[Exactness at a self-consistent reference]
\label{prop:exact-reference}
Let $b_\circ$ and $\varsigma$ be as in \Cref{thm:contact}, and let $b>0$,
$\omega\ge0$ satisfy
\begin{equation}\label{eq:self-consistent}
 b_\circ(Q(q))\,Q(q)=b\,Q(q)+k\omega\Gamma_\varsigma(q)\qquad(0<q<2)
\end{equation}
for the calibration $(\varsigma,b,\omega,g_\circ)$. Then $\nu_\circ=\nu_k$, and
$m_\circ(x)=\bar m_k(x-X_\circ(1))$, where $\bar m_k$ is the comparison density of the
calibration (\Cref{def:local-profile}). If moreover $\int m_\circ=1$, then $E(m_\circ)=\mathcal B_k(m_\circ)$, and
\[
 E(m_\circ)=\min\bigl\{E(m):\ m\in\mathcal M^1_c,\ \kappa(m)=k,\
 H_m(\varsigma k)=H_{m_\circ}(\varsigma k)\bigr\}.
\]
\end{proposition}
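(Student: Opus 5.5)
The plan is to compare the two cubics directly. Then I read off the exactness from the identity \eqref{eq:base-deficit} of \Cref{lem:base-identity}: both deficit terms will vanish along $m_\circ$.

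\emph{Step 1: $\nu_\circ=\nu_k$.} Substituting \eqref{eq:self-consistent} into the reference cubic $k\nu^3-3Q\nu-2b_\circ(Q)Q=0$ gives
\[
 k\nu^3-3Q(q)\nu-2bQ(q)-2k\omega\Gamma_\varsigma(q)=0 .
\]
This is exactly \eqref{eq:base-root} at $\kappa=k$. By \Cref{lem:base-identity} the positive root is unique, so $\nu_\circ=\nu_k$ on $(0,2)$; both vanish at the endpoints by definition. Consequently $d_\circ=d_k$, and the comparison density built from $\nu_k$ (with speed $d_k$, per \Cref{def:local-profile}) has the same profile $x\mapsto\nu_\circ(q)^2$ at $x=X_\circ(q)$. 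The only difference is the normalization of the origin, which I expect \Cref{def:local-profile} places at $q=1$; this gives the translation by $X_\circ(1)$.

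\emph{Step 2: $E(m_\circ)=\mathcal B_k(m_\circ)$.} By \Cref{thm:contact}, $m_\circ$ has ground parameter $k$, and its ground coordinate satisfies $q_\circ=X_\circ^{-1}$ on the support. There $n=\sqrt{m_\circ}=\nu_\circ(q_\circ)=\nu_k(q_\circ)$, so $\rho(n,\nu_k(q))\equiv0$ on $[0,X_\circ(2)]$. Off the support both $n$ and $\nu_k(q)$ vanish, since $q\in\{0,2\}$ there. Also by \Cref{thm:contact}, $h_\varsigma=h_c$ along $m_\circ$, and $h_c(x)$ is the minimizer of $F_\varsigma(q_\circ(x),\cdot)$ for $0<q_\circ<2$. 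Hence $\mathcal T=F_\varsigma(q,h_\varsigma)-\Gamma_\varsigma(q)=0$ there. Outside the support we have $q\in\{0,2\}$, where $0\le\Gamma_\varsigma\le\min\{q^2,(2-q)^2\}$ forces $\Gamma_\varsigma=0$. I still have to check that $F_\varsigma(q,h_\varsigma)=0$ in these regions. On $x<0$ everything is zero. On $x>X_\circ(2)$ I will use the definition $g_\circ(h_c)=f_\circ+\check f_\circ$ with $\check f_\circ=0$ there, so $g_\circ(h_c)=f_\circ$. A direct computation with $q=2$, $h=2-f$ then gives
\[
 F_\varsigma=\frac{2f^2-2ff}{2+\varsigma f}=0 .
\]
(Alternatively, this follows from continuity of the minimizer characterization up to $q=2$.) With $\int m_\circ=1$ and $E(m_\circ)<\infty$, since $m_\circ$ is bounded with compact support, \eqref{eq:base-deficit} yields $E(m_\circ)=\mathcal B_k(m_\circ)$.

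\emph{Step 3: minimality.} Let $m\in\mathcal M^1_c$ with $\kappa(m)=k$ and $H_m(\varsigma k)=H_{m_\circ}(\varsigma k)$. By \eqref{eq:base-observable}, $S_\varsigma(m)=S_\varsigma(m_\circ)$. By \eqref{eq:base-value}, $\mathcal B_k$ depends on $m$ only through $\kappa$ and $S_\varsigma$, so $\mathcal B_k(m)=\mathcal B_k(m_\circ)$. Since the integrand in \eqref{eq:base-deficit} is nonnegative, $E(m)\ge\mathcal B_k(m)=E(m_\circ)$. Because $m_\circ$ itself lies in the admissible set (bounded, compactly supported, unit mass, $\kappa=k$), the infimum is attained and equals $E(m_\circ)$.

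The main obstacle is Step 2's verification that $\mathcal T$ vanishes identically, including off the support and at the degenerate points $q\in\{0,2\}$ where uniqueness of the minimizer is not asserted. I also need to confirm the precise translation convention in \Cref{def:local-profile}.
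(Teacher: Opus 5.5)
Your proposal is correct and follows the paper's proof step for step. The identical cubics give $\nu_\circ=\nu_k$, and the translation holds because $X_k(1)=0$ by \Cref{def:local-profile}, so $X_\circ(q)=X_k(q)+X_\circ(1)$. Both defects in \eqref{eq:base-deficit} vanish on and off the support, and $\mathcal B_k$ is constant on the class by \eqref{eq:base-value} and \eqref{eq:base-observable}. Your tail computation $F_\varsigma(2,2-f)=0$, using $g_\circ(h_c)=f_\circ$, is the paper's observation that $g_\circ(h)=2-h$ there. So the ``obstacle'' you flag at $q\in\{0,2\}$ is already resolved by your own argument.
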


The calibration of the certificate in \Cref{sec:certificates} is computed
as an approximate solution of \eqref{eq:self-consistent} with
$H_{m_\circ}(\varsigma k)=H_*(\varsigma k)$, and $\varsigma$ is chosen by a local search for the
largest resulting value (\Cref{sec:construction}). The remaining observations
then raise the bound further.

\begin{proposition}[A one-determinant lower bound]\label{prop:baseline}
For any calibration of \Cref{def:base}, put
\begin{equation}\label{eq:baseline-value}
 \mathcal B_*=-b+\frac{2b}k+\frac3{2k}\int_0^2\nu_k\dd q
 -\omega\mathcal J_\varsigma(2)
 -\omega\Bigl(\frac k\varsigma-\frac{2H_*(\varsigma k)}{\varsigma^2}\Bigr).
\end{equation}
Every integrable $m\ge0$ with $\int m=1$, $\kappa(m)=k$ and
$H_m(\varsigma k)=H_*(\varsigma k)$ satisfies $E(m)\ge\mathcal B_*$.
Moreover, $\underline{\mathsf e}\ge\mathcal B_*$.
\end{proposition}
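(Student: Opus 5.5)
The first assertion follows directly from the exact base identity, \Cref{lem:base-identity}. Let $m\ge0$ be integrable with $\int m=1$, $\kappa(m)=k$ and $H_m(\varsigma k)=H_*(\varsigma k)$. If $E(m)=\infty$ there is nothing to prove, so assume $E(m)<\infty$. Then \eqref{eq:base-deficit} with $\kappa=k$ gives $E(m)=\mathcal B_k(m)+\int[\rho+\omega\mathcal T]$, and both integrands are nonnegative, so $E(m)\ge\mathcal B_k(m)$. By \eqref{eq:base-observable}, $S_\varsigma(m)=k/\varsigma-2H_m(\varsigma k)/\varsigma^2$. Substituting $H_m(\varsigma k)=H_*(\varsigma k)$ into \eqref{eq:base-value} turns $\mathcal B_k(m)$ into exactly $\mathcal B_*$ of \eqref{eq:baseline-value}.

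For $\underline{\mathsf e}\ge\mathcal B_*$ I would use the packed densities. Take $J\in\overline{\mathcal Y}_N$ and $m=m_J$. By \Cref{thm:packing}, $m\in\Mc$ and $E(m)=N^{-3}\E_N(J)$. The exact constraints now fail, so I would apply \eqref{eq:base-deficit} with $\kappa=\kappa(m_J)$ rather than $k$:
\[
 N^{-3}\E_N(J)\ge\mathcal B_{\kappa(m_J)}(m_J)
 =\Phi\bigl(\kappa(m_J),H_{m_J}(\varsigma\kappa(m_J))\bigr),
\]
where
\[
 \Phi(\kappa,H)=-b+\frac{2b}\kappa+\frac3{2\kappa}\int_0^2\nu_\kappa\dd q-\omega\mathcal J_\varsigma(2)-\omega\Bigl(\frac\kappa\varsigma-\frac{2H}{\varsigma^2}\Bigr).
\]
Note that $\Phi(k,H_*(\varsigma k))=\mathcal B_*$. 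It therefore suffices to show that $\Phi$ is continuous at $(k,H_*(\varsigma k))$ and that its arguments converge uniformly in $J$.

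For continuity, \Cref{lem:base-identity} gives joint continuity of $\nu_\kappa(q)$ on $(0,\infty)\times[0,2]$. On the compact set $[k/2,2k]\times[0,2]$ this is uniform continuity, so $\kappa\mapsto\int_0^2\nu_\kappa\dd q$ is continuous near $k$. The remaining terms of $\Phi$ are explicit and continuous.

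For the arguments, \Cref{thm:transfer} gives $|\kappa(m_J)-k|\le4\delta_N/\Lambda_0^2$ uniformly in $J$ once $\delta_N<\Lambda_0/2$, and $\delta_N\to0$. The determinant needs more care, because it is evaluated at the moving scale $s=\varsigma\kappa(m_J)$. I would use the triangle inequality
\[
 |H_{m_J}(s)-H_*(\varsigma k)|\le|H_{m_J}(s)-H_*(s)|+|H_*(s)-H_*(\varsigma k)|.
\]
The first term is at most $\frac s2\delta_N$ by \Cref{thm:transfer}, and this bound is uniform in $s$. The second term tends to zero by continuity of $H_*=\log\cosh\sqrt{\cdot}$, since $s\to\varsigma k$ uniformly in $J$.

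Combining these, $\inf_{J}N^{-3}\E_N(J)\ge\mathcal B_*-o(1)$ as $N\to\infty$, and hence $\underline{\mathsf e}\ge\mathcal B_*$.

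The main obstacle is the uniform continuity of $\kappa\mapsto\int_0^2\nu_\kappa$. Joint continuity on a compact set settles it, provided one checks that \Cref{lem:base-identity} covers the endpoints $q=0,2$, where $\nu_\kappa=0$. This holds because the statement asserts continuity on the closed interval $[0,2]$. One should also confirm that $\Gamma_\varsigma$ and $\mathcal J_\varsigma$ do not depend on $\kappa$, which is clear from \Cref{def:base}.
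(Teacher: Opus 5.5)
Your proof is correct and follows the paper's argument. The first claim comes from the exact base identity. For the packed densities you apply that identity at $\kappa=\kappa(m_J)$ and combine it with joint continuity of $\nu_\kappa$ and the uniform transfer estimates. The only difference is at the moving scale $\varsigma\kappa(m_J)$: the paper uses the Lipschitz bound $0\le\partial_sH_{m_J}(s)\le\frac12$ and the transfer estimate at the fixed scale $\varsigma k$, while you apply the transfer estimate at the moving scale and use continuity of $H_*$; both work.
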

\begin{proof}
If $E(m)<\infty$, the first assertion follows from
\eqref{eq:base-deficit}; otherwise it is immediate. For the packed
densities, \Cref{thm:transfer} gives $\kappa(m_J)\to k$ uniformly in $J$.
Since $\int m_J=1$, $0\le\partial_sH_{m_J}(s)\le1/2$, and hence
\[
 |H_{m_J}(\varsigma\kappa(m_J))-H_*(\varsigma k)|
 \le\frac\varsigma2|\kappa(m_J)-k|+\frac{\varsigma k}2\delta_N\to0.
\]
Joint continuity of $\nu_\kappa(q)$ on compact parameter sets
(\Cref{lem:base-identity}) gives
$\mathcal B_{\kappa(m_J)}(m_J)\to\mathcal B_*$ uniformly. Apply
\eqref{eq:base-deficit} and $E(m_J)=N^{-3}\E_N(J)$.
\end{proof}

This bound needs no state potential, comparison multipliers, or cell
prices. The additional observations will improve it by controlling part
of the nonnegative remainder in \eqref{eq:base-deficit}.

\subsection{The state and its range}\label{sec:state-region}

The base identity uses one determinant. To exploit the remaining
observations, we need the joint range of their local filters. These
filters are not independent. At a point $x$ with
$q(x)>0$, let $\mathcal K_x$ be the operator on $L^2(-\infty,x)$ with kernel
$\sqrt{m(t)}G_x(t,y)\sqrt{m(y)}$, where $G_x(t,y)=\frac12(e^{-|t-y|}-e^{t+y-2x})$ is
the Green function of $-\partial^2+1$ on $(-\infty,x)$ with a Dirichlet
condition at $x$, and let $v(t)=\sqrt{m(t)}\,e^{t-x}$, $t<x$.
Then $r_s(x)=s\ip v{(I+s\mathcal K_x)^{-1}v}$ for $s\ge0$,
$q(x)=\kappa\ip v{(I-\kappa\mathcal K_x)^{-1}v}$ and $r_\circ(x)=\kappa\|v\|^2$, so all
filters are integrals against the spectral measure of $\mathcal K_x$ at $v$.
After normalization this reads as follows.

\begin{theorem}[One measure for all filters]\label{thm:common-measure}
Let $m$ be as in \Cref{lem:filter}. At every $x$ with $q(x)>0$ there is
a probability measure $\mu_x$ on $[0,1]$ such that
\begin{equation}\label{eq:common}
 \frac{\kappa r_s(x)}{sq(x)}
   =\int_0^1\frac{1-z}{1+(s/\kappa)z}\dd\mu_x(z)\quad(s>0),\qquad
 \frac{r_\circ(x)}{q(x)}=\int_0^1(1-z)\dd\mu_x(z).
\end{equation}
If $q(x)=0$, then $r_s(x)=r_\circ(x)=0$ for all $s$.
\end{theorem}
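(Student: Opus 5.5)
The proof uses the resolvent representations recorded just before the statement: each filter is a quadratic form of the resolvent of one positive compact operator $\mathcal K_x$ at the vector $v$. Reweighting the spectral measure of $\kappa\mathcal K_x$ at $v$ then gives $\mu_x$. There are three steps: establish the representations, show that the spectrum of $\kappa\mathcal K_x$ lies in $[0,1)$, and normalize.

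\emph{Step 1: representations.} Fix $x$. Let $\varphi$ solve $-\varphi''+(1+sm)\varphi=0$ on $(-\infty,x)$ with $\varphi(x)=1$, decaying at $-\infty$. Then $\varphi$ satisfies the integral equation
\[
\varphi(t)=e^{t-x}-s\int_{-\infty}^xG_x(t,y)m(y)\varphi(y)\dd y .
\]
Since $\partial_tG_x(t,y)|_{t=x}=-e^{y-x}$, differentiating at $t=x$ gives $\varphi'(x)=1+s\int e^{y-x}m\varphi$. Put $\psi=\sqrt m\,\varphi$; then $(I+s\mathcal K_x)\psi=v$, hence $\varphi'(x)-1=s\ip v{(I+s\mathcal K_x)^{-1}v}$.

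Up to a constant factor, $\varphi$ is the solution $y$ of \Cref{lem:filter} asymptotic to $e^x$ at $-\infty$: both decay there, and such solutions are unique up to scale. Since $r_s=y'/y-1$, this gives the formula for $r_s(x)$; uniqueness uses the Volterra form in \Cref{lem:filter}. The same computation with $s$ replaced by $-\kappa$, applied to the ground state $u$ of \Cref{thm:ground}, gives
\[
u'(x)/u(x)-1=-\kappa\ip v{(I-\kappa\mathcal K_x)^{-1}v},
\]
that is, $q(x)=\kappa\ip v{(I-\kappa\mathcal K_x)^{-1}v}$. The identity $r_\circ(x)=\kappa\|v\|^2$ is immediate from the definition of $r_\circ$, or as the limit of $\kappa r_s/s$ as $s\downarrow0$.

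\emph{Step 2: spectral bound (the main obstacle).} The formula for $q$ requires $I-\kappa\mathcal K_x$ to be invertible with positive inverse. I expect this to be the delicate point, since $\kappa$ is exactly critical on the whole line. For $f\in H^1_0(-\infty,x)$, extended by zero, identity \eqref{eq:ground-square} gives
\[
\int(f'^2+f^2)-\kappa\int mf^2=\int\bigl(f'-\tfrac{u'}{u}f\bigr)^2\ge0,
\]
with equality only if $f$ is a multiple of $u$. This is impossible for $f\ne0$, because $f(x)=0<u(x)$. Hence the Dirichlet form is strictly positive, and by Birman--Schwinger every eigenvalue of $\kappa\mathcal K_x$ is $<1$. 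Since $\mathcal K_x\ge0$ is compact (trace class, as $m\in L^1$), the spectrum of $\kappa\mathcal K_x$ is a compact subset of $[0,1)$ and the resolvent is bounded. One must also check that the ODE solution produced in Step 1 is the one used in \Cref{thm:ground}, including the regularity at $x$; this is routine.

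\emph{Step 3: normalization.} Let $\nu$ be the spectral measure of $\kappa\mathcal K_x$ at $v$. It is a finite measure on $[0,1)$ of total mass $\|v\|^2$. By Step 1,
\[
\frac q\kappa=\int\frac{\dd\nu}{1-z},\qquad \frac{r_s}s=\int\frac{\dd\nu(z)}{1+(s/\kappa)z},\qquad \frac{r_\circ}\kappa=\nu([0,1)).
\]
If $q(x)>0$, define $\dd\mu_x=\kappa(1-z)^{-1}\dd\nu/q(x)$, a probability measure by the first identity. Substituting it into the second and third identities gives \eqref{eq:common}.

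If $q(x)=0$, then $q\ge\kappa\|v\|^2$, because the resolvent dominates the identity on a positive operator. Hence $v=0$, and the representations give $r_s(x)=r_\circ(x)=0$ for all $s$.
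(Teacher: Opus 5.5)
Your proposal is correct and is essentially the paper's proof. It uses the same Dirichlet-truncated operator $\mathcal K_x$, and the same resolvent representations obtained from the boundary derivative of the normalized solution through the integral equation with $G_x$. The bound $\|\kappa\mathcal K_x\|<1$ comes, as in the paper, from the ground-state square identity applied to the zero extension of $G_x(\sqrt m f)$, and $\mu_x$ is the spectral measure at $v$ reweighted by $\kappa(1-z)^{-1}/q(x)$. The one simplification available is to take $\varphi=e^{t-x}V_s(t)/V_s(x)$ directly, as the paper does, so that no uniqueness statement for decaying solutions is needed.
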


We use the filters at the two scales $s_0=32$, $s_1=64$ of the
determinant observations and the linear filter.

\begin{definition}[States]\label{def:states}
The {state} of $m$ at $x$ is $Y=(q,u_0,u_1,h)$ with
$u_i=r_{s_i}/s_i$ and $h=q-r_\circ$. By \Cref{thm:ground,lem:filter} it is
absolutely continuous on compact intervals, $Y'$ is integrable,
\begin{equation}\label{eq:state-flow}
 Y'=\mathsf b(Y)n^2-\mathsf d(Y),\qquad
 \mathsf b=(\kappa,1,1,0),\qquad
 \mathsf d=\bigl(Q(q),\,2u_0+s_0u_0^2,\,2u_1+s_1u_1^2,\,2h-q^2\bigr),
\end{equation}
and $Y\to Y_-=(0,0,0,0)$ at $-\infty$, $Y\to Y_+=(2,0,0,2)$ at
$+\infty$.
\end{definition}

The normalized coordinates $U_i=\kappa r_{s_i}/(s_iq)$ and
$\vartheta=1-r_\circ/q$ are the three integrals \eqref{eq:common} of one
probability measure on $[0,1]$. Their joint range is a moment space, the
set of values of three integrals over all probability measures, and it
can be computed exactly (\Cref{prop:three-filter}). After the change of
variables $w=\kappa(1-z)/(\kappa+s_0z)$, $U_0$ is the mean of $w$ and $U_1$ the mean of
$f(w)=(\kappa+s_0)w/(\kappa+s_1-(s_1-s_0)w)$, a convex function with $f(0)=0$ and
$f(1)=1$; hence $f(U_0)\le U_1\le U_0$, by Jensen's inequality and by
comparison with the chord. For given $(U_0,U_1)$, $\vartheta$ fills an interval
$[\vartheta_-,\vartheta_+]$ whose end points are attained by two-point measures. We
parametrize the pairs by $(U,T)\in[0,1]^2$: $U_0=U$, $U_1=(1-T)f(U)+TU$.

\begin{proposition}[The state region]\label{prop:state-region}
Let $\kappa>0$. There are continuous functions $V$ and
$0\le\vartheta_-\le\vartheta_+\le1$ on $[0,1]^2$, given explicitly in
\Cref{lem:range}, such that the set
\[
 \Omega_\kappa=\{Y_-\}\cup\Bigl\{\Bigl(q,\frac{qU}\kappa,\frac{qV(U,T)}\kappa,q\vartheta\Bigr):
 0<q\le2,\ (U,T)\in[0,1]^2,\ \vartheta_-(U,T)\le\vartheta\le\vartheta_+(U,T)\Bigr\}
\]
is compact, and the state of every $m$ as in \Cref{lem:filter} with
$\kappa(m)=\kappa$ lies in $\Omega_\kappa$ at every point. Every point of
$\Omega_\kappa$ with $q>0$ corresponds, through \eqref{eq:common}, to some
probability measure on $[0,1]$.
\end{proposition}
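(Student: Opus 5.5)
The plan has three parts: describe the moment space of the three normalized integrals, build $V$ and $\vartheta_\pm$ from it, and then settle compactness and the membership of states.

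\emph{Moment space.} By \Cref{thm:common-measure}, at every $x$ with $q(x)>0$ the triple $(U_0,U_1,\vartheta)=(\kappa r_{s_0}/(s_0q),\,\kappa r_{s_1}/(s_1q),\,1-r_\circ/q)$ is the vector of integrals of the functions $\psi_i(z)=(1-z)/(1+(s_i/\kappa)z)$ and $z$ against a probability measure $\mu_x$ on $[0,1]$; note that $1-\int(1-z)\,\dd\mu_x=\int z\,\dd\mu_x$. The joint range over all probability measures is the closed convex hull $\mathcal C$ of the curve $z\mapsto(\psi_0(z),\psi_1(z),z)$, $z\in[0,1]$. It is compact by Carath\'eodory's theorem and continuity of the curve.

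\emph{Reparametrization and the functions $V$, $\vartheta_\pm$.} Substitute $w=\psi_0(z)$, which is a decreasing bijection $[0,1]\to[0,1]$. Then $\psi_1=f(w)$ with $f$ convex, $f(0)=0$, $f(1)=1$, and $z=\zeta(w)$ for an explicit M\"obius map $\zeta$. Consequently the projection of $\mathcal C$ to $(U_0,U_1)$ is $\{f(U)\le U_1\le U\}$. Setting $V(U,T)=(1-T)f(U)+TU$ gives a continuous surjection from $[0,1]^2$ onto this projection.

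For fixed $(U_0,U_1)$ the fibre of $\mathcal C$ is an interval $[\vartheta_-,\vartheta_+]$, because $\mathcal C$ is compact and convex. I would compute its end points by the Markov--Krein/canonical-representation argument. The curve $(w,f(w),\zeta(w))$ is a Chebyshev system of order three, since $f$ and $\zeta$ are M\"obius-type functions and their Wronskian-type determinants have a fixed sign. Hence boundary points of $\mathcal C$ are attained by measures with at most two atoms, one of them at an end point $w\in\{0,1\}$. The lower and upper extremes are then obtained from the two-point measures $(1-\theta)\delta_{0}+\theta\delta_{w_1}$ and $(1-\theta)\delta_{w_2}+\theta\delta_{1}$, where $w_1,\theta$ (respectively $w_2,\theta$) are solved explicitly from the mean of $w$ equal to $U$ and the mean of $f$ equal to $V$. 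This yields the closed forms of \Cref{lem:range}. Continuity of $\vartheta_\pm$ on $[0,1]^2$ needs separate checking at the degenerate edges $T\in\{0,1\}$ and $U\in\{0,1\}$, where the two-point measures collapse to Dirac measures. I would handle it either directly from the formulas or by upper/lower semicontinuity of the support function of $\mathcal C$ along fibres combined with the explicit limits. I expect this step, the Chebyshev-system sign check together with continuity at the degenerate edges, to be the main obstacle.

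\emph{Compactness and membership.} The set $\Omega_\kappa$ is the image of the compact set $\{(q,U,T,\vartheta):0\le q\le2,\ (U,T)\in[0,1]^2,\ \vartheta_-\le\vartheta\le\vartheta_+\}$ under the continuous map $(q,U,T,\vartheta)\mapsto(q,qU/\kappa,qV/\kappa,q\vartheta)$. The face $q=0$ is sent to $Y_-$, so $\Omega_\kappa$ is compact. Membership of states is checked in two cases:
\begin{enumerate}
\item If $q(x)=0$, then $Y=Y_-$ by the last sentence of \Cref{thm:common-measure}.
\item If $q(x)>0$, write $u_i=r_{s_i}/s_i=qU_i/\kappa$ and $h=q-r_\circ=q\vartheta$. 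Here $(U_0,U_1,\vartheta)\in\mathcal C$, so there is $T$ with $U_1=V(U_0,T)$, and $\vartheta$ lies in the fibre.
\end{enumerate}
The converse statement, that every point of $\Omega_\kappa$ with $q>0$ comes from a measure, is immediate: each point of $\mathcal C$ is a moment vector of some probability measure, for instance the two-point measures above or their convex combinations.
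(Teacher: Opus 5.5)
Your proposal is correct, and its skeleton matches the paper's proof via \Cref{prop:three-filter,lem:range}. The shared steps are the substitution $w=\kappa(1-z)/(\kappa+s_0z)$, Jensen plus the chord for the $(U_0,U_1)$ region, the parametrization $V=(1-T)f(U)+TU$, two-point extremal measures with one atom at an end point, compactness as a continuous image, and the split $q=0$ versus $q>0$ for the states.

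The difference is in how you certify the fibre endpoints. You invoke Markov--Krein theory for the system $\{1,w,f,\zeta\}$. The paper instead proves the bound directly by Hermite interpolation. It builds $P_\pm\in\Span\{1,w,f\}$ that agree with $g_0=1-\zeta$ to first order at $y_\pm$ and with $g_0$ at $0$ (resp.\ $1$). It then writes $g_0-P_\pm$ in closed form as $w(w-y)^2$ (resp.\ $(w-1)(w-y)^2$) times a positive function. This is the standard proof of the Markov--Krein bound specialised to this case, so it is self-contained and fixes the orientation at once. The measure with the atom at $w=0$ (that is, $z=1$) gives the upper end $\vartheta_+$, and the one with the atom at $w=1$ gives $\vartheta_-$. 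Your list of the two measures, read respectively, has them the other way round.

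Your route is more conceptual, but the Chebyshev property you assert still needs an argument. The clean one is this: multiplication by the positive function $(\kappa_1-\delta w)(\kappa+s_0w)$ maps $\Span\{1,w,f,\zeta\}$ onto the cubic polynomials, and multiplication by $\kappa_1-\delta w$ maps $\Span\{1,w,f\}$ onto the quadratics. Hence nontrivial combinations have at most three, respectively two, zeros in $[0,1]$. This is essentially the computation the paper performs.

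Finally, the edge continuity you expect to be the main obstacle disappears once $y_\pm$ are written in $(U,T)$ and common factors are cancelled. Then $y_+=\kappa_1U(1-T)/(\kappa_1-\delta UT)$ and $y_-=(\kappa_0U+\kappa_1T(1-U))/(\kappa_0+\delta T(1-U))$ have denominators at least $\kappa_0$. So $V$ and $\vartheta_\pm$ are visibly continuous on the closed square, and their edge values reproduce the Dirac and end-point cases.
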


The last statement says that $\Omega_\kappa$ is the smallest region that can
be obtained from \Cref{thm:common-measure} alone. The pointwise inequality
of a certificate is required on all of $\Omega_\kappa$.

\subsection{Comparisons for the remaining observations}\label{sec:comparisons}

The tangent point $\nu_\kappa(q)$ defines a density with exactly this
amplitude. Every bounded speed $D$ on $(0,2)$ with $1/D$ integrable
defines a density with a prescribed ground parameter and ground
coordinate: place the amplitude $\sqrt{(D+Q)/\kappa}$ at the point $x$ where
$q=X^{-1}(x)$, $X'=1/D$ (\Cref{thm:primal-map}). With $D=d_\kappa$ the
amplitude is $\nu_\kappa$.

\begin{definition}[Comparison density]\label{def:local-profile}
For a calibration and $\kappa>0$, let $X_\kappa(q)=\int_1^qd_\kappa^{-1}$ and
$\bar m_\kappa(X_\kappa(q))=\nu_\kappa(q)^2$ on $(X_\kappa(0),X_\kappa(2))$,
$\bar m_\kappa=0$ elsewhere ($X_\kappa(0)$ and $X_\kappa(2)$ are finite by
\Cref{lem:local-profile}). Its state at $X_\kappa(q)$ is denoted by
$\bar Y(q)=(q,\bar u_0,\bar u_1,\bar h)$. Its {right filters} are the
functions of $q$
\[
 R_s(q)=\check r_s(-X_\kappa(q)),\qquad R_\circ(q)=\check r_\circ(-X_\kappa(q)),
\]
where $\check r_s,\check r_\circ$ are the filters of \Cref{lem:filter} of the
reflected density $\bar m_\kappa(-x)$.
\end{definition}

The right filters are the anticausal counterparts of the filters: they
see the comparison density to the right of a point. By
\Cref{lem:local-profile,lem:right-filters}, $\bar m_\kappa$ is bounded,
compactly supported, has ground parameter $\kappa$ and ground coordinate $q$
at $X_\kappa(q)$, and
\[
 d_\kappa\frac{\dd R_s}{\dd q}=-s\nu_\kappa^2+2R_s+R_s^2,\qquad
 d_\kappa\frac{\dd R_\circ}{\dd q}=-\kappa\nu_\kappa^2+2R_\circ,\qquad
 R_s(2)=R_\circ(2)=0 .
\]

\begin{theorem}[Signed comparisons]\label{thm:comparisons}

Let $m\ge0$ be integrable and not zero, with $E(m)<\infty$,
$\kappa(m)=\kappa$, ground coordinate $q$ and filters $r_s,r_\circ$. Put
\[
 a_s(q,r)=-\frac{sQ(q)+\kappa rR_s(q)}{d_\kappa(q)\bigl(2+r+R_s(q)\bigr)},
 \qquad
 a_\circ(q,r)=\frac{\kappa\bigl[2rR_\circ(q)-Q(q)(r+R_\circ(q))\bigr]}{2d_\kappa(q)}
\]
for $0<q<2$, and $a_s=a_\circ=0$ at $q\in\{0,2\}$. Then
$-sQ/(2d_\kappa)\le a_s(q,r_s)\le0$, $-\kappa Q/d_\kappa\le a_\circ(q,r_\circ)\le0$, and
\[
 H_m(s)-H_{\bar m_\kappa}(s)=\int a_s(q,r_s)\bigl(n^2-\nu_\kappa(q)^2\bigr)\dd x,
 \qquad
 S(m)-S(\bar m_\kappa)=\int a_\circ(q,r_\circ)\bigl(n^2-\nu_\kappa(q)^2\bigr)\dd x ,
\]
with integrable integrands.
\end{theorem}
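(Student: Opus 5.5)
Write $\bar r_s$ for the filter of $\bar m_\kappa$ at the point $X_\kappa(q)$, viewed as a function of $q$. The plan is to compare the two densities through the Jost-type factorization of the determinant. For the comparison density, the left filter $\bar r_s$ and the right filter $R_s$ combine into a two-sided object. Let $y_-$ be the solution of $-y''+(1+s\bar m_\kappa)y=0$ decaying at $-\infty$, and $y_+$ the one decaying at $+\infty$. Their logarithmic derivatives are $1+\bar r_s$ and $-(1+R_s)$. The Wronskian gives
\[
 \frac{1}{y_-y_+}=\frac{2+\bar r_s+R_s}{W},
\]
and the diagonal Green function of $-\partial^2+1+s\bar m_\kappa$ is $g_s=1/(2+\bar r_s+R_s)$ in these variables. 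The first-order variation of $H$ in $m$ is $\delta H=s\int g_s\,\delta m$. The exact (not first-order) difference should come from a mixed Green function: the product $y_-^{m}\cdot y_+^{\bar m}$, with the left solution for $m$ and the right solution for $\bar m_\kappa$, placed at the same $q$.

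\textbf{Step 1 (determinant identity).} Consider along $m$ the quantity
\[
 \Phi(x)=\log\bigl(2+r_s(x)+R_s(q(x))\bigr)+\int_{-\infty}^x r_s-\int^{q(x)}\frac{R_s}{d_\kappa}\dd p .
\]
This is the logarithm of the Wronskian of $m$'s left solution with $\bar m_\kappa$'s right solution, transported through $q$. Differentiate using $r_s'=sn^2-2r_s-r_s^2$, $q'=\kappa n^2-Q$, and the ODE for $R_s$ in $q$. The $n$-independent parts should cancel exactly when $n^2$ is replaced by $\nu_\kappa^2$, because then $q'=d_\kappa$ and the derivative reduces to the comparison case. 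What remains is linear in $n^2-\nu_\kappa(q)^2$. Its coefficient is
\[
 \frac{s-\kappa R_s(2+r_s+R_s)^{-1}\cdots}{\cdots},
\]
which after simplification should be $-a_s(q,r_s)$ up to the term $r_s$.

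Integrating over $\R$ uses the endpoint values: $q\to0,2$, $r_s\to0$, $R_s(2)=0$, and $\int_0^2 R_s/d_\kappa\,\dd p=H_{\bar m_\kappa}(s)$. That last fact holds because along $\bar m_\kappa$ one has $\dd x=\dd q/d_\kappa$, and the right filter integrates to the same determinant by the reflection symmetry of $H$. This gives $H_m(s)-H_{\bar m_\kappa}(s)=\int a_s(n^2-\nu_\kappa^2)$. The chain rule along $q$ (as in \eqref{eq:price-identity}) replaces any inverse of $q$, so monotonicity of $q$ is not needed.

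\textbf{Step 2 (second trace).} Run the same computation for the linear filter, or take the limit $s\downarrow0$ of Step 1 after scaling by $\kappa^2/s^2$ and using $S=\frac{\kappa^2}{4}\tr K^2$ together with $r_\circ=\lim\kappa r_s/s$. The direct route is cleaner. Use the quantity
\[
 \tfrac12 r_\circ^2 \;\text{ paired with }\; r_\circ R_\circ ,
\]
since $\int r_\circ^2$ is a quadratic, not logarithmic, functional. Differentiate
\[
 r_\circ R_\circ(q)+\int_{-\infty}^x r_\circ^2-\int^{q}\frac{R_\circ^2}{d_\kappa} ;
\]
the $n$-free part cancels, and the coefficient of $n^2-\nu_\kappa^2$ comes out as $a_\circ$ with its factor $\kappa/(2d_\kappa)$.

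\textbf{Step 3 (signs, bounds and integrability).} For the signs, note that $0\le r_s$ and $R_s\le(s/\kappa)R_\circ$, which is the right-filter version of \Cref{lem:filter}. Also $R_\circ$ is dominated by the right ground coordinate $2-q$ (again by reflection). The inequality $\kappa rR_s\ge0$ gives $a_s\le0$. For the lower bound, $sQ+\kappa rR_s\le\frac s2 Q(2+r+R_s)$ needs $\kappa rR_s\le\frac s2Q(r+R_s)$. Using $r\le(s/\kappa)q$ and $R_s\le(s/\kappa)(2-q)$, this reduces to $2rR\le$ a convex combination bounded by $q(2-q)(r+R)$. That is a harmonic-mean inequality, which should close via the same bounds. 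The $a_\circ$ bounds follow from $r_\circ\le q$ and $R_\circ\le2-q$ in the same way.

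Integrability follows from $|a|\lesssim Q/d_\kappa\le1/2$, using $d_\kappa>2Q$ from \Cref{lem:base-identity}. Combined with $n^2,\nu_\kappa(q)^2$, $\kappa n^2=q'+Q$ and $Q\in L^1$, this gives $a\,n^2\in L^1$; $\nu_\kappa^2$ is handled the same way.

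\textbf{Main obstacle.} The hardest part is Step 1: making the exact cancellation work. The right filter lives in the comparison geometry ($\dd q/d_\kappa$) while $m$ moves with speed $q'$, which need not be monotone or positive. Boundary terms must also be justified when $m$ is not compactly supported. I would first verify the algebraic identity for the derivative symbolically, then handle the endpoints by truncation using $r_s(\pm\infty)=0$ and continuity of $R_s$ at $q=0,2$.
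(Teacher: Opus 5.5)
Your route is the paper's: a potential in $(q,r)$ that couples the left filter of $m$ with the right filter of $\bar m_\kappa$. Step~2, however, fails as written, and Step~1 contains an endpoint error that is only compensated by accident.

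\textbf{Step 1.} Take $2$ as the base point of $\int^q$. Then your $\Phi$ equals $\log2+W_s(q,r_s)+\int_{-\infty}^xr_s$, where $W_s(q,r)=\int_q^2R_s/d_\kappa\,\dd p+\log\frac{2+r+R_s(q)}2$ is the paper's potential. Indeed $\Phi'=a_s(q,r_s)(n^2-\nu_\kappa^2)$ exactly: the coefficient is $+a_s$ and nothing is left over. But the endpoint identity you state is false. The reflected filter keeps decaying beyond the support, so
\[
 H_{\bar m_\kappa}(s)=\int_0^2\frac{R_s}{d_\kappa}\dd p+\log\Bigl(1+\frac{R_s(0)}2\Bigr).
\]
The extra term is supplied by $\Phi(-\infty)=\log(2+R_s(0))+\int_0^2R_s/d_\kappa\,\dd p$. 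So the result survives only because $R_s(0)>0$ enters the left endpoint value, which you did not list. The paper gets this automatically as $W_s(0,0)=\log c_s^-$ from the Jost solution $\bar v_s$.

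Truncation also does not justify the chain rule. Since $d_\kappa\sim Q^{2/3}$, the term $\int^qR_s/d_\kappa$ by itself is not Lipschitz at $q=0$, and $q$ need not be monotone. What works is that the singular parts cancel: $\partial_q$ of the combination is $-(rR_s+s\nu_\kappa^2)/(d_\kappa(2+r+R_s))$. This is bounded on the triangle $0\le r\le sq/\kappa$, which contains $(q,r_s)$.

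\textbf{Step 2: the gap.} Differentiate $r_\circ R_\circ(q)+\int_{-\infty}^xr_\circ^2-\int^qR_\circ^2/d_\kappa\,\dd p$ along $m$ and set $n=\nu_\kappa(q)$. The $n$-free part that remains is
\[
 \bigl(R_\circ-r_\circ\bigr)\bigl(\kappa\nu_\kappa^2-r_\circ-R_\circ\bigr).
\]
This does not vanish: it tends to $-R_\circ(0)^2<0$ as $x\to-\infty$. The coefficient of $n^2-\nu_\kappa^2$ is also not $a_\circ$.

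The quadratic term must be $\frac14(r_\circ+R_\circ(q))^2$. For
\[
 \Phi_\circ=\tfrac14\bigl(r_\circ+R_\circ(q)\bigr)^2+\int_{-\infty}^xr_\circ^2+\int_q^2\frac{R_\circ^2}{d_\kappa}\dd p
\]
the $n$-free part is $(R_\circ^2-r_\circ^2)+r_\circ^2-R_\circ^2=0$ and the coefficient is exactly $a_\circ$. The endpoint values give $S(m)-\bigl[\int_0^2R_\circ^2/d_\kappa\,\dd p+\frac14R_\circ(0)^2\bigr]=S(m)-S(\bar m_\kappa)$, again with a tail term. By the right-filter equation, $\frac\kappa2\int_q^2\nu_\kappa^2R_\circ/d_\kappa\,\dd p=\int_q^2R_\circ^2/d_\kappa\,\dd p+\frac14R_\circ(q)^2$. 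So $\Phi_\circ$ is the paper's $W_\circ=\frac{r^2}4+\frac{rR_\circ}2+G_\circ(q)$ plus $\int_{-\infty}^x r_\circ^2$.

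Your $s\downarrow0$ alternative can also work, but only if you subtract the first-order term $\frac s2(\int m-\int\bar m_\kappa)$ before scaling by $s^{-2}$; otherwise the limit diverges.

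\textbf{Step 3.} This is fine, with one caveat: $\nu_\kappa(q)^2$ need not be integrable in $x$, since it is of order $Q^{2/3}$. Integrability of $a\,\nu_\kappa^2$ comes from $\nu_\kappa^2/d_\kappa\le3/(2\kappa)$, which gives $|a|\nu_\kappa^2\lesssim Q$.
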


Each identity is the integral of the derivative of an explicit function
of $(q,r_s)$ along $m$ (\Cref{lem:comparison-general}): the derivative is
$-r_s$ plus a multiple of the amplitude defect $n^2-\nu_\kappa(q)^2$, whose
coefficient combines the filter of $m$ (from the left) with the right
filter of $\bar m_\kappa$ (from the right). The coefficients are bounded by
multiples of $\nu_\kappa$ (\Cref{lem:local-profile}), so the integrands are
integrable whenever $E(m)<\infty$. Since the observations are equality
constraints, their multipliers may have either sign. For $\Theta_{\mathrm{cert}}$
the comparison density is not in the relaxed class: its observations
differ from the targets (\Cref{tab:reference-enclosures}), and these
differences enter the final bound.

\subsection{The residual and the pointwise criterion}\label{sec:criterion}

Let $\xi_0,\xi_1,\chi,\tau$ be real numbers and put
\[
 a=\xi_0a_{s_0}+\xi_1a_{s_1}+\chi a_\circ ,
\]
evaluated at $(q,r_{s_0})$, $(q,r_{s_1})$ and $(q,r_\circ)$, that is, at
$r_{s_i}=s_iu_i$ and $r_\circ=q-h$. Let $\Psi\in C^1(\R^4)$ with
$\Psi(Y_-)=\Psi(Y_+)$, and put $\mathsf b_\Psi=\nabla\Psi\cdot\mathsf b$ and
$\mathsf d_\Psi=\nabla\Psi\cdot\mathsf d$, so that $(\Psi\circ Y)'=\mathsf b_\Psi n^2-\mathsf d_\Psi$
along every density. Let $\eta$ be a bounded function on $[0,2]$ that is
continuous outside a finite set. The {residual} at a state
$Y\in\Omega_\kappa$ and an amplitude $n\ge0$ is
\begin{equation}\label{eq:residual}
 \mathscr R_\kappa(Y,n)=\rho(n,\nu_\kappa)-a\,(n^2-\nu_\kappa^2)
 +\frac{\eta(q)}\kappa(\kappa n^2-Q)+\frac\tau\kappa Q
 +\mathsf b_\Psi n^2-\mathsf d_\Psi ,
\end{equation}
with $\nu_\kappa=\nu_\kappa(q)$ and $Q=Q(q)$. Each term after $\rho$ has a known
integral along every density with $\kappa(m)=\kappa$ and $\int m=1$: the
comparison identities, the price identity \eqref{eq:price-identity}, the
mass identity, and $\int(\Psi\circ Y)'=0$. The potential $\Psi$ is a null
Lagrangian; it does not change the bound, but it changes the integrand.

\begin{proposition}[Integrating the residual]\label{prop:certificate-mechanism}
Suppose that $\mathscr R_\kappa(Y,n)\ge0$ for all $Y\in\Omega_\kappa$
with $0<q<2$ and all $n\ge0$, and that $\mathscr R_\kappa(Y,0)\ge0$ for
$Y\in\Omega_\kappa$ with $q\in\{0,2\}$. Then every $m\in\mathcal M^{1}_c$ with
$\kappa(m)=\kappa$ satisfies
\[
 E(m)\ge\mathcal B_\kappa(m)+\sum_{i=0}^1\xi_i\bigl[H_m(s_i)-H_{\bar m_\kappa}(s_i)\bigr]
 +\chi\bigl[S(m)-S(\bar m_\kappa)\bigr]-\mathcal P_\kappa,
 \qquad
 \mathcal P_\kappa=\frac1\kappa\int_0^2\eta+\frac{\tau(\kappa-2)}\kappa .
\]
\end{proposition}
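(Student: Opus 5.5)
The argument integrates the pointwise inequality $\mathscr R_\kappa(Y(x),n(x))\ge0$ along the line and evaluates each term of \eqref{eq:residual} through an identity already established. Fix $m\in\mathcal M^1_c$ with $\kappa(m)=\kappa$, write $n=\sqrt m$, and let $Y=(q,u_0,u_1,h)$ be its state. By \Cref{prop:state-region}, $Y(x)\in\Omega_\kappa$ for every $x$. Since $m$ is bounded with compact support, $E(m)<\infty$. By \Cref{thm:ground}, $q\equiv0$ to the left of the support and $q\equiv2$ to the right, so outside a compact interval the state equals $Y_\pm$ and $n=0$. Wherever $0<q<2$ the first hypothesis applies. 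On the set where $q\in\{0,2\}$ we have $Q(q)=0$, so $q'=\kappa n^2$ a.e. On that set $q$ is constant, hence $q'=0$ a.e. there and therefore $n=0$ a.e. there, and the second hypothesis applies. Thus $\mathscr R_\kappa(Y,n)\ge0$ a.e. on $\R$.

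Next I would check that each term of $\mathscr R_\kappa$ is integrable and has a known integral.
\begin{itemize}
\item The term $\rho(n,\nu_\kappa(q))$ is handled by \Cref{lem:base-identity}.
\item The comparison term $a(n^2-\nu_\kappa^2)$ is integrable by \Cref{thm:comparisons}. By linearity its integral equals $\sum_i\xi_i[H_m(s_i)-H_{\bar m_\kappa}(s_i)]+\chi[S(m)-S(\bar m_\kappa)]$, since $r_{s_i}=s_iu_i$ and $r_\circ=q-h$ are exactly the arguments at which $a$ is evaluated.
\item For the price term, $\kappa n^2-Q(q)=q'$. So I need \eqref{eq:price-identity}, $\int\eta(q)q'=\int_0^2\eta$; its proof is part of this proposition. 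Since $q$ is Lipschitz and equals $0$ and $2$ at the ends of a compact interval, take a primitive $\mathcal N(p)=\int_0^p\eta$, which is Lipschitz on $[0,2]$. The chain rule for a Lipschitz function composed with an absolutely continuous one gives $(\mathcal N\circ q)'=\eta(q)q'$ a.e. At points where $\eta$ is discontinuous, on the preimage of the finite exceptional set either $q'=0$ a.e. or the set is null, so the identity still holds. Integrating gives $\mathcal N(2)-\mathcal N(0)$.
\item The mass term uses $\int Q(q)=\kappa-2$ from \Cref{thm:ground}, giving $\tau(\kappa-2)/\kappa$.
\item The potential term is $\mathsf b_\Psi n^2-\mathsf d_\Psi=(\Psi\circ Y)'$ by \eqref{eq:state-flow} and the chain rule for $C^1$ $\Psi$ composed with absolutely continuous $Y$. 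It integrates to $\Psi(Y_+)-\Psi(Y_-)=0$, because $Y$ is constant outside a compact interval.
\end{itemize}

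Integrating, $0\le\int\rho-\bigl(\text{comparison differences}\bigr)+\frac1\kappa\int_0^2\eta+\frac{\tau(\kappa-2)}\kappa$, that is, $\int\rho(n,\nu_\kappa(q))\dd x\ge\sum_i\xi_i[\cdots]+\chi[\cdots]-\mathcal P_\kappa$. Now insert this into \eqref{eq:base-deficit}. That identity gives $E(m)=\mathcal B_\kappa(m)+\int\rho+\omega\int\mathcal T$ with $\omega\ge0$ and $\mathcal T\ge0$. Dropping the $\mathcal T$ term yields the claimed inequality.

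The main obstacle is justifying integrability, since the residual is integrated as a sum of separately integrable pieces. In particular $\mathsf b_\Psi n^2$ and $\mathsf d_\Psi$ must be integrable. This holds because $\nabla\Psi$ is bounded on the compact set $\Omega_\kappa$, $\int n^2=1$, and $\mathsf d$ is bounded on $\Omega_\kappa$ and vanishes outside a compact interval: $\mathsf d(Y_\pm)=0$, as $Q(0)=Q(2)=0$ and $2\cdot2-2^2=0$. A second delicate point is the chain rule when $\eta$ is discontinuous. I would handle it via the Lipschitz primitive as above, noting that $q'=0$ a.e. on level sets of $q$.
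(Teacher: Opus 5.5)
Your proof follows the paper's argument essentially step by step. The steps are: a.e.\ nonnegativity of $\mathscr R_\kappa$ via $q'=0$ a.e.\ on the level sets $\{q=0\}$ and $\{q=2\}$; integrability of each term; the price identity through the Lipschitz primitive of $\eta$; the mass identity; the null Lagrangian; and finally \eqref{eq:base-deficit} with the $\omega\mathcal T$ term dropped.

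One claim in it is false, however: the state is \emph{not} equal to $Y_+$ outside a compact interval, and $\mathsf d(Y)$ does not vanish there. To the right of the support one has $q=2$ and $n=0$. But the filters satisfy $r_s'=-2r_s-r_s^2$ and $r_\circ'=-2r_\circ$ with positive values at the right end of the support. Hence $u_0,u_1>0$ and $h=2-r_\circ<2$ on the whole right half-line, decaying only like $e^{-2x}$. Correspondingly $\mathsf d(Y)=(0,\,2u_0+s_0u_0^2,\,2u_1+s_1u_1^2,\,-2r_\circ)\neq0$ there. (On the left, $Y=Y_-$ exactly, as you say.) Your a.e.\ nonnegativity step is unaffected, since the second hypothesis covers every $Y\in\Omega_\kappa$ with $q\in\{0,2\}$, not only $Y_\pm$.

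The two conclusions you draw from the false claim are still true, and the paper justifies them as follows.
\begin{enumerate}
\item Integrability of $\mathsf d_\Psi$. Since $\Psi\in C^1(\R^4)$ and $Y$ stays in the compact set $\Omega_\kappa$, one has $|(\Psi\circ Y)'|\le\sup_{\Omega_\kappa}|\nabla\Psi|\,|Y'|$ with $Y'\in L^1$. Hence $\mathsf d_\Psi=\mathsf b_\Psi n^2-(\Psi\circ Y)'$ is integrable.
\item The vanishing integral. Since $Y(x)\to Y_\pm$ as $x\to\pm\infty$ and $\Psi$ is continuous,
\[
\int(\Psi\circ Y)'=\lim_{R\to\infty}\bigl[\Psi(Y(R))-\Psi(Y(-R))\bigr]=\Psi(Y_+)-\Psi(Y_-)=0.
\]
\end{enumerate}
With these replacements the proof is complete.
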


The amplitude enters the residual only through $n^2$ and $n^3$, and the
infimum over $n\ge0$ is explicit.

Explicitly, for $\nu>0$ and $y\in\R$,
\[
 \mathcal C(\nu,y):=\max_{n\ge0}\bigl\{y(n^2-\nu^2)-\rho(n,\nu)\bigr\}=\nu^3\psi(y/\nu),
 \qquad\psi(x)=-x-\tfrac12+\tfrac4{27}\bigl(x+\tfrac32\bigr)_+^3,
\]
and $\psi$ is convex and twice continuously differentiable (\Cref{lem:conjugate}).

\begin{theorem}[Pointwise criterion]\label{thm:master}
With the data above, $\mathscr R_\kappa(Y,n)\ge0$ for all $n\ge0$ if
and only if
\begin{equation}\label{eq:master}
 \mathcal C\bigl(\nu_\kappa,\ a-\eta-\mathsf b_\Psi\bigr)
 \le\frac{\eta d_\kappa+\tau Q}\kappa+\mathsf b_\Psi\nu_\kappa^2-\mathsf d_\Psi .
\end{equation}
For every $F\in C^1(\R)$ with $F(0)=F(2)$, the replacement
$\Psi\mapsto\Psi-F(q)$, $\eta\mapsto\eta+\kappa F'(q)$ changes neither
the two sides of \eqref{eq:master} nor $\mathcal P_\kappa$. If
$\eta=-\tau Q/d_\kappa+\nu_\kappa e$ on $(0,2)$ for a bounded function $e$
that is continuous outside a finite set, then the right side of
\eqref{eq:master} is
$\nu_\kappa e\,d_\kappa/\kappa+\mathsf b_\Psi\nu_\kappa^2-\mathsf d_\Psi$ and
\begin{equation}\label{eq:centered-price}
 \mathcal P_\kappa=\frac1\kappa\int_0^2\nu_\kappa e\dd q
 +\tau\Bigl(1-\int\bar m_\kappa\Bigr).
\end{equation}
\end{theorem}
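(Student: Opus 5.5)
The plan is to verify the three assertions of \Cref{thm:master} separately. Each is an algebraic identity that uses only the definitions of $\mathscr R_\kappa$, $d_\kappa$, $\mathcal C$, $\mathsf b$ and $\mathsf d$, together with the properties of the comparison density.

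For the equivalence, fix $Y\in\Omega_\kappa$ with $0<q<2$, so that $\nu=\nu_\kappa(q)>0$ by \Cref{lem:base-identity}. Write $y=a-\eta-\mathsf b_\Psi$ and collect the terms of \eqref{eq:residual} that involve $n^2$ around $\nu^2$:
\[
 -a(n^2-\nu^2)+\eta n^2+\mathsf b_\Psi n^2
 =-y(n^2-\nu^2)+\eta\nu^2+\mathsf b_\Psi\nu^2 .
\]
Using $d_\kappa=\kappa\nu^2-Q$, the remaining constant terms combine to
\[
 \eta\nu^2-\frac{\eta Q}\kappa+\frac{\tau Q}\kappa+\mathsf b_\Psi\nu^2-\mathsf d_\Psi
 =\frac{\eta d_\kappa+\tau Q}\kappa+\mathsf b_\Psi\nu^2-\mathsf d_\Psi=:\mathrm{RHS}.
\]
This gives $\mathscr R_\kappa(Y,n)=\mathrm{RHS}-\bigl[y(n^2-\nu^2)-\rho(n,\nu)\bigr]$. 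Hence $\inf_{n\ge0}\mathscr R_\kappa\ge0$ holds if and only if $\mathrm{RHS}\ge\max_{n\ge0}\{y(n^2-\nu^2)-\rho\}=\mathcal C(\nu,y)$. This maximum is attained and has the closed form given by \Cref{lem:conjugate}, which I take as given. That is exactly \eqref{eq:master}.

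For the gauge invariance, put $\tilde\Psi=\Psi-F(q)$. Then $\tilde\Psi\in C^1$, and $\tilde\Psi(Y_-)=\tilde\Psi(Y_+)$ because $F(0)=F(2)$. Since the first components of $\mathsf b$ and $\mathsf d$ are $\kappa$ and $Q(q)$, we get $\mathsf b_{\tilde\Psi}=\mathsf b_\Psi-\kappa F'$ and $\mathsf d_{\tilde\Psi}=\mathsf d_\Psi-QF'$. With $\tilde\eta=\eta+\kappa F'$ the combination $y=a-\eta-\mathsf b_\Psi$ is unchanged, so the left side of \eqref{eq:master} is unchanged. The right side changes by
\[
 F'd_\kappa-\kappa F'\nu_\kappa^2+QF'=F'(d_\kappa-\kappa\nu_\kappa^2+Q)=0 .
\]
The price term changes by $\kappa^{-1}\int_0^2\kappa F'=F(2)-F(0)=0$. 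Finally, $\tilde\eta$ stays bounded and continuous outside a finite set because $F'$ is continuous on $[0,2]$.

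For the centered price, substitute $\eta=-\tau Q/d_\kappa+\nu_\kappa e$. This $\eta$ is admissible because $d_\kappa>2Q$ by \Cref{lem:base-identity}, so $0\le Q/d_\kappa<\tfrac12$, and $\nu_\kappa$ is bounded and continuous. Then $\eta d_\kappa+\tau Q=\nu_\kappa e\,d_\kappa$, which gives the stated right side. For $\mathcal P_\kappa$ one needs $\int_0^2Q/d_\kappa\dd q$. Change variables $x=X_\kappa(q)$ in \Cref{def:local-profile}, with $\dd x=\dd q/d_\kappa$:
\[
 \int\bar m_\kappa=\int_0^2\frac{\nu_\kappa^2}{d_\kappa}\dd q=\frac1\kappa\int_0^2\frac{d_\kappa+Q}{d_\kappa}\dd q=\frac1\kappa\Bigl(2+\int_0^2\frac Q{d_\kappa}\Bigr).
\]
Hence $\int_0^2Q/d_\kappa=\kappa\int\bar m_\kappa-2$, and
\[
 \mathcal P_\kappa=\frac1\kappa\int_0^2\nu_\kappa e
 -\frac\tau\kappa\Bigl(\kappa\int\bar m_\kappa-2\Bigr)+\frac{\tau(\kappa-2)}\kappa ,
\]
which simplifies to \eqref{eq:centered-price}.

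The only delicate points are the following. First, the pointwise equivalence is meant for $0<q<2$, where $\nu_\kappa>0$ and $\mathcal C$ is defined. Second, the change of variables requires $X_\kappa$ to be a strictly increasing absolutely continuous bijection onto the support of $\bar m_\kappa$. This is supplied by \Cref{lem:local-profile} via $d_\kappa>0$ and the integrability of $1/d_\kappa$.
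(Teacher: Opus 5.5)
Your proposal is correct and follows the paper's proof essentially line by line. It uses the same regrouping of $\mathscr R_\kappa$ around $\nu_\kappa^2$ via $d_\kappa=\kappa\nu_\kappa^2-Q$, the same use of the first components $\kappa$ and $Q$ of $\mathsf b,\mathsf d$ for the gauge step, and the same identity $\int\bar m_\kappa=\kappa^{-1}\bigl(2+\int_0^2Q/d_\kappa\bigr)$ for the price. Your restriction to $0<q<2$ is harmless but unnecessary, since the regrouping never divides by $\nu_\kappa$ and \Cref{lem:conjugate} defines $\mathcal C(\nu,y)$ for all $\nu\ge0$.
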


The second statement is a gauge freedom: a potential that depends on $q$
alone can be traded for a price. We use it to normalize the potential so
that it vanishes on the comparison trajectory, $\Psi=\Psi_{\rm raw}-\Psi_{\rm raw}(\bar Y(q))$.
The last statement fixes the form of the price: the part $-\tau Q/d_\kappa$
absorbs the mass multiplier, and the free part $\nu_\kappa e$ vanishes where
$\nu_\kappa$ does, at $q=0,2$, so that the criterion stays regular there. The
cost of the price is $\frac1\kappa\int_0^2\nu_\kappa e\dd q$. The lower bound is now
reduced to the inequality \eqref{eq:master} on the compact set $\Omega_\kappa$,
in which the amplitude no longer appears.

\subsection{Dual certificates}\label{sec:dual-certificates}

Near the ends of the transition the comparison amplitude vanishes like
$Q^{1/3}$. The two ends are therefore parametrized by $t\in[0,1]$ through
\[
 q_{\mathrm L}(t)=t^3,\qquad q_{\mathrm R}(t)=2-t^3,
\]
the left and the right {side}; on each side the comparison amplitude
vanishes linearly in $t$, and we write
$\nu_\kappa(q_{\mathrm s}(t))=t\,\hat\nu_\kappa(t)$.

\begin{definition}[Dual certificate]\label{def:dual}
A dual certificate $\Theta$ consists of
\begin{enumerate}
\item a calibration (\Cref{def:base}) with rational $\varsigma,b>0$,
$\omega\ge0$, and $g=g_\circ$ the contact potential
(\Cref{thm:contact}) of a reference profile
\[
 b_\circ(Q)=p_0+\sum_jp_j\frac{Q}{Q+\zeta_j}
 \qquad(\text{finitely many rational }p_0,p_j,\zeta_j>0);
\]
\item a potential
\begin{equation}\label{eq:transport-potential}
 \Psi_{\rm raw}=(2-q)^2\bigl[P_h(q)h+qP_0(q)u_0+qP_1(q)u_1
 +P_{00}(q)u_0^2+P_{01}(q)u_0u_1+P_{11}(q)u_1^2\bigr]
\end{equation}
with polynomials $P_\bullet$ with rational coefficients;
\item rational multipliers $\xi_0,\xi_1,\chi,\tau$;
\item for each side $\mathrm s\in\{\mathrm L,\mathrm R\}$, a partition of
$[0,1]$ into closed intervals $\mathsf C$ ({cells}) and a rational
affine function $e_{\mathsf C}$ on each cell (the {price}).
\end{enumerate}
\end{definition}

The reference profile is a positive combination of nondecreasing
functions of $Q$, as \Cref{thm:contact} requires. The factor $(2-q)^2$ and
the extra factor $q$ in the linear terms of $\Psi_{\rm raw}$ make the
normalized criterion below continuous up to the ends $t=0$ of both sides,
and $\mathscr R_\kappa(Y,0)=0$ at $q\in\{0,2\}$ (\Cref{lem:regular-criterion}(ii),(iv)).

For $\kappa>0$, the certificate uses the potential
$\Psi=\Psi_{\rm raw}-\Psi_{\rm raw}(\bar Y(q))$ and the price function
\[
 \eta=-\tau Q/d_\kappa+\nu_\kappa e,\qquad
 e(q_{\mathrm s}(t))=e_{\mathsf C}(t)\quad(t\in\mathsf C),
\]
where at a common end point of two cells either price may be used. With
$q=q_{\mathrm s}(t)$, the coordinates $(U,T)$ of \Cref{prop:state-region}, and
$\vartheta$ at one of its ends $\vartheta_\sigma$, $\sigma\in\{-,+\}$, the criterion \eqref{eq:master} divided by $\nu_\kappa^3$ takes the form
\begin{equation}\label{eq:closed-criterion-main}
 \psi\bigl(\mathfrak b_\sigma-e\bigr)-\mathfrak c_\sigma-e(1-\mathfrak l)\le0
\end{equation}
with explicit functions $\mathfrak b_\sigma,\mathfrak c_\sigma,\mathfrak l$ of $(t,U,T)$ and $\kappa$
that are continuous on $[0,1]^3\times(0,\infty)$ (\Cref{lem:regular-criterion}).
The function $\mathfrak l$ depends only on $(t,\kappa)$, and $\mathfrak b_\sigma,\mathfrak c_\sigma$
are linear in the multipliers and in $\Psi_{\rm raw}$. Since the left side
of \eqref{eq:closed-criterion-main} is convex in $\vartheta$, the two ends
$\vartheta_\pm$ suffice.

\begin{definition}[Admissibility and value]\label{def:admissible}

A dual certificate is {admissible at $\kappa$} if, for every side,
cell $\mathsf C$ and end $\sigma$,
\begin{equation}\label{eq:witness}
 \max_{(t,U,T)\in\mathsf C\times[0,1]^2}
 \Bigl[\psi\bigl(\mathfrak b_\sigma-e_{\mathsf C}\bigr)-\mathfrak c_\sigma
 -e_{\mathsf C}(1-\mathfrak l)\Bigr]<0 .
\end{equation}
Its {value} at a density $m$ with $\kappa(m)=\kappa$ is
\begin{equation}\label{eq:dual-value}
 \ell_\kappa(\Theta;m)=\mathcal B_\kappa(m)
 +\sum_{i=0}^1\xi_i\bigl[H_m(s_i)-H_{\bar m_\kappa}(s_i)\bigr]
 +\chi\bigl[S(m)-S(\bar m_\kappa)\bigr]-\mathcal P_\kappa,
\end{equation}
with
\begin{equation}\label{eq:closed-price-main}
 \mathcal P_\kappa=\sum_{\mathrm s,\mathsf C}\int_{\mathsf C}
 \frac{3t^3\hat\nu_\kappa(t)}{\kappa}e_{\mathsf C}(t)\dd t
 +\tau\Bigl(1-\int\bar m_\kappa\Bigr).
\end{equation}
Its {target value} is the value at $\kappa=k$ with the target
observations:
\begin{equation}\label{eq:dual-target-value}
 \ell_*(\Theta)=\mathcal B_*
 +\sum_{i=0}^1\xi_i\,\delta H_i+\chi\,\delta S
 -\Pi-\tau\Bigl(1-\int\bar m_k\Bigr),
\end{equation}
where $\mathcal B_*$ is defined in \eqref{eq:baseline-value} and
\[
 \Pi=\sum_{\mathrm s,\mathsf C}\int_{\mathsf C}
 \frac{3t^3\hat\nu_k(t)}{k}e_{\mathsf C}(t)\dd t,
\]
\[
 \delta H_i=H_*(s_i)-H_{\bar m_k}(s_i),\qquad
 \delta S=\frac{k^2}6-S(\bar m_k).
\]
\end{definition}

On each side $|\dd q|=3t^2\dd t$ and $\nu_\kappa=t\hat\nu_\kappa$, so the sum in
\eqref{eq:closed-price-main} is the price cost $\frac1\kappa\int_0^2\nu_\kappa e\dd q$
of \eqref{eq:centered-price}. The target value has four parts: the base
value $\mathcal B_*$ of the calibration on $\mathcal A_{\mathrm{obs}}(\varsigma)$, the terms
$\xi_i\delta H_i$ and $\chi\delta S$, of either sign, from the mismatch between the
observations of the comparison density and the targets, the cost $\Pi$
of the price, and a mass correction.

\begin{theorem}[Dual certificates bound the energy]\label{thm:dual}
Let $\Theta$ be admissible at $k$. Then
\begin{enumerate}
\item every $m\in\mathcal A_{\mathrm{obs}}(\varsigma)$ satisfies $E(m)\ge\ell_*(\Theta)$;
\item for every $\varepsilon>0$ there is $N_0$ such that
$N^{-3}\E_N(J)\ge\ell_*(\Theta)-\varepsilon$ for all $N\ge N_0$ and
$J\in\overline{\mathcal Y}_N$. Hence $\underline{\mathsf e}\ge\ell_*(\Theta)$ and, if
$\ell_*(\Theta)\ge0$, $\mathscr G(\mu,L)\ge2\sqrt{\ell_*(\Theta)}$ for all
$0<\mu<L$.
\end{enumerate}
\end{theorem}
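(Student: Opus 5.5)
Part (i) comes from \Cref{prop:certificate-mechanism}, and part (ii) from a stability version of the same argument combined with \Cref{thm:packing,thm:transfer}.

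For (i), fix $m\in\mathcal A_{\mathrm{obs}}(\varsigma)$ and put $\kappa=k$. The first task is to show that admissibility at $k$ gives the hypotheses of \Cref{prop:certificate-mechanism}. Take $\Psi=\Psi_{\rm raw}-\Psi_{\rm raw}(\bar Y(q))$ and $\eta=-\tau Q/d_k+\nu_k e$, with $e$ defined cellwise. \Cref{thm:master} says that $\mathscr R_k(Y,n)\ge0$ for all $n\ge0$ is equivalent to \eqref{eq:master}. On each side, divide by $\nu_k^3>0$; by \Cref{lem:regular-criterion} this yields \eqref{eq:closed-criterion-main}. The left side is convex in $\vartheta$, so it suffices to check the two ends $\vartheta_\pm$, and \eqref{eq:witness} makes it strictly negative there. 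Every state in $\Omega_k$ with $0<q<2$ is covered, because $q=q_{\mathrm s}(t)$ with $t\in(0,1]$ on one of the two sides. At $q\in\{0,2\}$ the regularity lemma gives $\mathscr R_k(Y,0)=0$.

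\Cref{prop:certificate-mechanism} then gives $E(m)\ge\ell_k(\Theta;m)$. Substituting the target observations identifies this with $\ell_*(\Theta)$:
\begin{itemize}
\item $\mathcal B_k(m)=\mathcal B_*$ by \eqref{eq:base-observable};
\item $H_m(s_i)=H_*(s_i)$ gives the terms $\delta H_i$;
\item $S(m)=\frac{k^2}4\tr K_m^2=\frac{k^2}{6}$ gives $\delta S$;
\item \eqref{eq:closed-price-main} with $|\dd q|=3t^2\dd t$ equals $\Pi+\tau(1-\int\bar m_k)$, using \eqref{eq:centered-price}.
\end{itemize}

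For (ii), I would apply the same inequality to $m_J$ at $\kappa=\kappa(m_J)$, which is near $k$ by \Cref{thm:transfer}. The main obstacle is that admissibility is assumed only at $\kappa=k$. I would argue by openness. The functions $\mathfrak b_\sigma,\mathfrak c_\sigma,\mathfrak l$ are continuous on $[0,1]^3\times(0,\infty)$, each cell is compact, and the strict inequality \eqref{eq:witness} holds at $k$. Hence the maximum, being a continuous function of $\kappa$ by uniform continuity on compacts, stays negative for $|\kappa-k|<\kappa_0$.

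One point needs care: the region $\Omega_\kappa$ depends on $\kappa$. Its parametrization by $(t,U,T)$ does not, and $\vartheta_\pm$ depend continuously on $\kappa$, so the same continuity argument covers it. Consequently $\Theta$ is admissible at every such $\kappa$, and $E(m_J)\ge\ell_{\kappa(m_J)}(\Theta;m_J)$ for large $N$, uniformly in $J$.

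It remains to show $\ell_{\kappa(m_J)}(\Theta;m_J)\to\ell_*(\Theta)$ uniformly in $J$.
\begin{itemize}
\item $\mathcal B_{\kappa(m_J)}(m_J)\to\mathcal B_*$, exactly as in the proof of \Cref{prop:baseline}.
\item $H_{m_J}(s_i)\to H_*(s_i)$ and $\tr K_{m_J}^2\to\frac23$, with rates given by \Cref{thm:transfer}, so $S(m_J)\to k^2/6$.
\item The comparison quantities $H_{\bar m_\kappa}(s_i)$, $S(\bar m_\kappa)$, $\int\bar m_\kappa$ and $\mathcal P_\kappa$ are continuous in $\kappa$ near $k$. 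This follows from joint continuity of $\nu_\kappa$ (\Cref{lem:base-identity}), the finiteness and continuity of $X_\kappa(0),X_\kappa(2)$ (\Cref{lem:local-profile}), and the continuous dependence of filters and determinants on a compactly supported bounded density.
\end{itemize}
Together with $E(m_J)=N^{-3}\E_N(J)$ from \Cref{thm:packing}, this gives $N^{-3}\E_N(J)\ge\ell_*(\Theta)-\varepsilon$ for $N\ge N_0$.

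The final claims follow directly. The inequality gives $\underline{\mathsf e}\ge\ell_*(\Theta)$, and \Cref{thm:gain} then gives $\mathscr G(\mu,L)\ge2\sqrt{\underline{\mathsf e}}\ge2\sqrt{\ell_*(\Theta)}$ for all $0<\mu<L$.
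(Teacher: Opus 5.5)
Your proposal is correct and follows the paper's proof. Part (i) is the same chain \Cref{thm:master} $\to$ \Cref{lem:regular-criterion} $\to$ \Cref{prop:certificate-mechanism}, with the target observations substituted. Part (ii) reproduces, inline, what the paper packages as \Cref{thm:certificate-transfer}: strict admissibility persists near $k$ by uniform continuity on the compact sets $\mathsf C\times[0,1]^2\times[k/2,2k]$, and $\ell_{\kappa(m_J)}(\Theta;m_J)$ converges term by term. The only variation is how you justify continuity of $H_{\bar m_\kappa}(s_i)$ and $S(\bar m_\kappa)$ in $\kappa$. You appeal to continuous dependence on the density, which does work because $\kappa\mapsto\bar m_\kappa$ is $L^1$-continuous and $H_m(s)$, $\tr K_m^2$ are $L^1$-continuous by \Cref{lem:features}(ii),(iv); the paper instead uses the Gronwall argument of \Cref{lem:filter-continuity} in the variable $q$.
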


Part (i) combines \Cref{prop:certificate-mechanism,thm:master}. Part (ii)
needs more, because the packed densities $m_J$ satisfy the constraints
only approximately and have $\kappa(m_J)\ne k$. Admissibility is a strict
inequality between continuous functions on a compact set, so it
persists for $\kappa$ near $k$, and every term of $\ell_\kappa(\Theta;m)$ is
continuous in $\kappa$ and in the observations of $m$
(\Cref{thm:certificate-transfer}). By \Cref{thm:transfer}, $\ell_{\kappa(m_J)}(\Theta;m_J)$
is then within $o(1)$ of $\ell_*(\Theta)$, uniformly in $J$.

\section{The certificate and the constants}\label{sec:certificates}

The numerical search chooses the rational data of
$\Theta_{\mathrm{cert}}$; verification establishes the inequalities needed
by \Cref{thm:dual}. We describe the search to explain the choice of
observations and potential, then state the verified bounds from which
the main results follow. Proofs and data are in \Cref{app:verification}.

\subsection{Construction of the certificate}\label{sec:construction}

The certificate was built in four stages, by the search programs of
\cite{Code}; each stage fixes part of the data of \Cref{def:dual} and
passes it on.

{Stage 1: one determinant.} By \Cref{prop:exact-reference}, a
reference profile $b_\circ$ that satisfies the fixed-point equation
\eqref{eq:self-consistent} gives a calibration whose base value is the
minimum of $E$ over the class with $\int m=1$, $\kappa(m)=k$ and one
prescribed determinant $H_m(\varsigma k)$. For fixed $\varsigma$, the fixed point with
$\int m_\circ=1$ and $H_{m_\circ}(\varsigma k)=H_*(\varsigma k)$ is computed numerically on
the left half $t\in[0,1]$, $q=t^3$, of the symmetric reference density.
The unknowns are the normalized contact trajectory $h_c/q_\circ$, the mirror
filter $\check f_\circ$, the accumulated mass and the accumulated
$\int(f_\circ^2+\check f_\circ^2)$ on the left half, as functions of $t$, and the two
numbers $b,\omega$, which act as Lagrange multipliers of the two
constraints. The amplitude is the root of the cubic \eqref{eq:base-root}
with $\Gamma_\varsigma$ given by \eqref{eq:contact} in terms of $f_\circ,\check f_\circ$. The
boundary conditions are regularity of $h_c/q_\circ$ at the singular end $t=0$,
the symmetry condition $\check f_\circ=f_\circ$ at $t=1$, mass $\frac12$ on the left
half, and $\int f_\circ^2=\frac k\varsigma-\frac2{\varsigma^2}H_*(\varsigma k)$, which includes the explicit
contribution of the decaying filter to the right of the support. The
boundary value problem is solved by collocation. The value
$E(m_\circ(\varsigma))$ is then maximized locally over $\varsigma$, first along $\varsigma=2^i$ and
then by a one-dimensional search; the largest value found is at
$\varsigma_{\mathrm{cal}}=2704.26\ldots$ in \eqref{eq:cert-scalars}.

{Stage 2: an exact reference profile.} The profile
$b_\circ(Q)=b+k\omega\Gamma_\varsigma(q)/Q$ of the fixed point is approximated by a
positive rational function $p_0+\sum_{j=1}^Jp_jQ/(Q+\zeta_j)$, fitted by
least squares in the logarithms of the coefficients. Each term is
nondecreasing in $Q$, as \Cref{thm:contact} requires. For the fitted
profile, the contact potential $g_\circ$ is fixed, and $(b,\omega)$ are
recomputed to maximize the base value $\mathcal B_*$, which for fixed $g_\circ$
is an explicit function of $(b,\omega)$. The number of terms $J$ is increased,
the fit with the largest base value is kept, and the search stops when
the base value agrees with the energy of the fixed point of stage 1 to
$10^{-12}$; $\Theta_{\mathrm{cert}}$ has $J=5$ and the values of \Cref{tab:cert-data}. The resulting base value is
$\mathcal B_*=1.14266367431\ldots$ (\Cref{tab:reference-enclosures}): a lower
bound for $E$ on $\mathcal A_{\mathrm{obs}}(\varsigma_{\mathrm{cal}})$ that uses only $\kappa=k$,
$\int m=1$ and $H_m(\varsigma_{\mathrm{cal}}k)$.

{Stage 3: multipliers and potential.} The comparison density $\bar m_k$
of this calibration has the observations of the target only
approximately: $H_{\bar m_k}(s)$, $S(\bar m_k)$ and $\int\bar m_k$ differ from
$H_*(s)$, $k^2/6$ and $1$. The multipliers $\xi_0,\xi_1,\chi,\tau$ turn these
differences into the terms $\xi_i\delta H_i$, $\chi\delta S$ and the mass
correction of \eqref{eq:dual-target-value}, and the potential $\Psi_{\rm raw}$
is needed to make the pointwise criterion hold at the price that these
terms can afford. For fixed multipliers and potential, let $e(t)$ be the
smallest price for which \eqref{eq:closed-criterion-main} holds at all
sampled states $(U,T,\sigma)$ over a sample point $t$. At each fixed state, the
left side of \eqref{eq:closed-criterion-main} is jointly convex in the multipliers, the
coefficients of $\Psi_{\rm raw}$ and $e$, because $\mathfrak b_\sigma,\mathfrak c_\sigma$ are linear
in the data and $\psi$ is convex; so the set of data and prices satisfying
the sampled inequalities is convex, and $e(t)$ is a convex function of the
data on the convex set of data for which a price exists at every sample. The sampled target value, in which $\Pi$ is a sum of these prices
with positive weights, is therefore concave. It was maximized by
sequential quadratic programming over the four multipliers and the
coefficients of the polynomials $P_\bullet$, first with degree at most two
on $64$ Gauss nodes in $t$ per side and a $33\times33$ grid in $(U,T)$, then with
degree at most three on $80$ nodes and a $41\times41$ grid, both grids refined
toward the comparison trajectory, starting from the certificate without
the second trace. The scales $s_0<s_1$ were selected among powers of two up to
$2^8$, with half-step refinement, by the same sampled value without the
second trace; the best pair is $(32,64)$.

{Stage 4: cells, prices and rectangles.} Each side is subdivided
into cells. On a cell, the affine price is obtained by Newton's method at
the cell center: its value makes vanish an upper bound, over a cover of
$[0,1]^2$ by rectangles, for the left side of \eqref{eq:witness} at the cell
center, its slope is the derivative of that value along the active state,
and a small reserve is added. The square $[0,1]^2$ is
then bisected into rectangles until an upper bound for the left side of
\eqref{eq:witness} on each rectangle (\Cref{lem:box-bound}) is negative; if
this fails within a work budget, the cell is bisected. (The production
program can also scale the multipliers, the potential and the prices
down by a common factor when no price makes the criterion hold at a cell
center; for $\Theta_{\mathrm{cert}}$ no scaling occurred.) The result has $8\,988$
cells and $6\,055\,942$ rectangles.

\Cref{tab:ledger} shows the parts of the resulting value. The base value
of stage 2 lies $2.85\cdot10^{-5}$ below $\cs$; the remaining observations
reduce this gap by a factor of about $3.3$.

\begin{table}[ht]
\centering
\caption{The parts of $c_{\mathrm{cert}}=\ell_*(\Theta_{\mathrm{cert}})$ in
\eqref{eq:dual-target-value}; small terms to five significant digits,
totals to eight.}
\label{tab:ledger}
\small
\begin{tabular}{@{}lr@{}}
\toprule
term&value\\\midrule
base value $\mathcal B_*$ ($\kappa$, $\int m$, $H(\varsigma_{\mathrm{cal}}k)$)&$1.1426637$\\
$\xi_0\,\delta H_0$ (determinant at $s_0=32$)&$1.9265\cdot10^{-4}$\\
$\xi_1\,\delta H_1$ (determinant at $s_1=64$)&$-1.5267\cdot10^{-4}$\\
$\chi\,\delta S$ (second trace)&$-2.1932\cdot10^{-6}$\\
$-\tau(1-\int\bar m_k)$ (mass)&$-4.6886\cdot10^{-10}$\\
$-\Pi$ (price)&$-1.7927\cdot10^{-5}$\\\midrule
$c_{\mathrm{cert}}$&$1.1426835$\\
$\cs$&$1.1426922$\\
\bottomrule
\end{tabular}
\end{table}

\subsection{Verification}\label{sec:verification-summary}

The certificate $\Theta_{\mathrm{cert}}$ has a reference profile with five terms,
the calibration scalars
\begin{equation}\label{eq:cert-scalars}
 \varsigma_{\mathrm{cal}}=2704.261804447028,\qquad
 b=0.11085839213923356,\qquad
 \omega=1351.3424903216799,
\end{equation}
the profile coefficients, multipliers and potential coefficients of
\Cref{tab:cert-data,tab:transport-polynomials}, and $8\,988$ cells with
affine prices.

\begin{proposition}[Computed values for $\Theta_{\mathrm{cert}}$]
\label{ca:transport}
\begin{enumerate}
\item $\Theta_{\mathrm{cert}}$ is admissible at $k$.
\item The quantities in \Cref{tab:reference-enclosures} lie in the stated
intervals.
\end{enumerate}
\end{proposition}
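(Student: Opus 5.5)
This is a computer-assisted proposition, so the plan is to reduce both parts to finitely many interval evaluations of explicitly defined quantities. Everything is carried out in outward-rounded interval (ball) arithmetic from the exact rational data of \Cref{tab:cert-data,tab:transport-polynomials} and the cell list.

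The first step is to enclose the objects attached to the calibration at $\kappa=k$. Since $\pi$ enters $k$, we use a rigorous enclosure of $k=\pi^2/4$. We then need the following.
\begin{enumerate}
\item The reference amplitude $\nu_\circ$ and the speed $d_\circ$, via the cubic, and the reference density $m_\circ$.
\item The calibration filter $f_\circ$ and its mirror $\check f_\circ$. These solve Riccati equations, which we integrate in $t$ (with $q=t^3$) by a validated ODE solver, for instance Taylor models with remainder bounds. Near the singular end $t=0$ we use the regular local expansion of \Cref{thm:contact}, with an explicit tail bound.
\item The contact potential $g_\circ$ and $\Gamma_\varsigma$, from \eqref{eq:contact}.
\item The comparison amplitude $\nu_k$ from \eqref{eq:base-root}. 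Its root is unique by \Cref{lem:base-identity}, so interval Newton gives a certified enclosure.
\item The comparison density $\bar m_k$, its filters and right filters, and hence $\int\bar m_k$, $H_{\bar m_k}(s_i)=\int r_{s_i}$, $S(\bar m_k)$ and $\int_0^2\nu_k\dd q$.
\item $H_*(s)=\log\cosh\sqrt s$, evaluated directly.
\end{enumerate}
These enclosures give part (ii): every entry of \Cref{tab:reference-enclosures} is a finite arithmetic combination of them. The price cost $\Pi$ is an exact integral of affine functions against the enclosed $t^3\hat\nu_k$ on each cell.

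For part (i), on each side, cell $\mathsf C$ and end $\sigma$ we must show \eqref{eq:witness}. We use the rectangle cover of $[0,1]^2$ stored with the certificate. On each box $\mathsf C\times R$ we evaluate an upper bound for
\[
 \psi(\mathfrak b_\sigma-e_{\mathsf C})-\mathfrak c_\sigma-e_{\mathsf C}(1-\mathfrak l),
\]
as in \Cref{lem:box-bound}, and check that it is strictly negative. The inputs are the state-region functions $V,\vartheta_\pm$ of \Cref{lem:range}, which are explicit and continuous, and the $q$-dependent comparison data on $\mathsf C$, which are interval functions of $t$ supplied by the validated solutions above. Because $\psi$ is convex and monotone on each branch, it can be bounded over an interval argument by its values at the endpoints. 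We get sharper bounds by centered (mean-value) forms in $(t,U,T)$. Where $t$ is small, we use the continuous-extension formulas of \Cref{lem:regular-criterion}, so that no division by $\nu_k$ or $Q$ occurs.

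The main obstacle is the regime near $t=0$ and near the comparison trajectory. There the criterion is nearly tight by design, since the price was set by Newton's method to make it vanish up to a small reserve. Naive interval bounds therefore overestimate and fail to be negative. I would handle this by first-order Taylor models in all three variables, with derivatives of $\mathfrak b_\sigma,\mathfrak c_\sigma$ enclosed analytically. Where that still fails, I would bisect further, which the stored cover of $6\,055\,942$ rectangles presumably already reflects. The ODE enclosures must also be tight enough, at the level of the $10^{-12}$ margins visible in the ledger, that this wrapping error does not consume the reserve. Achieving this is the second difficulty, which I would address with high-order Taylor integration in multiprecision.
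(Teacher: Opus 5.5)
Your plan is essentially the paper's proof: a verified outward-rounded interval computation. It encloses the functions of $t$ attached to the reference, contact and comparison densities (cubic-root enclosures plus validated Taylor flow steps), obtains the entries of \Cref{tab:reference-enclosures} by validated quadrature, and proves admissibility by negative upper bounds on every rectangle of a stored cover of $[0,1]^2$ for each side, cell and end, with the checker also verifying that the rectangles form a complete cover. The differences are implementation details: the paper uses second-order gradient--Hessian box bounds (\Cref{lem:box-bound}) rather than first-order forms, reduces $\mathcal B_*$, $H_{\bar m_k}(s_i)$ and $S(\bar m_k)$ to half-line integrals with closed-form tails via \Cref{lem:J-value,lem:symmetric-values}, and, since the flows are regular in $t$ at $t=0$, needs special care there only for the quotients by $t^3$ and the cancellation in $t^3-\varphi_\circ$ (\Cref{lem:cubic-quotient}), not for any expansion from \Cref{thm:contact}.
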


The computation (\Cref{app:verification}) encloses, in interval arithmetic
with outward rounding, the solutions of the ordinary differential
equations that define the reference density, the contact potential, the
comparison density and its filters, as functions of $t$ on both sides;
it bounds the left side of \eqref{eq:witness} on each rectangle by
second-order Taylor bounds and checks that the rectangles of each cell
cover $[0,1]^2$; and it encloses the integrals in
\Cref{tab:reference-enclosures}. All enclosure rules are lemmas of
\Cref{sec:enclosures}.

\begin{table}[ht]
\centering
\caption{Enclosures for $\Theta_{\mathrm{cert}}$ (\Cref{ca:transport}).}
\label{tab:reference-enclosures}
\small
\begin{tabular}{@{}lll@{}}
\toprule
quantity&lower endpoint&upper endpoint\\\midrule
$\mathcal B_*$&$1.142663674318292$&$1.142663674319440$\\
$\bar M:=\int\bar m_k$&$1.000000000819150$&$1.000000000819152$\\
$\delta H_0$&$-0.001269156586442$&$-0.001269156586438$\\
$\delta H_1$&$-0.002545408246820$&$-0.002545408246816$\\
$\delta S$&$0.000049913515799$&$0.000049913515800$\\
$10^5\,\Pi$&$1.7926991598178728941$&$1.7926991598184194252$\\
\bottomrule
\end{tabular}
\end{table}

\begin{corollary}[The certified constant]\label{cor:numerical}
Put $c_{\mathrm{cert}}=\ell_*(\Theta_{\mathrm{cert}})$. With the endpoints of
\Cref{tab:reference-enclosures} denoted by superscripts,
\[
 c_{\mathrm{cert}}\ge c^-:=\mathcal B_*^-+\xi_0\,\delta H_0^++\xi_1\,\delta H_1^-
 +\chi\,\delta S^++\tau(\bar M^+-1)-\Pi^+=1.142683536243430795\ldots,
\]
\[
 c_{\mathrm{cert}}\le c^+:=\mathcal B_*^++\xi_0\,\delta H_0^-+\xi_1\,\delta H_1^+
 +\chi\,\delta S^-+\tau(\bar M^--1)-\Pi^-=1.142683536244580836\ldots .
\]
Assembling the original rational interval endpoints before rounding the
entries of \Cref{tab:reference-enclosures} gives the sharper enclosure
\begin{equation}\label{eq:archived-bound}
 \num{certificate/bound/lo24}\le c_{\mathrm{cert}}
 \le\num{certificate/bound/hi24}.
\end{equation}
In particular $c_{\mathrm{cert}}>1.14268353624$, $c^+-c^-<1.2\cdot10^{-12}$, and
\[
 \inf_{\mathcal A_{\mathrm{obs}}(\varsigma_{\mathrm{cal}})}E\ge c_{\mathrm{cert}},\qquad
 \underline{\mathsf e}\ge c_{\mathrm{cert}},\qquad
 \mathscr G(\mu,L)\ge2\sqrt{c_{\mathrm{cert}}}\quad(0<\mu<L).
\]
\end{corollary}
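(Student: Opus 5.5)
The plan is to treat $\ell_*(\Theta_{\mathrm{cert}})$ as the explicit linear expression \eqref{eq:dual-target-value} in quantities that \Cref{ca:transport}(ii) encloses. Then $c_{\mathrm{cert}}$ follows by monotone interval assembly, and the three consequences follow from \Cref{thm:dual} via \Cref{ca:transport}(i).

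First, the enclosure. By definition,
\[
 c_{\mathrm{cert}}=\mathcal B_*+\xi_0\,\delta H_0+\xi_1\,\delta H_1+\chi\,\delta S-\Pi-\tau(1-\bar M),
\]
with the rational multipliers $\xi_0,\xi_1,\chi,\tau$ of \Cref{tab:cert-data}. Each term is monotone in its enclosed quantity, and the direction is fixed by the sign of its multiplier. The choice of endpoints in $c^-$ shows the sign pattern: $\xi_0<0$ (so $\delta H_0^+$ minimizes), $\xi_1>0$, $\chi<0$ and $\tau<0$, the last because $\tau(\bar M-1)$ is minimized at $\bar M^+$. This matches the signs in \Cref{tab:ledger}: $\delta H_0<0$ with positive contribution, $\delta H_1<0$ with negative contribution, $\delta S>0$ with negative contribution, and a negative mass term with $\bar M>1$. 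I would read these signs off the printed rationals. Substituting the endpoints of \Cref{tab:reference-enclosures} then gives $c^-\le c_{\mathrm{cert}}\le c^+$. The evaluation of $c^\pm$ is exact rational arithmetic and yields the displayed decimals. Repeating the same assembly with the unrounded rational endpoints archived by the verification gives \eqref{eq:archived-bound}. The numerical claims $c^->1.14268353624$ and $c^+-c^-<1.2\cdot10^{-12}$ are direct comparisons.

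Second, the consequences. \Cref{ca:transport}(i) states that $\Theta_{\mathrm{cert}}$ is admissible at $k$ with $\varsigma=\varsigma_{\mathrm{cal}}$. \Cref{thm:dual}(i) then gives $E\ge\ell_*(\Theta_{\mathrm{cert}})=c_{\mathrm{cert}}$ on $\mathcal A_{\mathrm{obs}}(\varsigma_{\mathrm{cal}})$. \Cref{thm:dual}(ii) gives $\underline{\mathsf e}\ge c_{\mathrm{cert}}$, and since $c_{\mathrm{cert}}>0$ it also gives $\mathscr G(\mu,L)\ge2\sqrt{c_{\mathrm{cert}}}$ for all $0<\mu<L$.

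The only delicate point is bookkeeping rather than mathematics. One must confirm that the quantities enclosed in \Cref{tab:reference-enclosures} are exactly those entering \eqref{eq:dual-target-value}. In particular, $\delta S$ must be defined with target $k^2/6=\frac{k^2}{4}\cdot\frac23$, consistent with $S=\frac{\kappa^2}4\tr K_m^2$, and $\Pi$ is the scaled quantity listed as $10^5\Pi$. One must also confirm that the multiplier signs used in choosing endpoints agree with the certified rationals. If a sign were reversed, the assembly would simply use the opposite endpoint, so the step cannot fail beyond a possible change in the last displayed digits.
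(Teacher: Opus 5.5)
Your proposal is correct and matches the paper's proof: it reads off $\xi_0<0$, $\xi_1>0$, $\chi<0$, $\tau<0$ from \Cref{tab:cert-data} and rewrites $-\tau(1-\bar M)=\tau(\bar M-1)$. It then assembles $c^\pm$ (and \eqref{eq:archived-bound} from the unrounded endpoints) in exact rational arithmetic, and takes the remaining assertions from \Cref{ca:transport} and \Cref{thm:dual}. Your bookkeeping remarks on $\delta S$ and on $10^5\Pi$ are accurate, and your sign check against \Cref{tab:ledger} is consistent with the certified data.
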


\subsection{The sine energy}\label{sec:sine-energy}

By the theta form \eqref{eq:sine-theta} and the symmetry of $n_*$,
\[
 \cs=E(m_*)=512\int_1^\infty\varpi^2e^{-3\pi\varpi/2}\,
 \Sigma_1(e^{-\pi\varpi})^6\Sigma_2(e^{-\pi\varpi})^6\dd\varpi .
\]
Truncating $\Sigma_1,\Sigma_2$ after $K+1$ terms gives a polynomial in
$a=e^{-\pi\varpi}$, whose integral is an explicit finite sum $E_K$, and the
tails give an explicit bound $R_K$ (\Cref{lem:sine-sum}).

\begin{proposition}[The sine energy]\label{ca:sine}
Let
\[
 (1+a^2+a^6+a^{12})^6(1-2a+2a^4-2a^9)^6=\sum_{n=0}^{126}C_na^n\in\mathbb Z[a],
\]
and let $E_3$ and $R_3$ be the quantities of \Cref{lem:sine-sum} with
$K=3$. Then
\[
 \begin{gathered}
 1.142692179815791439359121928\le E_3\le1.142692179815791439359121929,\\
 0<R_3<2.35701467\cdot10^{-22}.
 \end{gathered}
\]
and consequently
\[
 1.1426921798157914393588862<\cs<1.1426921798157914393593577 .
\]
\end{proposition}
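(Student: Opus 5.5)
The plan starts from the theta form of $n_*$ in \Cref{prop:target}. Put $a=e^{-\pi\varpi}$ and write $\Sigma_1=A_K+\epsilon_1$, $\Sigma_2=B_K+\epsilon_2$. Here $A_K=\sum_{j\le K}a^{j(j+1)}$ and $B_K=1+2\sum_{1\le j\le K}(-1)^ja^{j^2}$; for $K=3$ these are exactly the two polynomials displayed in the statement. The energy integral displayed before the proposition then splits as $\cs=E_K+(\text{tail})$. The main term is
\[
 E_K=512\int_1^\infty\varpi^2e^{-3\pi\varpi/2}A_K(a)^6B_K(a)^6\dd\varpi .
\]
After expanding $A_3^6B_3^6=\sum_{n=0}^{126}C_na^n$ with integer coefficients, $E_3$ becomes a finite sum $512\sum_nC_nI_n$. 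Each term is an elementary integral
\[
 I_n=\int_1^\infty\varpi^2e^{-\lambda_n\varpi}\dd\varpi
 =e^{-\lambda_n}\Bigl(\frac1{\lambda_n}+\frac2{\lambda_n^2}+\frac2{\lambda_n^3}\Bigr),
 \qquad \lambda_n=\pi\bigl(n+\tfrac32\bigr),
\]
presumably the explicit form recorded in \Cref{lem:sine-sum}.

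The first step is exact: compute the coefficients $C_n$ by exact polynomial multiplication over $\mathbb Z$. The second step evaluates $\sum_nC_nI_n$ in high-precision interval arithmetic, with rigorous enclosures of $\pi$ and of $\exp$ and outward rounding. Cancellation among the signed $C_n$ is harmless, since the terms decay like $e^{-\pi n}$ and only a few dozen terms matter at the $10^{-27}$ level. Working precision of about 100 bits is enough to get an interval of width below $10^{-27}$, which yields the stated enclosure of $E_3$.

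The third step, which I expect to be the main obstacle, is the rigorous tail bound $R_3$. On $\varpi\ge1$ we have $0<a\le e^{-\pi}<0.0433$. Hence $|\epsilon_1|\le\sum_{j\ge4}a^{j(j+1)}\le a^{20}/(1-a)$ and $|\epsilon_2|\le2a^{16}/(1-a)$. Moreover $A_3,B_3$ and the full series lie in explicit intervals close to $1$: for example $B_3\ge1-2a\ge0.91$ and $A_3\le1+a^2/(1-a)$. Write
\[
 \Sigma_1^6\Sigma_2^6-A_3^6B_3^6
\]
as a telescoping difference and bound it by the mean value theorem with these bounds. The result is a pointwise estimate $\le c\,a^{16}$ with an explicit rational $c$ of modest size, driven by the leading error $6B_3^5A_3^6\epsilon_2$.

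Integrating this estimate against $512\varpi^2e^{-3\pi\varpi/2}$ gives a bound of the form $512c\,I_{16}$ with $\lambda_{16}=17.5\pi$. That quantity is about $e^{-55}\approx10^{-24}$ times small constants, comfortably consistent with $R_3<2.36\cdot10^{-22}$. The remaining care is to track the sign: $\epsilon_1>0$, while $\epsilon_2$ has the sign of its first omitted term $+2a^{16}$, so $R_3>0$ should follow from the leading term dominating on $a\le e^{-\pi}$. The inequality $0<R_3$ can be made precise in this way, or simply replaced by a two-sided bound $|R_3|$ if \Cref{lem:sine-sum} allows.

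Finally, combine the pieces as $E_3^- + 0 < \cs < E_3^+ + R_3^+$. After rounding this gives the displayed enclosure of $\cs$; the lower endpoint $1.14269217981579143935888\ldots$ differs from $E_3^-$ only through rounding conventions.
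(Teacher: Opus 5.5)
Your treatment of $E_3$ (exact integer coefficients $C_n$, then an outward-rounded evaluation of $512\sum_nC_nI(n+\frac32)$) is exactly what the paper does. The problem is $R_3$. In \Cref{lem:sine-sum} it is not the remainder $\cs-E_3$ but an explicit closed-form majorant,
\[
 R_K=512\Bigl[\frac{6M_1^5M_2^6\,I(\frac32+(K+1)(K+2))}{1-e^{-2(K+2)\pi}}
 +\frac{12M_1^6M_2^5\,I(\frac32+(K+1)^2)}{1-e^{-(2K+3)\pi}}\Bigr],
\]
with $I(\lambda)=\int_1^\infty\varpi^2e^{-\lambda\pi\varpi}\dd\varpi$, $M_1=1/(1-e^{-2\pi})$ and $M_2=1+2e^{-\pi}/(1-e^{-3\pi})$. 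The lemma already proves $|\cs-E_K|\le R_K$, by essentially the mean-value argument you sketch, using these uniform constants and geometric factors for the gaps between omitted exponents.

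So in the proposition, $0<R_3$ is trivial (a sum of positive terms). The bound $R_3<2.35701467\cdot10^{-22}$ is just an interval evaluation of this formula; its value is about $2.357\cdot10^{-22}$, whereas the true tail is only about $10^{-22}$. Your sign analysis and tail estimate are aimed at a different number and do not verify the stated inequality for $R_3$.

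For the same reason you misread the final enclosure. The printed lower endpoint is not $E_3^-$ up to rounding; it is exactly
\[
 E_3^--R_3^+=1.142692179815791439359121928-2.35701467\cdot10^{-22}=1.1426921798157914393588862\ldots,
\]
because the paper uses $|\cs-E_3|\le R_3$ on both sides. The discrepancy $2.4\cdot10^{-22}$ is five orders of magnitude larger than the width of the $E_3$ enclosure.

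Your alternative route does still prove the enclosure of $\cs$, and it is sharper on one side. Positivity of the true tail needs no dominance argument. The omitted part of $\Sigma_2$ is the alternating series $2a^{16}-2a^{25}+\cdots$, which is positive and at most $2a^{16}$, and $B_3>0$. Hence $\Sigma_1>A_3>0$ and $\Sigma_2>B_3>0$, so $\Sigma_1^6\Sigma_2^6>A_3^6B_3^6$ and $\cs>E_3^-$, which implies the printed lower endpoint.

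The upper endpoint needs care, because the margin is not comfortable. In the lemma's notation, $512\,I(\frac{35}2)\approx1.28\cdot10^{-23}$ (your $512\,I_{16}$), so everything hinges on the constant $c$.
\begin{itemize}
\item Using $0<B_3<\Sigma_2<1$ and $\epsilon_2\le2a^{16}$ gives roughly $12M_1^6\cdot1.28\cdot10^{-23}\approx1.6\cdot10^{-22}$, which suffices.
\item Using upper bounds of the type $|B_3|,|\Sigma_2|\le M_2\approx1.086$ together with your extra factor $1/(1-a)$ gives about $2.46\cdot10^{-22}$. That overshoots the required $2.357\cdot10^{-22}$, and the stated upper bound for $\cs$ would not follow.
\end{itemize}
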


\Cref{thm:energy,thm:main-centered} follow from \Cref{cor:numerical},
\Cref{ca:sine}, \Cref{thm:sine-finite} and \Cref{thm:jacobi,thm:gain}; the proofs
are in \Cref{app:main-proofs}.

\paragraph{The one-determinant estimate.}
For the same calibration data, \Cref{prop:baseline} and the enclosure
of $\mathcal B_*$ in \Cref{tab:reference-enclosures} give
\[
 \underline{\mathsf e}\ge\mathcal B_*>1.14266367431829,
 \qquad
 \limsup_{N\to\infty}\sup_{0<\mu<L}
 \frac{A_N(\mathcal R_N^\star)}
 {\inf_{\mathcal R\in\mathfrak E_N(\mu,L)}A_N(\mathcal R)}
 \le\sqrt{\frac{\cs}{\mathcal B_*}}<1.000013.
\]
The ratio follows from \Cref{thm:gain,thm:sine-finite}; its numerical
bound uses only $\cs<1.14269218$ and the displayed lower endpoint for
$\mathcal B_*$. The full certificate improves this factor to
$1.000004$ by using the additional observations.

\section{A feasible density below the sine energy}\label{sec:feasible}

The lower certificate leaves a gap $\cs-c_{\mathrm{cert}}<10^{-5}$.
That gap may reflect either the chosen certificate or the relaxation
itself. We now show that the relaxation admits a density in
$\mathcal A_{\mathrm{obs}}(\varsigma_{\mathrm{cal}})\cap\Mc$ with energy strictly
below $\cs$. Thus even an optimal certificate based on these observations
cannot attain the sine value. Proofs are in \Cref{app:primal}.

Prescribing the speed of the ground coordinate enforces the ground
parameter exactly, leaving the other observations as finite-dimensional
constraints.

\begin{theorem}[Densities with a prescribed ground parameter]
\label{thm:primal-map}
Let $\kappa>0$ and let $D:(0,2)\to(0,\infty)$ be measurable with
$D\in L^\infty$ and $1/D\in L^1(0,2)$. Put
$X(q)=\int_1^qD^{-1}$ and let $m$ equal $(D(q)+Q(q))/\kappa$ at
$x=X(q)$ and zero outside $[X(0),X(2)]$. Then $m$ is bounded, compactly
supported, has ground parameter $\kappa$ and ground coordinate
$X^{-1}$ on its support, and
\begin{equation}\label{eq:primal-masses}
 \int m^{j/2}=\int_0^2\Bigl(\frac{D+Q}\kappa\Bigr)^{j/2}\frac{\dd q}D
 \qquad(j=1,2,3).
\end{equation}
\end{theorem}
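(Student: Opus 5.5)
The plan is to build the candidate ground state explicitly and then invoke the uniqueness part of \Cref{thm:ground}. Since $D\in L^\infty$ and $1/D\in L^1(0,2)$, the map $X$ is strictly increasing and absolutely continuous, with $X(0),X(2)$ finite. Its inverse $\mathsf q:=X^{-1}:[X(0),X(2)]\to[0,2]$ is Lipschitz, because $\mathsf q'=D(\mathsf q)\le\|D\|_\infty$ almost everywhere. Here I have to check that the inverse of an absolutely continuous increasing function with a.e.\ positive derivative is absolutely continuous. This holds because $X'=1/D>0$ a.e. and $X$ maps null sets to null sets, and the inverse is Lipschitz. Extend $\mathsf q$ by $0$ to the left of $X(0)$ and by $2$ to the right of $X(2)$. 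On the support, $\mathsf q'=D(\mathsf q)=\kappa m-Q(\mathsf q)$ by the definition of $m$. Outside the support, $\mathsf q'=0=\kappa m-Q(\mathsf q)$, since $Q(0)=Q(2)=0$. Thus $\mathsf q$ solves the Riccati equation $\mathsf q'=\kappa m-Q(\mathsf q)$ a.e., with $0\le\mathsf q\le2$. Boundedness of $m$ is immediate, since $m\le(\|D\|_\infty+1)/\kappa$.

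Next, define $u=\exp\int_0^x(1-\mathsf q)$. Then $u>0$ and $u\in C^1$, because $\mathsf q$ is continuous. Also $u'=(1-\mathsf q)u$ is locally absolutely continuous and $|u'|\le u$. Differentiating gives
\[
u''=\bigl((1-\mathsf q)^2-\mathsf q'\bigr)u=\bigl(1-Q(\mathsf q)-\kappa m+Q(\mathsf q)\bigr)u=(1-\kappa m)u
\]
almost everywhere. Hence $-u''+u=\kappa mu$. To the left of the support, $u=ce^{x}$, and to the right, $u=c'e^{-x}$, so $u\in H^1(\R)$. By the uniqueness statement in \Cref{thm:ground}, it remains to know that $\kappa$ is the ground parameter. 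I would get this from the ground-square identity. Computing $\int(f'-\frac{u'}uf)^2$ by integrating by parts against $u'/u=1-\mathsf q$, with $(u'/u)'=\kappa m-1+(u'/u)^2$ substituted, gives $\int(f'^2+f^2)-\kappa\int mf^2$ for $f\in H^1$ (density argument via compactly supported smooth $f$, boundary terms vanish). This shows that $-\partial_x^2+1-\kappa m\ge0$, with equality attained at $f=u\in H^1$. By the characterization at the end of \Cref{thm:ground}, $\kappa(m)$ is the largest admissible coupling: any $\kappa'>\kappa$ fails at $f=u$, because $\int mu^2>0$ as $m>0$ on the support. Therefore $\kappa(m)=\kappa$, and the ground coordinate $1-u'/u$ equals $\mathsf q=X^{-1}$ on the support.

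Finally, \eqref{eq:primal-masses} follows from the change of variables $x=X(q)$, $\dd x=\dd q/D(q)$, which is valid for absolutely continuous increasing $X$. This gives $\int m^{j/2}\dd x=\int_0^2((D+Q)/\kappa)^{j/2}D^{-1}\dd q$, where $m^{j/2}$ is bounded and the right-hand sides are finite since $1/D\in L^1$. The main obstacle is the measure-theoretic step, namely justifying $\mathsf q'=D(\mathsf q)$ a.e.\ and the change of variables when $D$ is merely measurable. I would handle this by first working with the Lipschitz function $\mathsf q$ and using the a.e.\ chain rule for $X\circ\mathsf q=\mathrm{id}$, together with the Lusin N-property of $X$.
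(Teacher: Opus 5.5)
Your proposal is correct and follows essentially the same route as the paper: the Lipschitz inverse of $X$ with the a.e.\ chain rule via the Lusin property, the explicit $u=\exp\int_0^x(1-q)$, the square identity, and uniqueness of the positive solution. The only variation is that you read off $\kappa(m)=\kappa$ from the variational characterization in \Cref{thm:ground}, where the paper uses the Rayleigh quotient of the feature operator $\mathcal K_m$.

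Two small repairs are needed. First, the Riccati relation is $(u'/u)'=1-\kappa m-(u'/u)^2$; your stated sign is flipped, although the identity you derive from it is the right one. Second, the Lipschitz bound for $X^{-1}$ should come from $X(q_2)-X(q_1)\ge(q_2-q_1)/\|D\|_\infty$, not from an a.e.\ derivative bound on $X^{-1}$, since that bound already presupposes absolute continuity.
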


The ground coordinate of such an $m$ satisfies $q'=D(q)$, and the
function $u=\exp\int(1-q)$ is a positive solution of $-u''+u=\kappa mu$; by the
identity \eqref{eq:ground-square} for this $u$, $\kappa(m)=\kappa$ and $u$ is the ground
state (proof of \Cref{thm:primal-map}). The remaining
observations are then solved for by Newton's method in finitely many
parameters of $D$.

\begin{definition}[Primal certificate]\label{def:primal}
A primal certificate consists of rationals $c_0,\dots,c_{16}$, a matrix
$C\in\mathbb Q^{6\times6}$ and a rational $\varrho>0$. Put
\[
 \phi_0(t)=\sum_{j=0}^{16}c_jT_j(2t-1),\qquad
 \phi_i(t)=T_{j_i}(2t-1),\quad (j_1,\dots,j_6)=(0,1,2,3,6,7),
\]
and, for $a\in\R^6$, $\phi_a=\phi_0+\sum_ia_i\phi_i$. Let $m_a$ be the
density of \Cref{thm:primal-map} with $\kappa=k$ and the symmetric speed
\[
 D_a(t^3)=t^2e^{\phi_a(t)}\quad(0<t\le1),\qquad D_a(2-q)=D_a(q),
\]
and
\[
 F(a)=\Bigl(\int m_a-1,\ \int\sqrt{m_a}-1,\
 \bigl(H_{m_a}(s)-H_*(s)\bigr)_{s=s_0,s_1,\varsigma_{\mathrm{cal}}k},\
 S(m_a)-\frac{k^2}6\Bigr).
\]
\end{definition}

The factor $t^2$ matches the endpoint behavior
$d_\kappa\sim Q^{2/3}$ of the comparison densities. The logarithm of
the normalized speed, $\log(D_a(t^3)/t^2)=\phi_a(t)$, is a polynomial;
the coordinate derivative $\dd x/\dd t=3e^{-\phi_a(t)}$ is therefore regular
at $t=0$. The six components of $F$ are the two normalizations
and the four observations of the relaxation; $\kappa=k$ holds by
construction.

\begin{theorem}[Primal certificates define feasible densities]\label{thm:primal}
Suppose that $\varepsilon_F\ge\|CF(0)\|_\infty$ and
$\theta_F\ge\sup_{\|a\|_\infty\le\varrho}\|I-C\,DF(a)\|_\infty$ satisfy
$\theta_F<1$ and $\varepsilon_F+\theta_F\varrho<\varrho$. Then $F$ has
exactly one zero $a_*$ with $\|a_*\|_\infty\le\varrho$, the iterates
$a^{(0)}=0$, $a^{(j+1)}=a^{(j)}-CF(a^{(j)})$ converge to it, and
$m_{\mathrm{feas}}:=m_{a_*}$ belongs to $\mathcal A_{\mathrm{obs}}(\varsigma_{\mathrm{cal}})\cap\Mc$
and satisfies
\[
 E(m_{\mathrm{feas}})\le E(m_0)e^{6\varrho}.
\]
\end{theorem}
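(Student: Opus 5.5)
The plan has three parts: a contraction argument for the simplified Newton map, membership of the zero in the relaxed class, and the energy estimate.

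\textbf{Contraction.} Put $\Phi(a)=a-CF(a)$ on the closed ball $\bar B=\{\|a\|_\infty\le\varrho\}\subset\R^6$. First I will check that $F$ is $C^1$ on a neighbourhood of $\bar B$. By \Cref{thm:primal-map}, the masses $\int m_a^{j/2}$ are integrals over $q$ of smooth functions of $\phi_a$; after the substitution $q=t^3$ the factor $t^2$ cancels $3t^2\dd t$, so the integrands are regular at $t=0$. The determinants $H_{m_a}(s)$ and the second trace $S(m_a)$ are integrals of the filters of \Cref{lem:filter}. In the coordinate $t$ these filters solve ODEs with coefficients depending smoothly on $a$ and regular at the ends. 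Smooth dependence on parameters and dominated convergence then give differentiability.

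The mean value inequality in the $\infty$-norm gives $\|\Phi(a)-\Phi(b)\|_\infty\le\theta_F\|a-b\|_\infty$ on the convex set $\bar B$, because $D\Phi=I-C\,DF$. It also gives $\|\Phi(a)\|_\infty\le\|\Phi(0)\|_\infty+\theta_F\varrho\le\varepsilon_F+\theta_F\varrho<\varrho$. So $\Phi$ maps $\bar B$ into itself and is a contraction there. Banach's theorem yields a unique fixed point $a_*$, and the iterates $a^{(j)}$ converge to it.

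Fixed points of $\Phi$ are exactly the zeros of $F$ provided $C$ is invertible. This holds because $\|I-C\,DF(0)\|_\infty<1$ forces $C\,DF(0)$ to be invertible. Hence $F$ has exactly one zero $a_*$ in $\bar B$.

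\textbf{Membership in the relaxed class.} Take $D=D_a$. It is bounded and positive on $(0,2)$, and $1/D$ is integrable, since $\int_0 t^{-2}e^{-\phi_a}\,3t^2\dd t<\infty$. \Cref{thm:primal-map} then shows that $m_{a_*}$ is bounded, compactly supported and satisfies $\kappa(m_{a_*})=k$. The equation $F(a_*)=0$ gives both normalizations, the three determinant targets, and, through $S=\frac{k^2}4\tr K^2$, the trace target $\tr K^2=\frac23$. Since $\kappa=k$, the third determinant is taken at $\varsigma_{\mathrm{cal}}\kappa=\varsigma_{\mathrm{cal}}k$, as the class requires. Therefore $m_{\mathrm{feas}}\in\mathcal A_{\mathrm{obs}}(\varsigma_{\mathrm{cal}})\cap\Mc$.

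\textbf{Energy estimate.} By \eqref{eq:primal-masses} with $j=3$ and $q=t^3$ on each half, the energy is
\[
 E(m_a)=2\int_0^1\Bigl(\frac{t^2e^{\phi_a}+Q(t^3)}{k}\Bigr)^{3/2}3e^{-\phi_a}\dd t .
\]
For $\|a\|_\infty\le\varrho$ and $t\in[0,1]$ we have $|T_j(2t-1)|\le1$, so $|\phi_a-\phi_0|\le\sum_i|a_i|\le6\varrho$. The integrand is $(Ae^{\phi}+B)^{3/2}e^{-\phi}$ with $A,B\ge0$. Under $\phi\mapsto\phi+\delta$ with $|\delta|\le6\varrho$, it is multiplied by at most $e^{(3/2)|\delta|}e^{|\delta|}$. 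This naive estimate gives only $e^{15\varrho}$, which is weaker than the claimed $e^{6\varrho}$.

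To sharpen it, write the integrand as $(Ae^{\phi/3}+Be^{-2\phi/3})^{3/2}$. Under the shift it is multiplied by at most $(e^{|\delta|/3}\vee e^{2|\delta|/3})^{3/2}=e^{|\delta|}\le e^{6\varrho}$. This yields $E(m_{\mathrm{feas}})\le E(m_0)e^{6\varrho}$.

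\textbf{Main obstacle.} The delicate step is the regularity of $F$: continuous differentiability in $a$ of the Riccati filter integrals $H_{m_a}(s)$, uniformly up to the singular ends $t=0$ on both sides. I would handle it by writing the filter ODE in the variable $t$, where $\dd x/\dd t=3e^{-\phi_a}$ is regular. The tail contribution to the right of the support is explicit, which avoids an improper-integral issue there.
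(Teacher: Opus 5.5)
Your proposal is correct and follows the paper's proof. The contraction step is \Cref{lem:contraction}, including the Neumann-series argument that $C$ is invertible. Membership comes from \Cref{thm:primal-map} together with $S(m)=\frac{k^2}4\tr K_m^2$ at $\kappa(m)=k$. Your regrouping of the integrand as $(Ae^{\phi/3}+Be^{-2\phi/3})^{3/2}$ is equivalent to the paper's observation in \Cref{prop:primal-jacobian} that the logarithmic derivative of $X_a'n_a^3$ with respect to $\phi_a$ lies in $[-1,\frac12]$; both give the factor $\max\{e^{\delta/2},e^{-\delta}\}\le e^{6\varrho}$.

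The only minor difference is how you get $C^1$ regularity of $H_{m_a}(s)$ and $S(m_a)$. You differentiate the filter integrals directly in the regular $t$-coordinate, with the explicit tail beyond the support. The paper instead passes through the comparison identity of \Cref{lem:comparison-general}, which also yields the explicit Jacobian formulas used in the verified computation.
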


{Construction.} The search minimizes $E(m_\phi)$ numerically over all
$17$ Chebyshev coefficients of $\phi$, first with the five equations obtained
by omitting $S$, then with all six equations $F=0$. It starts from a fit to
the logarithm of the normalized speed of the comparison density $\bar m_k$
of $\Theta_{\mathrm{cert}}$ and uses the first numerical candidate to initialize
the second search. Six coordinates are selected for a chord Newton
correction with an approximate inverse $C$ of the Jacobian. The resulting
exact dyadic center and radius $\varrho\approx1.346\cdot10^{-16}$ are fixed
inputs to the subsequent contraction verification (\Cref{app:primal});
global optimality of the numerical search is not required.

\begin{proposition}[Computed values for the primal certificate]
\label{ca:primal-second}
The residuals $F_i(0)$ lie in the intervals of \Cref{tab:primal-values};
for every $\|a\|_\infty\le\varrho$ the derivative satisfies
$|\partial_jF_i(a)-g_{ij}|\le10^{-11}|g_{ij}|$ with $g_{ij}$ from
\Cref{tab:primal-values}; and $E(m_0)\le\num{feasible/central_energy/hi24}$.
The unrounded interval data also give
\[
 \begin{aligned}
 \|CF(0)\|_\infty&\le\num{feasible/residual/hi24},\\
 \sup_{\|a\|_\infty\le\varrho}\|I-CDF(a)\|_\infty
 &\le\num{feasible/contraction/hi24},
 \end{aligned}
\]
and $E(m_0)e^{6\varrho}\le\num{feasible/energy_ceiling/hi24}$.
\end{proposition}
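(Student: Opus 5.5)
The statement is a computer-assisted assertion, so the proof will be a validated computation. It must (a) enclose the six residuals $F_i(0)$, (b) enclose every Jacobian entry $\partial_jF_i(a)$ uniformly over the box $\|a\|_\infty\le\varrho$, and (c) enclose $E(m_0)$. The three displayed norm bounds then follow by exact rational arithmetic with the certificate matrix $C$.

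The first step is to rewrite every component of $F$ as an integral in the variable $t\in[0,1]$ on the left half, using the symmetry $D_a(2-q)=D_a(q)$ and \eqref{eq:primal-masses}. With $q=t^3$, $\dd q=3t^2\dd t$ and $D=t^2e^{\phi_a}$, the masses $\int m_a^{j/2}$ become $2\int_0^1 3e^{-\phi_a}\bigl((t^2e^{\phi_a}+Q(t^3))/k\bigr)^{j/2}\dd t$, and $Q(t^3)/t^2=t(2-t^3)$ is regular. Since $m_a^{1/2}\sim t$ at the ends, these are smooth integrands.

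The determinants $H_{m_a}(s)$ are handled by \Cref{lem:filter}(ii). I write the Riccati filter $r_s$ as an ODE in $t$, $\dd r_s/\dd t=3e^{-\phi_a}(s m_a-2r_s-r_s^2)$, run it across the full support (left half and then the mirrored right half), and add the explicit tail $\int r_s$ to the right of the support. On that tail $m=0$, so $r'=-2r-r^2$ and $\int_{x}^\infty r=\tfrac12\log(1+r(x)/2)$. The second trace is treated the same way through the linear filter $r_\circ$ and $S=\int r_\circ^2$.

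Derivatives in $a$ come from the variational equations of these ODEs; $\partial_{a_i}\phi_a=\phi_i$ is an explicit polynomial. All of these ODEs are enclosed by a validated Taylor-model integrator on a mesh in $t$, with the coefficients $a$ carried as interval parameters over the box. The quadratures are enclosed by Taylor bounds with remainder, using the enclosure lemmas of \Cref{sec:enclosures}.

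I expect the Jacobian to be the main obstacle. The required relative accuracy $10^{-11}$ is uniform over a box of radius about $10^{-16}$, and the largest scale is $\varsigma_{\mathrm{cal}}k\approx6700$, so the Riccati filter is stiff. I would first integrate in the variable $\log V_s$ or the normalized $f_\varsigma=r_{\varsigma\kappa}/\varsigma$, which stays bounded by $q$. I would also use a mesh refined where $m_a$ is large, and check that wrapping does not inflate the enclosures. Because $\varrho$ is tiny, the dependence on $a$ is essentially linear: first-order Taylor models in $a$ with a crude second-order remainder bound should suffice.

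Once the enclosures are in hand, compute $\|CF(0)\|_\infty$ and $\|I-C\,DF(a)\|_\infty$ by interval matrix arithmetic, treating $DF(a)\in[g_{ij}(1\pm10^{-11})]$. Finally multiply the enclosure of $E(m_0)$ by an upper bound for $e^{6\varrho}$ to obtain the energy ceiling.
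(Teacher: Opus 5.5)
\textbf{Overall structure.} Your architecture agrees with the paper's. Everything is written on the left half in $t$ with $\dd x=X_a'\dd t$. Filters are enclosed by validated Taylor steps with the direction coefficients replaced by $c_{j_i}+[-\varrho,\varrho]$, integrals by Taylor quadrature, and the enclosure of $E(m_0)$ is multiplied by an upper bound for $e^{6\varrho}$.

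\textbf{The Jacobian: a different route.} You differentiate the filter ODEs (forward sensitivities). The paper never does. \Cref{prop:primal-jacobian} applies the comparison identity of \Cref{lem:comparison-general}, with $n_a$ itself as the comparison profile, to obtain closed forms such as $\partial_iH_{m_a}(s)=-2\int_0^1X_a'\frac{sQ/\kappa+r_sR_s}{2+r_s+R_s}\phi_i\dd t$. By symmetry, the right filter $R_s$ there is the same left filter read at the mirror point $v=2-t$, so it acts in effect as an adjoint state. One filter enclosure per scale on $v\in[0,2]$ then yields all six columns, each by a single quadrature. The same proposition also supplies the continuity of $DF$ that \Cref{lem:contraction} requires. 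Your route is more mechanical and bypasses the comparison machinery. It costs six additional linear ODEs per filter plus an appeal to smooth parameter dependence on the fixed $t$-domain, and it is sound.

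\textbf{First correction: the tail integral.} Your tail formula is off by a factor two. From $r'=-2r-r^2$ one gets $\frac{\dd}{\dd x}\log(1+r/2)=r'/(2+r)=-r$, so $\int_x^\infty r=\log(1+r(x)/2)$, as in the proof of \Cref{thm:certificate-transfer}. With your $\frac12$, each determinant is shifted by $\frac12\log(1+r_s(x_e)/2)$ at the right end $x_e$ of the support. That is far outside the residual intervals of \Cref{tab:primal-values}. For the residuals the paper avoids the tail altogether with \Cref{lem:symmetric-values}: $H_m(s)=2\int_{x_-}^{x_c}r_s+\log(1+r_s(x_c))$ and $S(m)=2\int_{x_-}^{x_c}r_\circ^2+r_\circ(x_c)^2$, so only the left half is integrated.

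\textbf{Second correction: the contraction bound.} Your last step cannot give the displayed contraction bound. Suppose $DF(a)$ is replaced by the boxes $[g_{ij}(1\pm10^{-11})]$. The widths alone then contribute $10^{-11}\sum_{j,l}|C_{6l}||g_{lj}|\approx1.7\cdot10^{-7}$ to the last row, because $|C_{61}|$ and $|C_{66}|$ are of order $10^4$. This reproduces the weaker $\theta_F=1.818\cdot10^{-7}$ of \Cref{cor:primal}, not $2.533\cdot10^{-9}$. The proposition's two norm bounds come from applying the exact $C$ to the unrounded residual and whole-cube Jacobian enclosures. Those enclosures are much narrower than the printed $10^{-11}$ relative tolerance. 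Your weaker bound would still satisfy the hypotheses of \Cref{thm:primal}, but it does not prove the proposition as stated.
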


\begin{corollary}[A feasible density below the sine energy]\label{cor:primal}
The primal certificate satisfies the hypotheses of \Cref{thm:primal} with
$\varepsilon_F=6.7299\cdot10^{-17}$ and $\theta_F=1.818\cdot10^{-7}$, and
\[
 \begin{gathered}
 c_{\mathrm{cert}}\le\inf_{\mathcal A_{\mathrm{obs}}(\varsigma_{\mathrm{cal}})\cap\Mc}E
 \le E(m_{\mathrm{feas}})\le U_{\mathrm{feas}}<\cs,\\
 U_{\mathrm{feas}}:=\num{feasible/energy_ceiling/hi24}.
 \end{gathered}
\]
\end{corollary}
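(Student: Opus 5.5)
The corollary follows by checking the hypotheses of \Cref{thm:primal} against the verified data of \Cref{ca:primal-second}, and then chaining the resulting inequalities with \Cref{cor:numerical} and \Cref{ca:sine}. The first task is to fix $\varepsilon_F$ and $\theta_F$. \Cref{ca:primal-second} gives $\|CF(0)\|_\infty\le6.729813\ldots\cdot10^{-17}$, so $\varepsilon_F=6.7299\cdot10^{-17}$ is an admissible rational upper bound. The supremum of $\|I-C\,DF(a)\|_\infty$ over the ball $\|a\|_\infty\le\varrho$ is bounded by $2.5333\cdot10^{-9}$, which is far below $\theta_F=1.818\cdot10^{-7}$. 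Any larger $\theta_F$ is allowed, because the hypothesis of \Cref{thm:primal} asks only for an upper bound. I would note that the stated $\theta_F$ is likely the cruder bound one gets directly from the entrywise relative bound $|\partial_jF_i-g_{ij}|\le10^{-11}|g_{ij}|$, namely $\|I-CG\|_\infty+10^{-11}\||C||G|\|_\infty$. Either bound works.

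Next I would verify $\theta_F<1$ and $\varepsilon_F+\theta_F\varrho<\varrho$. The second condition is equivalent to $\varepsilon_F<(1-\theta_F)\varrho$. The exact dyadic $\varrho$ printed in the data satisfies $\varrho\approx1.346\cdot10^{-16}$, so the right side is about $1.3459\cdot10^{-16}$, roughly twice $\varepsilon_F$. This step is a rational comparison, and I would check it with the exact dyadic radius rather than its decimal approximation. \Cref{thm:primal} then produces the unique zero $a_*$. It also gives $m_{\mathrm{feas}}=m_{a_*}\in\mathcal A_{\mathrm{obs}}(\varsigma_{\mathrm{cal}})\cap\Mc$ and $E(m_{\mathrm{feas}})\le E(m_0)e^{6\varrho}$.

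The energy bound comes from \Cref{ca:primal-second}, which gives $E(m_0)e^{6\varrho}\le U_{\mathrm{feas}}=1.142692108398735\ldots$. Comparing with the lower endpoint of $\cs$ in \Cref{ca:sine}, $1.14269217981\ldots$, gives $U_{\mathrm{feas}}<\cs$ with a margin of about $7\cdot10^{-8}$. The inequality $\inf_{\mathcal A_{\mathrm{obs}}(\varsigma_{\mathrm{cal}})\cap\Mc}E\le E(m_{\mathrm{feas}})$ holds simply because $m_{\mathrm{feas}}$ belongs to that set. The leftmost inequality $c_{\mathrm{cert}}\le\inf$ comes from \Cref{cor:numerical}: it gives $\inf_{\mathcal A_{\mathrm{obs}}(\varsigma_{\mathrm{cal}})}E\ge c_{\mathrm{cert}}$, and restricting to the subset $\mathcal A_{\mathrm{obs}}(\varsigma_{\mathrm{cal}})\cap\Mc$ can only raise the infimum.

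No step is a real obstacle, since all the analytic and computational content sits in \Cref{thm:primal} and \Cref{ca:primal-second}. The only point needing care is that $\varepsilon_F,\theta_F$ must be genuine upper bounds for the verified enclosures, and that the contraction inequality is checked with the exact radius; I would present both as exact rational comparisons.
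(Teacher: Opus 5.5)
Your proposal is correct and follows the paper's argument: check the hypotheses of \Cref{thm:primal} against the verified data, then chain the energy bound with \Cref{ca:sine} and the left inequality with \Cref{cor:numerical}. The only difference is the source of $\varepsilon_F$ and $\theta_F$. The paper computes them by interval arithmetic from the exact $C$ and the rounded enclosures of \Cref{tab:primal-values}, which is where the cruder $1.818\cdot10^{-7}$ comes from, as you guessed. You instead cite the sharper unrounded bounds of \Cref{ca:primal-second} and observe that the stated values dominate them. Both routes are valid, since \Cref{thm:primal} only requires upper bounds.
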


The value of the relaxation therefore lies in
$[c_{\mathrm{cert}},U_{\mathrm{feas}}]$: its width is less than
$8.572156\cdot10^{-6}$, and the relaxation is more than
$7.1417\cdot10^{-8}$ below $\cs$. By
\Cref{thm:dual}(i), every dual certificate with $\varsigma=\varsigma_{\mathrm{cal}}$ that is
admissible at $k$ has $\ell_*(\Theta)\le E(m_{\mathrm{feas}})<\cs$. Closing the gap
requires further observations.

\section{No locally sharp differentiable bound at the sine density}\label{sec:structural-limits}

\Cref{cor:primal} shows that the five observations of
$\mathcal A_{\mathrm{obs}}(\varsigma_{\mathrm{cal}})$ do not determine $\cs$. The next theorem
shows that no finite set of observations of the same kinds does, in a
local and differentiable sense. The proof is in \Cref{app:obstruction}.

\begin{theorem}[No locally sharp $C^1$ bound from finitely many observations]
\label{thm:no-touching}
Fix scales $0<\sigma_1<\dots<\sigma_r$ and put
\[
 \mathbf O(m)=\bigl(\kappa(m),\ \tr K_m^2,\ (H_m(\sigma_j),H_m'(\sigma_j))_{j=1}^r\bigr),
 \qquad H_m'(s)=\partial_sH_m(s).
\]
Let $U$ be a neighborhood of $\mathbf O(m_*)$ and $\Phi\in C^1(U)$ with
$\Phi(\mathbf O(m_*))=E(m_*)$. Then for every $\varepsilon>0$ there is
$v\in C_c^\infty(\R)$ with $\|v\|_\infty<\varepsilon$ such that
$m=m_*+v$ satisfies $m\ge0$, $\int m=\int\sqrt m=1$, $\mathbf O(m)\in U$ and
$E(m)<\Phi(\mathbf O(m))$. The same holds if some coordinates of $\mathbf O$ are
omitted, and if the scales $\sigma_j$ are continuously differentiable
functions of $\kappa(m)$ with $\sigma_j(\kappa(m_*))>0$.
\end{theorem}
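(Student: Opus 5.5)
The idea is a first-order (then second-order) perturbation argument at $m_*$. All observations and the constraints are differentiable functionals of $m$ at $m_*$, so finitely many linear conditions on $v$ can be imposed. The energy, by contrast, has a strictly convex second variation that cannot be matched by any $C^1$ function of finitely many observations, except along directions where a first-order gain is available. The plan has three steps.

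First, I will compute the first variations at $m_*$. Let $u_*$ be the ground state of \Cref{thm:ground}, normalized by $\int m_*u_*^2=1$. Then $\delta\kappa[v]=-\kappa^2\int vu_*^2/2\cdot(\text{explicit constant})$, which is the Hellmann--Feynman formula for the simple eigenvalue $\lambda_0(K_{m_*})$. The other variations are
\begin{itemize}
\item $\delta\tr K_m^2[v]=2\int v(x)\int m_*(y)e^{-2|x-y|}\dd y\dd x$;
\item $\delta H_m(\sigma)[v]=\frac{\sigma}{2}\int v\,G_\sigma(x,x)$, where $G_\sigma$ is the diagonal Green function of $-\partial^2+1+\sigma m_*$ (a $2\times$ normalized resolvent diagonal), and similarly for $H'_m(\sigma)$;
\item $\delta\int m=\int v$ and $\delta\int\sqrt m=\frac12\int v/n_*$.
\end{itemize}
The last one holds on the support of $v$, where $n_*>0$; this is fine since $n_*>0$ everywhere. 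Each of these is $\int v\,w_\ell$ for an explicit continuous weight $w_\ell$, with $\ell$ ranging over finitely many indices $\ell=1,\dots,L$. The first variation of the energy is $\delta E[v]=\frac32\int v\,n_*$.

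Second, the key linear-algebra step. Set $c=\nabla\Phi(\mathbf O(m_*))$. The first-order change of $E(m)-\Phi(\mathbf O(m))$ is $\int v\,(\frac32n_*-\sum_\ell c_\ell w_\ell)$. I claim that the function $\frac32n_*$ is not in the span of the finitely many weights $w_\ell$ together with $1$ and $1/n_*$. The reason is asymptotic: $n_*(x)=\exp(-\frac\pi2e^{2|x|}+O(|x|))$ by \Cref{prop:target}, so $n_*$ decays doubly exponentially. The weights $1$, $1/n_*$, $\int m_*e^{-2|\cdot-y|}$ and the resolvent diagonals $G_\sigma(x,x)\to$ constants behave as $1$, $e^{-2|x|}$ or $1/n_*$ at infinity. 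The ground-state term $u_*^2$ behaves as $e^{-2|x|}$. Consequently no finite combination can vanish identically while equalling $\frac32n_*$ only up to the constraint directions, except trivially.

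If the residual weight $w:=\frac32n_*-\sum c_\ell w_\ell-\alpha-\beta/n_*$ is nonzero for all admissible multipliers $\alpha,\beta$, I can choose $v\in C_c^\infty$ supported where $n_*>0$ with $\int vw<0$ and $\int v\,w_\ell=\int v=\int v/n_*=0$. This is possible since $w$ is not in the span, by a finite-dimensional Hahn--Banach / annihilator argument. Scaling by $t\downarrow0$ and correcting the nonlinear constraints $\int m=\int\sqrt m=1$ by an implicit-function adjustment in two fixed extra bump directions gives $E-\Phi\circ\mathbf O=t\int vw+o(t)<0$. Positivity of $m$ holds because $v$ is supported on a compact set where $m_*$ is bounded below, and the observations stay in $U$.

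Third, the degenerate case, in which $\frac32n_*$ does lie in the span. This happens only if $\frac32n_*=\alpha+\beta/n_*+\sum c_\ell w_\ell$ on $\R$. I expect this to be the main obstacle: the growth comparison above already excludes it (the $1/n_*$ term forces $\beta=0$; the remaining weights are analytic in $x$ away from $0$ with asymptotic expansions of the form $e^{-2j|x|}$-series, which cannot produce doubly exponential decay unless all combinations cancel). If the argument in fact reaches a second-order analysis, I would take $v$ in the joint kernel of all first variations. The second variation of $E$ is $\frac34\int v^2/n_*$, which is large where $n_*$ is small, while the second variations of the observations are bounded quadratic forms. That comparison goes the wrong way for this approach, so the first-order exclusion is essential, and I would make it rigorous via the theta-function asymptotics of $n_*$ and of the resolvent diagonals. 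Omitting coordinates only shrinks the span. For $\kappa$-dependent scales, the chain rule adds multiples of $w_\kappa$, which are already in the span, so the same argument applies.
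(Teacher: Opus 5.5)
Your architecture is the paper's: first variations at $m_*$, restriction to $v$ in the common kernel of $v\mapsto\int v$ and $v\mapsto\int v/(2n_*)$, an implicit-function correction in two fixed bump directions (which is $o(t)$ because its derivative at $0$ vanishes), and reduction to excluding a pointwise identity $\frac32 n_*=\alpha+\beta/n_*+\sum_\ell c_\ell w_\ell$ on $\R$ by the behaviour as $x\to+\infty$. Also requiring $v$ to annihilate the observation weights is a harmless strengthening. Your handling of omitted coordinates and $\kappa$-dependent scales is right. The gap is in the one step that carries the content: excluding that identity.

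You argue that the weights have asymptotic expansions in powers of $e^{-2|x|}$, which ``cannot produce doubly exponential decay unless all combinations cancel''. But the cancelling case is exactly what must be ruled out, and qualitative asymptotic information cannot do it. Indeed, $n_*$ is itself analytic for $x\neq0$ and is $O(e^{-ax})$ for every $a$, so its expansion in powers of $e^{-2x}$ is identically zero. Multiplying the identity by $n_*$ gives $\beta=0$; letting $x\to\infty$ gives $\alpha+c_0=0$; multiplying by $e^{2x}$ gives $c_1=0$. What remains is $\frac32 n_*=\,$(a combination of the remainders of the weights). A contradiction then requires those remainders to be $o(n_*)$, not merely $o(e^{-2jx})$ for all $j$.

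The missing idea is a size comparison. The deviations of $u^2$, $W_2$, $G_\sigma(x,x)$ and $\partial_sG_\sigma(x,x)$ from their free forms $c_0+c_1e^{-2x}$ are driven by the potential $m_*=n_*^2$, hence are quadratic in the amplitude. The energy gradient $\frac32 n_*$ is only linear in it. Quantitatively, from the Volterra and convolution representations, each weight equals $c_0+c_1e^{-2x}+O(\mathsf J(x))$, where
\[
 \mathsf J(x)=\int_x^\infty m_*+e^{-2x}\int_x^\infty e^{2y}m_*(y)\dd y\le C(1+\varpi)e^{-\pi\varpi},\qquad \varpi=e^{2x},
\]
by the theta form \eqref{eq:sine-theta}. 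The same form gives the lower bound $n_*(x)\ge c\,\varpi e^{-\pi\varpi/2}$. Hence $\mathsf J=o(n_*)$, and the last step yields $\frac32 n_*=o(n_*)$, which is impossible.

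So your step~3 must be replaced by these two-sided bounds on $n_*$ together with the tail estimates for the ODE solutions. With them, the rest of your proposal goes through as you describe.
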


The conclusion concerns $C^1$ functions of the listed observations near
their values at $m_*$. It does not address arbitrary nonsmooth bounds
or assert a lower-energy perturbation on every exact observation level
set. Exact feasibility below $\cs$ for the observations used here is
established separately by \Cref{cor:primal}.

The proof compares decay rates. If $E\ge\Phi\circ\mathbf O$ near
$m_*$ with equality at $m_*$, then $m_*$ minimizes $E-\Phi\circ\mathbf O$ under the
two normalizations, and the first-order condition is a pointwise
identity
\[
 \tfrac32n_*=\alpha+\frac\zeta{2n_*}+\mathfrak g ,
\]
with Lagrange multipliers $\alpha,\zeta$ and a combination $\mathfrak g$ of the
gradients of the observations. These gradients are the squared ground
state, the diagonals $G_s(x,x)$ of the Green functions of
$-\partial_x^2+1+sm_*$, their derivatives in $s$, and the convolution
$e^{-2|\cdot|}*m_*$ (\Cref{lem:gradients}). All of them behave like
$c_0+c_1e^{-2x}+o(n_*)$ as $x\to\infty$, while $n_*$ decays like
$\exp(-\frac\pi2e^{2x})$ (\Cref{prop:target}). Letting $x\to\infty$ in the identity
multiplied by $n_*$, then in the identity itself, and then in the identity
multiplied by $e^{2x}$, forces $\zeta=0$, $\alpha+c_0=0$ and $c_1=0$; then
$\frac32n_*=o(n_*)$, which is impossible.

\section{Discussion}\label{sec:discussion}

By \Cref{cor:numerical,thm:sine-finite},
$2\sqrt{c_{\mathrm{cert}}}\le\mathscr G(\mu,L)\le2\sqrt{\cs}$ for all $0<\mu<L$, with
$\sqrt{\cs/c_{\mathrm{cert}}}<1.000003782131<1.000004$; in terms of the Green energy,
$c_{\mathrm{cert}}\le\underline{\mathsf e}\le\cs$ and $(\cs-c_{\mathrm{cert}})/\cs<7.6\cdot10^{-6}$.

\begin{hypothesis}[Optimality of the sine--Jacobi method]
\label{hyp:sine-lower}
$\underline{\mathsf e}=\cs$. Consequently $\mathscr G(\mu,L)=2\sqrt{\cs}$ for all
$0<\mu<L$.
\end{hypothesis}

The consequence follows from \Cref{thm:gain,thm:sine-finite}. By
\Cref{cor:primal}, the five observations of the relaxation leave a gap of
more than $7.1417\cdot10^{-8}$ below $\cs$, and by \Cref{thm:no-touching} no
continuously differentiable function of finitely many ground, trace and
determinant observations that equals $\cs$ at $m_*$ is a lower bound for $E$
near $m_*$. The spectral constraints of \eqref{eq:limit-problem} are
equivalent to $H_m(s)=H_*(s)$ for all $s\ge0$, since the zeros of the entire
function $s\mapsto\det(I+\frac s2K_m)$ are the numbers $-2/\lambda_j(K_m)$. Since $m_{\mathrm{feas}}$
also satisfies $\int\sqrt{m_{\mathrm{feas}}}=1$, adding the root mass to the
relaxation does not close the gap of \Cref{cor:primal}. By
\Cref{thm:common-measure}, the filters at all scales at a point are
integrals against one probability measure $\mu_x$; whether the relaxation
that keeps $H_m(s)$ for all $s\ge0$ admits a dual certificate of the type of
\Cref{sec:mechanism} whose state is the whole measure $\mu_x$ is open.

\paragraph{Use of AI tools.}
AI tools assisted the authors in writing and debugging code, comparing
computational approaches, and selecting numerical formulations with
more transparent and better-controlled error propagation. They also
provided numerous suggestions on analytical methods and on the structure
of proofs at the stages leading to certificates with the required
properties. The authors reviewed these suggestions and take
responsibility for the mathematical arguments, the implementation,
the verification of the certificates, and the final content of the paper.
The rigorous numerical claims rest on the analytical error bounds and
certificate-verification procedures described in the paper.

\appendix

\section{Recurrences, Green matrices and packing}\label{app:reduction}

This appendix proves the results of \Cref{sec:reduction-gain,sec:reduction-density}:
the correspondence between recurrences and Jacobi matrices and the
formula for the gain, the exponential representation of Green
matrices, and the packing and transfer theorems.

\subsection{Recurrences, Jacobi matrices and the gain}\label{sec:finite}

The continuant recurrence identifies residual polynomials with normalized
leading minors and expresses the terminal response through leading and
trailing minors. These two identities give the Jacobi correspondence
and the exact gain formula. We begin with the spectral quantities that
are fixed throughout the fiber.

For a real symmetric tridiagonal $J$ with diagonal $b_i$ and
$J_{j,j+1}=\sqrt{v_j}$, expansion of the determinant along the last row gives
\begin{equation}\label{eq:continuant}
 P_{-1}=0,\qquad P_0=1,\qquad
 P_{j+1}(z)=(z-b_{j+1})P_j(z)-v_jP_{j-1}(z)
 \quad(0\le j<N),\ v_0:=0,
\end{equation}
and the same recurrence holds for $\widehat P_{l,j}(z)=\det(zI-J_{[l+2:j]})$
in $j\ge l+1$, with $\widehat P_{l,l}=0$, $\widehat P_{l,l+1}=1$; note
$\widehat P_{l,N}=\widehat P_l$.

\begin{lemma}[Spectral identities of the fiber]\label{lem:trace}
Let $J\in\overline{\mathcal Y}_N$ and $G=(I-J)^{-1}$. Then $G$ is positive definite
with eigenvalues $(1-\xi_j)^{-1}$, $\xi_j=\cos((j-\frac12)\pi/N)$,
$1\le j\le N$, and
\[
 \tr G=N^2,\qquad \tr(G^2)=\frac{2N^4+N^2}3 .
\]
The eigenvalues of $G/N^2$, in decreasing order, are
\[
 r_{N,j}=\frac1{2N^2\sin^2((2j+1)\pi/(4N))}\qquad(0\le j<N).
\]
\end{lemma}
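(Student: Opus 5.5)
The plan is to read everything off the spectrum of $J$. By definition of $\overline{\mathcal Y}_N$, $J$ is real symmetric with characteristic polynomial $2^{1-N}T_N$. Hence its eigenvalues are the $N$ simple roots $\xi_j=\cos((j-\frac12)\pi/N)$, $1\le j\le N$, all in $(-1,1)$. So $I-J$ is symmetric with eigenvalues $1-\xi_j>0$. Its inverse $G$ is therefore positive definite with eigenvalues $(1-\xi_j)^{-1}$, which is the first claim.

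For the eigenvalues of $G/N^2$, use $1-\cos\theta=2\sin^2(\theta/2)$ with $\theta=(j-\frac12)\pi/N$. This gives $(1-\xi_j)^{-1}=1/(2\sin^2((2j-1)\pi/(4N)))$. The values increase as $j$ decreases, so reindexing $j\mapsto j+1$ with $0\le j<N$ lists them in decreasing order. This gives exactly $r_{N,j}$.

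For the traces, I will avoid summing cosecants directly and use the characteristic polynomial. Write $p(z)=\prod_j(z-\xi_j)$, a constant multiple of $T_N$. Then
\[
\tr G=\sum_j\frac1{1-\xi_j}=\frac{p'(1)}{p(1)}=\frac{T_N'(1)}{T_N(1)},\qquad
\tr G^2=\sum_j\frac1{(1-\xi_j)^2}=\Bigl(\frac{T_N'}{T_N}\Bigr)^2(1)-\frac{T_N''(1)}{T_N(1)},
\]
the second from $-(p'/p)'=\sum_j(z-\xi_j)^{-2}$. The Chebyshev equation $(1-z^2)T_N''-zT_N'+N^2T_N=0$ at $z=1$ gives $T_N'(1)=N^2$, using $T_N(1)=1$. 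Differentiating once and setting $z=1$ gives $-3T_N''(1)-T_N'(1)+N^2T_N'(1)=0$, so $T_N''(1)=N^2(N^2-1)/3$. Therefore $\tr G=N^2$ and
\[
\tr G^2=N^4-\frac{N^4-N^2}3=\frac{2N^4+N^2}3 .
\]

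The argument has no real obstacle. The only points needing care are that the roots are simple and lie strictly inside $(-1,1)$, so that $G$ exists and is positive definite, and the bookkeeping in the differentiated Chebyshev equation.
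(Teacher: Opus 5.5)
Your proposal is correct and follows the paper's proof essentially step for step: the spectrum is read off from the characteristic polynomial $2^{1-N}T_N$, the traces are obtained as $P'(1)/P(1)$ and $(P'/P)^2(1)-P''(1)/P(1)$ with $T_N'(1)$ and $T_N''(1)$ taken from the Chebyshev equation and its derivative at $z=1$, and the ordered eigenvalues come from $1-\xi_{j+1}=2\sin^2((2j+1)\pi/(4N))$. The one step you leave implicit, dividing by $N^2$ to obtain $r_{N,j}$, is immediate.
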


\begin{proof}\label{proof:lem:trace}
The characteristic polynomial of $J$ is $P=2^{1-N}T_N=\prod_j(z-\xi_j)$
with the distinct roots $\xi_j\in(-1,1)$ of $T_N$, so $G$ has the stated
eigenvalues, and
$\tr G=\sum_j(1-\xi_j)^{-1}=P'(1)/P(1)$,
$\tr G^2=\sum_j(1-\xi_j)^{-2}=(P'/P)^2(1)-P''(1)/P(1)$. The equation
$(1-x^2)T_N''-xT_N'+N^2T_N=0$ at $x=1$ gives $T_N'(1)=N^2T_N(1)=N^2$, and its
derivative $(1-x^2)T_N'''-3xT_N''+(N^2-1)T_N'=0$ at $x=1$ gives
$T_N''(1)=N^2(N^2-1)/3$. Hence $\tr G=N^2$ and
$\tr G^2=N^4-N^2(N^2-1)/3$. Finally
$1-\xi_{j+1}=2\sin^2((2j+1)\pi/(4N))$.
\end{proof}

\begin{proof}[Proof of \Cref{thm:jacobi}]\label{proof:thm:jacobi}
Since $\|J\|=\cos\frac\pi{2N}<1<z_0$, every $z_0I-J_{[1:j]}$ is positive
definite, so $P_j(z_0)>0$. Evaluating \eqref{eq:continuant} at $z_0$ and
dividing by $P_{j+1}(z_0)$ gives $\alpha_j-\beta_j=1$; the formulas also give $\gamma_j>0$,
$\beta_j\ge0$ with equality exactly when $v_j=0$, and $\alpha_0=1$,
$\beta_0=0$. Evaluating \eqref{eq:continuant} at $z(\lambda)$, with
$z(\lambda)-b_{j+1}=(z_0-b_{j+1})-2\lambda/\Delta$, and dividing by $P_{j+1}(z_0)$
shows that $P_j(z(\lambda))/P_j(z_0)$ satisfies the residual recurrence of
$\mathcal R_J$; hence it is $p_j$, and $p_N(\lambda)=T_N(z(\lambda))/T_N(z_0)$. So
$\mathcal R_J\in\mathfrak E_N(\mu,L)$.

Conversely, let $\mathcal R\in\mathfrak E_N(\mu,L)$, put $a_j=\Delta\gamma_j/2$,
$c_j=\prod_{i<j}a_i^{-1}$, $b_{j+1}=z_0-\alpha_j/a_j$ and
$v_j=\beta_j/(a_ja_{j-1})\ge0$. Then $\widetilde P_j(z)=c_jp_j(\Delta(z_0-z)/2)$
satisfies \eqref{eq:continuant} with these coefficients, so it is the
$j$-th leading minor of the Jacobi matrix $J$ with diagonal $b_i$ and
off-diagonal entries $\sqrt{v_j}$. Terminal exactness makes
$\widetilde P_N$ a multiple of $T_N$; being monic, $\widetilde P_N=2^{1-N}T_N$,
so $J\in\overline{\mathcal Y}_N$. Since $\alpha_j-\beta_j=1$, $p_j(0)=1$ and
$\widetilde P_j(z_0)=c_j$; inserting this into the formulas of the theorem
returns $(\alpha_j,\beta_j,\gamma_j)$. Inserting the formulas of the theorem
into the definitions of $b_{j+1},v_j$ returns the entries of $J$, the
off-diagonal ones by the nonnegative square root. The two maps are
therefore inverse to each other.

Since $\beta_j>0$ exactly when $v_j>0$, and $b_{j+1}=0$ exactly when
$\alpha_j=z_0a_j=\frac{L+\mu}2\gamma_j$, the bijection maps $\mathcal Y_N$ onto the
irreducible and $\X_N$ onto the centered recurrences. The zero-diagonal
matrix with $v_1=\frac12$ and $v_j=\frac14$ ($j\ge2$) has $P_1=T_1$ and
$P_j=2^{1-j}T_j$ ($j\ge2$) by \eqref{eq:continuant} and the Chebyshev
recurrence, so it lies in $\X_N$, and all six classes are nonempty.
\end{proof}

\begin{lemma}[Derivative of the terminal residual]\label{lem:gain-kernel}
For $J\in\overline{\mathcal Y}_N$, every $p\ge1$, every symmetric $p\times p$ matrix $H$
with $\mu I\preceq H\preceq LI$, and real $p\times p$ matrices $B=(B_0,\dots,B_{N-1})$,
with $Z=z_0I-2H/\Delta$,
\begin{equation}\label{eq:gain-kernel}
 \mathsf D_{\mathcal R_J,H}[B]
 =-\frac2{\Delta P_N(z_0)}\sum_{j=0}^{N-1}\widehat P_j(Z)B_jP_j(Z).
\end{equation}
\end{lemma}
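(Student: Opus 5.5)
The plan is to differentiate the matrix recurrence of \Cref{def:methods} at $H_j\equiv H$ and solve the resulting inhomogeneous linear recurrence by variation of constants, with trailing minors as the discrete Green function. Write $\mathbf p_j=p_j(H)$ for the unperturbed iterates. By \Cref{thm:jacobi} these are $P_j(Z)/P_j(z_0)$, and they all commute with $H$. Let $\dot{\mathbf p}_j$ denote the directional derivative in the direction $B$. Then $\dot{\mathbf p}_{-1}=\dot{\mathbf p}_0=0$ and
\[
 \dot{\mathbf p}_{j+1}=(\alpha_jI-\gamma_jH)\dot{\mathbf p}_j-\beta_j\dot{\mathbf p}_{j-1}-\gamma_jB_j\mathbf p_j .
\]
This is a linear recurrence whose homogeneous part coincides with that of the residual polynomials, with forcing $-\gamma_jB_j\mathbf p_j$ entering at step $j+1$. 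Everything acts on the left, so noncommutativity causes no trouble: the solution operators are polynomials in $H$ multiplying from the left, while $B_j\mathbf p_j$ stays on the right.

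The homogeneous solution started at step $l$ with values $0$ at step $l$ and the identity at step $l+1$ is built from trailing minors. Since $\widehat P_{l,j}$ obeys \eqref{eq:continuant} in $j$ with $\widehat P_{l,l}=0$ and $\widehat P_{l,l+1}=1$, normalising exactly as in the proof of \Cref{thm:jacobi} shows that $\Phi_{l,j}:=\widehat P_{l,j}(Z)\,P_{l+1}(z_0)/P_j(z_0)$ solves the homogeneous matrix recurrence in $j$ with $\Phi_{l,l}=0$ and $\Phi_{l,l+1}=I$. The mechanism is the same as before: rescaling by $P_j(z_0)$ converts the continuant coefficients $(z-b_{j+1},v_j)$ into $(\alpha_j-\gamma_jH,\beta_j)$.

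Duhamel's principle then gives
\[
 \dot{\mathbf p}_N=-\sum_{l=0}^{N-1}\gamma_l\,\Phi_{l,N}B_l\mathbf p_l .
\]
To verify this, observe that each summand solves the homogeneous recurrence for $j\ge l+1$ and injects exactly $-\gamma_lB_l\mathbf p_l$ at step $l+1$. It remains to substitute the explicit quantities: $\gamma_l=\frac2\Delta P_l(z_0)/P_{l+1}(z_0)$, $\Phi_{l,N}=\widehat P_l(Z)P_{l+1}(z_0)/P_N(z_0)$ and $\mathbf p_l=P_l(Z)/P_l(z_0)$. The factors $P_{l+1}(z_0)$ and $P_l(z_0)$ cancel, leaving $-\frac{2}{\Delta P_N(z_0)}\widehat P_l(Z)B_lP_l(Z)$, which is \eqref{eq:gain-kernel}.

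The main obstacle is bookkeeping. One point is the index convention $\widehat P_j=\det(zI-J_{[j+2:N]})$, which has to match the injection step $l+1$. Another is the reducible case $v_j=0$, where one should check that no division by $v_j$ occurs. It does not, because every coefficient involves only $P_j(z_0)>0$.
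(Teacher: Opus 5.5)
Your proposal is correct and is essentially the paper's proof. The paper differentiates the matrix recurrence and rescales the unknown as $W_j=P_j(z_0)U_j$, so that $W_j$ satisfies the continuant recurrence with forcing $-\frac2\Delta B_jP_j(Z)$; it then superposes the homogeneous solutions $\widehat P_{l,j}(Z)X$. You instead keep the original normalization and rescale the fundamental solutions to $\Phi_{l,j}$, which is the same computation with the factors $P_j(z_0)$ placed elsewhere.
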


\begin{proof}[Proof of \Cref{lem:gain-kernel}]\label{proof:lem:gain-kernel}
The iterates are polynomials in the entries of $H_0,\dots,H_{N-1}$. Let
$U_j$ be the derivative of $\mathbf p_j$ at $(H,\dots,H)$ in the direction
$B$. Since $\mathbf p_j=p_j(H)=P_j(Z)/P_j(z_0)$ at the base point,
$U_{-1}=U_0=0$ and
$U_{j+1}=(\alpha_jI-\gamma_jH)U_j-\beta_jU_{j-1}-\gamma_jB_jp_j(H)$. By the formulas
of \Cref{thm:jacobi}, $P_{j+1}(z_0)(\alpha_jI-\gamma_jH)=P_j(z_0)(Z-b_{j+1}I)$,
$P_{j+1}(z_0)\beta_j=v_jP_{j-1}(z_0)$ and $P_{j+1}(z_0)\gamma_j=2P_j(z_0)/\Delta$; so
$W_j=P_j(z_0)U_j$ satisfies
\[
 W_{j+1}=(Z-b_{j+1}I)W_j-v_jW_{j-1}-\frac2\Delta B_jP_j(Z),\qquad W_{-1}=W_0=0 .
\]
The solution of the homogeneous recurrence with $W_l=0$, $W_{l+1}=X$ is
$W_j=\widehat P_{l,j}(Z)X$, by the continuant recurrence of $\widehat P_{l,j}$.
By linearity, $W_N=-\frac2\Delta\sum_l\widehat P_{l,N}(Z)B_lP_l(Z)$, and
$\mathsf D[B]=U_N=W_N/P_N(z_0)$ is \eqref{eq:gain-kernel}.
\end{proof}

\begin{proof}[Proof of \Cref{thm:gain}]\label{proof:thm:gain}
Put $\bar c=2/(\Delta P_N(z_0))$, $c_l(x,y)=\widehat P_l(x)P_l(y)$ and
$C_*^2=\max_{[-1,1]^2}\sum_lc_l^2$. Diagonalize $Z=O\diag(z_i)O^T$ with
$z_i\in[-1,1]$ and put $\widetilde B_l=O^TB_lO$. By \eqref{eq:gain-kernel} the
$(i,i')$ entry of $O^T\mathsf D[B]O$ is $-\bar c\sum_lc_l(z_i,z_{i'})(\widetilde B_l)_{ii'}$,
and the Cauchy--Schwarz inequality in $l$ gives
$\|\mathsf D[B]\|_{\HS}^2\le\bar c^2C_*^2\sum_l\|B_l\|_{\HS}^2$. For a
maximizer $(x_*,y_*)$, $p=2$, $Z=\diag(x_*,y_*)$, that is,
$H=\frac\Delta2(z_0I-Z)$, and
$B_l=c_l(x_*,y_*)E_{12}/C_*$, with $E_{12}$ the matrix unit, give
$\mathsf D[B]=-\bar cC_*E_{12}$ and $\sum_l\|B_l\|_{\HS}^2=1$; this proves the
equality in \eqref{eq:full-gain} and attainment in dimension two.

Deleting row and column $l+1$ from $I-J$ leaves the blocks
$I-J_{[1:l]}$ and $I-J_{[l+2:N]}$, so $d_{l+1}=P_l(1)\widehat P_l(1)/P_N(1)$.
With $P_N(1)/P_N(z_0)=T_N(1)/T_N(z_0)=\epsN$, the value of the maximum at
$(1,1)$ gives the inequality in \eqref{eq:full-gain}. Taking the infimum over
the fiber, which is in bijection with $\mathfrak E_N$, and the lower limit in
$N$, gives $\mathscr G(\mu,L)\ge2\sqrt{\underline{\mathsf e}}$, since the square root is
continuous and increasing.
\end{proof}

\subsection{Green matrices as exponential kernels}\label{app:centering}

This subsection proves \Cref{cor:centering-packing}. A Jacobi matrix with
zero diagonal has an exponential Green matrix (\Cref{lem:green}); the
centering lift reduces a general Jacobi matrix to this case
(\Cref{thm:centering-transfer}); and the points of the lift leave room for
the intervals.

\begin{lemma}[Green factorization]\label{lem:green}
Let $J$ be a real symmetric tridiagonal $M\times M$ matrix, $M\ge2$,
with zero diagonal, positive off-diagonal entries and $\|J\|<1$, and let
$G=(I-J)^{-1}$, $d_i=G_{ii}$. Put $t_0=t_M=0$ and
$t_i=J_{i,i+1}G_{i,i+1}$ for $1\le i<M$. Then $t_i>0$, $d_i=1+t_{i-1}+t_i$,
and with
\[
 x_1=0,\qquad x_{i+1}-x_i=\tfrac12\log(1+1/t_i)\quad(1\le i<M),
\]
one has
\[
 G_{ij}=\sqrt{d_id_j}\,e^{-|x_i-x_j|}\qquad(1\le i,j\le M).
\]
\end{lemma}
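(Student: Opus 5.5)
The proof rests on the fact that the inverse of an irreducible tridiagonal matrix is a semiseparable (Green) matrix. I would use this in three steps: factor $G$, identify the ratios of its factors with the $t_i$, and read off the exponential form.

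First, positivity and the factorization. Since $\|J\|<1$ and $J$ has nonnegative entries, the Neumann series $G=\sum_kJ^k$ converges. Irreducibility then gives $G_{ij}>0$ for all $i,j$, so in particular $G_{i,i+1}>0$ and $t_i=J_{i,i+1}G_{i,i+1}>0$. The $(i,i)$ entry of $(I-J)G=I$ reads $d_i-J_{i-1,i}G_{i-1,i}-J_{i,i+1}G_{i+1,i}=1$. Because $G$ is symmetric, this is exactly $d_i=1+t_{i-1}+t_i$, with the convention $t_0=t_M=0$.

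Second, the semiseparable structure. The standard continuant argument, or the cofactor formula as in the proof of \Cref{thm:gain}, gives $G_{ij}=a_ib_j$ for $i\le j$ with positive sequences $a,b$: here $a_i\propto\det(I-J_{[1:i-1]})$ and $b_j\propto\det(I-J_{[j+1:M]})$, up to the product of off-diagonal entries between $i$ and $j$. That product also factors, so for $i\le j$
\[
G_{ij}=\frac{P_{i-1}(1)\,\Big(\prod_{l=i}^{j-1}J_{l,l+1}\Big)\,\widehat P_{j-1}(1)}{P_M(1)}.
\]
Put $\rho_l=G_{l,l+1}/\sqrt{d_ld_{l+1}}\in(0,1)$. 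For $i<k<j$ the factorization gives $G_{ij}G_{kk}=G_{ik}G_{kj}$. Hence the normalized matrix $\tilde G_{ij}=G_{ij}/\sqrt{d_id_j}$ satisfies $\tilde G_{ij}=\prod_{l=i}^{j-1}\rho_l$, which is the exponential form with gaps $x_{l+1}-x_l=-\log\rho_l$. Positive definiteness of the $2\times2$ principal minor gives $\rho_l<1$, so the gaps are positive.

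Third, the identity $-\log\rho_l=\tfrac12\log(1+1/t_l)$, which is equivalent to $\rho_l^2=t_l/(1+t_l)$, i.e.
\[
G_{l,l+1}^2(1+t_l)=t_l\,d_ld_{l+1}.
\]
This is where the zero diagonal must enter, and I expect it to be the main step. I would split $G$ at the edge $(l,l+1)$. Let $\alpha=\det(I-J_{[1:l]})/\det(I-J_{[1:l-1]})$ and $\beta=\det(I-J_{[l+1:M]})/\det(I-J_{[l+2:M]})$ be the Schur-complement "left and right resolvents", and write $c=J_{l,l+1}$. Cofactor formulas then give
\[
G_{l,l}=\frac{\beta}{\alpha\beta-c^2},\qquad G_{l+1,l+1}=\frac{\alpha}{\alpha\beta-c^2},\qquad G_{l,l+1}=\frac{c}{\alpha\beta-c^2}.
\]
Consequently $d_ld_{l+1}=\alpha\beta/D^2$ and $G_{l,l+1}^2=c^2/D^2$ with $D=\alpha\beta-c^2$, while $t_l=c^2/D$ and $1+t_l=\alpha\beta/D$. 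The required identity becomes $\frac{c^2}{D^2}\cdot\frac{\alpha\beta}{D}=\frac{c^2}{D}\cdot\frac{\alpha\beta}{D^2}$, which holds identically.

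So the identity actually holds for any diagonal, and the zero diagonal is needed only later, for the points of the lift. This makes the proof robust. The one thing to check carefully is the three cofactor formulas. Each follows from block inversion of $I-J$ at the $2\times2$ block on indices $l,l+1$, after eliminating the left and right parts via Schur complements: $\alpha$ is the Schur complement at index $l$ of the left block, and $\beta$ is the one at $l+1$ of the right block. Finally, setting $x_1=0$ and summing the gaps gives $G_{ij}=\sqrt{d_id_j}\,e^{-|x_i-x_j|}$.
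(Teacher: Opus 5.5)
Your proof is correct and follows essentially the paper's route: the cofactor formula for the inverse of a tridiagonal matrix, the multiplicativity $G_{ij}G_{kk}=G_{ik}G_{kj}$, the diagonal of $(I-J)G=I$, and an adjacent-entry identity. Your $2\times2$ Schur-complement relation $\alpha\beta-c^2=\det(I-J)/\bigl(\det(I-J_{[1:l-1]})\det(I-J_{[l+2:M]})\bigr)$ is exactly the paper's Laplace expansion $\det(I-J_{[1:l]})\det(I-J_{[l+1:M]})-J_{l,l+1}^2\det(I-J_{[1:l-1]})\det(I-J_{[l+2:M]})=\det(I-J)$, and your Neumann-series positivity argument is a harmless variant. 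One correction to your closing remark: the zero diagonal is used inside this lemma. It gives $d_i=1+t_{i-1}+t_i$ (in general one only has $(1-J_{ii})d_i=1+t_{i-1}+t_i$), and it gives the entrywise nonnegativity of $J$ that your Neumann-series argument needs. Only the identity $\rho_l^2=t_l/(1+t_l)$ holds for any diagonal.
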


\begin{proof}[Proof of \Cref{lem:green}]\label{proof:lem:green}
Put $A=I-J\succ0$, $a_i=J_{i,i+1}>0$, $\theta_i=\det A_{[1:i]}$ and
$\varphi_i=\det A_{[i:M]}$, with $\theta_0=\varphi_{M+1}=1$; all are positive. The
inverse of a tridiagonal matrix is
\[
 G_{ij}=\frac{\theta_{i-1}\varphi_{j+1}}{\theta_M}\prod_{r=i}^{j-1}a_r
 \qquad(i\le j)
\]
\cite{Meurant}, and the Laplace expansion of $\det A$ along its first $i$
rows gives $\theta_i\varphi_{i+1}-a_i^2\theta_{i-1}\varphi_{i+2}=\theta_M$ $(1\le i<M)$. Hence $G_{ij}>0$,
$G_{ij}G_{jl}=G_{il}d_j$ for $i\le j\le l$, and
$d_id_{i+1}-G_{i,i+1}^2=\theta_{i-1}\varphi_{i+2}/\theta_M=G_{i,i+1}/a_i$. The diagonal of
$AG=I$ reads $d_i=1+t_{i-1}+t_i$, and
$G_{i,i+1}^2/(d_id_{i+1})=t_i/(1+t_i)=e^{-2(x_{i+1}-x_i)}$. The quotients
$c_{ij}=G_{ij}/\sqrt{d_id_j}>0$ satisfy $c_{il}=c_{ij}c_{jl}$ for $i\le j\le l$,
so $c_{ij}=\prod_{r=i}^{j-1}c_{r,r+1}=e^{-(x_j-x_i)}$.
\end{proof}

\begin{theorem}[The centering lift]\label{thm:centering-transfer}
Let $J$ be a real symmetric tridiagonal $M\times M$ matrix with positive
off-diagonal entries and $\|J\|<1$. There is a unique lower bidiagonal
matrix $\mathsf B$ with positive diagonal and subdiagonal such that
$\mathsf B\mathsf B^T=(I+J)/2$. Define the $2M\times2M$ matrix $\widehat J$
on the index sets $\mathrm o=(1,3,\dots,2M-1)$ and
$\mathrm e=(2,4,\dots,2M)$ by
\[
 \widehat J_{\mathrm o,\mathrm e}=\mathsf B,\qquad
 \widehat J_{\mathrm e,\mathrm o}=\mathsf B^T,\qquad
 \widehat J_{\mathrm o,\mathrm o}=\widehat J_{\mathrm e,\mathrm e}=0.
\]
Then $\widehat J$ is tridiagonal with zero diagonal and positive
off-diagonal entries, $\|\widehat J\|<1$,
\begin{equation}\label{eq:lift-green-block}
 \bigl[(I-\widehat J)^{-1}\bigr]_{\mathrm o,\mathrm o}=2(I-J)^{-1},
\end{equation}
and $\det(xI-\widehat J)=2^{-M}\det((2x^2-1)I-J)$. For $M=N$ the map
$J\mapsto\widehat J$ is a bijection $\mathcal Y_N\to\X_{2N}$ with inverse
$J=2\mathsf B\mathsf B^T-I$.
\end{theorem}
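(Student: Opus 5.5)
The plan is to prove the statements in the order they are listed: existence and uniqueness of $\mathsf B$, then the structure of $\widehat J$, then the Schur-complement identity \eqref{eq:lift-green-block}, the determinant formula, and finally the bijection.

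\textbf{Existence of $\mathsf B$.} Since $\|J\|<1$, the matrix $(I+J)/2$ is symmetric positive definite and tridiagonal. Its Cholesky factor $\mathsf B$ is therefore unique with positive diagonal, and it is lower bidiagonal. The subdiagonal entries are $\mathsf B_{i+1,i}=J_{i,i+1}/(2\mathsf B_{ii})>0$.

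\textbf{Structure of $\widehat J$.} In the interleaved ordering $1,2,\dots,2M$, the entry $\mathsf B_{ii}$ sits at position $(2i-1,2i)$ and $\mathsf B_{i+1,i}$ sits at $(2i+1,2i)$. Hence $\widehat J$ is tridiagonal with zero diagonal, and its off-diagonal sequence is $\mathsf B_{11},\mathsf B_{21},\mathsf B_{22},\dots,\mathsf B_{MM}$, all positive.

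\textbf{Spectral facts.} The matrix $\widehat J$ is the block matrix $\begin{pmatrix}0&\mathsf B\\ \mathsf B^T&0\end{pmatrix}$. Its eigenvalues are $\pm$ the singular values of $\mathsf B$, i.e.\ $\pm\sqrt{\lambda}$ for the eigenvalues $\lambda$ of $\mathsf B\mathsf B^T=(I+J)/2$. These $\lambda$ lie in $(0,1)$, so $\|\widehat J\|<1$.

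For the determinant, the Schur complement with respect to the even block gives
\[
\det(xI-\widehat J)=x^M\det\bigl(xI-x^{-1}\mathsf B\mathsf B^T\bigr)=\det\bigl(x^2I-\mathsf B\mathsf B^T\bigr)=\det\bigl(x^2I-(I+J)/2\bigr).
\]
This equals $2^{-M}\det((2x^2-1)I-J)$, first for $x\neq0$ and then for all $x$ by continuity.

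\textbf{The Green block.} By the block inverse formula, the odd–odd block of $(I-\widehat J)^{-1}$ equals $(I-\mathsf B\mathsf B^T)^{-1}$. Since $I-\mathsf B\mathsf B^T=(I-J)/2$, this is $2(I-J)^{-1}$, which is \eqref{eq:lift-green-block}.

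\textbf{The bijection $\mathcal Y_N\to\X_{2N}$.}

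First, for $J\in\mathcal Y_N$ the determinant identity with $P=2^{1-N}T_N$ gives
\[
\det(xI-\widehat J)=2^{-N}2^{1-N}T_N(2x^2-1)=2^{1-2N}T_{2N}(x),
\]
using $T_N(2x^2-1)=T_{2N}(x)$. Combined with the structural facts above, this shows $\widehat J\in\X_{2N}$.

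For the inverse direction, take $\widehat J\in\X_{2N}$. Reading its off-diagonal entries alternately defines a lower bidiagonal $\mathsf B$ with positive entries. Put $J=2\mathsf B\mathsf B^T-I$. Then $J$ is symmetric tridiagonal with off-diagonal entries $2\mathsf B_{ii}\mathsf B_{i+1,i}>0$. The determinant identity holds for this $J$ by the same Schur computation, which never used $\|J\|<1$. So $\det((2x^2-1)I-J)=2^{1-N}T_N(2x^2-1)$ for all $x$, and since $x\mapsto 2x^2-1$ is onto $[-1,\infty)$, polynomial identity gives $\det(yI-J)=2^{1-N}T_N(y)$. Hence $J\in\mathcal Y_N$.

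The two maps are mutually inverse: uniqueness of the Cholesky factor recovers $\mathsf B$ from $J$, and $\mathsf B$ determines $\widehat J$ entry by entry.

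\textbf{Expected obstacle.} The only real care needed is the last step: showing that $\det(xI-\widehat J)$ equal to $2^{1-2N}T_{2N}$ forces the characteristic polynomial of $J$. This goes through the even-polynomial substitution $y=2x^2-1$ as above.
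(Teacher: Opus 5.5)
Your proposal is correct and follows the paper's proof essentially step by step. The steps are the same: the Cholesky factor of $(I+J)/2$, the block form $\bigl(\begin{smallmatrix}0&\mathsf B\\\mathsf B^T&0\end{smallmatrix}\bigr)$ after grouping odd indices before even ones, the Schur complement for the determinant and for the odd--odd block of the inverse, and the substitution $y=2x^2-1$ on $[-1,\infty)$ together with Cholesky uniqueness for the bijection. The differences are cosmetic: you bound the norm through the singular values of $\mathsf B$ rather than through $\mathscr J^2=\diag(\mathsf B\mathsf B^T,\mathsf B^T\mathsf B)$, and you read off the Green block from the block-inverse formula rather than by solving $(I-\mathscr J)(r,s)=(f,0)$.
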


\begin{proof}[Proof of \Cref{thm:centering-transfer}]
\label{proof:thm:centering-transfer}
The matrix $\Sigma=(I+J)/2$ satisfies $0\prec\Sigma\prec I$ and is tridiagonal;
its Cholesky factor $\mathsf B$ is the unique lower triangular matrix with
positive diagonal and $\mathsf B\mathsf B^T=\Sigma$, it is bidiagonal, and
$\mathsf B_{i+1,i}=\Sigma_{i,i+1}/\mathsf B_{ii}>0$ \cite[\S\S4.2--4.3]{GolubVanLoan}. In
the interlaced order the entries of $\widehat J$ above the diagonal are
$\mathsf B_{ii}$ and $\mathsf B_{i+1,i}$, so $\widehat J$ is a Jacobi matrix with
zero diagonal and positive off-diagonal entries. Grouping the odd
indices before the even ones, $\widehat J$ becomes
$\mathscr J=\bigl(\begin{smallmatrix}0&\mathsf B\\\mathsf B^T&0\end{smallmatrix}\bigr)$,
and $\mathscr J^2=\diag(\mathsf B\mathsf B^T,\mathsf B^T\mathsf B)$ has the
eigenvalues of $\Sigma$ in $(0,1)$; so $\|\widehat J\|<1$. The Schur complement
of the lower right block of $xI-\mathscr J$ gives
$\det(xI-\widehat J)=x^M\det(xI-x^{-1}\Sigma)=\det(x^2I-\Sigma)$ for $x\ne0$, and by
continuity for all $x$. The system $(I-\mathscr J)(r,s)=(f,0)$ reads
$r-\mathsf Bs=f$, $s=\mathsf B^Tr$, so $(I-\Sigma)r=f$ and $r=2(I-J)^{-1}f$, which
is \eqref{eq:lift-green-block}.

For $J\in\mathcal Y_N$, $T_N(2x^2-1)=T_{2N}(x)$ gives
$\det(xI-\widehat J)=2^{1-2N}T_{2N}(x)$, so $\widehat J\in\X_{2N}$. Conversely, for
$C\in\X_{2N}$ the block $\mathsf B=C_{\mathrm o,\mathrm e}$ is lower bidiagonal with
positive diagonal and subdiagonal, $J=2\mathsf B\mathsf B^T-I$ is a Jacobi
matrix with positive off-diagonal entries, and the same determinant
identity gives $\det(yI-J)=2^{1-N}T_N(y)$ for $y\ge-1$, hence for all $y$;
so $J\in\mathcal Y_N$. By uniqueness of the Cholesky factor the two maps are
inverse to each other.
\end{proof}

\begin{proof}[Proof of \Cref{cor:centering-packing}]\label{proof:cor:centering-packing}
For $M=1$, take $y_1=0$ and $I_1=[-1/d_1,0]$. Assume $M\ge2$ and
apply \Cref{lem:green} to the lift $\widehat J$, with points
$x_1<\dots<x_{2M}$, numbers $t_r$ and diagonal $D_r=1+t_{r-1}+t_r$ of
$(I-\widehat J)^{-1}$, and put $y_i=x_{2i-1}$. By \eqref{eq:lift-green-block},
$D_{2i-1}=2d_i$ and $2G_{ij}=\sqrt{(2d_i)(2d_j)}\,e^{-|y_i-y_j|}$. Since
$\frac12\log(1+1/t)=\operatorname{artanh}(1/(2t+1))\ge1/(2t+1)$,
\[
 x_{2i}-x_{2i-2}\ge\frac1{2t_{2i-2}+1}+\frac1{2t_{2i-1}+1}\ge\frac4{2D_{2i-1}}=\frac1{d_i}
 \qquad(2\le i\le M-1),
\]
by the inequality $\frac1a+\frac1b\ge\frac4{a+b}$. Put $I_1=[y_1-1/d_1,y_1]$,
$I_M=[y_M,y_M+1/d_M]$ (for $M\ge2$), and for $2\le i\le M-1$ let $I_i$ be the
interval of length $1/d_i$ in $[x_{2i-2},x_{2i}]$ whose left end point is
$\max\{x_{2i-2},y_i-1/d_i\}$; it lies in $[x_{2i-2},x_{2i}]$ and contains $y_i$,
because $x_{2i-2}<y_i<x_{2i}$ and $x_{2i}-x_{2i-2}\ge1/d_i$. These intervals lie in
the consecutive ranges $(-\infty,x_1]$, $[x_{2i-2},x_{2i}]$ and $[x_{2M-1},\infty)$,
which have pairwise disjoint interiors.
\end{proof}

\subsection{Feature operators, packing and spectral transfer}\label{sec:embedding}

The densities are compared with atomic measures through operators on
$H^1(\R)$: the kernel $e^{-|x-y|}$ is the Gram kernel of the functions
$v_x=e^{-|\cdot-x|}/\sqrt2$, so $K_m$ and its atomic analogues are Gram
operators of weighted families $\{v_x\}$ (\Cref{lem:features}). Moving mass
by a small distance changes these operators little in trace norm, which
gives the packing bound; the spectrum of $G/N^2$ is then compared with
$(\Lambda_j)$.

Let $\HH=H^1(\R)$ be the real Sobolev space with
$\ip uv_\HH=\int_\R(u'v'+uv)$, and put
\[
 \mathsf k_x(t)=\tfrac12e^{-|t-x|},\qquad v_x=\sqrt2\,\mathsf k_x,\qquad
 \mathcal P_x=v_x\otimes v_x,\qquad
 \mathcal K_\sigma=\int_\R\mathcal P_x\dd\sigma(x)
\]
for a finite signed Borel measure $\sigma$, where $(a\otimes b)u=\ip ub_\HH a$;
for real $h\in L^1(\R)$ write $\mathcal K_h=\mathcal K_{h(x)\dd x}$. Trace norms
are denoted $\|\cdot\|_1$. Eigenvalues of positive compact operators are
listed in nonincreasing order with multiplicity and padded with zeros.

\begin{lemma}[Feature operators]\label{lem:features}
\begin{enumerate}
\item $\HH$ is a separable Hilbert space; every $u\in\HH$ has a continuous
representative with $u(x)\to0$ as $|x|\to\infty$; and
$\ip u{\mathsf k_x}_\HH=u(x)$, $\ip{v_x}{v_y}_\HH=e^{-|x-y|}$,
$\|\mathcal P_x-\mathcal P_y\|_1=2\sqrt{1-e^{-2|x-y|}}$.
\item The integral defining $\mathcal K_\sigma$ exists as a Bochner integral
in the trace class, $\|\mathcal K_\sigma\|_1\le|\sigma|(\R)$ and
$\tr\mathcal K_\sigma=\sigma(\R)$; for $\sigma\ge0$, $\mathcal K_\sigma$ is
positive.
\item For $0\le m\in L^1(\R)$, $K_m$ is positive and trace class, its
nonzero eigenvalues coincide with those of $\mathcal K_m$ with
multiplicity, and $\tr K_m=\int m$. For $\sigma=\sum_{i=1}^Mw_i\delta_{y_i}$
with $w_i\ge0$, the nonzero eigenvalues of $\mathcal K_\sigma$ coincide with
those of the matrix $(\sqrt{w_iw_j}\,e^{-|y_i-y_j|})_{i,j=1}^M$.
\item For positive trace-class operators $A,B$ on a separable real Hilbert
space,
\begin{equation}\label{eq:trace-lipschitz}
 \sum_{j\ge0}|\lambda_j(A)-\lambda_j(B)|\le\|A-B\|_1 .
\end{equation}
\item If $\pi$ is a finite positive Borel measure on $\R^2$ of total mass
$M_\pi$ whose marginals are $\sigma$ and $\sigma'$, then
\begin{equation}\label{eq:coupling-spectrum}
 \|\mathcal K_\sigma-\mathcal K_{\sigma'}\|_1
 \le2\sqrt{2M_\pi\int|x-y|\dd\pi(x,y)} .
\end{equation}
\end{enumerate}
\end{lemma}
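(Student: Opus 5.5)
We prove the five parts in order; (i) and (iii) carry the structure, and the rest is standard operator theory.

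For (i) we use the usual facts about $H^1(\R)$. Separability and the existence of a continuous representative vanishing at infinity follow from the Sobolev embedding $H^1(\R)\subset C_0(\R)$. The reproducing identity $\ip u{\mathsf k_x}_\HH=u(x)$ comes from integrating by parts on $(-\infty,x)$ and $(x,\infty)$: $\mathsf k_x$ satisfies $-\mathsf k_x''+\mathsf k_x=\delta_x$, since the jump of $\mathsf k_x'$ at $x$ is $-1$. This gives $\ip{v_x}{v_y}_\HH=2\mathsf k_y(x)=e^{-|x-y|}$. The operator $\mathcal P_x-\mathcal P_y$ has rank two and trace zero. Its nonzero eigenvalues are $\pm\sqrt{1-c^2}$ with $c=\ip{v_x}{v_y}$, by the standard formula for a difference of rank-one projections onto unit vectors. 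Hence $\|\mathcal P_x-\mathcal P_y\|_1=2\sqrt{1-e^{-2|x-y|}}$.

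For (ii), the map $x\mapsto\mathcal P_x$ is continuous into the trace class by (i), so it is Bochner measurable. Each $\mathcal P_x$ has trace norm $1$, so Bochner integrability and the bound $\|\mathcal K_\sigma\|_1\le|\sigma|(\R)$ follow. The trace is a continuous linear functional, which gives $\tr\mathcal K_\sigma=\sigma(\R)$. For $\sigma\ge0$ we have $\ip{\mathcal K_\sigma u}u=\int u(x)^2\dd\sigma\ge0$.

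For (iii) the key step is a factorization. Define $T:L^2(\R)\to\HH$ by $Tf=\int\sqrt{m(x)}f(x)v_x\dd x$. This is bounded, since $\|Tf\|\le\int\sqrt m|f|\le\|m\|_1^{1/2}\|f\|_2$.
\begin{itemize}
\item On one side, $T^*T=K_m$, because its kernel is $\sqrt{m(x)}\ip{v_x}{v_y}\sqrt{m(y)}$.
\item On the other side, $TT^*u=\int m(x)u(x)v_x\dd x=\mathcal K_mu$.
\end{itemize}
The operators $T^*T$ and $TT^*$ have the same nonzero eigenvalues with multiplicity. This gives positivity, the trace-class property, and $\tr K_m=\int m$ by (ii). The atomic case is the same argument with $T:\R^M\to\HH$, $Te_i=\sqrt{w_i}v_{y_i}$, whose Gram matrix is $(\sqrt{w_iw_j}e^{-|y_i-y_j|})$.

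For (iv) we use the Lidskii–Mirsky-type inequality for self-adjoint compact operators: $\sum_j|\lambda_j(A)-\lambda_j(B)|\le\|A-B\|_1$ when the eigenvalues are ordered decreasingly. Here it is applied to positive operators with zero padding. We prove it by finite-rank approximation from the matrix case, where it follows from Ky Fan's maximum principle or the Lidskii majorization $\lambda(A)-\lambda(B)\prec\lambda(A-B)$. Positivity makes the zero padding harmless, since all eigenvalues are nonnegative and the ordered lists stay aligned. This is the step needing the most care, namely the passage from matrices to operators. We handle it by compressing to the span of the first $n$ vectors of a basis and letting $n\to\infty$. The compressed eigenvalues converge, and the compressed trace norms are at most $\|A-B\|_1$.

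For (v) we write $\mathcal K_\sigma-\mathcal K_{\sigma'}=\int(\mathcal P_x-\mathcal P_y)\dd\pi(x,y)$, which holds because $\pi$ has marginals $\sigma$ and $\sigma'$. Then (i) gives
\[
\|\mathcal K_\sigma-\mathcal K_{\sigma'}\|_1\le\int 2\sqrt{1-e^{-2|x-y|}}\,\dd\pi\le2\int\sqrt{2|x-y|}\,\dd\pi,
\]
using $1-e^{-s}\le s$. Cauchy–Schwarz against $\pi$ then bounds this by $2\sqrt{2M_\pi\int|x-y|\dd\pi}$.
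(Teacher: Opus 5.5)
Your proposal is correct. For parts (i), (ii), (iii) and (v) it follows the paper's argument: your $T$ is the paper's $A_m$, with $T^*T=K_m$ and $TT^*=\mathcal K_m$. In (iii) the paper obtains trace class by computing $\|A_m^*\|_{\HS}^2=\int m$ directly with Tonelli, whereas you import it from (ii) via $\|T\|_{\HS}^2=\tr(TT^*)$; this is the same computation. You have two harmless slips. Since $\ip u{v_x}_\HH=\sqrt2\,u(x)$, one has $\ip{\mathcal K_\sigma u}u=2\int u^2\dd\sigma$. Likewise $T^*u=\sqrt{2m}\,u$, so $TT^*u=\sqrt2\int m(x)u(x)v_x\dd x$. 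Neither affects the conclusions.

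The genuine difference is in (iv). You derive the inequality from Lidskii's majorization for Hermitian matrices, followed by a compression limit. That limit needs three things: $P_nAP_n\to A$ in operator norm for compact $A$, Weyl's bound $|\lambda_j(X)-\lambda_j(Y)|\le\|X-Y\|$ to get convergence of each eigenvalue, and Fatou's lemma over $j$ to pass the sum to the limit. Your route is valid, but the paper avoids both Lidskii and the approximation. It writes $A-B=D_+-D_-$ and notes that $A\preceq B+D_+$ and $B\preceq B+D_+$. By min--max this gives $(\lambda_j(A)-\lambda_j(B))_+\le\lambda_j(B+D_+)-\lambda_j(B)$. Summing over $j$ yields $\tr D_+$, because for positive trace-class operators the trace is the sum of the eigenvalues. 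Exchanging $A$ and $B$ and adding gives $\tr D_++\tr D_-=\|A-B\|_1$. That argument is a few lines long, works directly in infinite dimensions, and uses positivity exactly where it is needed, namely to sum the ordered eigenvalue lists to traces. Your matrix inequality holds without positivity, but positivity is required anyway to make the decreasing order with zero padding meaningful in infinite dimensions. So the extra generality of your route buys nothing here, at the cost of a deeper theorem and a limiting argument.
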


\begin{proof}\label{proof:lem:features}
(i) The Sobolev facts follow from \cite[Proposition~8.1 and Theorem~8.2]{Brezis}.
In particular, $(u^2)'=2uu'\in L^1$, so $u^2$ has limits at both
infinities; these limits vanish because $u\in L^2$. The distributional
identity $(-\partial_t^2+1)\mathsf k_x=\delta_x$ gives
$\ip u{\mathsf k_x}_\HH=u(x)$, first for smooth compactly supported $u$
and then by density in $H^1$. Thus $\ip{v_x}{v_y}_\HH=e^{-|x-y|}$.
For unit vectors $v,w$ with $r=\ip vw$, the operator
$v\otimes v-w\otimes w$ has nonzero eigenvalues
$\pm\sqrt{1-r^2}$ (diagonalize on $\Span\{v,w\}$). This proves the
trace-norm formula, including the case $v=w$.

(ii) The map $x\mapsto\mathcal P_x$ is continuous in trace norm, has a
separable range, and satisfies $\|\mathcal P_x\|_1=1$. It is therefore
Bochner integrable against every finite signed measure, with
$\|\mathcal K_\sigma\|_1\le|\sigma|(\R)$. Positivity follows by integrating
$\ip{\mathcal P_xu}u=|\ip u{v_x}|^2$. Continuity of the trace on the
trace class gives $\tr\mathcal K_\sigma=\int\tr\mathcal P_x\dd\sigma=\sigma(\R)$.

(iii) Define $A_m:L^2(\R)\to\HH$ by
$A_mf=\int v_x\sqrt{m(x)}f(x)\dd x$. The Bochner integral exists and
$\|A_mf\|_\HH\le\|m\|_1^{1/2}\|f\|_2$. Part (i) gives
\[
 A_m^*u=\sqrt{2m}\,u,\qquad A_mA_m^*=\mathcal K_m,\qquad A_m^*A_m=K_m.
\]
For an orthonormal basis $(e_j)$ of $\HH$, Tonelli's theorem yields
\[
 \|A_m^*\|_{\HS}^2
 =\sum_j\int m(x)|\ip{e_j}{v_x}_\HH|^2\dd x
 =\int m(x)\|v_x\|_\HH^2\dd x=\int m.
\]
Thus $A_m$ is Hilbert--Schmidt, both products are positive trace class,
and their nonzero eigenvalues coincide with multiplicity: $A_m$ and
$\lambda^{-1}A_m^*$ identify the eigenspaces at $\lambda>0$.
For an atomic measure, use instead
$Ca=\sum_i\sqrt{w_i}a_iv_{y_i}$; then $CC^*=\mathcal K_\sigma$ and
$(C^*C)_{ij}=\sqrt{w_iw_j}e^{-|y_i-y_j|}$.

(iv) Write $A-B=D_+-D_-$ for its positive and negative parts. The min--max
principle gives eigenvalue monotonicity under operator order. Since
$A\preceq B+D_+$,
\[
 \sum_j(\lambda_j(A)-\lambda_j(B))_+
 \le\sum_j[\lambda_j(B+D_+)-\lambda_j(B)]=\tr D_+.
\]
Exchange $A$ and $B$ and add; $\tr D_++\tr D_-=\|A-B\|_1$ proves
\eqref{eq:trace-lipschitz}. Finite-dimensional spaces may be enlarged
by a zero summand to implement the stated padding convention.

(v) The marginal identities for Bochner integrals give
$\mathcal K_\sigma-\mathcal K_{\sigma'}=\int(\mathcal P_x-\mathcal P_y)\dd\pi$.
Part (i), $1-e^{-2a}\le2a$, and Cauchy--Schwarz imply
\[
 \|\mathcal K_\sigma-\mathcal K_{\sigma'}\|_1
 \le2\int\sqrt{1-e^{-2|x-y|}}\dd\pi
 \le2\sqrt{2M_\pi\int|x-y|\dd\pi}.\qedhere
\]
\end{proof}

\begin{proof}[Proof of \Cref{thm:packing}]\label{proof:thm:packing}
Split $J$ at its zero off-diagonal entries into irreducible diagonal
blocks $J^{(1)},\dots,J^{(q)}$; each is real symmetric tridiagonal with
positive off-diagonal entries and $\|J^{(p)}\|\le\|J\|<1$, and
$G=(I-J)^{-1}$ is block diagonal with blocks $(I-J^{(p)})^{-1}$. For a
block of size at least two, \Cref{cor:centering-packing} gives points
$y_i$ and closed intervals $I_i\ni y_i$ of lengths $1/d_i$ with disjoint
interiors such that $G_{ij}=\sqrt{d_id_j}e^{-|y_i-y_j|}$ inside the block.
For a block of size one with entry $b$, $d_i=(1-b)^{-1}>0$; put $y_i=0$
and $I_i=[-1/d_i,0]$. Translate the blocks, in their order, so that the
union of the intervals of each block lies at distance at least $2\log N$
from the union of the intervals of every other block; translations
preserve the factorization within a block. Define $m_J$ as in the
statement. Since the $I_i$ have disjoint interiors and
$|I_i|=1/d_i$,
\[
 \int m_J=\sum_i\frac{d_i^2}{N^2}\cdot\frac1{d_i}=\frac{\tr G}{N^2}=1,\qquad
 \int\sqrt{m_J}=\sum_i\frac{d_i}N\cdot\frac1{d_i}=1,\qquad
 E(m_J)=\sum_i\frac{d_i^3}{N^3}\cdot\frac1{d_i}=\frac{\E_N(J)}{N^3},
\]
by \Cref{lem:trace}; $m_J$ is bounded with compact support, so
$m_J\in\Mc$.

Put $w_i=d_i/N^2$, so that $\sum_iw_i=1$, and
$\sigma_J=\sum_iw_i\delta_{y_i}$. The measure
$\pi=\sum_i\frac{d_i^2}{N^2}\ind_{I_i}(x)\dd x\otimes\delta_{y_i}$ has
marginals $m_J\dd x$ and $\sigma_J$ and mass one. Since
$\int_{I}|x-y|\dd x\le|I|^2/2$ for $y\in I$,
\[
 \int|x-y|\dd\pi\le\sum_i\frac{d_i^2}{N^2}\cdot\frac1{2d_i^2}=\frac1{2N},
\]
and \eqref{eq:coupling-spectrum} gives
$\|\mathcal K_{m_J}-\mathcal K_{\sigma_J}\|_1\le2/\sqrt N$. By
\Cref{lem:features}(iii) and (iv),
$\sum_j|\lambda_j(K_{m_J})-\lambda_j(\widetilde G)|\le2/\sqrt N$, where
$\widetilde G_{ij}=\sqrt{w_iw_j}\,e^{-|y_i-y_j|}$. For $i,j$ in the same
block $\widetilde G_{ij}=G_{ij}/N^2$; for $i,j$ in different blocks
$G_{ij}=0$ and $|y_i-y_j|\ge2\log N$, so
$0<\widetilde G_{ij}\le\sqrt{w_iw_j}/N^2$. Each matrix unit has trace norm
one, so
\[
 \Bigl\|\widetilde G-\frac G{N^2}\Bigr\|_1\le\sum_{i,j}\frac{\sqrt{w_iw_j}}{N^2}
 =\frac1{N^2}\Bigl(\sum_i\sqrt{w_i}\Bigr)^2\le\frac{N\sum_iw_i}{N^2}=\frac1N .
\]
Both matrices are positive semidefinite, and \eqref{eq:trace-lipschitz}
together with the triangle inequality gives
\eqref{eq:block-spectral-error}.
\end{proof}

\begin{proof}[Proof of \Cref{thm:transfer}]\label{proof:thm:transfer}
By \Cref{lem:trace}, $\lambda_j(G/N^2)=r_{N,j}$ for $j<N$ and zero for
$j\ge N$. For $0\le j<N$ the argument $a=(2j+1)\pi/(4N)$ lies in
$(0,\pi/2)$, where $\frac2\pi a\le\sin a\le a$; hence
\[
 \Lambda_j=\frac{8}{\pi^2(2j+1)^2}=\frac1{2N^2a^2}\le r_{N,j}
 \le\frac{\pi^2}{8N^2a^2}=\frac2{(2j+1)^2}.
\]
For fixed $j$, $r_{N,j}\to\Lambda_j$ because $\sin a/a\to1$. Since
$\sum_jr_{N,j}=\tr G/N^2=1$ and $r_{N,j}\le2/(2j+1)^2$, dominated
convergence gives $\sum_j\Lambda_j=1$. Similarly
$\sum_jr_{N,j}^2=\tr(G^2)/N^4=\frac23+\frac1{3N^2}$ by \Cref{lem:trace},
and $r_{N,j}^2\le4/(2j+1)^4$, so $\sum_j\Lambda_j^2=\frac23$. Since
$r_{N,j}\ge\Lambda_j$ for $j<N$,
\[
 \sum_j|r_{N,j}-\Lambda_j|
 =\Bigl(1-\sum_{j<N}\Lambda_j\Bigr)+\sum_{j\ge N}\Lambda_j
 =2\sum_{j\ge N}\Lambda_j,
\]
and $\frac1{(2j+1)^2}\le\frac1{(2j-1)(2j+1)}=\frac12\bigl(\frac1{2j-1}-\frac1{2j+1}\bigr)$
gives, by telescoping, $\sum_{j\ge N}\Lambda_j\le\frac8{\pi^2}\cdot\frac1{2(2N-1)}$.
Hence $\sum_j|r_{N,j}-\Lambda_j|\le8/(\pi^2(2N-1))$, and with
\eqref{eq:block-spectral-error} and the triangle inequality this proves
\eqref{eq:deltaN}.

For the determinant, $P(z)=\prod_j(z-\xi_j)=2^{1-N}T_N(z)$ and
$T_N(1)=1$ give
\[
 \prod_{j<N}\Bigl(1+\frac s2r_{N,j}\Bigr)
 =\prod_{j=1}^N\frac{1+s/(2N^2)-\xi_j}{1-\xi_j}
 =\frac{T_N(1+s/(2N^2))}{T_N(1)}=T_N\Bigl(1+\frac s{2N^2}\Bigr).
\]
Use $T_N(\cosh t)=\cosh(Nt)$ with
$t_N=2\operatorname{arsinh}(\sqrt s/(2N))$. Then
$\cosh t_N=1+s/(2N^2)$ and $Nt_N\to\sqrt s$, giving
$\sum_j\log(1+sr_{N,j}/2)\to\log\cosh\sqrt s=H_*(s)$, also for $s=0$.
The map
$x\mapsto\log(1+sx/2)$ is $(s/2)$-Lipschitz on $[0,\infty)$, so
\[
 \Bigl|\sum_j\log(1+sr_{N,j}/2)-\sum_j\log(1+s\Lambda_j/2)\Bigr|
 \le\frac s2\sum_j|r_{N,j}-\Lambda_j|\longrightarrow0,
\]
which proves $\sum_j\log(1+s\Lambda_j/2)=H_*(s)$; the series converges
because its terms are at most $s\Lambda_j/2$. The same Lipschitz bound
and \eqref{eq:deltaN} give $|H_{m_J}(s)-H_*(s)|\le\frac s2\delta_N$. Since
$\sum_j\lambda_j(K_{m_J})=\int m_J=1$ and $\sum_j\Lambda_j=1$, all
$\lambda_j(K_{m_J})$ and $\Lambda_j$ lie in $[0,1]$, so
\[
 \Bigl|\tr K_{m_J}^2-\frac23\Bigr|
 =\Bigl|\sum_j(\lambda_j(K_{m_J})-\Lambda_j)(\lambda_j(K_{m_J})+\Lambda_j)\Bigr|
 \le2\delta_N .
\]
Finally, $|\lambda_0(K_{m_J})-\Lambda_0|\le\delta_N<\Lambda_0/2$ gives
$\lambda_0(K_{m_J})>\Lambda_0/2$ and
\[
 |\kappa(m_J)-k|=\frac{2|\lambda_0(K_{m_J})-\Lambda_0|}{\lambda_0(K_{m_J})\Lambda_0}
 \le\frac{4\delta_N}{\Lambda_0^2},
\]
where $k=2/\Lambda_0=\pi^2/4$.
\end{proof}

\section{The sine reference}\label{app:sine}

The sine reference requires both a finite-dimensional extremum and its
continuum limit. Orthogonal polynomials on the Chebyshev nodes diagonalize
$J_N^\star$ and place the gain maximum at $x=y=1$. Their limit gives the
Legendre Green kernel $G_*$; its diagonal is $\phi_*/2$. A uniform bound
on the discrete diagonals upgrades convergence to $L^2$, while Jacobi's
theta identities give an explicit density and a finite-sum enclosure
for its energy.

Throughout this appendix $h=\pi/N$, $\theta_i=(i-\frac12)h$,
$x_i=\cos\theta_i$ and $p_i=\sin\theta_i$ for $1\le i\le N$.

\begin{proof}[Proof of the finite part of \Cref{thm:sine-finite}]
\label{proof:thm:sine-finite}
{Spectrum.} Put $w_i=p_i\sin(h/2)$. Since
$2\sin(h/2)\sin\theta_i=\cos((i-1)h)-\cos(ih)$, the sum $\sum_iw_i$
telescopes to $\frac12(1-\cos(Nh))=1$. For functions $f$ on the nodes
$\{x_1,\dots,x_N\}$, written $f_i=f(x_i)$, define
\[
 (\mathcal L_Nf)_i=\frac{\sin(ih)f_{i+1}+\sin((i-1)h)f_{i-1}}{2p_i}
 \qquad(1\le i\le N),
\]
where the coefficient of $f_0$ at $i=1$ and that of $f_{N+1}$ at $i=N$
vanish. Since
$w_i(\mathcal L_N)_{i,i+1}=\frac12\sin(h/2)\sin(ih)=w_{i+1}(\mathcal L_N)_{i+1,i}$,
$\mathcal L_N$ is symmetric in $\ell^2(w)$. The matrix
$D^{1/2}\mathcal L_ND^{-1/2}$, $D=\diag(w_i)$, is symmetric tridiagonal with
zero diagonal and entries
$\sqrt{w_i/w_{i+1}}\,\sin(ih)/(2p_i)=\sin(ih)/(2\sqrt{p_ip_{i+1}})$, that is,
it equals $J_N^\star$; these entries are positive for $1\le i<N$.

Let $f=T_j$, so that $f_i=\cos(j\theta_i)$, and use $\theta_{i\pm1}=\theta_i\pm h$,
$ih=\theta_i+h/2$, $(i-1)h=\theta_i-h/2$. The terms with the vanishing
coefficients may be evaluated by the same formula, so for every $i$
\begin{align*}
 2p_i(\mathcal L_NT_j)_i
 &=\sin(\theta_i+\tfrac h2)\cos(j\theta_i+jh)
  +\sin(\theta_i-\tfrac h2)\cos(j\theta_i-jh)\\
 &=\cos(j\theta_i)\cos(jh)\cdot2\sin\theta_i\cos\tfrac h2
  -\sin(j\theta_i)\sin(jh)\cdot2\cos\theta_i\sin\tfrac h2 .
\end{align*}
With $U_{j-1}(\cos\theta)=\sin(j\theta)/\sin\theta$ this reads
\[
 \mathcal L_NT_j=\cos\tfrac h2\cos(jh)\,T_j-\sin\tfrac h2\sin(jh)\,xU_{j-1}
 \quad\text{on the nodes}.
\]
For $j\ge1$ both $T_j$ and $xU_{j-1}$ have degree $j$ and leading
coefficient $2^{j-1}$, and $\mathcal L_NT_0=\cos\frac h2$. Hence, for
$0\le j<N$, $\mathcal L_N$ maps the restrictions to the nodes of the
polynomials of degree at most $j$ into themselves, and multiplies the
coefficient of $x^j$ by $\cos\frac h2\cos(jh)-\sin\frac h2\sin(jh)=\cos((j+\frac12)h)$.
The $N$ nodes are distinct, so the restrictions of the polynomials of
degree less than $N$ form all of $\R^N$, and $\ell^2(w)$ is an inner
product on them. Let $\phi_{N,j}$ be the orthonormal polynomials of
$\ell^2(w)$ with positive leading coefficients, $0\le j<N$. Then
$\mathcal L_N\phi_{N,j}$ has degree at most $j$, and for $l<j$,
$\ip{\mathcal L_N\phi_{N,j}}{\phi_{N,l}}_w=\ip{\phi_{N,j}}{\mathcal L_N\phi_{N,l}}_w=0$
because $\mathcal L_N\phi_{N,l}$ has degree at most $l$. Hence
$\mathcal L_N\phi_{N,j}=\cos((j+\frac12)h)\phi_{N,j}$. The vectors
$U_{\cdot j}=(\sqrt{w_i}\phi_{N,j}(x_i))_i$ are orthonormal eigenvectors of
$J_N^\star$ with eigenvalues $\cos((j+\frac12)h)$, $0\le j<N$; these are
the $N$ distinct roots of $T_N$, so the monic characteristic polynomial
of $J_N^\star$ is $2^{1-N}T_N$ and $J_N^\star\in\X_N$. By
\Cref{thm:jacobi}, $\mathcal R_N^\star$ is centered.

{Reversal.} Since $\sin((N-i)h)=\sin(ih)$,
$\sin\theta_{N-i}=\sin\theta_{i+1}$ and $\sin\theta_{N+1-i}=\sin\theta_i$,
the entries satisfy $J^\star_{N-i,N+1-i}=J^\star_{i,i+1}$. Hence
$RJ_N^\star R=J_N^\star$ for the reversal permutation $R$, which maps
$J^\star_{[s+2:N]}$ onto $J^\star_{[1:N-s-1]}$, and
$\widehat P_s=P_{N-s-1}$ for $0\le s<N$.

{Leading principal polynomials.} Put $A_0=1$ and
$A_s=\prod_{r=0}^{s-1}\frac{\sin((r+\frac12)h)}{\sin((r+1)h)}>0$ for
$1\le s\le N-1$, and
\[
 C_s(\cos\theta)=\sum_{j=0}^sA_jA_{s-j}\cos((s-2j)\theta)\qquad(0\le s\le N-1).
\]
We show that for $1\le s\le N-2$
\begin{equation}\label{eq:rogers}
 \sin((s+1)h)\,C_{s+1}=2\sin((s+\tfrac12)h)\,xC_s-\sin(sh)\,C_{s-1}.
\end{equation}
Using $2\cos\theta\cos(n\theta)=\cos((n+1)\theta)+\cos((n-1)\theta)$, the
three functions in \eqref{eq:rogers} are combinations of
$\cos((s+1-2k)\theta)$, $0\le k\le s+1$, with coefficients
\[
 [C_{s+1}]_k=A_kA_{s+1-k},\qquad
 [2xC_s]_k=A_kA_{s-k}+A_{k-1}A_{s+1-k},\qquad
 [C_{s-1}]_k=A_{k-1}A_{s-k},
\]
where $A_{-1}=0$ and $A_n=0$ for $n<0$. It suffices to compare these
coefficients. For $k=0$ the comparison reads
$\sin((s+1)h)A_{s+1}=\sin((s+\frac12)h)A_s$, which is the definition of
$A_{s+1}$; the case $k=s+1$ is the same. For $1\le k\le s$ put
$l=s+1-k\ge1$; dividing by $A_{k-1}A_{l-1}$ and using
$A_n/A_{n-1}=\sin((n-\frac12)h)/\sin(nh)$, then multiplying by
$\sin(kh)\sin(lh)$, the comparison becomes, with $\alpha=kh$,
$\beta=lh$, $\delta=h/2$,
\[
 \sin(\alpha+\beta)\sin(\alpha-\delta)\sin(\beta-\delta)
 +\sin(\alpha+\beta-2\delta)\sin\alpha\sin\beta
 =\sin(\alpha+\beta-\delta)\bigl[\sin(\alpha-\delta)\sin\beta
 +\sin(\beta-\delta)\sin\alpha\bigr].
\]
Both sides equal
$\cos(\alpha-\beta)\cos\delta\sin\sigma-\sin\sigma\cos\sigma$,
where $\sigma=\alpha+\beta-\delta$, by the product-to-sum identities.
This proves \eqref{eq:rogers}.

The coefficient of $x^s$ in $C_s$ is $2^sA_s$, since $\cos(n\theta)=T_n(x)$
has leading coefficient $2^{n-1}$ for $n\ge1$ and the degree $s$ occurs
for $j=0$ and $j=s$. Put $\widetilde P_s=C_s/(2^sA_s)$. Then
$\widetilde P_0=1$, $\widetilde P_1=x$, and dividing \eqref{eq:rogers} by
$2^{s+1}\sin((s+1)h)A_{s+1}=2^{s+1}\sin((s+\frac12)h)A_s$ gives
\[
 \widetilde P_{s+1}=x\widetilde P_s
 -\frac{\sin(sh)A_{s-1}}{4\sin((s+\frac12)h)A_s}\widetilde P_{s-1}
 =x\widetilde P_s-\frac{\sin^2(sh)}{4\sin\theta_s\sin\theta_{s+1}}\widetilde P_{s-1}.
\]
The last coefficient is $(J^\star_{s,s+1})^2$, and $J_N^\star$ has zero
diagonal, so $\widetilde P_s$ and $P_s$ satisfy the same recurrence
\eqref{eq:continuant} with the same initial values: $P_s=C_s/(2^sA_s)$
for $0\le s\le N-1$. All coefficients $A_jA_{s-j}$ are positive and
$|\cos((s-2j)\theta)|\le1$, so $|P_s(x)|\le P_s(1)$ for $x\in[-1,1]$.

{The gain.} By reversal, $|\widehat P_s(x)|=|P_{N-s-1}(x)|\le
P_{N-s-1}(1)=\widehat P_s(1)$ on $[-1,1]$, so
$\sum_s\widehat P_s(x)^2P_s(y)^2\le\sum_s\widehat P_s(1)^2P_s(1)^2$ for
$x,y\in[-1,1]$. The maximum in \eqref{eq:full-gain} is attained at
$(1,1)$, and the endpoint evaluation in the proof of \Cref{thm:gain}
gives \eqref{eq:sine-gain}.
\end{proof}

Let $\mathcal L$ be the nonnegative self-adjoint Legendre operator on
$L^2(-1,1)$, defined by the orthonormal basis
$p_j=\sqrt{j+\tfrac12}\,\mathsf P_j$ and $\mathcal Lp_j=j(j+1)p_j$, with
\[
 \mathcal D(\mathcal L)=\Bigl\{\sum_ja_jp_j:
 \sum_j j^2(j+1)^2|a_j|^2<\infty\Bigr\}.
\]
The unitary map $(Vf)(t)=\sqrt{\pi\sin\pi t}\,f(\cos\pi t)$ defines
$\mathscr H=\pi^2V(\mathcal L+\tfrac14)V^{-1}$. Its differential expression
is $-\partial_t^2-\pi^2/(4\sin^2\pi t)$, as the calculation below shows.
This specifies the no-logarithm endpoint realization. Requiring only
that functions vanish at both endpoints would not distinguish it from
the logarithmic branch, which also vanishes.

\begin{lemma}[The continuum sine kernel]\label{lem:legendre}
Let $\mathsf P_j$ be the Legendre polynomials, and let
\[
 \upsilon_j(t)=\sqrt{\pi(j+\tfrac12)\sin\pi t}\;\mathsf P_j(\cos\pi t),\qquad
 \mu_j=\pi^2(j+\tfrac12)^2=2/\Lambda_j .
\]
Then $(\upsilon_j)$ is an orthonormal
basis of $L^2(0,1)$, and each $\upsilon_j$ solves
$-\upsilon''-\frac{\pi^2}{4\sin^2\pi t}\upsilon=\mu_j\upsilon$ on $(0,1)$. Put
\[
 l(t)=\frac2\pi\sqrt{\sin\pi t}\,\mathbf K\bigl(\sin^2\tfrac{\pi t}2\bigr),
 \qquad \bar l(t)=l(1-t).
\]
Then $l,\bar l$ extend continuously to $[0,1]$ with value zero at both
ends, $l'\bar l-l\bar l'=2$ on $(0,1)$, and the operator with kernel
$G_*(t,s)=\frac12l(\min(t,s))\,\bar l(\max(t,s))$ is
$\sum_j\mu_j^{-1}\ip\cdot{\upsilon_j}\upsilon_j$. The kernel is continuous on
$[0,1]^2$, $G_*(t,t)=\sum_j\upsilon_j(t)^2/\mu_j$ for every $t$, and
$\int_0^1G_*(t,t)\dd t=\frac12$.
\end{lemma}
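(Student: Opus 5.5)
The plan is to transport everything through the unitary map $V$ and read the statement off standard facts about Legendre functions.

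\textbf{Orthonormal basis and eigen-equation.} Since $(p_j)$ is an orthonormal basis of $L^2(-1,1)$ and $(Vf)(t)=\sqrt{\pi\sin\pi t}\,f(\cos\pi t)$ is unitary onto $L^2(0,1)$ (substitute $x=\cos\pi t$, $\dd x=\pi\sin\pi t\,\dd t$), the functions $\upsilon_j=Vp_j$ form an orthonormal basis. For the eigen-equation, write $\upsilon=\sqrt{\sin\pi t}\,f(\cos\pi t)$. Differentiating twice and inserting Legendre's equation $(1-x^2)f''-2xf'+j(j+1)f=0$ gives
\[
 -\upsilon''-\frac{\pi^2}{4\sin^2\pi t}\upsilon=\pi^2\Bigl(j(j+1)+\tfrac14\Bigr)\upsilon=\mu_j\upsilon .
\]
This is a routine calculation. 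It also confirms that $\mathscr H=\pi^2V(\mathcal L+\frac14)V^{-1}$ has the stated differential expression and eigenvalues $\mu_j$.

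\textbf{The solutions $l,\bar l$.} The function $\mathbf K(\sin^2\frac{\pi t}2)$ equals $\frac\pi2\,{}_2F_1(\frac12,\frac12;1;\frac{1-x}2)$ with $x=\cos\pi t$, which is $\frac\pi2\mathsf P_{-1/2}(x)$, a Legendre function of degree $-\frac12$. Hence $l=V(\pi^{-1/2}\mathsf P_{-1/2}(\cdot))$ solves the equation at spectral parameter $\pi^2(-\frac12\cdot\frac12+\frac14)=0$. The reflection $t\mapsto1-t$ corresponds to $x\mapsto-x$, so $\bar l$ is also a zero-energy solution.

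\emph{Boundary behaviour.} As $t\to0$, $\mathbf K(\sin^2\frac{\pi t}2)\to\pi/2$, so $l\sim\sqrt{\pi t}\to0$; this is the no-logarithm branch. As $t\to1$, $\mathbf K(s)\sim\frac12\log\frac{16}{1-s}$, so $l\sim\sqrt{1-t}\,\log$, which also tends to $0$.

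\emph{Wronskian.} The Wronskian $l'\bar l-l\bar l'$ is constant because there is no first-order term. Evaluating it as $t\to0$ with $l\approx\frac2\pi\sqrt{\pi t}\cdot\frac\pi2$ and $\bar l\approx\frac2\pi\sqrt{\pi t}\cdot\frac12\log\frac{16}{\pi^2t^2}$ gives the value $2$. A cleaner route is the Legendre Wronskian $W(\mathsf P_\nu(x),\mathsf P_\nu(-x))=\frac{2\sin\pi\nu}{\pi(1-x^2)}$ at $\nu=-\frac12$, which equals $-\frac2{\pi(1-x^2)}$, transported through $V$.

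\textbf{The Green operator.} The kernel $G_*(t,s)=\frac12 l(t_<)\bar l(t_>)$ is continuous on $[0,1]^2$. It is bounded by $\frac12\sup l\cdot\sup\bar l$, and $l\bar l\to0$ at both ends. For $\varphi\in C_c^\infty(0,1)$, $u=G_*\varphi$ satisfies $-u''-\frac{\pi^2}{4\sin^2}u=\varphi$, using the Wronskian $2$. The function $u$ behaves like a multiple of $l$ near $0$ and of $\bar l$ near $1$, so it has the no-logarithm behaviour at both ends.

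To identify $G_*$ with $\mathscr H^{-1}=\sum\mu_j^{-1}\ip\cdot{\upsilon_j}\upsilon_j$, pair with $\upsilon_j$. Green's identity
\[
 \ip{G_*\varphi}{\mu_j\upsilon_j}=\ip{\varphi}{\upsilon_j}
\]
holds provided the boundary terms $[u'\upsilon_j-u\upsilon_j']_0^1$ vanish. This is the main obstacle. I would check it from the asymptotics: near $0$, $u\sim c\,l\sim c\sqrt t$ and $\upsilon_j\sim c'\sqrt t$, so the Wronskian of two $\sqrt t$-type solutions tends to $0$; near $1$ the same holds after reflection. The vanishing relies on both functions lying on the $\sqrt t$ branch and not the logarithmic one, which is exactly why the no-logarithm realization matters. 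Density of $C_c^\infty$ together with boundedness of both operators (the second is Hilbert–Schmidt since $\sum\mu_j^{-2}<\infty$) then gives equality of operators.

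\textbf{Diagonal.} Since $\sum\mu_j^{-1}<\infty$ and $G_*$ is continuous and positive, Mercer's theorem gives $G_*(t,t)=\sum_j\upsilon_j(t)^2/\mu_j$ pointwise, with uniform convergence. Integrating yields
\[
 \int_0^1G_*(t,t)\dd t=\sum_j\mu_j^{-1}=\tfrac12\sum_j\Lambda_j=\tfrac12
\]
by \Cref{thm:transfer}.
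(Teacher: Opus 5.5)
Your proof is correct. Its skeleton matches the paper's: transport by $V$, the zero-energy solutions $l,\bar l$, the Wronskian, the Green kernel, and Mercer's theorem. The differences are in how three sub-steps are carried out.

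\emph{Zero-energy equation.} You identify $\mathbf K(\sin^2\frac{\pi t}2)=\frac\pi2\mathsf P_{-1/2}(\cos\pi t)$ and reuse the eigen-equation computation with $\nu(\nu+1)=-\frac14$. The paper instead checks the hypergeometric equation for $\mathbf K$ directly from its power series.

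\emph{Wronskian.} For $l'\bar l-l\bar l'=2$, the paper integrates $(\bar l/l)'=-W/l^2$ and uses only the function asymptotics of $\bar l/l=\mathbf K(1-s)/\mathbf K(s)$. It never differentiates an asymptotic expansion. Your first route does differentiate the endpoint expansions. That is legitimate because they are convergent, but you should say so. Your constant should also read $\frac12\log\frac{64}{\pi^2t^2}$, which is harmless. Your second route uses the Legendre Wronskian $\frac{2\sin\pi\nu}{\pi(1-x^2)}$ with the convention $f'g-fg'$, and it correctly gives $2$ after the change of variables.

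\emph{Spectral identification.} This is the real difference: identifying $G_*$ with $\sum_j\mu_j^{-1}\ip\cdot{\upsilon_j}\upsilon_j$.
\begin{itemize}
\item The paper applies the kernel operator to $\upsilon_j$ itself and writes the difference from $\upsilon_j/\mu_j$ as $al+b\bar l$. It removes $b$ by comparing sizes at $0$, which needs the estimate $O(t^{5/2}\log(1/t))$ for $\frac12\bar l(t)\int_0^tl\upsilon_j$. It removes $a$ by reflection.
\item You test against $\varphi\in C_c^\infty(0,1)$. Then $G_*\varphi$ is exactly $c\,l$ near $0$ and a multiple of $\bar l$ near $1$, where $c=\frac12\int\bar l\varphi$. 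All endpoint analysis therefore reduces to the explicit pair $(l,\upsilon_j)$, and completeness plus density finish the argument.
\end{itemize}
Both arguments rest on the same fact: $\upsilon_j$ lies on the non-logarithmic branch.

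Two points would make your boundary step precise.
\begin{itemize}
\item Since $l$ and $\upsilon_j$ solve different equations, their Wronskian is not constant. Asymptotics of the form $\sim\sqrt t$ alone do not control it. Write $l=\sqrt t\,A$ and $\upsilon_j=\sqrt t\,B$ with $A,B$ analytic at $0$; then $l'\upsilon_j-l\upsilon_j'=t(A'B-AB')\to0$.
\item Apply the Lagrange identity on $[\epsilon,1-\epsilon]$ and only then let $\epsilon\to0$. The integrals $\int u\,\upsilon_j''$ and $\int\frac{\pi^2}{4\sin^2\pi t}u\,\upsilon_j$ diverge separately whenever $c\ne0$; only their combination $\mu_j\int u\,\upsilon_j$ is integrable.
\end{itemize}
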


\begin{proof}\label{proof:lem:legendre}
{The basis.} Legendre orthogonality and density of polynomials in
$L^2(-1,1)$ show that $(p_j)$ is an orthonormal basis. The substitution
$z=\cos\pi t$ proves that $V$ is unitary; hence $(\upsilon_j)=(Vp_j)$
is an orthonormal basis of $L^2(0,1)$.

{The equation.} Let $s=\sin^2(\pi t/2)$, so $\cos\pi t=1-2s$ and
$\sin\pi t=2\sqrt{s(1-s)}$. For $F\in C^2(0,1)$, the chain rule gives
\begin{equation}\label{eq:legendre-transform}
 \Bigl(-\partial_t^2-\frac{\pi^2}{4\sin^2\pi t}\Bigr)\bigl\{\sqrt{\sin\pi t}\,F(s)\bigr\}
 =\pi^2\sqrt{\sin\pi t}\,\bigl\{\tfrac14F-(1-2s)F'-s(1-s)F''\bigr\}.
\end{equation}
For $F(s)=\mathsf P_j(1-2s)$, $\frac{\dd}{\dd x}=-\frac12\frac{\dd}{\dd s}$ and
$1-x^2=4s(1-s)$ turn the Legendre equation into
$s(1-s)F''+(1-2s)F'+j(j+1)F=0$, so the brace in
\eqref{eq:legendre-transform} equals $(\frac14+j(j+1))F=(j+\frac12)^2F$, and
$\upsilon_j$ solves the stated equation.

{Zero-energy solutions.} For $0\le s<1$, the binomial series
$(1-u)^{-1/2}=\sum_nc_nu^n$, $c_n=\binom{2n}n4^{-n}$, and
$\int_0^{\pi/2}\sin^{2n}\theta\dd\theta=\frac\pi2c_n$ give
$\mathbf K(s)=\frac\pi2\sum_nc_n^2s^n$. Since $c_{n+1}/c_n=(n+\frac12)/(n+1)$,
the coefficients $a_n=\frac\pi2c_n^2$ satisfy $(n+1)^2a_{n+1}=(n+\frac12)^2a_n$,
which is the coefficient form of
\[
 s(1-s)\mathbf K''+(1-2s)\mathbf K'-\tfrac14\mathbf K=0 .
\]
This equation is invariant under $s\mapsto1-s$, so $\mathbf K(1-s)$ also
solves it. By \eqref{eq:legendre-transform}, $l$ and $\bar l$, which equal
$\frac2\pi\sqrt{\sin\pi t}$ times $\mathbf K(s)$ and $\mathbf K(1-s)$, solve
$-y''-\frac{\pi^2}{4\sin^2\pi t}y=0$ on $(0,1)$, and both are positive.

{Endpoint behavior.} Our elliptic integral uses the parameter:
$\mathbf K(s)=K_{\mathrm{DLMF}}(\sqrt s)$. The convergent endpoint
expansion \cite[Eq.~19.12.1]{DLMF2026}, with complementary modulus
$\sqrt s$, gives
\[
 \mathbf K(s)=\frac\pi2+O(s),\qquad
 \mathbf K(1-s)=\log4-\frac12\log s+O(s\log(1/s))
 \quad(s\downarrow0).
\]
As $t\downarrow0$, $\sin\pi t=\pi t(1+O(t^2))$ and $s=(\pi t/2)^2(1+O(t^2))$,
so $\frac12\log\frac1s=\log\frac1t+O(1)$ and
\[
 l(t)=\sqrt{\pi t}\,(1+O(t^2)),\qquad
 \bar l(t)=\frac2{\sqrt\pi}\sqrt t\log\frac1t+O(\sqrt t).
\]
In particular $l(t)\to0$ and $\bar l(t)\to0$ as $t\downarrow0$, and by
$\bar l(t)=l(1-t)$ also as $t\uparrow1$.

{Wronskian.} $W=l'\bar l-l\bar l'$ satisfies $W'=l''\bar l-l\bar l''=0$,
since both functions solve the same equation, so $W$ is constant. Since
$(\bar l/l)'=-W/l^2$,
\[
 \frac{\bar l}l(t)=\frac{\bar l}l(\tfrac12)+W\int_t^{1/2}\frac{\dd u}{l(u)^2}
 =\frac W\pi\log\frac1t+O(1)\qquad(t\downarrow0),
\]
by $l(u)^2=\pi u(1+O(u^2))$. On the other hand
$\bar l/l=\mathbf K(1-s)/\mathbf K(s)=\frac2\pi\log\frac1t+O(1)$. Hence $W=2$.

{The inverse.} Let $\mathcal G$ be the operator with kernel $G_*$,
which is continuous on $[0,1]^2$ as a product of continuous functions of
$\min(t,s)$ and $\max(t,s)$. For bounded continuous $g$ on $(0,1)$, the
function
$u=\mathcal Gg=\frac12\bar l(t)\int_0^tlg+\frac12l(t)\int_t^1\bar lg$ is $C^2$ on
$(0,1)$, with
\[
 u'=\tfrac12\bar l'\!\int_0^tlg+\tfrac12l'\!\int_t^1\bar lg,\qquad
 u''=-\frac{\pi^2}{4\sin^2\pi t}u-\tfrac12Wg=-\frac{\pi^2}{4\sin^2\pi t}u-g ,
\]
since the terms $\pm\frac12l\bar lg$ cancel in $u'$. Apply this to $g=\upsilon_j$ and set
$f=\mathcal G\upsilon_j-\upsilon_j/\mu_j$. Then $f=al+b\bar l$, since
it solves the homogeneous equation and $W=2$. At $0$, both
$\mathcal G\upsilon_j$ and $\upsilon_j/\mu_j$ are $O(\sqrt t)$:
the first integral term is $O(t^{5/2}\log(1/t))$ and the second is
$O(\sqrt t)$. Thus $b=0$, because $\bar l(t)/\sqrt t\to\infty$.
Reflection gives $f(t)=O(\sqrt{1-t})$ at $1$, and hence $a=0$.
Consequently $\mathcal G\upsilon_j=\mu_j^{-1}\upsilon_j$ for every $j$.
The bounded kernel and completeness of the basis give
$\mathcal G=\mathscr H^{-1}=\sum_j\mu_j^{-1}\ip\cdot{\upsilon_j}\upsilon_j$.

{The diagonal.} By Mercer's theorem \cite[\S98]{RieszNagy} for the
continuous symmetric kernel $G_*$ of the positive operator $\mathcal G$,
$G_*(t,s)=\sum_j\mu_j^{-1}\upsilon_j(t)\upsilon_j(s)$ with absolute and uniform
convergence on $[0,1]^2$. On the diagonal this gives
$G_*(t,t)=\sum_j\upsilon_j(t)^2/\mu_j$, and integration gives
$\int_0^1G_*(t,t)\dd t=\sum_j\mu_j^{-1}=\frac12\sum_j\Lambda_j=\frac12$ by
\Cref{thm:transfer}.
\end{proof}

\begin{proof}[Proof of \Cref{prop:target}]\label{proof:prop:target}
{The map $X_*$.} With $s=\sin^2(\pi t/2)$,
$l\bar l=\frac4{\pi^2}\sin(\pi t)\mathbf K(s)\mathbf K(1-s)=\phi_*$, and
$(\log(l/\bar l))'=W/(l\bar l)=2/\phi_*$. Since $l(\frac12)=\bar l(\frac12)$,
\[
 X_*(t)=\tfrac12\log\frac{l(t)}{\bar l(t)}=-\tfrac12\log\varpi(t),\qquad
 \varpi(t)=\frac{\mathbf K(\cos^2(\pi t/2))}{\mathbf K(\sin^2(\pi t/2))}.
\]
Thus $X_*$ is continuous and strictly increasing, $X_*(1-t)=-X_*(t)$, and
$\varpi(t)\to\infty$ as $t\downarrow0$ because $\mathbf K(1-s)\to\infty$ and
$\mathbf K(s)\to\frac\pi2$. Hence $X_*(t)\to-\infty$ as $t\downarrow0$ and
$X_*(t)\to+\infty$ as $t\uparrow1$, and $X_*$ is a bijection onto $\R$.

{Normalizations and spectrum.} The substitution $x=X_*(t)$,
$\dd x=\dd t/\phi_*(t)$, and $m_*(X_*(t))=\phi_*(t)^2$ give
\[
 \int m_*=\int_0^1\phi_*,\qquad \int\sqrt{m_*}=\int_0^1\dd t=1,\qquad
 E(m_*)=\int_0^1\phi_*^2=\cs ,
\]
and $\int_0^1\phi_*=\int_0^1l\bar l=2\int_0^1G_*(t,t)\dd t=1$ by
\Cref{lem:legendre}. For $s<t$,
\[
 \sqrt{\phi_*(s)\phi_*(t)}\,e^{-(X_*(t)-X_*(s))}
 =\sqrt{l(s)\bar l(s)l(t)\bar l(t)}\sqrt{\frac{l(s)\bar l(t)}{\bar l(s)l(t)}}
 =l(s)\bar l(t)=2G_*(s,t).
\]
The map
\[
 U:L^2(\R)\to L^2(0,1),\qquad (Uf)(t)=f(X_*(t))/\sqrt{\phi_*(t)},
\]
is unitary, since $\int_0^1|f(X_*(t))|^2\dd t/\phi_*(t)=\int|f|^2$ and
$g\mapsto(\sqrt{\phi_*}\,g)\circ X_*^{-1}$ is its inverse. For
$g\in L^2(0,1)$ and $f=U^{-1}g$, the substitution $y=X_*(s)$ gives
\[
 (UK_{m_*}f)(t)=\frac1{\sqrt{\phi_*(t)}}\int_0^1\phi_*(t)e^{-|X_*(t)-X_*(s)|}
 \phi_*(s)\sqrt{\phi_*(s)}g(s)\frac{\dd s}{\phi_*(s)}
 =\int_0^12G_*(t,s)g(s)\dd s ,
\]
so $UK_{m_*}U^{-1}=2\mathcal G$, and by \Cref{lem:legendre}
$\lambda_j(K_{m_*})=2/\mu_j=\Lambda_j$.

{Theta form.} Fix $t\le\frac12$, so that $\varpi=\varpi(t)\ge1$, and put
$z=\sin^2(\pi t/2)$ and $a=e^{-\pi\varpi}$. For $0<q<1$ let
$\vartheta_3(q)=\sum_{j\in\mathbb Z}q^{j^2}$,
$\vartheta_4(q)=\sum_{j\in\mathbb Z}(-1)^jq^{j^2}$,
$\vartheta_2(q)=\sum_{j\in\mathbb Z}q^{(j+1/2)^2}$. Jacobi's
theta--elliptic identities \cite[\S20.9(i)]{DLMF2026}, in our parameter
convention, read
\[
 z=\frac{\vartheta_2(a)^4}{\vartheta_3(a)^4},\qquad
 1-z=\frac{\vartheta_4(a)^4}{\vartheta_3(a)^4},\qquad
 \mathbf K(z)=\frac\pi2\vartheta_3(a)^2,
 \quad a=\exp\!\left(-\pi\frac{\mathbf K(1-z)}{\mathbf K(z)}\right).
\]
These are also given in \cite[\S4.6]{FGS}; the equivalent modular
transformation follows from Gaussian Poisson summation
\cite[Chapter~5]{SteinShakarchi}.
Using $\sin\pi t=2\sqrt{z(1-z)}$ and
$\mathbf K(1-z)=\varpi\mathbf K(z)$,
\[
 \phi_*(t)=\frac8{\pi^2}\varpi\sqrt{z(1-z)}\,\mathbf K(z)^2
 =\frac8{\pi^2}\varpi\,\frac{\vartheta_2(a)^2\vartheta_4(a)^2}{\vartheta_3(a)^4}
 \cdot\frac{\pi^2}4\vartheta_3(a)^4=2\varpi\,\vartheta_2(a)^2\vartheta_4(a)^2 .
\]
Since $(j+\frac12)^2=\frac14+j(j+1)$ and the terms with $j$ and $-1-j$ agree,
$\vartheta_2(a)=2a^{1/4}\Sigma_1(a)$; and $\vartheta_4(a)=\Sigma_2(a)$. With
$x=X_*(t)=-\frac12\log\varpi\le0$, that is $\varpi=e^{2|x|}$, and
$n_*(x)=\phi_*(t)$, this is \eqref{eq:sine-theta} for $x\le0$; since
$\phi_*(1-t)=\phi_*(t)$ and $X_*(1-t)=-X_*(t)$, $n_*$ is even, which gives
$x\ge0$.

{Asymptotics.} For $\varpi\ge1$, $j(j+1)\ge2j$ and
$j^2\ge3j-2$ give
\[
 0\le\Sigma_1(a)-1\le\frac{a^2}{1-a^2}\le\frac{e^{-2\pi\varpi}}{1-e^{-2\pi}},\qquad
 |\Sigma_2(a)-1|\le\frac{2a}{1-a^3}\le\frac{2e^{-\pi\varpi}}{1-e^{-3\pi}} .
\]
Hence, for $\varpi\ge1$, $1\le\Sigma_1(a)\le M_1$ and
$0<1-2e^{-\pi}/(1-e^{-3\pi})\le\Sigma_2(a)\le M_2$, with $M_1,M_2$ of
\Cref{lem:sine-sum}; so $\log\Sigma_1$ and $\log\Sigma_2$ are bounded, and
$\log n_*(x)=-\frac\pi2e^{2|x|}+2|x|+O(1)$.
\end{proof}

\begin{lemma}[Uniform bound for the sine diagonal]\label{lem:uniform-diagonal}
For every $N\ge2$ and $i$, $0<[(I-J_N^\star)^{-1}]_{ii}\le64N$.
\end{lemma}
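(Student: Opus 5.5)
The bound will come from the cofactor formula for the Green diagonal, combined with the explicit leading principal polynomials computed in the proof of \Cref{thm:sine-finite}. That proof gives $d_{s+1}=P_s(1)\widehat P_s(1)/P_N(1)$ with $\widehat P_s=P_{N-s-1}$, and $P_s=C_s/(2^sA_s)$. Evaluating at $x=1$ gives
\[
 P_s(1)=2^{-s}A_s^{-1}\sum_{j=0}^sA_jA_{s-j},
\]
while $P_N(1)=2^{1-N}T_N(1)=2^{1-N}$. Positivity of $d_i$ is immediate, since $I-J_N^\star\succ0$ and all $P_s(1)>0$. For the upper bound the powers of two cancel up to a factor $2$:
\[
 d_{s+1}=\frac{2}{A_sA_{N-1-s}}\Bigl(\sum_{j\le s}A_jA_{s-j}\Bigr)\Bigl(\sum_{j\le N-1-s}A_jA_{N-1-s-j}\Bigr).
\]

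The key step is two-sided control of $A_n=\prod_{r<n}\sin((r+\frac12)h)/\sin((r+1)h)$. I would compare this product with the Wallis-type ratio $\Gamma(n+\frac12)/(\Gamma(\frac12)\Gamma(n+1))$, which is the $h\to0$ analogue. Uniformly in $N$ the comparison should give $c/\sqrt{n+1}\le A_n\le C/\sqrt{n+1}$ for $n\le N/2$, and a symmetric statement near the other end. I would use the symmetry $\sin((r+1)h)=\sin((N-r-1)h)$: the product over $r<n$ for $n>N/2$ splits into a Wallis-like part and a mirrored part, and this gives a formula like $A_n\asymp \sqrt{(N-n)/ (N (n+1))}$ up to constants. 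To handle the ratio termwise, I would note that $\sin$ is concave on $[0,\pi]$. Then $\log$ of each factor is bounded by the exact Wallis factor times $1+O(h^2r^2)$ for small $r$, with a cruder monotonicity argument elsewhere.

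With $A_n\asymp((n+1)^{-1}(N-n)/N)^{1/2}$, the convolution sum $\sum_{j\le s}A_jA_{s-j}$ is $O(1)$ for every $s$, since it behaves like $\sum_j (j+1)^{-1/2}(s-j+1)^{-1/2}$. Also $1/(A_sA_{N-1-s})\lesssim\sqrt{(s+1)(N-s)}\cdot O(1)\le O(N)$. This gives $d_{s+1}=O(N)$, and the remaining task is to track the constants so that the total is at most $64$.

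The main obstacle is obtaining uniform explicit constants for $A_n$ over the full range $0\le n\le N-1$, including $n$ near $N$, where the factor $\sin((r+1)h)$ becomes small. If that proves messy, I would instead avoid the sharp asymptotics and bound the sum crudely by Cauchy--Schwarz, $\sum_jA_jA_{s-j}\le\sum_jA_j^2$, using only a monotonicity estimate $A_n\le\sqrt{2/(n+1)}$-type bounds, which follow from $\sin((r+\frac12)h)^2\le\sin(rh)\sin((r+1)h)\cdot\frac{(r+1/2)^2}{r(r+1)}$ by log-concavity of $\sin$. The generous constant $64$ suggests such crude estimates suffice.
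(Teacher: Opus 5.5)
Your route is different from the paper's. The paper writes $x_i=\sqrt{p_i}f_i$ using the positive eigenvector $(\sqrt{p_i})$ of $J_N^\star$, turns $x^T(I-J_N^\star)x$ into a weighted Dirichlet form, and bounds $(M^{-1})_{ii}=\sup x_i^2/x^TMx$ by a path estimate with a harmonic sum. As written, your version rests on false estimates.

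A small point first: the powers of two cancel exactly, so $d_{s+1}=S_sS_{N-1-s}/(A_sA_{N-1-s})$ with $S_s=\sum_{j\le s}A_jA_{s-j}$; for $N=2$ this gives $d_1=2$.

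The serious problem is your claim that $A_n$ is of order $\sqrt{(N-n)/(N(n+1))}$, which is false. From $\prod_{r=1}^{N-1}\sin(rh)=N2^{1-N}$ and $\prod_{r=0}^{N-1}\sin((r+\frac12)h)=2^{1-N}$ you get $A_{N-1}=1/(N\sin\frac h2)\to2/\pi$. Pairing $r$ with $N-2-r$ gives the exact reflection identity $A_nA_{N-1-n}=1/(N\sin((n+\frac12)h))$. So $A_n$ is of order $(N\sin((n+\frac12)h))^{-1/2}$ and grows back near $n=N-1$.

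Consequently your claim that every convolution sum is $O(1)$ is false. Indeed $S_{N-1}=\frac1N\sum_{j=0}^{N-1}1/\sin((j+\frac12)h)\sim\frac2\pi\log N$, and $d_1=N\sin\frac h2\,S_{N-1}\sim\log N$; for $N=24$ this reproduces $d_1\approx4.69$. So $S_sS_{N-1-s}$ is unbounded and cannot be estimated separately from the prefactor $1/(A_sA_{N-1-s})=N\sin((s+\frac12)h)$. The bound $O(N)$ holds only because this prefactor, of size about $\pi(s+\frac12)$, is small exactly where $S_{N-1-s}$, of size about $\log\frac N{s+1}$, is large. The missing step is this joint estimate, i.e. boundedness of $u(1+\log\frac1u)$ in $u=(s+1)/N$. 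This is the same logarithm the paper controls with its harmonic sum and $p_i\le\pi d/N$.

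Your fallback does not close the gap, for three reasons.
\begin{enumerate}
\item Cauchy--Schwarz, $S_s\le\sum_{j\le s}A_j^2$, already loses a logarithm for $s\le N/2$: there $\sum_{j\le s}A_j^2\asymp1+\log(s+1)$ while $S_s=O(1)$. Near $s=N/2$ it gives only $d_{s+1}\lesssim N\log^2N$, which no constant absorbs.
\item Your displayed inequality is reversed. Since $\sin x/x$ decreases on $(0,\pi)$, one has $\sin^2((r+\frac12)h)>\sin(rh)\sin((r+1)h)\,(r+\frac12)^2/(r(r+1))$ for $r\ge1$.
\item The bound $A_n\le C/\sqrt{n+1}$ fails at $n=N-1$, since $A_{N-1}\to2/\pi$.
\end{enumerate}
What log-concavity of $\sin$ actually gives, by telescoping, is the \emph{lower} bound $A_n\ge\sin\frac h2/\sqrt{\sin h\,\sin(nh)}$ for $n\ge1$. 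Through the reflection identity this becomes an upper bound $A_n\lesssim(N\sin((n+\frac12)h))^{-1/2}$. From there, $S_s\lesssim1+\log\frac N{N-s}$ and $d_{s+1}\lesssim N\,u(1+\log\frac1u)$ with $u=\min(s+1,N-s)/N$. That would repair your approach, but none of it is in the proposal.
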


\begin{proof}\label{proof:lem:uniform-diagonal}
Put $c=\cos\frac\pi{2N}$. Since $\sin y\ge\frac{2y}\pi$ on
$[0,\frac\pi2]$, $1-c=2\sin^2\frac\pi{4N}\ge\frac1{2N^2}$. For $N=2,3$, the
diagonal entries of the positive definite matrix
$(I-J_N^\star)^{-1}$ are at most its norm $(1-c)^{-1}\le2N^2\le6N$.

Let $N\ge4$. The vector $(\sqrt{p_i})_i$ satisfies
$(J_N^\star\sqrt p)_i=\frac{\sin(ih)+\sin((i-1)h)}{2\sqrt{p_i}}=c\sqrt{p_i}$, by
$\sin(ih)+\sin((i-1)h)=2\sin\theta_i\cos\frac h2$. Write $x\in\R^N$ as
$x_i=\sqrt{p_i}f_i$. Since
$2J^\star_{i,i+1}\sqrt{p_ip_{i+1}}=\sin(ih)$ and
$-f_if_{i+1}=\frac12(f_{i+1}-f_i)^2-\frac12f_i^2-\frac12f_{i+1}^2$,
\[
 \mathfrak q(f):=x^T(I-J_N^\star)x
 =\sum_ip_if_i^2+\frac12\sum_{i<N}\sin(ih)(f_{i+1}-f_i)^2
 -\frac12\sum_{i<N}\sin(ih)(f_i^2+f_{i+1}^2),
\]
and the coefficient of $f_i^2$ in the last sum is
$\frac12(\sin(ih)+\sin((i-1)h))=cp_i$. Hence
\[
 \mathfrak q(f)=(1-c)\sum_ip_if_i^2+\frac12\sum_{i<N}\sin(ih)(f_{i+1}-f_i)^2 .
\]
Let $S$ be the set of indices with $\theta_i\in[\frac\pi4,\frac{3\pi}4]$,
that is, $i\in[\frac N4+\frac12,\frac{3N}4+\frac12]$. This interval has
length $N/2$ and contains at least $\lfloor N/2\rfloor\ge N/4$ integers,
and $p_i\ge\frac1{\sqrt2}\ge\frac12$ on $S$. Hence
$\mathfrak q(f)\ge\frac1{2N^2}\cdot\frac12\cdot\frac N4\min_Sf_i^2$, and some
$j\in S$ has $f_j^2\le16N\mathfrak q(f)$.

Fix $i$ and let $E$ be the set of edges $k$, joining $k$ and $k+1$, between
$i$ and $j$. The Cauchy--Schwarz inequality gives
$(f_i-f_j)^2\le\sum_{k\in E}\frac1{\sin(kh)}\sum_{k\in E}\sin(kh)(f_{k+1}-f_k)^2
\le2\mathfrak q(f)\sum_{k\in E}\frac1{\sin(kh)}$, hence
\[
 f_i^2\le2f_j^2+2(f_i-f_j)^2\le2f_j^2+4\mathfrak q(f)\sum_{k\in E}\frac1{\sin(kh)} .
\]
Since $\sin(kh)=\sin(\min(k,N-k)h)\ge2\min(k,N-k)/N$, each term is at
most $N/(2\min(k,N-k))$. Put $m'=\lfloor N/4\rfloor+1$, so that $4m'\ge N+1$.
The edges with $\min(k,N-k)>N/4$ are those with $m'\le k\le N-m'$; there are
at most $N-2m'+1\le2m'$ of them, and each contributes at most $N/(2m')$, so
together they contribute at most $N$. Let $k\in E$ with $\min(k,N-k)\le N/4$. If
$k\le N/4$, then $k<j$, so $i\le k$; if $k\ge3N/4$, then $k\ge j$, since
$j\le\frac{3N}4+\frac12$, so $k\le i-1$. Only one of the two cases occurs,
and in either case $n=\min(k,N-k)$ runs through distinct integers in
$[d,N/4]$, where $d=\min(i,N+1-i)$. These edges contribute at most
$\frac N2\sum_{n=d}^{\lfloor N/4\rfloor}\frac1n\le\frac N2(1+\log\frac Nd)$.
Thus $\sum_{k\in E}1/\sin(kh)\le\frac N2(1+\log\frac Nd)+N$. Moreover
$p_i=\sin\theta_i=\sin(\pi-\theta_i)\le\pi d/N$. With $u=d/N\in(0,1]$,
\[
 p_if_i^2\le32N\mathfrak q(f)+2\pi N\mathfrak q(f)\,u\Bigl(3+\log\frac1u\Bigr)
 \le(32+6\pi)N\mathfrak q(f)<64N\mathfrak q(f),
\]
where $u(3+\log\frac1u)$ is increasing on $(0,1]$, with derivative
$2+\log\frac1u>0$, and equals $3$ at $u=1$. For a positive definite matrix
$M$, the Cauchy--Schwarz inequality in the inner product $\ip{x}{y}_M=x^TMy$
gives $x_i^2=\ip{M^{-1}e_i}x_M^2\le(M^{-1})_{ii}\,x^TMx$ with equality at
$x=M^{-1}e_i$, so $(M^{-1})_{ii}=\sup_{x\ne0}x_i^2/x^TMx$. With
$M=I-J_N^\star$ and $x_i^2=p_if_i^2$ this proves the claim.
\end{proof}

\begin{proof}[Proof of the limit in \Cref{thm:sine-finite}]
\label{proof:thm:reference-limit}
By the finite part, $J_N^\star$ has the orthonormal eigenvectors
$U_{\cdot j}$, $U_{ij}=\sqrt{w_i}\phi_{N,j}(x_i)$, with eigenvalues
$\cos((j+\frac12)h)$, and
$(1-\cos((j+\frac12)h))^{-1}=N^2r_{N,j}$ by \Cref{lem:trace}. Hence
$d_i=\sum_{j<N}N^2r_{N,j}U_{ij}^2$ and
\[
 D_N(t)=\sum_{j<N}r_{N,j}\,NU_{ij}^2\qquad(t\in[\tfrac{i-1}N,\tfrac iN)).
\]

{Convergence of the first $K$ terms.} The intervals
$[\cos(ih),\cos((i-1)h)]$, $1\le i\le N$, partition $[-1,1]$, have lengths
$2w_i$ and contain $x_i$; hence $\sum_iw_iF(x_i)$ is a Riemann sum of
$\frac12\int_{-1}^1F$ with mesh at most $h$, and converges to it for every
$F\in C[-1,1]$. For fixed $j$ and $N>j$, the moment matrix of $1,x,\dots,x^j$ is
positive definite: a nonzero polynomial of degree at most $j$ cannot
vanish at all $N$ distinct nodes. These matrices converge to the positive
definite moment matrix of $\tfrac12\dd x$ on $[-1,1]$. Cholesky
factorization, and hence Gram--Schmidt with positive leading coefficients,
depends continuously on a positive definite moment matrix. Therefore
$\phi_{N,j}\to\sqrt{2j+1}\,\mathsf P_j$ coefficientwise and uniformly on
$[-1,1]$.
For
$t\in[\frac{i-1}N,\frac iN)$, $|\theta_i-\pi t|\le h$, so
$|x_i-\cos\pi t|\le h$, $|p_i-\sin\pi t|\le h$, and
$Nw_i=p_i\cdot N\sin\frac\pi{2N}$ with $N\sin\frac\pi{2N}\to\frac\pi2$. By the
uniform continuity of $\mathsf P_j$ on $[-1,1]$, for fixed $K$
\[
 D_{N,K}(t):=\sum_{j<K}r_{N,j}NU_{ij}^2\ \longrightarrow\
 D_K(t):=\sum_{j<K}\Lambda_j\upsilon_j(t)^2\quad\text{uniformly on }[0,1),
\]
since $NU_{ij}^2=Nw_i\phi_{N,j}(x_i)^2\to\frac\pi2\sin(\pi t)(2j+1)\mathsf P_j(\cos\pi t)^2=\upsilon_j(t)^2$
uniformly and $r_{N,j}\to\Lambda_j$.

{The tails.} By \Cref{lem:uniform-diagonal}, $0\le D_N\le64$, and
$0\le D_N-D_{N,K}\le D_N$. Since the columns of $U$ are unit vectors,
\[
 \|D_N-D_{N,K}\|_{L^1(0,1)}=\sum_i\frac1N\sum_{j=K}^{N-1}r_{N,j}NU_{ij}^2
 =\sum_{j=K}^{N-1}r_{N,j}\le\sum_{j\ge K}\frac2{(2j+1)^2}\le\frac1{2K-1},
\]
by the bound of \Cref{lem:trace} used in the proof of
\Cref{thm:transfer} and telescoping; hence
$\|D_N-D_{N,K}\|_2^2\le64/(2K-1)$. By \Cref{lem:legendre},
$\sum_j\Lambda_j\upsilon_j^2=2\sum_j\upsilon_j^2/\mu_j=2G_*(t,t)=l\bar l=\phi_*$, so
$0\le\phi_*-D_K\le\phi_*$ and
$\|\phi_*-D_K\|_2^2\le\|\phi_*\|_\infty\int_0^1(\phi_*-D_K)
=\|\phi_*\|_\infty\sum_{j\ge K}\Lambda_j$, where $\phi_*$ is bounded as a
continuous function on $[0,1]$. Therefore
\[
 \limsup_{N\to\infty}\|D_N-\phi_*\|_2
 \le\frac8{\sqrt{2K-1}}+\Bigl(\|\phi_*\|_\infty\sum_{j\ge K}\Lambda_j\Bigr)^{1/2}
\]
for every $K$, and the right-hand side tends to zero as $K\to\infty$.

{Conclusion.} The definition of $D_N$ gives
\[
 N^{-3}\E_N(J_N^\star)=\sum_iN^{-1}(d_i/N)^2=\|D_N\|_2^2\to\|\phi_*\|_2^2=\cs .
\]
By \eqref{eq:sine-gain},
$\Delta A_N(\mathcal R_N^\star)/(N^{3/2}\epsN)=2N^{-3/2}\E_N(J_N^\star)^{1/2}
=2\|D_N\|_2\to2\sqrt{\cs}$, and since $\mathcal R_N^\star\in\mathfrak E_N(\mu,L)$,
$\mathscr G(\mu,L)\le2\sqrt{\cs}$.
\end{proof}

\begin{lemma}[A finite sum for the sine energy]\label{lem:sine-sum}
For an integer $K\ge0$ let
\[
 \Sigma_{1,K}(a)=\sum_{j=0}^Ka^{j(j+1)},\qquad
 \Sigma_{2,K}(a)=1+2\sum_{j=1}^K(-1)^ja^{j^2},\qquad
 \Sigma_{1,K}^6\Sigma_{2,K}^6=\sum_{n=0}^{12K^2+6K}C_na^n,
\]
\[
 I(\lambda)=\int_1^\infty\varpi^2e^{-\lambda\pi\varpi}\dd\varpi
 =e^{-\lambda\pi}\Bigl(\frac1{\lambda\pi}+\frac2{(\lambda\pi)^2}
 +\frac2{(\lambda\pi)^3}\Bigr),\qquad
 M_1=\frac1{1-e^{-2\pi}},\qquad M_2=1+\frac{2e^{-\pi}}{1-e^{-3\pi}},
\]
\[
 E_K=512\sum_nC_nI\bigl(n+\tfrac32\bigr),
\]
\[
 R_K=512\Bigl[\frac{6M_1^5M_2^6\,I(\frac32+(K+1)(K+2))}{1-e^{-2(K+2)\pi}}
 +\frac{12M_1^6M_2^5\,I(\frac32+(K+1)^2)}{1-e^{-(2K+3)\pi}}\Bigr].
\]
Then $|\cs-E_K|\le R_K$, and $R_K\to0$.
\end{lemma}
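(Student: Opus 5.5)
We start from the integral representation of \Cref{sec:sine-energy},
\[
 \cs=512\int_1^\infty\varpi^2e^{-3\pi\varpi/2}\Sigma_1(a)^6\Sigma_2(a)^6\dd\varpi,\qquad a=e^{-\pi\varpi}.
\]
It follows from $E(m_*)=2\int_{-\infty}^0n_*^3\dd x$, the theta form \eqref{eq:sine-theta} with $\varpi=e^{-2x}$, and $\dd x=-\dd\varpi/(2\varpi)$. Indeed, $n_*^3=512\varpi^3e^{-3\pi\varpi/2}\Sigma_1^6\Sigma_2^6$, and the factor $2\cdot\frac1{2\varpi}$ turns $\varpi^3$ into $\varpi^2$. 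We write $\Sigma_i=\Sigma_{i,K}+\tau_i$ with tails $\tau_1=\sum_{j>K}a^{j(j+1)}$ and $\tau_2=2\sum_{j>K}(-1)^ja^{j^2}$. Expanding $\Sigma_{1,K}^6\Sigma_{2,K}^6=\sum_nC_na^n$ and integrating term by term gives exactly $E_K$, since $\int_1^\infty\varpi^2e^{-\pi\varpi(n+3/2)}\dd\varpi=I(n+\frac32)$. The closed form of $I$ comes from integrating by parts twice. It remains to bound the contribution of the difference $\Sigma_1^6\Sigma_2^6-\Sigma_{1,K}^6\Sigma_{2,K}^6$.

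For this we telescope:
\[
 \Sigma_1^6\Sigma_2^6-\Sigma_{1,K}^6\Sigma_{2,K}^6=(\Sigma_1^6-\Sigma_{1,K}^6)\Sigma_2^6+\Sigma_{1,K}^6(\Sigma_2^6-\Sigma_{2,K}^6).
\]
We then apply $|x^6-y^6|\le6\max(|x|,|y|)^5|x-y|$ to each factor. The bounds we need, for $\varpi\ge1$ so that $a\le e^{-\pi}$, are the following.
\begin{itemize}
\item $0\le\Sigma_{1,K},\Sigma_1\le M_1$, as in the asymptotics part of the proof of \Cref{prop:target}.
\item $|\Sigma_2|\le M_2$, and also $|\Sigma_{2,K}|\le M_2$ by the same geometric majorization $2\sum_{j\ge1}a^{j^2}\le2a/(1-a^3)$, using $j^2\ge3j-2$.
\item $|\tau_1|\le a^{(K+1)(K+2)}/(1-a^{2(K+2)})$, since consecutive exponents $j(j+1)$ for $j\ge K+1$ grow by at least $2(K+2)$.
\item $|\tau_2|\le2a^{(K+1)^2}/(1-a^{2K+3})$, since $j^2$ grows by at least $2K+3$.
\end{itemize}
Bounding $a^{2(K+2)}\le e^{-2(K+2)\pi}$ in the denominators and integrating $\varpi^2e^{-3\pi\varpi/2}a^{p}$ into $I(\frac32+p)$ yields precisely the two terms of $R_K$. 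The coefficients are $6M_1^5M_2^6$ for the first and $6\cdot2\,M_1^6M_2^5$ for the second; in the second we use $\Sigma_{1,K}^6\le M_1^6$ and $\max(|\Sigma_2|,|\Sigma_{2,K}|)\le M_2$. This gives $|\cs-E_K|\le R_K$.

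Finally, $R_K\to0$ because $I(\lambda)\le e^{-\lambda\pi}(1+\dots)$ decays exponentially in $\lambda$, while the denominators stay bounded below by $1-e^{-4\pi}$. The step needing the most care is the uniform bound $|\Sigma_{2,K}|\le M_2$ for the partial sums. The alternating sign does not matter there, since we bound $\Sigma_{2,K}$ crudely by $1+2\sum_{j\ge1}a^{j^2}$. The other point to watch is matching the exponents in the tail geometric series exactly. Everything else is direct calculation.
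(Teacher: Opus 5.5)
Your proof is correct and follows the paper's argument essentially step for step. You use the same integral representation via $\varpi=e^{-2x}$, the same telescoping $(\Sigma_1^6-\Sigma_{1,K}^6)\Sigma_2^6+\Sigma_{1,K}^6(\Sigma_2^6-\Sigma_{2,K}^6)$ with $|x^6-y^6|\le6\max(|x|,|y|)^5|x-y|$, and the same geometric tail bounds. One harmless slip: at $K=0$ the denominator $1-e^{-(2K+3)\pi}$ is only bounded below by $1-e^{-3\pi}$, not $1-e^{-4\pi}$; this does not affect $R_K\to0$.
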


\begin{proof}\label{proof:lem:sine-sum}
The formula for $I(\lambda)$ follows from the antiderivative
$-e^{-cv}(v^2/c+2v/c^2+2/c^3)$ of $v^2e^{-cv}$, $c=\lambda\pi$. By
\Cref{prop:target}, $\cs=\int n_*^3$, $n_*$ is even, and for $x\le0$ the
variable $\varpi=e^{-2x}$ ranges over $[1,\infty)$ with
$\dd x=-\dd\varpi/(2\varpi)$; by \eqref{eq:sine-theta},
\[
 \cs=2\int_{-\infty}^0n_*^3\dd x
 =512\int_1^\infty\varpi^2e^{-3\pi\varpi/2}\,\Sigma_1(e^{-\pi\varpi})^6
 \Sigma_2(e^{-\pi\varpi})^6\dd\varpi .
\]
The polynomial $\Sigma_{1,K}^6\Sigma_{2,K}^6$ has degree
$6K(K+1)+6K^2$, and integrating it against $512\varpi^2e^{-3\pi\varpi/2}$ with
$a=e^{-\pi\varpi}$ gives $E_K$, since $a^ne^{-3\pi\varpi/2}=e^{-(n+3/2)\pi\varpi}$.

Let $\varpi\ge1$ and $a=e^{-\pi\varpi}\le e^{-\pi}$. Since $j(j+1)\ge2j$,
$0\le\Sigma_{1,K}\le\Sigma_1\le\sum_ja^{2j}\le M_1$; since $j^2\ge3j-2$ for
$j\ge1$, $|\Sigma_2|,|\Sigma_{2,K}|\le1+2\sum_{j\ge1}a^{3j-2}\le M_2$. The
omitted exponents of $\Sigma_1$ are $j(j+1)$, $j\ge K+1$, with consecutive
differences $2(j+1)\ge2(K+2)$; those of $\Sigma_2$ are $j^2$, $j\ge K+1$,
with differences $2j+1\ge2K+3$. Comparison with geometric series gives
\[
 |\Sigma_1-\Sigma_{1,K}|\le\frac{a^{(K+1)(K+2)}}{1-e^{-2(K+2)\pi}},\qquad
 |\Sigma_2-\Sigma_{2,K}|\le\frac{2a^{(K+1)^2}}{1-e^{-(2K+3)\pi}} .
\]
For $|X|,|x|\le M_1$ and $|Y|,|y|\le M_2$,
$X^6Y^6-x^6y^6=(X^6-x^6)Y^6+x^6(Y^6-y^6)$ and
$|X^6-x^6|=|X-x|\,|\sum_{i<6}X^ix^{5-i}|\le6M_1^5|X-x|$ give
\[
 |\Sigma_1^6\Sigma_2^6-\Sigma_{1,K}^6\Sigma_{2,K}^6|
 \le6M_1^5M_2^6|\Sigma_1-\Sigma_{1,K}|+6M_1^6M_2^5|\Sigma_2-\Sigma_{2,K}| .
\]
Integrating this bound against $512\varpi^2e^{-3\pi\varpi/2}$ gives
$|\cs-E_K|\le R_K$. Finally, $I(\lambda)\le e^{-\lambda\pi}$ for $\lambda\ge1$,
since $\frac1\pi+\frac2{\pi^2}+\frac2{\pi^3}<1$, so $R_K\to0$.
\end{proof}
\section{The ground coordinate, the filters and the state region}\label{app:states}

The ground-state square identity supplies the coordinate $q$ and its
integral constraints. The Fredholm--Jost identity then expresses each
determinant as the integral of a Riccati filter. To describe their joint
range, we use the spectral measure of a Dirichlet-truncated
Birman--Schwinger operator. For the three retained filters, the resulting
moment problem has explicit two-point extremizers.

\subsection{Ground state, filters and the common measure}\label{sec:ground}

\begin{proof}[Proof of \Cref{thm:ground}]\label{proof:thm:ground}
{Ground state and square identity.}
The positive compact operator $K_m$ has a positive largest eigenvalue
$\lambda_0$. Its nonnegative kernel allows a nonnegative normalized
maximizer $\varphi$ of the Rayleigh quotient, with
$K_m\varphi=\lambda_0\varphi$. Put $j=\sqrt m\,\varphi\in L^1$ and
\[
 u_-(x)=e^{-x}\int_{-\infty}^xe^yj(y)\dd y,\qquad
 u_+(x)=e^x\int_x^\infty e^{-y}j(y)\dd y,\qquad u=u_-+u_+.
\]
Then $u_-'=-u_-+j$, $u_+'=u_+-j$, and $j$ is nonnegative and nonzero.
Thus $u>0$, $u'=u_+-u_-$ is continuous and locally absolutely continuous,
$|u'|\le u$, and $u''=u-2j$ almost everywhere. The functions $u_\pm$
are in $L^1\cap L^\infty$, so $u\in H^1$. Moreover,
$\sqrt m\,u=\lambda_0\varphi$, whence $2j=\kappa mu$.

Set $a=u'/u$. Then $|a|\le1$ and $a'=1-\kappa m-a^2$. For $f\in H^1$,
\[
 (f'-af)^2=f'^2+f^2-\kappa mf^2-(af^2)' .
\]
All nondifferentiated terms are integrable, and $af^2\to0$ at both
infinities by \Cref{lem:features}(i). Integration proves
\eqref{eq:ground-square}. Its right-hand side vanishes precisely when
$(f/u)'=0$, or $f=cu$.

{Simplicity and uniqueness.}
Any $H^1\cap C^1$ solution $v$ of $-v''+v=\kappa mv$ with locally
absolutely continuous derivative satisfies
$\|v\|_\HH^2=\kappa\int mv^2$: integrate $(vv')'=v'^2+v^2-\kappa mv^2$
between endpoints tending to $\pm\infty$ along which $vv'\to0$.
Such endpoints exist because $vv'\in L^1$; the integrands are in $L^1$
because $v$ is bounded. The square identity therefore gives $v=cu$.
For an eigenvector $\psi$ of $K_m$ at $\lambda_0$, the convolution
$v=e^{-|\cdot|}*(\sqrt m\,\psi)$ has this regularity and satisfies
$\sqrt m\,v=\lambda_0\psi$. Thus $\psi$ is a multiple of $\varphi$,
proving simplicity as well as uniqueness of the positive solution.
The square identity proves the variational characterization of $\kappa$;
testing with $u$ excludes every larger coupling.

{Ground coordinate.}
The function $q=1-a$ lies in $[0,2]$ and satisfies
$q'=\kappa m-Q(q)$. For $R>0$,
\[
 \int_{-R}^RQ(q)=\kappa\int_{-R}^Rm-q(R)+q(-R)\le\kappa\int m+2.
\]
Hence $Q(q),q'\in L^1$, $q$ has limits at both infinities, and these
limits belong to $\{0,2\}$. A limit $q(+\infty)=0$ would give
$u'/u\ge1/2$ eventually and exponential growth of $u$; a limit
$q(-\infty)=2$ gives the same contradiction at the other end.
Thus the limits are $0$ and $2$, and integration gives
$\int Q(q)=\kappa\int m-2$. Outside a compact support, $u''=u$ and
$u\in H^1$ select $u=c_-e^x$ on the left and $u=c_+e^{-x}$ on the
right, giving the stated constant values of $q$. If $m$ is bounded,
$|q'|\le\kappa\|m\|_\infty+1$, so $q$ is Lipschitz.
\end{proof}

\begin{proof}[Proof of \Cref{lem:filter}]\label{proof:lem:filter}
Put $M=\int m$ and $M(x)=\int_{-\infty}^xm$.

{The Volterra equation.} Let $V_0=1$ and
$V_{j+1}(x)=\frac s2\int_{-\infty}^x(1-e^{-2(x-y)})m(y)V_j(y)\dd y$. By
induction on $j$, using $0\le1-e^{-2(x-y)}\le1$ and
$\frac{\dd}{\dd y}M(y)^{j+1}=(j+1)M(y)^jm(y)$,
\[
 0\le V_j(x)\le\frac{(sM(x)/2)^j}{j!}\le\frac{(sM/2)^j}{j!} ,
\]
and, also by induction,
\begin{equation}\label{eq:volterra-terms}
 V_j(x)=\Bigl(\frac s2\Bigr)^j\int_{y_1<\dots<y_j<x}
 \prod_{i=1}^j\bigl(1-e^{-2(y_{i+1}-y_i)}\bigr)m(y_i)\dd y_i
 \qquad(y_{j+1}=x).
\end{equation}
The series $V_s=\sum_jV_j$ converges uniformly, $1\le V_s\le e^{sM/2}$,
$V_s(x)\to1$ as $x\to-\infty$ because $M(x)\to0$, and summation of the
recursive definitions gives
$V_s(x)=1+\frac s2\int_{-\infty}^x(1-e^{-2(x-y)})m(y)V_s(y)\dd y$. If $V$ and
$\widetilde V$ are bounded solutions, $D=V-\widetilde V$ satisfies the
homogeneous equation, and induction gives
$|D(x)|\le\|D\|_\infty(sM/2)^j/j!$ for every $j$; hence $D=0$.

{The filter.} Writing the equation as
$V_s(x)=1+\frac s2\int_{-\infty}^xmV_s-\frac s2e^{-2x}\int_{-\infty}^xe^{2y}mV_s$,
both terms are absolutely continuous, and
\[
 V_s'(x)=s\int_{-\infty}^xe^{-2(x-y)}m(y)V_s(y)\dd y\ge0,\qquad
 V_s''+2V_s'=smV_s\ \text{a.e.} ;
\]
the right-hand side of the first identity is continuous, so $V_s\in C^1$.
The function $r_s=V_s'/V_s$ is locally absolutely continuous and
$r_s'=V_s''/V_s-r_s^2=sm-2r_s-r_s^2$; since $(\log V_s)'=r_s$ and
$V_s(-\infty)=1$, $V_s=\exp\int_{-\infty}^xr_s$. Since $V_s$ is nondecreasing,
$V_s(y)/V_s(x)\le1$ for $y\le x$, so
\begin{equation}\label{eq:filter-linear-bound}
 0\le r_s(x)\le s\int_{-\infty}^xe^{-2(x-y)}m(y)\dd y=\frac s\kappa r_\circ(x).
\end{equation}
The linear filter satisfies $r_\circ(x)\le\kappa M(x)\to0$ as $x\to-\infty$
and, for every $R$,
$r_\circ(x)\le\kappa e^{-2(x-R)}M+\kappa\int_R^\infty m$, so
$r_\circ(x)\to0$ as $x\to+\infty$. Hence $r_s(\pm\infty)=0$, and $r_s$ is
bounded.

{Uniqueness.} Let $\rho$ be a bounded locally absolutely continuous
solution of $\rho'=sm-2\rho-\rho^2$ with $\rho(x)\to0$ as $x\to-\infty$. The
difference $\delta=\rho-r_s$ satisfies $\delta'=-(2+\rho+r_s)\delta$, and there
is $x_0$ with $2+\rho+r_s\ge1$ on $(-\infty,x_0]$. For $a<x\le x_0$,
$|\delta(x)|=|\delta(a)|\exp(-\int_a^x(2+\rho+r_s))\le\|\delta\|_\infty e^{-(x-a)}$;
letting $a\to-\infty$ gives $\delta=0$ on $(-\infty,x_0]$, and then
$\delta(x)=\delta(x_0)\exp(-\int_{x_0}^x(2+\rho+r_s))=0$ for all $x$.

{The determinant.}
For $s>0$, dominated convergence in the Volterra equation gives
\[
 V_s(+\infty)=1+\frac s2\int_\R m(y)V_s(y)\dd y.
\]
The function $f_-(x)=e^xV_s(x)$ is the left Jost solution for
$-\partial_x^2+sm$ at spectral parameter $z=-1$. Apply the
Fredholm--Jost identity \cite[Theorem~4.6]{GesztesyMakarov2004} with
$sm\in L^1(\R)$ and $\sqrt z=i$. In its notation the Birman--Schwinger
operator is $-sK_m/2$, since $(-\partial_x^2+1)^{-1}$ has kernel
$e^{-|x-y|}/2$, and the inverse transmission coefficient is the
Volterra limit above. Therefore
\[
 \det\Bigl(I+\frac s2K_m\Bigr)=V_s(+\infty),\qquad
 H_m(s)=\int_\R r_s.
\]
The case $s=0$ is immediate. The Riccati equation then gives
$\|r_s'\|_1\le sM+(2+\|r_s\|_\infty)H_m(s)$.

{The linear filter.} The function
$r_\circ(x)=\kappa e^{-2x}\int_{-\infty}^xe^{2y}m(y)\dd y$ is locally absolutely
continuous by the product rule, with $r_\circ'=\kappa m-2r_\circ$; it is
nonnegative, and $r_\circ(\pm\infty)=0$ as shown above. By Tonelli's theorem $\int r_\circ=\frac\kappa2M$, so
$\|r_\circ'\|_1\le2\kappa M$. The function $h=q-r_\circ$ satisfies
$h'=\kappa m-Q(q)-\kappa m+2r_\circ=q^2-2h$, so as before
$h(x)=\int_{-\infty}^xe^{-2(x-y)}q(y)^2\dd y\ge0$, that is, $r_\circ\le q$; and
$0\le h\le q$, $h'=q'-r_\circ'\in L^1$. Together with
\eqref{eq:filter-linear-bound}, this gives $r_s\le(s/\kappa)q$. Finally, for $c>0$, $0\le\tilde m\in L^1$ and
$\varrho(x)=c\int_{-\infty}^xe^{-2(x-y)}\tilde m(y)\dd y$, Tonelli's theorem and
$\int_{\max(y,z)}^\infty e^{-2(2x-y-z)}\dd x=\frac14e^{-2|y-z|}$ give
\[
 \int_\R\varrho^2
 =c^2\iint \tilde m(y)\tilde m(z)\int_{\max(y,z)}^\infty e^{-2(2x-y-z)}\dd x\dd y\dd z
 =\frac{c^2}4\iint e^{-2|y-z|}\tilde m(y)\tilde m(z)\dd y\dd z .
\]
For $c=\kappa$ and $\tilde m=m$, $\varrho=r_\circ$, and the last double integral is
$\|K_m\|_{\HS}^2=\tr K_m^2$.
\end{proof}

\begin{proof}[Proof of \Cref{thm:common-measure}]\label{proof:thm:common-measure}
Fix $x$ and put $I_x=(-\infty,x)$ and
\[
 G_x(t,y)=\tfrac12\bigl(e^{-|t-y|}-e^{t+y-2x}\bigr)\qquad(t,y<x).
\]
Since $(x-t)+(x-y)\ge|t-y|$, $0\le G_x\le\frac12e^{-|t-y|}$, and
$G_x(x,y)=0$. Let $\mathcal K_x$ be the operator on $L^2(I_x)$ with kernel
$\sqrt{m(t)}G_x(t,y)\sqrt{m(y)}$ and $v(t)=\sqrt{m(t)}e^{t-x}$. The kernel is
symmetric with
$\iint m(t)G_x(t,y)^2m(y)\le\frac14M^2$, so $\mathcal K_x$ is self-adjoint and
Hilbert--Schmidt, in particular compact; and $\|v\|_2^2=\int_{I_x}e^{2(t-x)}m(t)\dd t$.

{The Green representation.} For $\phi\in L^1(I_x)$ put
$g=G_x\phi=\int_{I_x}G_x(\cdot,y)\phi(y)\dd y$. Then
$g(t)=\frac12\int e^{-|t-y|}\phi(y)\dd y-\frac12e^{t-2x}\int e^y\phi(y)\dd y$. As in
the proof of \Cref{thm:ground}, $g\in C^1(I_x\cup\{x\})$, $g'$ is locally
absolutely continuous, $-g''+g=\phi$ almost everywhere, $g(x)=0$, $g,g'$
are bounded and $g(t),g'(t)\to0$ as $t\to-\infty$, and
\[
 g'(x)=-\int_{I_x}e^{y-x}\phi(y)\dd y,
\]
since $\partial_tG_x(t,y)\to-e^{y-x}$ as $t\uparrow x$ for every $y<x$ and
$|\partial_tG_x|\le1$. Integration by parts on $[a,x]$ and $a\to-\infty$ give
$\int_{I_x}\phi g=\int_{I_x}(g'^2+g^2)$, and $g\in H^1(I_x)$.

{Positivity and the norm.} For $f\in L^2(I_x)$, $\phi=\sqrt m\,f\in L^1$
and $\ip f{\mathcal K_xf}=\int\phi G_x\phi=\int_{I_x}(g'^2+g^2)\ge0$. Let
$\mathcal K_xf=\lambda f$ with $\lambda>0$ and $f\ne0$, and $g=G_x(\sqrt mf)$.
Then $\sqrt m\,g=\lambda f$, so $\int_{I_x}(g'^2+g^2)=\lambda\|f\|^2$ and
$\int_{I_x}mg^2=\lambda^2\|f\|^2$. The extension $\hat g$ of $g$ by zero on
$[x,\infty)$ lies in $H^1(\R)$ because $g(x)=0$, and \eqref{eq:ground-square}
gives
\[
 \lambda(1-\kappa\lambda)\|f\|^2=\int(\hat g'^2+\hat g^2-\kappa m\hat g^2)
 =\int\Bigl(\hat g'-\frac{u'}u\hat g\Bigr)^2\ge0 .
\]
If the right-hand side were zero, $\hat g$ would be a multiple of $u>0$
vanishing on $(x,\infty)$, so $\hat g=0$ and $f=\lambda^{-1}\sqrt m\,g=0$.
Hence $\lambda<1/\kappa$ for every positive eigenvalue. Since $\mathcal K_x$
is compact and positive, $\|\mathcal K_x\|$ is its largest eigenvalue or
zero, so $\|\mathcal K_x\|<1/\kappa$, and $I+s\mathcal K_x$ ($s\ge0$) and
$I-\kappa\mathcal K_x$ are invertible.

{Boundary values.} For $s\ge0$ put $y_s(t)=e^{t-x}V_s(t)/V_s(x)$, with
$V_s$ of \Cref{lem:filter}(ii), and put $y_{-\kappa}=u/u(x)$.
From $V_s''+2V_s'=smV_s$ and $-u''+u=\kappa mu$,
$-y_s''+(1+sm)y_s=0$ for $s\in\{-\kappa\}\cup[0,\infty)$; moreover $y_s(x)=1$,
$y_s$ is bounded on $I_x$, $y_s(t)\to0$ as $t\to-\infty$, and
\[
 y_s'(x)=1+r_s(x)\quad(s\ge0),\qquad y_{-\kappa}'(x)=\frac{u'(x)}{u(x)}=1-q(x).
\]
Let $z=e^{t-x}-sG_x(my_s)$, which is defined since $my_s\in L^1(I_x)$. Then
$-z''+z=-smy_s$, so $w=y_s-z$ satisfies $-w''+w=0$ on $I_x$, is bounded,
and $w(x)=0$; hence $w=\alpha e^t+\beta e^{-t}$ with $\beta=0$ by boundedness
and $\alpha=0$ by $w(x)=0$. Thus $y_s=e^{t-x}-sG_x(my_s)$. Multiplication
by $\sqrt m$ gives $(I+s\mathcal K_x)(\sqrt m\,y_s)=v$, so
$\sqrt m\,y_s=(I+s\mathcal K_x)^{-1}v$, and differentiation at $t=x$ gives
\[
 y_s'(x)=1+s\int_{I_x}e^{y-x}m(y)y_s(y)\dd y
 =1+s\ip v{(I+s\mathcal K_x)^{-1}v} .
\]
Hence $r_s(x)=s\ip v{(I+s\mathcal K_x)^{-1}v}$ for $s\ge0$ and
$q(x)=\kappa\ip v{(I-\kappa\mathcal K_x)^{-1}v}$; also
$r_\circ(x)=\kappa\int_{I_x}e^{-2(x-y)}m(y)\dd y=\kappa\|v\|^2$.

{The measure.} Let $\sigma_x$ be the spectral measure of the positive
compact operator $\mathcal K_x$ at $v$. It has mass $\|v\|^2$, is supported
in $[0,\|\mathcal K_x\|]\subset[0,1/\kappa)$, and includes any component of
$v$ in the kernel as an atom at $0$. The spectral theorem gives
$\ip v{\Phi(\mathcal K_x)v}=\int\Phi\dd\sigma_x$ for bounded Borel $\Phi$.
Therefore
\[
 r_s(x)=s\int\frac{\dd\sigma_x(\lambda)}{1+s\lambda},\qquad
 q(x)=\kappa\int\frac{\dd\sigma_x(\lambda)}{1-\kappa\lambda},\qquad
 r_\circ(x)=\kappa\int\dd\sigma_x .
\]
If $q(x)>0$, the measure $\kappa(1-\kappa\lambda)^{-1}q(x)^{-1}\dd\sigma_x$ is a
probability measure on $[0,1/\kappa)$; its image $\mu_x$ under
$\lambda\mapsto z=\kappa\lambda$ satisfies \eqref{eq:common}, because
$\kappa r_s/(sq)=\int(1-z)(1+(s/\kappa)z)^{-1}\dd\mu_x$ and
$r_\circ/q=\int(1-z)\dd\mu_x$ by the three displayed identities. If
$q(x)=0$, the positive integrand $(1-\kappa\lambda)^{-1}$ forces $\sigma_x=0$,
and all filters vanish at $x$.
\end{proof}

\subsection{The range of three filters}\label{app:range}

By \Cref{thm:common-measure}, the normalized values of the three filters
at a point are three integrals of one probability measure on $[0,1]$. Their
joint range is computed by a change of variables that makes the first
integrand linear, Jensen's inequality for the second, and Hermite
interpolation for the third.

For two scales $0<s_0<s_1$ and $\kappa>0$ put
\[
 \kappa_0=\kappa+s_0,\qquad \kappa_1=\kappa+s_1,\qquad \delta=s_1-s_0,
 \qquad f(U)=\frac{\kappa_0U}{\kappa_1-\delta U}.
\]

\begin{proposition}[The range of three filters]\label{prop:three-filter}
At a point with $q>0$, let $U_i=\kappa r_{s_i}/(s_iq)$ for $i=0,1$ and
$\vartheta=1-r_\circ/q$. Then $0\le U_0\le1$ and $f(U_0)\le U_1\le U_0$.
If $0<U_0<1$ and $f(U_0)<U_1<U_0$, put
\[
 y_-=\frac{\kappa_1U_1-\kappa_0U_0}{\delta U_1},\qquad
 y_+=\frac{\kappa_1(U_0-U_1)}{\delta(1-U_1)};
\]
then $0<y_+<U_0<y_-<1$ and
\begin{equation}\label{eq:third-interval}
 \vartheta_-:=\frac{\kappa(1-U_0)}{\kappa+s_0y_+}
 \ \le\ \vartheta\ \le\
 \vartheta_+:=1-\frac{\kappa_0U_0}{\kappa+s_0y_-}.
\end{equation}
On the boundary of the region, $\vartheta=1$ if $U_0=0$; $\vartheta=0$ if
$U_0=1$; $\vartheta=1-U_0$ if $U_1=U_0$; and
$\vartheta=\kappa(1-U_0)/(\kappa+s_0U_0)$ if $U_1=f(U_0)$. Every point of the
region and of the interval \eqref{eq:third-interval} arises from some
probability measure in \eqref{eq:common}.
\end{proposition}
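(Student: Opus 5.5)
By \Cref{thm:common-measure}, at a point with $q>0$ the three normalized quantities are integrals against one probability measure $\mu_x$ on $[0,1]$:
\[
 U_i=\int\frac{1-z}{1+(s_i/\kappa)z}\dd\mu_x(z),\qquad
 1-\vartheta=\int(1-z)\dd\mu_x(z).
\]
The plan is to change variables so that $U_0$ becomes a mean, and then read the statement as a moment-space computation.

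\textbf{Change of variables.} Put $w=\kappa(1-z)/(\kappa+s_0z)$. This is a decreasing bijection of $[0,1]$ onto itself, with inverse
\[
 z=\frac{\kappa(1-w)}{\kappa+s_0w}.
\]
Substituting, $U_0=\int w\dd\nu$ for the image measure $\nu$. A direct computation gives $(1-z)/(1+s_1z/\kappa)=f(w)$, with $f$ as defined before the proposition. It also gives $1-z=\kappa_0w/(\kappa+s_0w)$, so
\[
 \vartheta=\int g(w)\dd\nu,\qquad g(w)=1-\frac{\kappa_0w}{\kappa+s_0w}=\frac{\kappa(1-w)}{\kappa+s_0w}.
\]
Hence $0\le U_0\le1$ is immediate.

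\textbf{First two filters.} One checks that $f$ is convex, with $f(0)=0$ and $f(1)=1$; indeed $f''>0$ because $\delta>0$. Jensen's inequality then gives $f(U_0)\le U_1$, and the chord bound $f(w)\le w$ gives $U_1\le U_0$.

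\textbf{Third filter.} For fixed interior $(U_0,U_1)$, the extremes of $\int g\dd\nu$ subject to $\int w=U_0$ and $\int f(w)=U_1$ will be found by the standard three-function Chebyshev-system argument. The key observation is that $(1,w,f,g)$ form a Chebyshev system on $[0,1]$. This holds because $f$ and $g$ are Möbius functions with distinct poles $w=\kappa_1/\delta$ and $w=-\kappa/s_0$, both outside $[0,1]$, so after clearing denominators each relevant combination is a low-degree rational function with at most the expected number of zeros. The lower and upper principal representations are then two-point measures. One has support $\{y_+,1\}$ and the other has support $\{0,y_-\}$; which endpoint goes with which extreme is fixed by the sign of the Hermite-interpolation remainder $g-(\text{affine combination of }1,w,f)$.

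The atom locations follow by solving the two moment equations. For support $\{y,1\}$ with weights $1-p$ and $p$, we need $(1-p)y+p=U_0$ and $(1-p)f(y)+p=U_1$. Eliminating $p$ and using the explicit form of $f$ gives $y=y_+$. For support $\{0,y\}$ with weight $p$ on $y$, we need $py=U_0$ and $pf(y)=U_1$, so $f(y)/y=U_1/U_0$, which gives $y=y_-$. Evaluating $\int g$ on these measures should reproduce the printed formulas $\vartheta_-=\kappa(1-U_0)/(\kappa+s_0y_+)$ and $\vartheta_+=1-\kappa_0U_0/(\kappa+s_0y_-)$. The inequalities $0<y_+<U_0<y_-<1$ are equivalent to the weights lying strictly in $(0,1)$ when $f(U_0)<U_1<U_0$.

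To decide which two-point measure is the minimizer, I will use the following route: for the affine-in-$(1,w,f)$ interpolant touching $g$ at $y$ doubly and at $1$ (respectively at $0$ and doubly at $y$), show that $g$ minus the interpolant has constant sign on $[0,1]$. Its numerator is a polynomial of degree at most $3$ with the prescribed zeros, so it has no further zeros, and the sign can be read off at one test point. Integrating this one-signed difference against an arbitrary feasible $\nu$ yields the bound.

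\textbf{Boundary cases and attainment.} The boundary cases come from degenerate measures. $U_0=0$ forces $\nu=\delta_0$, giving $\vartheta=1$. $U_0=1$ forces $\nu=\delta_1$, giving $\vartheta=0$. $U_1=U_0$ forces $\nu$ to sit on $\{0,1\}$, giving $\vartheta=1-U_0$. $U_1=f(U_0)$ forces $\nu=\delta_{U_0}$, giving $\vartheta=g(U_0)$. For the converse (attainment), intermediate values of $\vartheta$ are obtained by convex combinations of the two extremal measures, which preserve the first two moments. Pulling back through $w\mapsto z$ gives a probability measure in \eqref{eq:common}.

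I expect the main obstacle to be the sign verification for the Hermite remainders, that is, identifying which principal representation gives which bound. The atom formulas themselves are routine algebra.
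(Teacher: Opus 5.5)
Your proposal is correct and follows essentially the paper's proof: the same substitution $w=\kappa(1-z)/(\kappa+s_0z)$, Jensen plus the chord for $f(U_0)\le U_1\le U_0$, the two-point measures on $\{0,y_-\}$ and $\{y_+,1\}$, the sign of the Hermite remainder $w(w-y)^2$ or $(w-1)(w-y)^2$ after clearing the denominators $(\kappa_1-\delta w)(\kappa+s_0w)$, and mixtures for attainment. The Chebyshev-system language is only framing. When you read off the sign, note that a point inside $[0,1]$ will not work because the interpolant is unknown there; the paper pins the leading constant by evaluating the cleared remainder at the pole $w=-\kappa/s_0$ of $g$, where it equals $-\kappa_0^2\kappa s_1/s_0^2\neq0$.
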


\begin{proof}[Proof of \Cref{prop:three-filter}]\label{proof:prop:three-filter}
{Moment coordinates.}
The decreasing bijection $w=\kappa(1-z)/(\kappa+s_0z)$ has inverse
$z=\kappa(1-w)/(\kappa+s_0w)$. For the image measure $\tilde\omega$,
\[
 U_0=\int w\dd\tilde\omega,\qquad U_1=\int f(w)\dd\tilde\omega,
 \qquad 1-\vartheta=\int g_0(w)\dd\tilde\omega,
 \quad g_0(w)=\frac{\kappa_0w}{\kappa+s_0w}.
\]
Every probability measure on $[0,1]$ can occur in these coordinates.
Since $f''(w)=2\kappa_0\kappa_1\delta/(\kappa_1-\delta w)^3>0$,
Jensen's inequality and $f(w)\le w$ give $f(U_0)\le U_1\le U_0$.
Equality on the lower boundary forces $\tilde\omega=\delta_{U_0}$;
on the upper boundary it forces
$\tilde\omega=(1-U_0)\delta_0+U_0\delta_1$.
These measures, including $U_0=0,1$, give all four boundary values.

{Extremizing measures.}
For an interior pair, the increasing ratios
\[
 \frac{f(y)}y=\frac{\kappa_0}{\kappa_1-\delta y},\qquad
 \frac{1-f(y)}{1-y}=\frac{\kappa_1}{\kappa_1-\delta y}
\]
take the values $U_1/U_0$ and $(1-U_1)/(1-U_0)$ at
$y_-$ and $y_+$, respectively. The strict moment inequalities give
$0<y_+<U_0<y_-<1$. Thus
\[
 \tilde\omega_-=\Bigl(1-\frac{U_0}{y_-}\Bigr)\delta_0
       +\frac{U_0}{y_-}\delta_{y_-},\qquad
 \tilde\omega_+=\frac{1-U_0}{1-y_+}\delta_{y_+}
       +\frac{U_0-y_+}{1-y_+}\delta_1
\]
are probability measures with moments $(U_0,U_1)$.

{Sharp bounds by interpolation.}
For $0<y<1$, let $P_\pm\in\Span\{1,w,f(w)\}$ agree with $g_0$ and
its derivative at $y$, and agree with $g_0$ at $0$ for $P_-$ or at $1$
for $P_+$. Multiplication by $\kappa_1-\delta w$ maps this span
bijectively onto the quadratic polynomials; Hermite interpolation
therefore gives unique $P_\pm$. The polynomials
\[
 E_\pm=(\kappa_1-\delta w)(\kappa+s_0w)(g_0-P_\pm)
\]
have degree at most three, with zeros $0,y,y$ or $y,y,1$.
Their common value at $w=-\kappa/s_0$ is
$-\kappa_0^2\kappa s_1/s_0^2$, fixing the remaining factor. Hence
\[
 g_0-P_-=
 \frac{\kappa_0^2s_0s_1w(w-y)^2}
 {(\kappa+s_0y)^2(\kappa+s_0w)(\kappa_1-\delta w)}\ge0,
\]
\[
 g_0-P_+=
 \frac{\kappa_0\kappa s_0s_1(w-1)(w-y)^2}
 {(\kappa+s_0y)^2(\kappa+s_0w)(\kappa_1-\delta w)}\le0.
\]
The integral of $P_\pm$ depends only on $(1,U_0,U_1)$. Take $y=y_-$
in $P_-$ and $y=y_+$ in $P_+$; equality holds on the corresponding
two-point supports. Therefore
\[
 \frac{\kappa_0U_0}{\kappa+s_0y_-}
 =\int g_0\dd\tilde\omega_-
 \le\int g_0\dd\tilde\omega
 \le\int g_0\dd\tilde\omega_+
 =1-\frac{\kappa(1-U_0)}{\kappa+s_0y_+}.
\]
This is \eqref{eq:third-interval}. Mixtures of $\tilde\omega_-$ and
$\tilde\omega_+$ preserve the two moments and attain every intermediate
value of $\vartheta$; the boundary measures already identified complete
the attainment assertion.
\end{proof}

Let $\mathcal R_\kappa\subset\R^3$ be the set of triples $(U_0,U_1,\vartheta)$
such that either $0<U_0<1$, $f(U_0)<U_1<U_0$ and
$\vartheta_-\le\vartheta\le\vartheta_+$, or $(U_0,U_1)$ lies on the boundary of
$\{0\le U_0\le1,\ f(U_0)\le U_1\le U_0\}$ and $\vartheta$ is the boundary value
listed in \Cref{prop:three-filter}.

\begin{lemma}[Parametrization of the state region]\label{lem:range}
For $(U,T)\in[0,1]^2$ put
\[
 V=\frac{U[\kappa_0+\delta T(1-U)]}{\kappa_1-\delta U},\qquad
 y_+=\frac{\kappa_1U(1-T)}{\kappa_1-\delta UT},\qquad
 y_-=\frac{\kappa_0U+\kappa_1T(1-U)}{\kappa_0+\delta T(1-U)},
\]
\[
 \vartheta_-=\frac{\kappa(1-U)}{\kappa+s_0y_+},\qquad
 \vartheta_+=1-\frac{\kappa_0U}{\kappa+s_0y_-}.
\]
All denominators are at least $\kappa$, $\kappa_1-\delta U\ge\kappa_0$ and
$\kappa_1-\delta UT\ge\kappa_0$, these functions are continuous
on $[0,1]^2$, and $V,y_\pm,\vartheta_\pm$ take values in $[0,1]$. Then
$\mathcal R_\kappa=\{(U,V,\vartheta):(U,T)\in[0,1]^2,\ \vartheta_-\le\vartheta\le\vartheta_+\}$,
and on the interior of $[0,1]^2$ the functions $y_\pm,\vartheta_\pm$ coincide
with those of \Cref{prop:three-filter} at $(U_0,U_1)=(U,V)$. The sets
$\mathcal R_\kappa$ and $\Omega_\kappa$ are compact, and the state of every $m$ as
in \Cref{lem:filter} with $\kappa(m)=\kappa$ lies in $\Omega_\kappa$ at every point.
\end{lemma}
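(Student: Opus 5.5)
The plan is to prove \Cref{lem:range} by showing that the map $(U,T)\mapsto(U,V)$ sends $[0,1]^2$ onto the moment region $\{0\le U_0\le1,\ f(U_0)\le U_1\le U_0\}$, and that on the interior the functions $y_\pm,\vartheta_\pm$ agree with the formulas of \Cref{prop:three-filter}. The key identity is
\[
 V=(1-T)f(U)+TU,
\]
which one checks by putting $(1-T)\kappa_0U+TU(\kappa_1-\delta U)$ over the common denominator $\kappa_1-\delta U$. This gives $U(\kappa_0+\delta T(1-U))$, because $\kappa_1-\kappa_0=\delta$. For fixed $U$ the map $T\mapsto V$ is affine and increasing from $f(U)$ to $U$. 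Hence the image is exactly the moment region, and the interior of $[0,1]^2$ maps onto pairs with $0<U_0<1$ and $f(U_0)<U_1<U_0$.

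\textbf{Formulas for $y_\pm$.} Substitute $U_1=V$ into $y_-=(\kappa_1U_1-\kappa_0U_0)/(\delta U_1)$ and $y_+=\kappa_1(U_0-U_1)/(\delta(1-U_1))$ and simplify.
\begin{itemize}
\item For $U_0-V$, note $U-f(U)=\delta U(1-U)/(\kappa_1-\delta U)$, so $U_0-V=(1-T)\delta U(1-U)/(\kappa_1-\delta U)$.
\item For $1-V$, one computes $(1-U)(\kappa_1-\delta UT)/(\kappa_1-\delta U)$.
\end{itemize}
Together these give the stated $y_+$; the analogous simplification gives $y_-$.

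\textbf{Denominator bounds and ranges.} The bounds follow from $0\le U,T\le1$, giving $\kappa_1-\delta U\ge\kappa_1-\delta=\kappa_0$ and $\kappa_0+\delta T(1-U)\ge\kappa_0\ge\kappa$. Continuity follows. For the ranges:
\begin{itemize}
\item $y_+\in[0,1]$, since $\kappa_1U(1-T)\le\kappa_1-\delta UT$; this is equivalent to $\kappa_1(1-U)+\kappa_0UT\ge0$.
\item $y_-$ is a weighted average of $U$ and $1$ with weights $\kappa_0$ and $\delta T(1-U)$, up to a check on the numerator.
\item $\vartheta_\pm\in[0,1]$ follows from $\kappa(1-U)\le\kappa+s_0y_+$ and $\kappa_0U\le\kappa+s_0y_-$. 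The latter holds since $y_-\ge U$ gives $\kappa+s_0y_-\ge\kappa+s_0U\ge\kappa_0U$.
\end{itemize}

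\textbf{Identification with $\mathcal R_\kappa$.} On the interior, \Cref{prop:three-filter} gives the interval. On the boundary of $[0,1]^2$, evaluate the formulas directly and compare with the listed boundary values, checking that $\vartheta_-=\vartheta_+$ there.
\begin{itemize}
\item $U=0$: $y_+=0$, so $\vartheta_-=1$, and $\vartheta_+=1$.
\item $U=1$: both equal $0$.
\item $T=1$: $y_+=0$ and $y_-=1$, so $\vartheta_-=1-U=\vartheta_+$.
\item $T=0$: $y_\pm=U$, which gives $\kappa(1-U)/(\kappa+s_0U)$, since $1-\kappa_0U/(\kappa+s_0U)=\kappa(1-U)/(\kappa+s_0U)$.
\end{itemize}
These match the boundary values of \Cref{prop:three-filter}, which proves $\mathcal R_\kappa$ equals the parametrized set.

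\textbf{Compactness and membership.} $\mathcal R_\kappa$ is the image of the compact set $\{(U,T,\vartheta):\vartheta_-\le\vartheta\le\vartheta_+\}$ under a continuous map, so it is compact. $\Omega_\kappa$ is the image of $[0,2]\times\mathcal R_\kappa$ under $(q,U_0,U_1,\vartheta)\mapsto(q,qU_0/\kappa,qU_1/\kappa,q\vartheta)$, and at $q=0$ this image collapses to $Y_-$, so $\Omega_\kappa$ is compact as well. For membership, at points with $q>0$, \Cref{thm:common-measure} and \Cref{prop:three-filter} place $(U_0,U_1,\vartheta)$ in $\mathcal R_\kappa$, with $u_i=qU_i/\kappa$ and $h=q\vartheta$. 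At points with $q=0$, all filters vanish, so $h=0$ and the state is $Y_-$.

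The main obstacle is the algebra for $y_-$ and the boundary cases, where the denominators $\delta U_1$ and $1-U_1$ of \Cref{prop:three-filter} degenerate. The parametrized formulas avoid this, which is the reason for introducing them.
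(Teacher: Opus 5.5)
Your proposal is correct and follows the paper's proof essentially step for step: the identity $V=(1-T)f(U)+TU$, substitution into the formulas of \Cref{prop:three-filter}, clearing positive denominators for the ranges, checking the edges $T=0$, $T=1$, $U=0$, $U=1$ against the listed boundary values, and obtaining $\mathcal R_\kappa$ and $\Omega_\kappa$ as continuous images of compact sets. One small correction: $y_-$ is not the weighted average of $U$ and $1$ with weights $\kappa_0$ and $\delta T(1-U)$, because its numerator is $\kappa_0U+\kappa_1T(1-U)$. It is, however, the weighted average of $U+T(1-U)\in[U,1]$ and $1$ with those weights; equivalently $y_--U=T(1-U)(\kappa_1-\delta U)/(\kappa_0+\delta T(1-U))\ge0$ and $1-y_-=\kappa_0(1-T)(1-U)/(\kappa_0+\delta T(1-U))\ge0$, which gives exactly the bound $U\le y_-\le1$ you use.
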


\begin{proof}[Proof of \Cref{lem:range}]\label{proof:lem:range}
The identity $V=(1-T)f(U)+TU$ maps the square onto the two-filter region;
on its interior, $T\mapsto V$ is an increasing affine bijection from
$(0,1)$ to $(f(U),U)$. Substitution in the formulas of
\Cref{prop:three-filter} gives
\[
 y_+=\frac{\kappa_1(U-V)}{\delta(1-V)},\qquad
 y_-=\frac{\kappa_1V-\kappa_0U}{\delta V}.
\]
These are the displayed formulas after cancellation of common factors.
Their denominators satisfy
$\kappa_1-\delta U,\kappa_1-\delta UT,\kappa_0+\delta T(1-U)\ge\kappa_0$.
The inequalities $0\le y_+\le U\le y_-\le1$ follow by clearing these
positive denominators. Hence $\kappa+s_0y_\pm\ge\kappa$, all functions
are continuous on the square, and $V,y_\pm,\vartheta_\pm\in[0,1]$.

At $T=0$, $y_+=y_-=U$ and
$\vartheta_\pm=\kappa(1-U)/(\kappa+s_0U)$; at $T=1$,
$y_+=0$, $y_-=1$, and $\vartheta_\pm=1-U$. At $U=0,1$ the common
endpoint values are $1,0$, respectively. These agree with all the
boundary cases of \Cref{prop:three-filter}, proving the range assertion
and $\vartheta_-\le\vartheta_+$ everywhere.

The continuous map
$(U,T,\lambda)\mapsto(U,V,(1-\lambda)\vartheta_-+\lambda\vartheta_+)$
maps $[0,1]^3$ onto $\mathcal R_\kappa$, so this set is compact.
Likewise, $\Omega_\kappa$ is the image of $[0,2]\times\mathcal R_\kappa$
under $(q,U_0,U_1,\vartheta)\mapsto(q,qU_0/\kappa,qU_1/\kappa,q\vartheta)$.
For $q>0$, state inclusion follows from \Cref{thm:common-measure,prop:three-filter};
for $q=0$, all filters vanish and the state is $Y_-$.
\end{proof}

\begin{proof}[Proof of \Cref{prop:state-region}]\label{proof:prop:state-region}
\Cref{lem:range} gives continuous $V,\vartheta_\pm$, their bounds,
compactness, and state inclusion. Every normalized triple in
$\mathcal R_\kappa$ is attained by a probability measure by
\Cref{prop:three-filter}; multiplying its coordinates by $q$ gives
the last assertion.
\end{proof}
\section{The calibration and the contact potential}\label{app:calibration}

The calibration begins with an exact defect identity. Its integral is
the one-determinant lower bound; the remaining task is to choose a
potential that makes the calibration defect vanish along a reference
density. The contact theorem does this by proving monotonicity of the
contact coordinate and locating the unique minimum of $F_\varsigma$.
The last two results evaluate the calibration constant and characterize
exactness at a self-consistent reference.

\begin{proof}[Proof of \Cref{lem:base-identity}]\label{proof:lem:base-identity}
{Calibration and root.}
For $0\le h\le q$, write $N=2(q-h)^2-(q^2-2h)g(h)$. If $q^2\le2h$,
then $N\ge0$; otherwise $g(h)\le2-h$ gives
$N\ge2(q-h)^2-(q^2-2h)(2-h)=h(2-q)^2\ge0$. Furthermore,
\[
 F_\varsigma(q,0)=\frac{(2-g(0))q^2}{2+\varsigma g(0)}\le q^2,
 \qquad F_\varsigma(q,q)=\frac{Q(q)g(q)}{2+\varsigma g(q)}\le(2-q)^2.
\]
Thus $0\le\Gamma_\varsigma\le\min\{q^2,(2-q)^2\}\le Q^2$.
Continuity follows by taking the minimum of the continuous function
$F_\varsigma(q,qu)$ over the fixed compact interval $0\le u\le1$.

For $0<q<2$, put $c=2bQ+2\kappa\omega\Gamma_\varsigma>0$.
The function $\kappa\nu^2-3Q-c/\nu$ increases strictly from $-\infty$
to $+\infty$, so the positive root is unique and
$d_\kappa=2Q+c/\nu_\kappa>2Q$. Its strict monotonicity gives joint
continuity for $0<q<2$. Since $Q\le1$ and $\Gamma_\varsigma\le Q^2$,
\[
 \nu_\kappa\le\max\{1,((3+2b+2\kappa\omega)/\kappa)^{1/2}\},
 \qquad \nu_\kappa^3\le C Q
\]
with $C$ uniform on compact $\kappa$-sets. This proves continuity at
$q=0,2$ as well.

{Pointwise identities.}
By \Cref{lem:filter}, $0\le f_\varsigma\le q$, $h_\varsigma=q-f_\varsigma$,
and $h_\varsigma'=q^2+\varsigma f_\varsigma^2-2h_\varsigma$.
Substituting $\mathcal J_\varsigma'=-g/(2+\varsigma g)$ gives
\eqref{eq:calibration-pointwise} and $\mathcal T\ge0$.
Put $\mathcal V(q)=-\kappa^{-1}\int_0^q(b+\tfrac32\nu_\kappa)$.
The cubic equation and \eqref{eq:cubic-loss} yield
\[
 n^3+bn^2+(\mathcal V\circ q)'
 =\rho(n,\nu_\kappa)-\tfrac12\nu_\kappa^3
   +\frac{(3\nu_\kappa+2b)Q}{2\kappa}
 =\rho(n,\nu_\kappa)-\omega\Gamma_\varsigma(q).
\]
Adding $\omega$ times the calibration identity gives
\begin{equation}\label{eq:defect-identity}
 n^3+bn^2+\omega f_\varsigma^2+
 \bigl\{\mathcal V(q)+\omega\mathcal J_\varsigma(h_\varsigma)\bigr\}'
 =\rho(n,\nu_\kappa)+\omega\mathcal T.
\end{equation}

{Integration.}
The derivatives $q'$ and $h_\varsigma'$ are integrable by
\Cref{thm:ground,lem:filter}; $\mathcal V$ and $\mathcal J_\varsigma$
are $C^1$ with bounded derivatives. Their compositions therefore have
integrable derivatives, with endpoint changes
$-\kappa^{-1}(2b+\tfrac32\int_0^2\nu_\kappa)$ and
$\mathcal J_\varsigma(2)$, respectively. Also $n^2,n^3,f_\varsigma^2\in L^1$.
The two nonnegative terms on the right of \eqref{eq:defect-identity}
are consequently integrable. Finally, integration of
$f_\varsigma'=\kappa m-2f_\varsigma-\varsigma f_\varsigma^2$ gives
\eqref{eq:base-observable}. Integrating \eqref{eq:defect-identity}, with
$\int m=1$, proves \eqref{eq:base-deficit}.
\end{proof}

Throughout the proof of \Cref{thm:contact},
$X_2=X_\circ(2)$, $q_\circ$ denotes the ground coordinate of $m_\circ$ (equal
to $X_\circ^{-1}$ on $[0,X_2]$, $0$ to the left and $2$ to the right), and
$\varsigma>0$ is fixed.

\begin{proof}[Proof of \Cref{thm:contact}: the reference density]
The function $b_\circ$ is positive and nondecreasing, so
$k\nu_\circ^3=3Q\nu_\circ+2b_\circ(Q)Q\ge2b_\circ(0)Q$ and, as in the proof of
\Cref{lem:base-identity}, $k\nu_\circ^2>3Q$ and
$d_\circ>\frac23k\nu_\circ^2\ge\frac23k(2b_\circ(0)Q/k)^{2/3}$. Hence
$1/d_\circ\le CQ^{-2/3}$ is integrable on $(0,2)$ and $X_2<\infty$. Since $Q\le1$
and $b_\circ(Q)\le b_\circ(1)$, $k\nu_\circ^3\le3Q\nu_\circ+2b_\circ(1)Q$; as in the proof of
\Cref{lem:base-identity}, this gives
$\nu_\circ\le\nu_{\max}:=\max\{1,((3+2b_\circ(1))/k)^{1/2}\}$ and
$\nu_\circ^3\le(3\nu_{\max}+2b_\circ(1))Q/k$, so $\nu_\circ(q)\to0$ as $q\to0$ or $q\to2$,
and $d_\circ\le k\nu_\circ^2$ is bounded. The function
$(\nu,q)\mapsto k\nu^3-3Q(q)\nu-2b_\circ(Q(q))Q(q)$ is $C^\infty$ on
$(0,\infty)\times(0,2)$, and its $\nu$-derivative at the root is
$3k\nu_\circ^2-3Q=3d_\circ>0$; by the implicit function theorem
$\nu_\circ\in C^\infty(0,2)$, and with the limits at the ends,
$\nu_\circ\in C[0,2]$. \Cref{thm:primal-map}, applied with
$\kappa=k$ and $D=d_\circ$ after the translation $x\mapsto x-X_\circ(1)$, shows
that $m_\circ$ is bounded with support $[0,X_2]$, has ground parameter $k$,
and has ground coordinate $X_\circ^{-1}$ on $[0,X_2]$. Since $\nu_\circ$ depends
on $q$ only through $Q(q)$, $d_\circ(2-q)=d_\circ(q)$, so
$X_\circ(2-q)=X_2-X_\circ(q)$, $q_\circ(X_2-x)=2-q_\circ(x)$ and
$m_\circ(X_2-x)=m_\circ(x)$. Differentiation of $k\nu^3-3Q\nu-2b_\circ(Q)Q=0$
in $Q$ gives
\[
 \frac{\dd\nu_\circ}{\dd Q}=\frac{3\nu_\circ+2b_\circ(Q)+2Qb_\circ'(Q)}{3k\nu_\circ^2-3Q}
 =\frac{3\nu_\circ+2b_\circ+2Qb_\circ'}{3d_\circ}>0 ,
\]
and $Q(q_\circ(x))$ increases strictly on $(0,X_2/2)$, where $q_\circ<1$. Hence
$m_\circ=\nu_\circ(q_\circ)^2$ is continuous on $\R$, zero outside $[0,X_2]$,
strictly increasing on $[0,X_2/2]$ and symmetric about $X_2/2$.

The filter $f_\circ=r_{\varsigma k}/\varsigma$ of $m_\circ$ satisfies
$f_\circ'=km_\circ-2f_\circ-\varsigma f_\circ^2$, $f_\circ=0$ on $(-\infty,0]$, and
$0\le f_\circ\le q_\circ$ by \Cref{lem:filter} with $s=\varsigma k$; it is $C^1$
because $m_\circ$ is continuous, and $f_\circ>0$ on $(0,\infty)$, since
$f_\circ(x)=\int_0^xe^{-\int_t^x(2+\varsigma f_\circ)}km_\circ(t)\dd t$. The reflected
filter $\check f_\circ(x)=f_\circ(X_2-x)$ satisfies
$\check f_\circ'=-km_\circ+2\check f_\circ+\varsigma\check f_\circ^2$ and
$0\le\check f_\circ\le q_\circ(X_2-x)=2-q_\circ$.
\end{proof}

\begin{proof}[Proof of \Cref{thm:contact}: the contact point and $g_\circ$]
{Monotonicity of $h_c$.} On $[0,X_2]$, $q_\circ'=km_\circ-Q(q_\circ)$, so
$h_c=q_\circ-f_\circ$ has the derivative
\[
 h_c'=2f_\circ+\varsigma f_\circ^2-Q(q_\circ),\qquad h_c'(0)=0 .
\]
On $(0,X_2)$ the functions $q_\circ$ and $m_\circ$ are locally Lipschitz, so
$f_\circ'$ and $h_c'$ are locally absolutely continuous, and
\[
 h_c''+(2+2\varsigma f_\circ)h_c'
 =(2+2\varsigma f_\circ)(km_\circ-Q(q_\circ))-Q'(q_\circ)q_\circ'
 =(2q_\circ+2\varsigma f_\circ)q_\circ' ,
\]
using $Q'(q)=2-2q$ and $km_\circ-Q(q_\circ)=q_\circ'$. The function $h_c'$ is
continuous on $[0,X_2]$, and by this equation
$|h_c''|\le(2+2\varsigma\|f_\circ\|_\infty)|h_c'|+(4+2\varsigma\|f_\circ\|_\infty)\|q_\circ'\|_\infty$
almost everywhere on $(0,X_2)$; so $h_c'$ is Lipschitz on $[0,X_2]$, and
variation of constants on $[0,x]$ gives
\[
 h_c'(x)=\int_0^x\exp\Bigl(-\int_t^x(2+2\varsigma f_\circ)\Bigr)
 \bigl(2q_\circ(t)+2\varsigma f_\circ(t)\bigr)q_\circ'(t)\dd t>0
 \qquad(0<x\le X_2),
\]
because $q_\circ>0$ and $q_\circ'=d_\circ(q_\circ)>0$ on $(0,X_2)$. For $x\ge X_2$,
$q_\circ=2$, $m_\circ=0$, $h_c=2-f_\circ$ and $h_c'=2f_\circ+\varsigma f_\circ^2>0$, and
$f_\circ(+\infty)=0$ by \Cref{lem:filter}. Hence $h_c$ is continuous and
strictly increasing on $[0,\infty)$, $h_c(0)=0$, $h_c(x)\to2$ as $x\to\infty$.

{The potential.} Define $g_\circ$ on $[0,2)$ by
$g_\circ(h_c(x))=f_\circ(x)+\check f_\circ(x)$ and put $g_\circ(2)=0$. Since
$f_\circ(x)\to0$ and $\check f_\circ(x)=f_\circ(X_2-x)\to0$ as $x\to\infty$,
$g_\circ$ is continuous on $[0,2]$. The bounds
$\check f_\circ\le2-q_\circ$ and $f_\circ\ge0$ give
$0\le g_\circ(h_c)\le f_\circ+2-q_\circ=2-h_c$. For $x\ge X_2$,
$\check f_\circ(x)=f_\circ(X_2-x)=0$, so $g_\circ(h)=2-h$ for
$h\ge h_c(X_2)=2-f_\circ(X_2)$. On $(0,X_2)$, $g_\circ$ is $C^1$ as a function of
$h$, since $h_c'>0$ there, and
\[
 1+g_\circ'(h_c)=\frac{h_c'+f_\circ'+\check f_\circ'}{h_c'}
 =\frac{2\check f_\circ+\varsigma\check f_\circ^2-Q(q_\circ)}{h_c'},
\]
because $f_\circ'+\check f_\circ'=(\check f_\circ-f_\circ)(2+\varsigma(f_\circ+\check f_\circ))$.
The numerator $\check P:=2\check f_\circ+\varsigma\check f_\circ^2-Q(q_\circ)$ at $x$ equals
$h_c'(X_2-x)>0$, by $\check f_\circ(x)=f_\circ(X_2-x)$ and
$Q(q_\circ(x))=Q(q_\circ(X_2-x))$. Hence $h+g_\circ(h)$ is nondecreasing on $[0,2]$:
strictly increasing on $[0,h_c(X_2)]$ and constant $2$ on $[h_c(X_2),2]$.

{Comparison of the two filters.} Let $m_1,m_2\ge0$ be bounded on
$(-\infty,x]$, vanish near $-\infty$, and let $f_1,f_2$ solve
$f_i'=km_i-2f_i-\varsigma f_i^2$ with $f_i(-\infty)=0$. Then
$\delta=f_2-f_1$ satisfies $\delta'=k(m_2-m_1)-c\delta$ with
$c=2+\varsigma(f_1+f_2)\ge2$, and, since $\delta$ is bounded,
\[
 \delta(x)=k\int_{-\infty}^x\exp\Bigl(-\int_t^xc\Bigr)\bigl(m_2(t)-m_1(t)\bigr)\dd t .
\]
Fix $0<x<X_2/2$, and take $m_1=m_\circ$ and $m_2(\tau)=m_\circ(2x-\tau)$. By
translation invariance and the symmetry of $m_\circ$, the filter of $m_2$
at $x$ is $f_\circ$ at $X_2-x$, that is, $f_2(x)=\check f_\circ(x)$, while
$f_1(x)=f_\circ(x)$. For $\tau<x$ write $\tau=x-t$ with $t>0$; then
$m_2(\tau)-m_1(\tau)=m_\circ(x+t)-m_\circ(x-t)$. If $x+t\le X_2/2$, this is
nonnegative by monotonicity; if $x+t>X_2/2$, then
$m_\circ(x+t)=m_\circ(X_2-x-t)$ with $x-t<X_2-x-t<X_2/2$, so it is again
nonnegative. For $0<t<\min\{x,X_2/2-x\}$ it is positive. Hence
$\check f_\circ(x)>f_\circ(x)$ for $0<x<X_2/2$, and by reflection
$\check f_\circ(x)<f_\circ(x)$ for $X_2/2<x<X_2$.
\end{proof}

\begin{proof}[Proof of \Cref{thm:contact}: the minimum]
\label{proof:thm:contact}
{The derivative.} Put $A=q-h$ and $B=q^2-2h$, so that
$F_\varsigma=(2A^2-Bg_\circ(h))/(2+\varsigma g_\circ(h))$. Where $g_\circ$ is
differentiable,
\[
 \partial_hF_\varsigma
 =\frac{(-4A+2g_\circ-Bg_\circ')(2+\varsigma g_\circ)-\varsigma g_\circ'(2A^2-Bg_\circ)}{(2+\varsigma g_\circ)^2}
 =\frac{2\bigl[(g_\circ-2A)(2+\varsigma g_\circ)-g_\circ'(B+\varsigma A^2)\bigr]}{(2+\varsigma g_\circ)^2}.
\]
Let $h=h_c(x)$, $\xi=q_\circ(x)$, $f=f_\circ(x)$ and
$\check f=\check f_\circ(x)$ for $0<x<X_2$. Write
$\Delta_g=2+\varsigma(f+\check f)$ and
$\Xi=(2-\xi)f+\xi\check f+\varsigma f\check f-Q(\xi)$.
Use $h=\xi-f$, $g_\circ=f+\check f$,
$h_c'g_\circ'=(\check f-f)\Delta_g$, and
\[
 B+\varsigma A^2=h_c'+2(q-\xi)(\xi+\varsigma f)
 +(1+\varsigma)(q-\xi)^2.
\]
Substitution and collection of the factor $q-\xi$ give
\begin{equation}\label{eq:contact-sign}
 \partial_hF_\varsigma(q,h_c(x))
 =\frac{2(q-\xi)}{\Delta_gh_c'}
 \bigl[(1+\varsigma)(f-\check f)(q-\xi)-2\Xi\bigr].
\end{equation}

{Positivity of $\Xi$.} Let $\epsilon_\xi=Q(\xi)/(1+\sqrt{1+\varsigma Q(\xi)})$,
the positive solution of $2\epsilon+\varsigma\epsilon^2=Q(\xi)$. The function
$y\mapsto2y+\varsigma y^2$ is increasing on $[0,\infty)$, and
$2f+\varsigma f^2-Q(\xi)=h_c'>0$, $2\check f+\varsigma\check f^2-Q(\xi)=\check P>0$;
hence $f=\epsilon_\xi+\alpha_1$, $\check f=\epsilon_\xi+\alpha_2$ with
$\alpha_1,\alpha_2>0$. Substituting and using $2\epsilon_\xi+\varsigma\epsilon_\xi^2=Q(\xi)$,
\[
 \Xi=(2-\xi)\alpha_1+\xi\alpha_2+\varsigma\epsilon_\xi(\alpha_1+\alpha_2)+\varsigma\alpha_1\alpha_2>0 .
\]

{The case $0<q\le1$ on $[0,q]$.} Let $h=h_c(x)\in(0,q]$ with
$0<x<X_2$. If $\xi=q$, \eqref{eq:contact-sign} gives $\partial_hF_\varsigma=0$. If $\xi<q$, then $\xi<1$, so $x<X_2/2$ and $f<\check f$; the
bracket in \eqref{eq:contact-sign} is negative and $q-\xi>0$, so
$\partial_hF_\varsigma<0$. If $\xi\ge1$, then $f\ge\check f$ and $q-\xi\le0$,
so the bracket is at most $-2\Xi<0$ and $\partial_hF_\varsigma\ge0$, with
equality only if $\xi=q$. If $q<\xi<1$, then $\xi-q\le\xi-h=f$ because
$h\le q$, and $\check f>f$; the bracket equals
$(1+\varsigma)(\xi-q)(\check f-f)-2\Xi\le(1+\varsigma)f(\check f-f)-2\Xi$, and
\[
 2\Xi-(1+\varsigma)f(\check f-f)
 =\alpha_1\bigl[4-2\xi+(3\varsigma+1)\epsilon_\xi\bigr]
 +\alpha_2\bigl[2\xi+(\varsigma-1)f\bigr]+(\varsigma+1)\alpha_1^2>0 .
\]
Here $2\xi+(\varsigma-1)f\ge\xi+\varsigma f>0$ by $f\le\xi$; so $\partial_hF_\varsigma>0$.
On $[h_c(X_2),2]$, where $g_\circ(h)=2-h$, the numerator of $F_\varsigma$ is
$h(2-q)^2$ and $\partial_hF_\varsigma=(2+2\varsigma)(2-q)^2/(2+\varsigma(2-h))^2>0$. The
function $F_\varsigma(q,\cdot)$ is continuous on $[0,q]$ and continuously
differentiable away from the two points $0$ and $h_c(X_2)$; it is
therefore strictly decreasing on $[0,h^*]$ and strictly increasing on
$[h^*,q]$, where $h^*=h_c(x^*)$, $q_\circ(x^*)=q$. Here $h^*=q-f_\circ(x^*)<q$.

{Points $h>q$.} Let $0<q\le1$, $0<a\le2-q$, $G=g_\circ(q)$ and
$g=g_\circ(q+a)$. Since $h+g_\circ(h)$ is nondecreasing, $g\ge G-a$, and
$g\ge0$. The identity
$2g_\circ(h)+\varsigma g_\circ(h)^2-Q(h)=\check P+2(2-\xi)f+(1+\varsigma)f^2+2\varsigma f\check f$
at $h=h_c(x)$, $0\le x\le X_2$,
and the identity
$2(2-h)+\varsigma(2-h)^2-Q(h)=(1+\varsigma)(2-h)^2$ on the tail give
$2G+\varsigma G^2\ge Q(q)$. Now $q-(q+a)=-a$ and $q^2-2(q+a)=-Q-2a$ with
$Q=Q(q)$, so
\[
 F_\varsigma(q,q+a)=\varphi(g),\qquad \varphi(\gamma)=\frac{2a^2+(Q+2a)\gamma}{2+\varsigma\gamma},
 \qquad F_\varsigma(q,q)=\frac{QG}{2+\varsigma G}.
\]
If $g\ge G$, then
$(2+\varsigma G)(2+\varsigma g)[\varphi(g)-F_\varsigma(q,q)]=2a^2(2+\varsigma G)+2ag(2+\varsigma G)+2Q(g-G)>0$.
If $g<G$, note $\varphi'(\gamma)=2(Q+2a-\varsigma a^2)/(2+\varsigma\gamma)^2$. When
$Q+2a-\varsigma a^2<0$, $\varphi(g)>\varphi(G)>F_\varsigma(q,q)$ by the previous case
with $g$ replaced by $G$. When $Q+2a-\varsigma a^2\ge0$, $\varphi$ is
nondecreasing; for $a\le G$,
\[
 \varphi(g)-F_\varsigma(q,q)\ge\varphi(G-a)-\frac{QG}{2+\varsigma G}
 =\frac{2a(2G+\varsigma G^2-Q)}{(2+\varsigma G)(2+\varsigma(G-a))}\ge0,
\]
while for $a>G$, $\varphi(g)\ge\varphi(0)=a^2>G^2\ge QG/(2+\varsigma G)$, the last
inequality by $2G+\varsigma G^2\ge Q$. In all cases
$F_\varsigma(q,q+a)\ge F_\varsigma(q,q)>F_\varsigma(q,h^*)$. Hence $h^*$ is the unique
minimizer of $F_\varsigma(q,\cdot)$ on $[0,2]$ for $0<q\le1$.

{The case $1<q<2$.} On $[0,h_c(X_2)]$ put
$\mathcal U(h)=2-h-g_\circ(h)$. For $0\le x\le X_2$,
\[
 \mathcal U(h_c(x))=2-(q_\circ-f_\circ)-(f_\circ+\check f_\circ)
 =2-q_\circ(x)-f_\circ(X_2-x)=h_c(X_2-x),
\]
so $\mathcal U$ is an involution of $[0,h_c(X_2)]$, and
$g_\circ(\mathcal U(h_c(x)))=f_\circ(X_2-x)+\check f_\circ(X_2-x)=g_\circ(h_c(x))$. For $h\in[0,h_c(X_2)]$, substitution of
$\mathcal U(h)=2-h-g_\circ(h)$ and $g_\circ(\mathcal U(h))=g_\circ(h)$ gives
$F_\varsigma(q,h)=F_\varsigma(2-q,\mathcal U(h))$.
Since $F_\varsigma(q,\cdot)$ increases strictly on $[h_c(X_2),2]$ for every $q$,
every minimizer of $F_\varsigma(q,\cdot)$ on $[0,2]$ lies in $[0,h_c(X_2)]$, and
\[
 \min_{[0,2]}F_\varsigma(q,\cdot)=\min_{[0,h_c(X_2)]}F_\varsigma(2-q,\mathcal U(\cdot))
 =\min_{[0,2]}F_\varsigma(2-q,\cdot).
\]
For $1<q<2$, $0<2-q<1$, so the unique minimizer of $F_\varsigma(2-q,\cdot)$ is
$h_c(X_\circ(2-q))=h_c(X_2-X_\circ(q))$, and the unique minimizer of
$F_\varsigma(q,\cdot)$ is $\mathcal U(h_c(X_2-X_\circ(q)))=h_c(X_\circ(q))$.

{The value.} At the minimizer $h=q-f$, $f=f_\circ(x)$, $\check f=\check f_\circ(x)$,
$q_\circ(x)=q$: $A=f$, $B=2f-Q(q)$, and
$2f^2-(2f-Q)(f+\check f)=Q(f+\check f)-2f\check f$. Since $h=q-f\in[0,q]$ minimizes
$F_\varsigma(q,\cdot)$ on $[0,2]\supset[0,q]$,
$\Gamma_\varsigma(q)=\min_{[0,2]}F_\varsigma(q,\cdot)=F_\varsigma(q,q-f)$, which gives
\eqref{eq:contact}; with the equality of the minima over $[0,2]$ for $q$ and
$2-q$ proved above, this gives $\Gamma_\varsigma(2-q)=\Gamma_\varsigma(q)$.
\end{proof}

\begin{lemma}[The calibration constant]\label{lem:J-value}
In the setting of \Cref{thm:contact}, with $g=g_\circ$, $X_2=X_\circ(2)$ and
$f_{\rm end}=f_\circ(X_2)$,
\[
 \mathcal J_\varsigma(2)=\int_0^{X_2/2}\Bigl(2\Gamma_\varsigma(q_\circ)-f_\circ^2-\check f_\circ^2\Bigr)\dd x
 -\frac{\varsigma f_{\rm end}-2\log(1+\varsigma f_{\rm end}/2)}{\varsigma^2} .
\]
\end{lemma}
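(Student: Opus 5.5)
The plan is to compute $\mathcal J_\varsigma(2)=-\int_0^2 g_\circ/(2+\varsigma g_\circ)\dd h$ by the substitution $h=h_c(x)$, $x\in[0,\infty)$. This is legitimate by \Cref{thm:contact}: $h_c$ is continuous and strictly increasing from $h_c(0)=0$ to $h_c(+\infty)=2$, and $h_c'$ is continuous. The integrand $g_\circ/(2+\varsigma g_\circ)$ is continuous and bounded on $[0,2]$. The substitution holds on each $[0,R]$, and I will let $R\to\infty$ using monotone convergence in $h$. This gives
\[
 \mathcal J_\varsigma(2)=-\int_0^\infty\frac{(f_\circ+\check f_\circ)\,h_c'}{2+\varsigma(f_\circ+\check f_\circ)}\dd x .
\]

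On $[0,X_2]$ I insert $h_c'=2f_\circ+\varsigma f_\circ^2-Q(q_\circ)$ and write $\Delta_g=2+\varsigma(f_\circ+\check f_\circ)$. The key algebraic identity is
\[
 (f_\circ+\check f_\circ)(2f_\circ+\varsigma f_\circ^2)=f_\circ^2\Delta_g+2f_\circ\check f_\circ .
\]
With it the integrand becomes $\bigl[Q(f_\circ+\check f_\circ)-2f_\circ\check f_\circ\bigr]/\Delta_g-f_\circ^2$. By \eqref{eq:contact} this equals $\Gamma_\varsigma(q_\circ)-f_\circ^2$.

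Next I split $[0,X_2]$ at $X_2/2$ and reflect the right half by $x\mapsto X_2-x$. The symmetry $q_\circ(X_2-x)=2-q_\circ(x)$ together with $\Gamma_\varsigma(2-q)=\Gamma_\varsigma(q)$ from \Cref{thm:contact} shows that $\Gamma_\varsigma(q_\circ)$ contributes the same amount on both halves. The definition $\check f_\circ(x)=f_\circ(X_2-x)$ turns $\int_{X_2/2}^{X_2}f_\circ^2$ into $\int_0^{X_2/2}\check f_\circ^2$. Together these give the first integral in the lemma.

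On the tail $x\ge X_2$ we have $\check f_\circ=0$, $h_c=2-f_\circ$, $g_\circ=f_\circ$ and $h_c'=2f_\circ+\varsigma f_\circ^2$. The integrand therefore reduces to $-f_\circ^2$. There $f_\circ'=-2f_\circ-\varsigma f_\circ^2<0$, with $f_\circ$ decreasing from $f_{\rm end}$ to $0$. Substituting $f=f_\circ(x)$ gives
\[
 \int_{X_2}^\infty f_\circ^2\dd x=\int_0^{f_{\rm end}}\frac{f\dd f}{2+\varsigma f}=\frac{\varsigma f_{\rm end}-2\log(1+\varsigma f_{\rm end}/2)}{\varsigma^2},
\]
which is the second term.

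The only delicate point is justifying the improper substitution. I expect this to be easy: the integrands are integrable because $f_\circ\in L^2$, and $h_c$ is a $C^1$ increasing homeomorphism of $[0,\infty)$ onto $[0,2)$. The rest is the algebraic identity above.
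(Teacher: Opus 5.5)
Your proof is correct and is essentially the paper's argument. On $[0,X_2]$ the paper gets $(\mathcal J_\varsigma(h_c))'=\Gamma_\varsigma(q_\circ)-f_\circ^2$ by quoting \eqref{eq:calibration-pointwise} together with the minimality of $h_c$, which your chain-rule computation with \eqref{eq:contact} re-derives, and then uses the same reflection; for the tail it stays in the $h$-variable on $[h_c(X_2),2]$, where $g_\circ(h)=2-h$ and $w=2-h$ is your $f=f_\circ(x)$, so it avoids the improper substitution in $x$ altogether.
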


\begin{proof}\label{proof:lem:J-value}
For $0<x<X_2$, $h_c(x)$ is the minimizer of $F_\varsigma(q_\circ(x),\cdot)$, so
\eqref{eq:calibration-pointwise}, which is an algebraic identity in
$(q,h_\varsigma,f_\varsigma)$ given the two state equations and holds along $m_\circ$
with $(q,f_\varsigma,h_\varsigma)=(q_\circ,f_\circ,h_c)$, gives
$(\mathcal J_\varsigma(h_c))'=\Gamma_\varsigma(q_\circ)-f_\circ^2$. Integration over $[0,X_2]$, with
$h_c(0)=0$ and $\mathcal J_\varsigma(0)=0$, gives
$\mathcal J_\varsigma(h_c(X_2))=\int_0^{X_2}(\Gamma_\varsigma(q_\circ)-f_\circ^2)\dd x$. By the symmetry
$q_\circ(X_2-x)=2-q_\circ(x)$, $\Gamma_\varsigma(q_\circ(X_2-x))=\Gamma_\varsigma(q_\circ(x))$, and
$\int_{X_2/2}^{X_2}f_\circ^2=\int_0^{X_2/2}\check f_\circ^2$, so the integral equals
the first term of the claim. On $[h_c(X_2),2]=[2-f_{\rm end},2]$,
$g_\circ(h)=2-h$, and the substitution $w=2-h$ gives
\[
 \mathcal J_\varsigma(2)-\mathcal J_\varsigma(h_c(X_2))=-\int_0^{f_{\rm end}}\frac{w\dd w}{2+\varsigma w}
 =-\Bigl[\frac w\varsigma-\frac2{\varsigma^2}\log\Bigl(1+\frac{\varsigma w}2\Bigr)\Bigr]_0^{f_{\rm end}} .
\qedhere
\]
\end{proof}

\begin{proof}[Proof of \Cref{prop:exact-reference}]\label{proof:prop:exact-reference}
The condition \eqref{eq:self-consistent} makes the defining cubics for
$\nu_\circ$ and $\nu_k$ identical. Uniqueness of the positive root and
continuity at $0,2$ give $\nu_\circ=\nu_k$, hence $d_\circ=d_k$ and
$X_\circ(q)=X_k(q)+X_\circ(1)$. This proves the stated translation.

Suppose $\int m_\circ=1$. Then $m_\circ\in\mathcal M^1_c$ and
$\kappa(m_\circ)=k$ by \Cref{thm:contact}. Along its support,
$n=\nu_k(q_\circ)$ and $h_\varsigma=h_c$ minimizes $F_\varsigma(q_\circ,\cdot)$;
therefore both defects in \eqref{eq:base-deficit} vanish. They also
vanish outside: on the left $(q,f,h)=(0,0,0)$; on the right $q=2$,
$n=\nu_k(2)=0$ and $g_\circ(h)=2-h$, which gives
$F_\varsigma(2,h)=0=\Gamma_\varsigma(2)$. Thus
$E(m_\circ)=\mathcal B_k(m_\circ)$.

For every density in the stated class, \eqref{eq:base-deficit} gives
$E(m)\ge\mathcal B_k(m)$. By \eqref{eq:base-value} and \eqref{eq:base-observable},
the prescribed ground parameter and determinant force
$\mathcal B_k(m)=\mathcal B_k(m_\circ)=E(m_\circ)$. The reference itself
belongs to the class and attains this value.
\end{proof}
\section{Comparisons, integration and transfer}\label{app:integration}

The comparison identities are obtained by differentiating explicit
potentials of $(q,r)$. We prove them for a class of comparison profiles
that also includes the primal family. After establishing continuity of
the comparison filters, we integrate the certificate residual, eliminate
the amplitude, and transfer strict admissibility from the target ground
parameter to the packed densities.

\subsection{The comparison density and the comparison identities}\label{app:comparison}

\begin{lemma}[Regularity of the comparison density]\label{lem:local-profile}
The comparison density $\bar m_\kappa$ is bounded, compactly supported,
has ground parameter $\kappa$ and ground coordinate $q$ at $X_\kappa(q)$.
On compact $\kappa$-sets, uniformly in $0<q<2$,
\[
 d_\kappa^{-1}\le CQ^{-2/3},\qquad
 \frac{\nu_\kappa^2}{d_\kappa}\le\frac3{2\kappa},\qquad
 \frac Q{d_\kappa\nu_\kappa}\le\frac1{2b},\qquad
 \nu_\kappa^3\le CQ ,
\]
$\nu_\kappa(q)$ is jointly continuous in $(\kappa,q)\in(0,\infty)\times[0,2]$,
and $\kappa\mapsto d_\kappa^{-1}$ is continuous into $L^1(0,2)$. If
$\Gamma_\varsigma(2-q)=\Gamma_\varsigma(q)$, then $\bar m_\kappa$ is symmetric about $0$.
\end{lemma}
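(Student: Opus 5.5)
The plan is to read everything off the cubic \eqref{eq:base-root} and then invoke \Cref{thm:primal-map} with $D=d_\kappa$. For $0<q<2$ put $c=2bQ+2\kappa\omega\Gamma_\varsigma$. As in the proof of \Cref{lem:base-identity}, the root satisfies $d_\kappa=2Q+c/\nu_\kappa$. In particular $d_\kappa>2Q$, and rewriting the cubic as $\kappa\nu_\kappa^2=3Q+c/\nu_\kappa$ gives $d_\kappa=\kappa\nu_\kappa^2-Q\ge\frac23\kappa\nu_\kappa^2$. This is exactly $\nu_\kappa^2/d_\kappa\le 3/(2\kappa)$.

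The remaining pointwise bounds follow from the same two identities. Since $c\ge2bQ$, we get $d_\kappa\nu_\kappa\ge c\ge2bQ$, which is $Q/(d_\kappa\nu_\kappa)\le1/(2b)$. The bound $\nu_\kappa^3\le CQ$ was already obtained in \Cref{lem:base-identity} from $Q\le1$ and $\Gamma_\varsigma\le Q^2$, with $C$ uniform on compact $\kappa$-sets. For the lower bound on $d_\kappa$, the cubic gives $\kappa\nu_\kappa^3\ge2bQ$, so $\nu_\kappa\ge(2bQ/\kappa)^{1/3}$. Hence $d_\kappa\ge\frac23\kappa\nu_\kappa^2\ge cQ^{2/3}$, uniformly for $\kappa$ in a compact subset of $(0,\infty)$. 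Joint continuity of $\nu_\kappa(q)$ is part of \Cref{lem:base-identity}.

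Next come the properties of $\bar m_\kappa$. From $d_\kappa\le\kappa\nu_\kappa^2$ and the boundedness of $\nu_\kappa$, $d_\kappa$ is bounded. From $d_\kappa^{-1}\le CQ^{-2/3}$, $1/d_\kappa\in L^1(0,2)$, so $X_\kappa(0)$ and $X_\kappa(2)$ are finite. Now apply \Cref{thm:primal-map} with $D=d_\kappa$. It gives amplitude $(d_\kappa+Q)/\kappa=\nu_\kappa^2$, so the resulting density is $\bar m_\kappa$. That theorem then yields boundedness, compact support $[X_\kappa(0),X_\kappa(2)]$, ground parameter $\kappa$, and ground coordinate $X_\kappa^{-1}$, i.e.\ $q$ at $X_\kappa(q)$.

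For the symmetry, suppose $\Gamma_\varsigma(2-q)=\Gamma_\varsigma(q)$. The cubic depends on $q$ only through $Q$ and $\Gamma_\varsigma$, so $\nu_\kappa(2-q)=\nu_\kappa(q)$ and $d_\kappa(2-q)=d_\kappa(q)$. Therefore $X_\kappa(2-q)=-X_\kappa(q)$, since the base point is $q=1$, and $\bar m_\kappa(-x)=\bar m_\kappa(x)$.

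The main obstacle is the continuity of $\kappa\mapsto d_\kappa^{-1}$ in $L^1(0,2)$. I would argue by dominated convergence, in two steps.
\begin{enumerate}
\item Pointwise convergence: joint continuity of $\nu_\kappa$ gives continuity of $d_\kappa=\kappa\nu_\kappa^2-Q$ in $\kappa$ for each fixed $q\in(0,2)$. Since $d_\kappa>2Q>0$ there, $d_\kappa^{-1}(q)$ is continuous in $\kappa$.
\item Domination: on a compact $\kappa$-neighbourhood, $d_\kappa^{-1}\le CQ^{-2/3}$ with one constant $C$. This requires checking that the constants $b$, $\omega$ entering the lower bound $d_\kappa\ge\frac23\kappa(2bQ/\kappa)^{2/3}$ are fixed data of the calibration and that only $\kappa$ varies, which holds.
\end{enumerate}
Since $Q^{-2/3}$ is integrable on $(0,2)$, dominated convergence gives the $L^1$ continuity.
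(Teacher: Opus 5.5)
Your proposal is correct and follows essentially the same route as the paper: the cubic \eqref{eq:base-root} yields $\kappa\nu_\kappa^2\ge3Q$ and $d_\kappa\nu_\kappa\ge2bQ$, hence all four pointwise bounds. \Cref{thm:primal-map} with $D=d_\kappa$ then gives the properties of $\bar m_\kappa$, and dominated convergence with the $\kappa$-uniform majorant $CQ^{-2/3}$ gives the $L^1$ continuity, exactly as in the paper.
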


\begin{proof}\label{proof:lem:local-profile}
Write $\nu=\nu_\kappa(q)$, $d=d_\kappa(q)$. From \eqref{eq:base-root},
$\kappa\nu^3\ge2bQ$, $\kappa\nu^2>3Q$, and
$d\nu=\kappa\nu^3-Q\nu=2Q\nu+2bQ+2\kappa\omega\Gamma_\varsigma\ge2bQ$. Hence
$d>\kappa\nu^2-\frac13\kappa\nu^2=\frac23\kappa\nu^2$, which gives $\nu^2/d<3/(2\kappa)$
and $d^{-1}<\frac3{2\kappa}\nu^{-2}\le\frac3{2\kappa}(\kappa/(2bQ))^{2/3}$, and
$Q/(d\nu)\le1/(2b)$. The bound $\nu^3\le CQ$ and the joint continuity were
proved with \Cref{lem:base-identity}. Since $Q^{-2/3}$ is integrable on
$(0,2)$, $1/d_\kappa\in L^1$, $d_\kappa\le\kappa\nu_\kappa^2$ is bounded, and
\Cref{thm:primal-map} with $D=d_\kappa$, for which $(D+Q)/\kappa=\nu_\kappa^2$,
gives the first assertion. For $\kappa'\to\kappa$, $1/d_{\kappa'}\to1/d_\kappa$
pointwise on $(0,2)$, with the common majorant $CQ^{-2/3}$ on a compact
neighborhood of $\kappa$; dominated convergence gives the $L^1$ continuity.
If $\Gamma_\varsigma$ is symmetric, $\nu_\kappa(2-q)=\nu_\kappa(q)$ and $d_\kappa(2-q)=d_\kappa(q)$,
so $X_\kappa(2-q)=-X_\kappa(q)$ and $\bar m_\kappa(-x)=\bar m_\kappa(x)$.
\end{proof}

\begin{definition}[Comparison profiles]\label{def:comparison-profile}
A {comparison profile} at $\kappa>0$ is a continuous
$\nu:[0,2]\to[0,\infty)$, positive on $(0,2)$, such that
$d=\kappa\nu^2-Q$ is positive on $(0,2)$, $1/d\in L^1(0,2)$, and, for a
constant $C_\nu$,
\[
 \frac{\nu^2}d+\frac Q{d\nu}+\frac{\nu^3}Q\le C_\nu\qquad\text{on }(0,2).
\]
Its density $\bar m$ is the density of \Cref{thm:primal-map} with $D=d$,
that is, $\bar m(X(q))=\nu(q)^2$, $X(q)=\int_1^qd^{-1}$. Its left filters are
those of \Cref{lem:filter}, and its {right filters}, as functions of
$q$, are $R_s(q)=\check r_s(-X(q))$ and $R_\circ(q)=\check r_\circ(-X(q))$, where
$\check r_s,\check r_\circ$ are the left filters of $\check m(x)=\bar m(-x)$.
\end{definition}

By \Cref{lem:local-profile}, $\nu_\kappa$ is a comparison profile at $\kappa$,
and its right filters are the functions $R_s,R_\circ$ of
\Cref{sec:comparisons}, by the next lemma.

\begin{lemma}[Right filters]\label{lem:right-filters}
Let $\nu$ be a comparison profile at $\kappa$ with density $\bar m$. Then
$\bar m$ has ground parameter $\kappa$, and $R_s,R_\circ$ are absolutely
continuous on $[0,2]$ and satisfy
\[
 d\frac{\dd R_s}{\dd q}=-s\nu^2+2R_s+R_s^2,\qquad
 d\frac{\dd R_\circ}{\dd q}=-\kappa\nu^2+2R_\circ,\qquad R_s(2)=R_\circ(2)=0,
\]
\[
 0\le R_s\le\frac s\kappa(2-q),\qquad 0\le R_\circ\le2-q,\qquad
 R_\circ(q)=\kappa\int_{X(q)}^\infty e^{-2(y-X(q))}\bar m(y)\dd y .
\]
Let $\bar v_s$ solve $-\bar v''+(1+s\bar m)\bar v=0$ with $\bar v_s=e^{-x}$ for
$x\ge X(2)$. Then $R_s(q)=-\bar v_s'/\bar v_s-1$ at $x=X(q)$, and for
$x\le X(0)$, $\bar v_s(x)=c_s^-e^{-x}+c_s^+e^x$ with $c_s^-=\exp H_{\bar m}(s)$.
If $\bar m$ is symmetric about $0$, then, with $\bar r_s,\bar r_\circ$ the left filters
of $\bar m$ (functions of $x$), $R_s(q)=\bar r_s(X(2-q))$ and
$R_\circ(q)=\bar r_\circ(X(2-q))$.
\end{lemma}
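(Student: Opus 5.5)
The plan is to work entirely with the reflected density $\check m(x)=\bar m(-x)$ and transport its left-filter theory (\Cref{lem:filter}) back through the coordinate $q$. First I apply \Cref{thm:primal-map} with $D=d$: since $d$ is positive, bounded (because $d\le\kappa\nu^2$ with $\nu$ continuous) and $1/d\in L^1$, the density $\bar m$ is bounded and compactly supported, has ground parameter $\kappa$, and its ground coordinate equals $X^{-1}$ on $[X(0),X(2)]$. Reflection preserves $\kappa$, because $K_{\check m}$ is unitarily equivalent to $K_{\bar m}$ via $f\mapsto f(-\cdot)$, and also preserves the determinants $H$. Along $x=X(q)$ we have $\dd x/\dd q=1/d$ and $\bar m(X(q))=\nu(q)^2$. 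Writing $R_s(q)=\check r_s(-X(q))$, the chain rule gives $\dd R_s/\dd q=-\check r_s'(-X(q))/d$. Substituting the Riccati equation $\check r_s'=s\check m-2\check r_s-\check r_s^2$ and using $\check m(-X(q))=\nu^2$ yields $d\,\dd R_s/\dd q=-s\nu^2+2R_s+R_s^2$. The same computation with $r_\circ'=\kappa m-2r_\circ$ gives the equation for $R_\circ$. Absolute continuity on $[0,2]$ follows because $\check r_s$ is Lipschitz ($\check m$ is bounded) and $X$ is absolutely continuous with integrable derivative $1/d$. Finally, $R_s(2)=R_\circ(2)=0$ because $-X(2)$ lies left of the support of $\check m$, where the filters vanish.

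For the bounds, \Cref{lem:filter} gives $0\le\check r_s\le(s/\kappa)\check r_\circ\le(s/\kappa)\check q$ and $0\le\check r_\circ\le\check q$, where $\check q$ is the ground coordinate of $\check m$. The step that needs care is identifying $\check q(-x)=2-q(x)$. The ground state of $\check m$ is $u(-x)$, so $\check q(-x)=1-\check u'(-x)/\check u(-x)=1+u'(x)/u(x)=2-q(x)$, using uniqueness from \Cref{thm:ground}. The integral formula for $R_\circ$ comes from substituting $y\mapsto -y$ in $\check r_\circ(x)=\kappa\int_{-\infty}^xe^{-2(x-y)}\check m(y)\dd y$.

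For the Jost statement, I take $\check V_s$ from \Cref{lem:filter}(ii) for $\check m$. Then $\check y=e^x\check V_s$ solves $-\check y''+(1+s\check m)\check y=0$, equals $e^x$ left of the support of $\check m$, and satisfies $\check r_s=\check y'/\check y-1$. Setting $\bar v_s(x)=\check y(-x)$, this function solves the equation for $\bar m$, equals $e^{-x}$ for $x\ge X(2)$, and gives $R_s(q)=\check r_s(-X(q))=-\bar v_s'/\bar v_s-1$ at $X(q)$. To the right of the support of $\check m$, i.e.\ for $x\le X(0)$ in the original variable, the remark after \Cref{lem:filter} gives $\check y=e^{H_{\check m}(s)}e^x+\beta e^{-x}$. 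Reflecting yields $\bar v_s=c_s^-e^{-x}+c_s^+e^x$ with $c_s^-=\exp H_{\check m}(s)=\exp H_{\bar m}(s)$.

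In the symmetric case $\check m=\bar m$, so $\check r_s=\bar r_s$. Symmetry also forces $X(2-q)=-X(q)$, since the ground coordinate satisfies $q(-x)=2-q(x)$ by the identity above. Hence $R_s(q)=\bar r_s(X(2-q))$, and likewise for $R_\circ$. I expect no real obstacle: the only delicate points are the absolute continuity in $q$ near the endpoints, which is handled by the integrability of $1/d$, and the reflection identity for the ground coordinate.
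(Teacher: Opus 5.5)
Your proposal is correct and follows essentially the same route as the paper: apply \Cref{thm:primal-map} with $D=d$, pass to the reflected density $\check m$, identify its ground coordinate as $2-q(-x)$ through the uniqueness statement of \Cref{thm:ground}, and transport the filter equations, bounds, convolution formula and Jost solution $e^x\check V_s$ back through $x=X(q)$. Your derivation of $X(2-q)=-X(q)$ in the symmetric case from $q(-x)=2-q(x)$ is a slightly more explicit version of a step the paper asserts directly.
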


\begin{proof}\label{proof:lem:right-filters}
The function $\nu$ is bounded and $d\le\kappa\nu^2$, so \Cref{thm:primal-map}
applies: $\bar m$ is bounded, supported in $[X(0),X(2)]$, has ground
parameter $\kappa$ and ground coordinate $q(x)=X^{-1}(x)$ there. The reflection
$(Rf)(x)=f(-x)$ is unitary on $L^2(\R)$ and $K_{\check m}=RK_{\bar m}R$, so
$\kappa(\check m)=\kappa$. With $u$ the ground state of $\bar m$, the function
$\check u(x)=u(-x)$ is positive, lies in $H^1\cap C^1$, has a locally absolutely
continuous derivative and solves $-\check u''+\check u=\kappa\check m\check u$ almost
everywhere; by the uniqueness statement of \Cref{thm:ground}, the ground
coordinate of $\check m$ is $\check q(x)=1-\check u'(x)/\check u(x)=1+u'(-x)/u(-x)=2-q(-x)$.
\Cref{lem:filter}, applied to $\check m$, gives $0\le\check r_s\le\frac s\kappa\check q$
and $0\le\check r_\circ\le\check q$, that is, the two bounds. The filter
equations $\check r_s'=s\check m-2\check r_s-\check r_s^2$ evaluated at
$-x$, with $\check m(-x)=\bar m(x)=\nu(q(x))^2$ and $\frac{\dd}{\dd x}=d(q)\frac{\dd}{\dd q}$
on $(X(0),X(2))$, give the differential equations; the composition of the
Lipschitz function $\check r_s$ (its derivative is bounded because $\check m$
is) with the absolutely continuous function $-X$ is absolutely continuous
on $[0,2]$. The filters of $\check m$ vanish to the
left of its support $[-X(2),-X(0)]$, which gives $R_s(2)=R_\circ(2)=0$; the
convolution formula for $\check r_\circ$ gives the formula for $R_\circ$.

The function $Y=e^x\check V_s$, with $\check V_s$ the function $V_s$ of \Cref{lem:filter}(ii) for
$\check m$, solves $-Y''+(1+s\check m)Y=0$ and equals $e^x$ to the left of
the support of $\check m$; so $\bar v_s(x)=Y(-x)$, and
$-\bar v_s'(x)/\bar v_s(x)=Y'(-x)/Y(-x)=1+\check r_s(-x)$. To the right of
the support of $\check m$, $\check V_s''+2\check V_s'=0$, so
$\check V_s=\alpha+\beta e^{-2x}$ with $\alpha=\check V_s(+\infty)=\exp H_{\check m}(s)$,
and $Y=\alpha e^x+\beta e^{-x}$; since $K_{\check m}$ is unitarily equivalent
to $K_{\bar m}$ by the reflection $f(x)\mapsto f(-x)$, $\alpha=\exp H_{\bar m}(s)$,
and $\bar v_s(x)=\alpha e^{-x}+\beta e^x$ for $x\le X(0)$. If $\bar m$ is
symmetric, $\check m=\bar m$ and $-X(q)=X(2-q)$.
\end{proof}

\begin{lemma}[Comparison with a comparison profile]\label{lem:comparison-general}
Let $\nu$ be a comparison profile at $\kappa$ with density $\bar m$ and right
filters $R_s,R_\circ$, and let $m\ge0$ be integrable with $E(m)<\infty$,
$\kappa(m)=\kappa$, ground coordinate $q$ and left filters $r_s,r_\circ$. Define
$a_s,a_\circ$ by the formulas of \Cref{thm:comparisons} with $\nu,d,R_s,R_\circ$
in place of $\nu_\kappa,d_\kappa,R_s,R_\circ$. Then
$-sQ/(2d)\le a_s\le0$, $-\kappa Q/d\le a_\circ\le0$, and
\[
 H_m(s)-H_{\bar m}(s)=\int a_s(q,r_s)\bigl(n^2-\nu(q)^2\bigr)\dd x,
 \qquad
 S(m)-S(\bar m)=\int a_\circ(q,r_\circ)\bigl(n^2-\nu(q)^2\bigr)\dd x .
\]
\end{lemma}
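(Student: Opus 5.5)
The plan is to write each identity as the integral of an exact derivative along $m$. The potential couples the left filter $r=r_s$ of $m$ with the right filter $R=R_s$ of the comparison profile, both evaluated at the ground coordinate $q(x)$.

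\textbf{Determinant identity.} Set $\Phi(x)=\log\bigl(2+r(x)+R(q(x))\bigr)$. The inputs are $r'=sn^2-2r-r^2$ (\Cref{lem:filter}), $q'=\kappa n^2-Q$, and $d\,\dd R/\dd q=-s\nu^2+2R+R^2$ (\Cref{lem:right-filters}). Write $q'=d+\kappa(n^2-\nu^2)$. The chain rule then gives
\[
 \Phi'=(R-r)+\frac{(n^2-\nu^2)\bigl(s+\kappa\,\dd R/\dd q\bigr)}{2+r+R}.
\]
The term $R(q)$ is removed with $R(q)=R(q)q'/d-\kappa(n^2-\nu^2)R/d$, together with the price identity \eqref{eq:price-identity} applied to $\eta=R/d$. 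This gives $\int R(q)q'/d\,\dd x=\int_0^2R/d\,\dd q$, and by the change of variables $x=X(q)$ this equals the integral of the right filter over the support of $\bar m$.

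Collecting coefficients, $s+\kappa\,\dd R/\dd q=(-sQ+\kappa R(2+R))/d$. Subtracting $\kappa R(2+r+R)/d$ leaves exactly $-(sQ+\kappa rR)/d$, so the coefficient of $n^2-\nu^2$ is $a_s(q,r)$.

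\textbf{Boundary terms.} As $x\to+\infty$ we have $r\to0$ and $R(2)=0$, so $\Phi\to\log2$. As $x\to-\infty$ we have $r\to0$, but $\Phi\to\log(2+R(0))$. This endpoint discrepancy is matched by the part of $H_{\bar m}(s)$ that lies outside the support. Indeed, to the left of $X(0)$ the reflected filter solves $\check r'=-2\check r-\check r^2$, whose integral is $\log(1+R(0)/2)$. Hence $\int_0^2R/d\,\dd q+\log\frac{2+R(0)}2=H_{\check m}(s)=H_{\bar m}(s)$, using \Cref{lem:filter}(ii) and the unitary equivalence used in \Cref{lem:right-filters}. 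Combining $\int r=H_m(s)$ with $\int\Phi'=\log2-\log(2+R(0))$ gives the first identity.

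\textbf{Linear filter.} The second identity follows the same scheme. I would use the quadratic potential $\Phi_\circ=rR_\circ(q)$ together with $S=\int r_\circ^2$, and the equations $r_\circ'=\kappa n^2-2r_\circ$ and $d\,\dd R_\circ/\dd q=-\kappa\nu^2+2R_\circ$. Then $\Phi_\circ'$ equals $R_\circ$-terms, minus $r_\circ^2$-type terms, plus $(n^2-\nu^2)$ times a coefficient. I would tune the combination, for instance $\tfrac12(r+R_\circ)^2$ minus a function of $q$, until the coefficient becomes $a_\circ$. The constant term is $S(\bar m)$, obtained from $\int_0^2R_\circ^2/d\,\dd q$ plus the explicit tail outside the support.

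\textbf{Bounds on the coefficients.} We have $a_s\le0$ trivially. For the lower bound $a_s\ge-sQ/(2d)$ we need $\kappa rR\le\tfrac12 sQ(r+R)$, that is, $1/r+1/R\ge2\kappa/(sQ)$. This follows from $r\le sq/\kappa$ and $R\le s(2-q)/\kappa$, since $1/q+1/(2-q)=2/Q$. The bounds for $a_\circ$ follow in the same way from $r_\circ\le q$ and $R_\circ\le2-q$.

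\textbf{Expected main obstacle.} The delicate step is justifying the integration. Here $q$ need not be monotone, $d\to0$ at $q\in\{0,2\}$, and $R/d$ may be singular there. I would handle this with the profile bounds $Q/(d\nu)\le C_\nu$ and $\nu^2/d\le C_\nu$. These give $|a_s|\le C\nu$, so $|a_s|(n^2+\nu^2)$ is integrable because $E(m)<\infty$ and $\int\nu^3/d\,\dd q<\infty$. The composition $R\circ q$ is handled by absolute continuity of $R$ on $[0,2]$, as in the proof of \Cref{lem:right-filters}.
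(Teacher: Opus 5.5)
Your strategy is the paper's, and your determinant algebra is correct. Your $\Phi$ minus your price primitive $\int_0^qR_s/d$ is, up to an additive constant, exactly the paper's potential $W_s(q,r)=L_s(q)+\log\frac{2+r+R_s(q)}2$, since $L_s(q)=\int_q^2R_s/d$. Your tail evaluation $\int_0^2R_s/d\,\dd q+\log\frac{2+R_s(0)}2=H_{\bar m}(s)$ is the paper's $W_s(0,0)=\log c_s^-$, obtained without the Jost solution $\bar v_s$.

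For the trace, the potential you are tuning toward is $\frac14(r+R_\circ)^2+\int_q^2R_\circ^2/d$; the normalization is $\frac14$, not $\frac12$. It equals the paper's $W_\circ=\frac{r^2}4+\frac{rR_\circ}2+G_\circ(q)$. Your constant $\int_0^2R_\circ^2/d\,\dd q+\frac14R_\circ(0)^2$ is indeed $S(\bar m)$; the paper obtains it instead as $\frac\kappa2\int\bar mR_\circ=G_\circ(0)$. Your bounds on $a_s$ and $a_\circ$ are fine.

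The gap is in the integration step. You split off $\frac{R_s}{d}(q)\,q'$ and apply \eqref{eq:price-identity} with $\eta=R_s/d$. That identity is only proved for bounded $\eta$, and $R_s/d$ blows up at $q=0$, because $R_s(0)>0$ while $d\le\kappa\nu^2\le CQ^{2/3}$.

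Nor is $\frac{R_s}{d}(q)\,q'$ automatically in $L^1(\R)$. If the support of $m$ is unbounded on the left, $q$ can oscillate infinitely often near $0$ (bumps of $m$ separated by gaps where $q'=-Q$). Then $q'\in L^1$, yet $\int\frac{R_s}{d}(q)\,|q'|\,\dd x=\infty$, so $\int\Phi'$ is not a Lebesgue integral. The profile bounds you invoke control $a_s$, not this term. Splitting off $-R_\circ^2/d$ in the trace identity has the same defect.

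The remedy is to keep each potential in one piece, as the paper does; the singular $R/d$ contributions then cancel. Indeed $\partial_qW_s=-(rR_s+s\nu^2)/(d(2+r+R_s))$ and $\partial_qW_\circ=(2rR_\circ-\kappa\nu^2(r+R_\circ))/(2d)$. These are bounded on the triangles $0\le r\le sq/\kappa$ and $0\le r\le q$, where the states of $m$ lie, because $rR_s\le s^2Q/\kappa^2$, $rR_\circ\le Q$, $Q/d\le C_\nu\nu$ and $\nu^2/d\le C_\nu$. So the potentials are Lipschitz there and the chain rule applies for $0<q<2$.

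The half-lines $\{q=0\}$ and $\{q=2\}$ need separate treatment. There $n=0$ a.e., your formula for $\Phi'$ (which divides by $d$) is undefined, and one differentiates directly. Equivalently, integrate on compact subintervals of $\{0<q<2\}$, where $R_s/d$ is bounded, and pass to the limit using only the absolute integrability of $r_s$ and $a_s(n^2-\nu^2)$.

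Finally, that integrability does not come from $\int\nu^3/d\,\dd q$, which is an integral along $\bar m$. It comes from $|a_s|\le C\nu$, $\nu n^2\le\frac13\nu^3+\frac23n^3$, and $\nu^3\le C_\nu Q$ with $\int Q(q)\,\dd x=\kappa\int m-2$ along $m$.
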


\begin{proof}\label{proof:lem:comparison-general}
{Bounds and integrability.}
The inequalities $r_s\le sq/\kappa$ and
$R_s\le s(2-q)/\kappa$ imply
$2\kappa r_sR_s\le sQ(r_s+R_s)$, including when either factor vanishes.
Thus $-sQ/(2d)\le a_s\le0$. Similarly,
$2r_\circ R_\circ\le Q(r_\circ+R_\circ)$ and
$r_\circ+R_\circ\le2$ give $-\kappa Q/d\le a_\circ\le0$.
The profile bounds imply $|a_s|+|a_\circ|\le C\nu$ and
$\nu^3\le C Q(q)\in L^1(\R)$. Since
$\nu n^2\le\nu^3/3+2n^3/3$, both claimed integrands are integrable.

{Determinant potential.}
Let $L_s(q)=\log(e^{X(q)}\bar v_s(X(q)))$. By \Cref{lem:right-filters},
$dL_s'=-R_s$ and $L_s(2)=0$. On the triangle
$\Delta_s=\{0\le q\le2,\ 0\le r\le sq/\kappa\}$ define
\[
 W_s(q,r)=L_s(q)+\log\frac{2+r+R_s(q)}2.
\]
On $0<q<2$, the right-filter equation gives
\[
 \partial_qW_s=-\frac{rR_s+s\nu^2}{d(2+r+R_s)},\qquad
 \partial_rW_s=\frac1{2+r+R_s}.
\]
These derivatives are bounded on $\Delta_s$ because
$rR_s\le s^2Q/\kappa^2$, $Q/d\le C_\nu\nu$, and
$\nu^2/d\le C_\nu$. The potential is continuous on the closed triangle,
so integration along segments makes it Lipschitz there. Its endpoint
values are
\[
 W_s(0,0)=\log c_s^-=H_{\bar m}(s),\qquad
 W_s(2,r)=\log(1+r/2),
\]
where the first identity follows from
$\bar v_s=c_s^-e^{-x}+c_s^+e^x$ on the left of the support.
Along $m$, substitution of $q'=\kappa n^2-Q$ and
$r_s'=sn^2-2r_s-r_s^2$ gives, for $0<q<2$,
\[
 \frac{\dd}{\dd x}W_s(q,r_s)=-r_s+a_s(q,r_s)(n^2-\nu^2).
\]
Indeed, the coefficient of $n^2$ is
$-(sQ+\kappa r_sR_s)/(d(2+r_s+R_s))=a_s$; the remaining term is
$-r_s-a_s\nu^2$, using $d=\kappa\nu^2-Q$.

The composition is locally absolutely continuous. On the level sets
$q=0,2$, one has $q'=0$, and hence $n=0$, almost everywhere. On $q=0$
also $r_s=0$; on $q=2$, differentiating $\log(1+r_s/2)$ gives $-r_s$.
Locality of derivatives of absolutely continuous functions justifies
these identities on the level sets. The derivative formula therefore
holds almost everywhere, with the endpoint convention $a_s=0$.
Its right-hand side is integrable, and $(q,r_s)$ tends to $(0,0)$ and
$(2,0)$ at the two infinities. Integration proves the determinant identity.

{Second-trace potential.}
Put
\[
 G_\circ(q)=\frac\kappa2\int_q^2\frac{\nu^2R_\circ}{d},\qquad
 W_\circ(q,r)=\frac{r^2}4+\frac{rR_\circ(q)}2+G_\circ(q).
\]
The integral is finite, and on $\Delta_\circ=\{0\le q\le2,\ 0\le r\le q\}$,
\[
 \partial_rW_\circ=\frac{r+R_\circ}2,\qquad
 \partial_qW_\circ=
 \frac{2rR_\circ-\kappa\nu^2(r+R_\circ)}{2d}.
\]
The same profile bounds, with $rR_\circ\le Q$ and $r+R_\circ\le2$,
make these derivatives bounded; thus $W_\circ$ is Lipschitz on
$\Delta_\circ$. The state equations yield
\[
 \frac{\dd}{\dd x}W_\circ(q,r_\circ)
 =-r_\circ^2+a_\circ(q,r_\circ)(n^2-\nu^2).
\]
The coefficient of $n^2$ is
$\kappa[2r_\circ R_\circ-Q(r_\circ+R_\circ)]/(2d)=a_\circ$;
the constant term follows from $d=\kappa\nu^2-Q$.
At $q=0$, $r_\circ=0$; at $q=2$, $W_\circ=r_\circ^2/4$ and its derivative
is $-r_\circ^2$. The same level-set argument extends the formula almost
everywhere. Its endpoint values are $G_\circ(0)$ and $0$.

Finally, integration of $(\bar r_\circ^2)'$ and Fubini's theorem give
\[
 S(\bar m)=\frac\kappa2\int\bar m\bar r_\circ
 =\frac\kappa2\int\bar mR_\circ
 =\frac\kappa2\int_0^2\frac{\nu^2R_\circ}{d}=G_\circ(0).
\]
Integrating the derivative of $W_\circ$ proves the second identity.
\end{proof}

\begin{proof}[Proof of \Cref{thm:comparisons}]\label{proof:thm:comparisons}
By \Cref{lem:local-profile}, $\nu_\kappa$ is a comparison profile at $\kappa$
with density $\bar m_\kappa$, and \Cref{lem:comparison-general} applies.
\end{proof}

\begin{lemma}[Continuity of the comparison filters]\label{lem:filter-continuity}
For $s>0$ let $\bar r_s(q),\bar r_\circ(q)$ denote the left filters of $\bar m_\kappa$ at
the point $X_\kappa(q)$, $q\in[0,2]$, and put
\[
 w_s=q-\frac\kappa s\bar r_s,\qquad w_\circ=q-\bar r_\circ .
\]
Then $\bar r_s,\bar r_\circ$ are jointly continuous in
$(q,\kappa)\in[0,2]\times(0,\infty)$, and on every compact $\kappa$-set
$0\le w_s,w_\circ\le Cq^{7/3}$. If $\Gamma_\varsigma$ is symmetric, the right
filters are $R_s(q)=\bar r_s(2-q)$ and $R_\circ(q)=\bar r_\circ(2-q)$.
\end{lemma}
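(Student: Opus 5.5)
The argument has three parts: first the two-sided bound on $w_s,w_\circ$, then joint continuity via a Volterra representation in the variable $q$, and finally the symmetry statement.

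\textbf{The bound on $w_s,w_\circ$.} By \Cref{lem:filter}, $0\le r_s\le\frac s\kappa r_\circ\le\frac s\kappa q$ holds along any density. Applied to $\bar m_\kappa$, whose ground coordinate at $X_\kappa(q)$ is $q$ (\Cref{lem:local-profile}), this gives $w_s\ge w_\circ\ge0$. For the upper bound we control $w_\circ=\bar h$. By \Cref{lem:filter}, $h'=q^2-2h$, so $\bar h(x)=\int_{-\infty}^xe^{-2(x-y)}q(y)^2\dd y$. Changing variables $y=X_\kappa(p)$, with $\dd y=\dd p/d_\kappa(p)$, gives
\[
 \bar h\le\int_0^q\frac{p^2\dd p}{d_\kappa(p)}\le C\int_0^qp^{2-2/3}\dd p=Cq^{7/3}.
\]
Here we used $d_\kappa^{-1}\le CQ^{-2/3}$ and $Q(p)\ge p$ for $p\le1$; for $q\ge1$ the bound $w\le q\le q^{7/3}$ is trivial.

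For $w_s$, subtract the Riccati equation for $\frac\kappa s\bar r_s$, namely $\kappa\bar m-2\frac\kappa s\bar r_s-\kappa\bar r_s^2/s$, from $q'=\kappa\bar m-Q$. This gives $w_s'=-2w_s+q^2+\frac\kappa s\bar r_s^2$. Since $\bar r_s\le\frac s\kappa q$, the source term is at most $(1+s/\kappa)q^2$, and the same integration yields $w_s\le C(1+s/\kappa)q^{7/3}$. This bound is uniform on compact $\kappa$-sets for each fixed $s$.

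\textbf{Joint continuity.} Written in the variable $q$, the filters satisfy
\[
 \bar r_\circ(q)=\kappa\int_0^q\exp\Bigl(-2\int_p^q\frac{\dd p'}{d_\kappa(p')}\Bigr)\frac{\nu_\kappa(p)^2}{d_\kappa(p)}\dd p .
\]
The ingredients depend continuously on $\kappa$: $\nu_\kappa$ is jointly continuous, $\nu_\kappa^2/d_\kappa\le3/(2\kappa)$, and $\kappa\mapsto d_\kappa^{-1}$ is continuous into $L^1$ (\Cref{lem:local-profile}). Dominated convergence then gives joint continuity of $\bar r_\circ$.

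For $\bar r_s$, write the Riccati equation $d_\kappa\,\dd\bar r_s/\dd q=s\nu_\kappa^2-2\bar r_s-\bar r_s^2$ as the fixed-point equation
\[
 \bar r_s(q)=\int_0^q e^{-\int_p^q(2+\bar r_s)/d_\kappa}\,\frac{s\nu_\kappa^2}{d_\kappa}\dd p .
\]
A Gronwall argument then applies. The difference of two solutions, for parameters $\kappa$ and $\kappa'$, satisfies a linear equation in which the coefficients $1/d_\kappa$ and $1/d_{\kappa'}$ are close in $L^1$, and the a priori bound $0\le\bar r_s\le sq/\kappa$ keeps everything bounded. This gives continuity in $\kappa$ uniformly in $q$; continuity in $q$ alone is already absolute continuity. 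The main obstacle I expect is exactly this step: $1/d_\kappa$ is singular at $q=0,2$, so closeness holds only in $L^1$ and not uniformly. I would handle it by estimating the differences of the exponents in $L^1$ and combining this with uniform equicontinuity from the integrable majorant $CQ^{-2/3}$.

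\textbf{Symmetry.} If $\Gamma_\varsigma$ is symmetric, \Cref{lem:local-profile} shows that $\bar m_\kappa$ is symmetric about $0$. The last statement of \Cref{lem:right-filters} then gives $R_s(q)=\bar r_s(X_\kappa(2-q))$, which in the $q$-parametrization is $\bar r_s(2-q)$; the same holds for $R_\circ$.
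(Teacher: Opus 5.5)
Your proposal is correct and is essentially the paper's proof. The ingredients match: a Gronwall argument in the variable $q$ for $\bar r_s$, the bound $d_\kappa^{-1}\le CQ^{-2/3}$ integrated against $q^2$ for $w_s,w_\circ$, and \Cref{lem:right-filters} with \Cref{lem:local-profile} for the symmetry; your explicit Duhamel formula for $\bar r_\circ$ and the grouping $w_s'=-2w_s+q^2+\frac\kappa s\bar r_s^2$ are only harmless variants. The step you flag as the main obstacle is not one, so no equicontinuity argument is needed. Gronwall only needs the Lipschitz coefficient $\ell_\kappa=(2+2R_{\max})/d_\kappa$ to be integrable, where $R_{\max}=2s/\min\mathsf K$ on a compact $\kappa$-set $\mathsf K$. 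It then gives directly $\sup_q|\bar r_s^{(\kappa)}-\bar r_s^{(\kappa')}|\le\exp(\int_0^2\ell_\kappa)\int_0^2\epsilon_{\kappa,\kappa'}$, with forcing term $\epsilon_{\kappa,\kappa'}=s|\nu_\kappa^2/d_\kappa-\nu_{\kappa'}^2/d_{\kappa'}|+(2R_{\max}+R_{\max}^2)|1/d_\kappa-1/d_{\kappa'}|$. This term tends to zero in $L^1$ by dominated convergence.
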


\begin{proof}\label{proof:lem:filter-continuity}
Fix a compact set $\mathsf K\subset(0,\infty)$ and $\kappa\in\mathsf K$. Along
$\bar m_\kappa$, $x=X_\kappa(q)$ and $\frac{\dd q}{\dd x}=d_\kappa(q)$ on $(0,2)$;
$\bar r_s$ is Lipschitz in $x$, and $X_\kappa$ is absolutely continuous, so
$\bar r_s(q)$ is absolutely continuous on $[0,2]$ with $\bar r_s(0)=0$ and
\[
 \frac{\dd\bar r_s}{\dd q}=G_\kappa(q,\bar r_s),\qquad
 G_\kappa(q,r)=\frac{s\nu_\kappa(q)^2-2r-r^2}{d_\kappa(q)} .
\]
Its values lie in $[0,R_{\max}]$, $R_{\max}=2s/\min\mathsf K$. For
$r,r'\in[0,R_{\max}]$,
$|G_\kappa(q,r)-G_\kappa(q,r')|\le\ell_\kappa(q)|r-r'|$ with
$\ell_\kappa=(2+2R_{\max})/d_\kappa$, and
$|G_\kappa(q,r)-G_{\kappa'}(q,r)|\le\epsilon_{\kappa,\kappa'}(q)$ with
$\epsilon_{\kappa,\kappa'}=s|\nu_\kappa^2/d_\kappa-\nu_{\kappa'}^2/d_{\kappa'}|+(2R_{\max}+R_{\max}^2)|1/d_\kappa-1/d_{\kappa'}|$.
The function $e=|\bar r_s^{(\kappa)}-\bar r_s^{(\kappa')}|$ satisfies
$e(q)\le\int_0^q\ell_\kappa e+\int_0^2\epsilon_{\kappa,\kappa'}$, so Gronwall's
inequality gives
$\sup_qe\le\exp(\int_0^2\ell_\kappa)\int_0^2\epsilon_{\kappa,\kappa'}$. As
$\kappa'\to\kappa$, $\epsilon_{\kappa,\kappa'}\to0$ pointwise on $(0,2)$ by
\Cref{lem:local-profile}, with the integrable majorant
$3s/\min\mathsf K+CQ^{-2/3}$; hence $\bar r_s$ is continuous in $\kappa$
uniformly in $q$, and being continuous in $q$ for each $\kappa$, it is
jointly continuous. The same argument, with
$G_\kappa(q,r)=(\kappa\nu_\kappa^2-2r)/d_\kappa$, applies to $\bar r_\circ$.

By \Cref{lem:filter}, $0\le w_s\le q$, and along $\bar m_\kappa$
\[
 w_s'=q'-\frac\kappa s\bar r_s'=\Bigl(1+\frac s\kappa\Bigr)q^2-2\Bigl(1+\frac{sq}\kappa\Bigr)w_s
 +\frac s\kappa w_s^2 ,
\]
since
$q'-\frac\kappa s(s\bar m-2\bar r_s-\bar r_s^2)=-Q(q)+\frac{2\kappa}s\bar r_s+\frac\kappa s\bar r_s^2$,
which for $\bar r_s=\frac s\kappa(q-w_s)$ equals
$-Q(q)+2(q-w_s)+\frac s\kappa(q-w_s)^2$. For $0\le w\le q$,
$-2(1+\frac{sq}\kappa)w+\frac s\kappa w^2\le w(\frac s\kappa(w-2q)-2)\le0$, so
$\frac{\dd w_s}{\dd q}\le(1+\frac s\kappa)q^2/d_\kappa\le Cq^2Q^{-2/3}$, and
$w_s(0)=0$ gives $w_s\le Cq^{7/3}$ for $q\le1$, hence for $q\le2$ after
enlarging $C$. Likewise $w_\circ'=q^2-2w_\circ\le q^2$ gives $w_\circ\le Cq^{7/3}$.
The last assertion is \Cref{lem:right-filters} with \Cref{lem:local-profile}.
\end{proof}

\begin{lemma}[Spectral values of a symmetric density]\label{lem:symmetric-values}
Let $m\ge0$ be integrable and not zero, with support in $[x_-,2x_c-x_-]$
and symmetric about $x_c$, let $\kappa=\kappa(m)$, and let $r_s,r_\circ$ be its left filters.
Then
\[
 H_m(s)=2\int_{x_-}^{x_c}r_s\dd x+\log(1+r_s(x_c)),\qquad
 S(m)=2\int_{x_-}^{x_c}r_\circ^2\dd x+r_\circ(x_c)^2 .
\]
\end{lemma}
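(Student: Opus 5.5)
The plan is to treat the two identities separately. For the determinant I would use the Wronskian of the Jost solution and its mirror image. For the second trace I would split the double integral of \Cref{lem:filter}(i) into the two halves of $m$.

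\emph{Determinant.} Put $y_-(x)=e^{x-x_-}V_s(x)$, with $V_s$ from \Cref{lem:filter}(ii). By the remark after that proposition, $y_-$ solves $-y''+(1+sm)y=0$ and equals $e^{x-x_-}$ for $x\le x_-$. To the right of the support,
\[
 y_-=Ae^{x-x_-}+Be^{-(x-x_-)},\qquad A=e^{H_m(s)}.
\]
By the symmetry of $m$ about $x_c$, the function $y_+(x)=y_-(2x_c-x)$ solves the same equation. It equals $e^{2x_c-x_--x}$ to the right of $2x_c-x_-$, because $2x_c-x\le x_-$ there. The Wronskian $W=y_-y_+'-y_-'y_+$ is therefore constant.

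To the right of the support $y_+'=-y_+$, so
\[
 W=-(y_-+y_-')y_+=-2Ae^{2(x_c-x_-)}.
\]
At $x=x_c$ we have $y_+(x_c)=y_-(x_c)$ and $y_+'(x_c)=-y_-'(x_c)$. Using $y_-'/y_-=1+r_s$, this gives
\[
 W=-2y_-(x_c)^2\bigl(1+r_s(x_c)\bigr).
\]
Since $V_s=\exp\int_{-\infty}^xr_s$ and $r_s=0$ to the left of $x_-$,
\[
 y_-(x_c)=e^{x_c-x_-}\exp\int_{x_-}^{x_c}r_s .
\]
Equating the two values of $W$ gives $A=\exp\bigl(2\int_{x_-}^{x_c}r_s\bigr)(1+r_s(x_c))$. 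Taking logarithms yields the first formula.

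\emph{Second trace.} Write $m=m_L+m_R$, where $m_L=m\ind_{(-\infty,x_c]}$ and $m_R=m-m_L$. Since $m\in L^1$, the point $x_c$ carries no mass. By \Cref{lem:filter}(i),
\[
 S(m)=\frac{\kappa^2}4\bigl[I_{LL}+I_{RR}+2I_{LR}\bigr],\qquad I_{AB}=\iint e^{-2|y-z|}m_A(y)m_B(z)\dd y\dd z .
\]
Reflection about $x_c$ gives $I_{RR}=I_{LL}$. On the cross term $e^{-2|y-z|}=e^{-2(x_c-y)}e^{-2(z-x_c)}$, so $I_{LR}$ factorizes. Symmetry together with the convolution formula for $r_\circ(x_c)$ gives
\[
 I_{LR}=\bigl(r_\circ(x_c)/\kappa\bigr)^2 .
\]

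For $I_{LL}$, apply the general identity of \Cref{lem:filter}(i) with $c=\kappa$ and $\tilde m=m_L$. The causal filter of $m_L$ equals $r_\circ$ on $(-\infty,x_c]$ and $r_\circ(x_c)e^{-2(x-x_c)}$ beyond $x_c$. The square of this tail integrates to $r_\circ(x_c)^2/4$. Hence
\[
 \frac{\kappa^2}4I_{LL}=\int_{x_-}^{x_c}r_\circ^2+\frac{r_\circ(x_c)^2}4 .
\]
Summing the three contributions gives
\[
 S(m)=2\int_{x_-}^{x_c}r_\circ^2+\frac{r_\circ(x_c)^2}2+\frac{r_\circ(x_c)^2}2 ,
\]
which is the second formula.

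The only delicate point is bookkeeping: checking that the mirror solution $y_+$ has exactly the normalization $e^{2x_c-x_--x}$ on the right, and that the factor $e^{2(x_c-x_-)}$ cancels. The rest is routine.
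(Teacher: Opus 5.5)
Your proof is correct and follows the paper's argument. For the determinant, both use the constant Wronskian of the left Jost solution $e^{x}V_s$ and its mirror image, evaluated at the center and to the right of the support. For $S(m)$, both split the double integral into the same-side parts, handled by the filter identity of \Cref{lem:filter}(i) applied to the left half, and the factorized cross terms. The only difference is cosmetic: the paper translates to $x_c=0$, whereas you keep $x_c$ general and normalize by $e^{x-x_-}$, and the factor $e^{2(x_c-x_-)}$ cancels as you note.
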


\begin{proof}\label{proof:lem:symmetric-values}
Translate so that $x_c=0$. Let $u_L=e^xV_s$, with $V_s$ of
\Cref{lem:filter}(ii); it solves $-u''+(1+sm)u=0$, equals $e^x$
to the left of the support, and $u_L'/u_L=1+r_s$, $u_L(x)=e^x\exp\int_{-\infty}^xr_s$.
Since $m$ is even, $u_R(x)=u_L(-x)$ solves the same equation and equals
$e^{-x}$ to the right of the support. The Wronskian
$W=u_L'u_R-u_Lu_R'$ is constant. At $x=0$, $u_R(0)=u_L(0)$ and
$u_R'(0)=-u_L'(0)$, so $W=2u_L'(0)u_L(0)=2(1+r_s(0))\exp(2\int_{x_-}^0r_s)$,
using $r_s=0$ on $(-\infty,x_-]$. To the right of the support,
$V_s=\alpha+\beta e^{-2x}$ with $\alpha=V_s(+\infty)=\exp H_m(s)$
(\Cref{lem:filter}), so $u_L=\alpha e^x+\beta e^{-x}$, $u_R=e^{-x}$ and
$W=2\alpha$. Comparing the two expressions gives the first identity.

For the second, write $S(m)=\frac{\kappa^2}4\iint m(x)m(y)e^{-2|x-y|}$ and
split $\R^2$ into the four quadrants of the signs of $x,y$. For the
quadrant $x,y<0$, the identity of \Cref{lem:filter}(i) with $c=\kappa$ and
$\tilde m=m\ind_{(-\infty,0)}$, for which
$\kappa\int_{-\infty}^xe^{-2(x-y)}\tilde m(y)\dd y$ is $r_\circ(x)$ on $(-\infty,0]$ and
$r_\circ(0)e^{-2x}$ on $[0,\infty)$, gives
$\int_{-\infty}^0r_\circ^2+\int_0^\infty r_\circ(0)^2e^{-4x}\dd x=\int_{-\infty}^0r_\circ^2+r_\circ(0)^2/4$. For the quadrant
$x<0<y$, the integral is
$\frac{\kappa^2}4\int_{x<0}e^{2x}m\int_{y>0}e^{-2y}m=\frac{\kappa^2}4(r_\circ(0)/\kappa)^2$,
because $\int_{y>0}e^{-2y}m(y)\dd y=\int_{x<0}e^{2x}m(x)\dd x=r_\circ(0)/\kappa$ by
symmetry. By symmetry the quadrant $x,y>0$ equals the first and the
quadrant $y<0<x$ equals the second. Summation gives the claim.
\end{proof}

\subsection{Integration of the residual}\label{app:integration-proof}

\begin{proof}[Proof of \Cref{prop:certificate-mechanism}]
\label{proof:prop:certificate-mechanism}
Let $m\in\mathcal M^1_c$ with ground parameter $\kappa$ and state $Y$.
Then $E(m)<\infty$, and $Y(x)\in\Omega_\kappa$ by \Cref{lem:range}.
On the level sets $q=0,2$, absolute continuity gives $q'=0$ almost
everywhere, and the ground equation then gives $n=0$. The hypotheses
therefore imply $\mathscr R_\kappa(Y,n)\ge0$ almost everywhere.

The terms $\rho(n,\nu_\kappa(q))$ and $a(n^2-\nu_\kappa^2)$ are integrable
by \Cref{lem:base-identity,thm:comparisons}. Since $Y'$ is integrable and
$Y$ lies in the compact set $\Omega_\kappa$, the chain rule gives
\[
 (\Psi\circ Y)'=\mathsf b_\Psi n^2-\mathsf d_\Psi\in L^1,\qquad
 \int_\R(\Psi\circ Y)'=\Psi(Y_+)-\Psi(Y_-)=0.
\]
For the price, let $\Phi(y)=\int_0^y\eta$. This function is Lipschitz,
with $\Phi'=\eta$ outside the finite discontinuity set of $\eta$.
The chain rule applies off its preimage under $q$; on that preimage,
both $q$ and $\Phi\circ q$ have derivative zero almost everywhere on
each level set. Hence
\[
 (\Phi\circ q)'=\eta(q)q'\in L^1,\qquad
 \int_\R\eta(q)q'=\Phi(2)-\Phi(0)=\int_0^2\eta.
\]
Also $\int Q(q)=\kappa-2$. Rewrite \eqref{eq:residual} as
\[
 \rho=\mathscr R_\kappa+a(n^2-\nu_\kappa^2)
 -\frac{\eta(q)q'}\kappa-\frac\tau\kappa Q(q)-(\Psi\circ Y)' .
\]
Every term is integrable. Integrating and discarding
$\mathscr R_\kappa\ge0$, then using \Cref{thm:comparisons}, gives
\[
 \int\rho\ge\sum_i\xi_i[H_m(s_i)-H_{\bar m_\kappa}(s_i)]
 +\chi[S(m)-S(\bar m_\kappa)]-\frac1\kappa\int_0^2\eta
 -\frac{\tau(\kappa-2)}\kappa.
\]
Combine this with $E(m)\ge\mathcal B_\kappa(m)+\int\rho$ from
\eqref{eq:base-deficit}.
\end{proof}

\begin{lemma}[Cubic conjugate]\label{lem:conjugate}
For $\nu\ge0$ and $y\in\R$,
\[
 \mathcal C(\nu,y):=\max_{n\ge0}\bigl\{y(n^2-\nu^2)-\rho(n,\nu)\bigr\}
 =-y\nu^2-\frac{\nu^3}2+\frac4{27}\Bigl(y+\frac{3\nu}2\Bigr)_+^3 .
\]
For $\nu>0$, $\mathcal C(\nu,y)=\nu^3\psi(y/\nu)$ with
$\psi(x)=-x-\frac12+\frac4{27}(x+\frac32)_+^3$, which equals
$\frac23x^2+\frac4{27}x^3$ for $x\ge-\frac32$. The function $\psi$ is
twice continuously differentiable and convex.
\end{lemma}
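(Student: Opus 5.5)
The plan is to compute the maximum directly: the objective is a cubic polynomial in $n$. With $\rho(n,\nu)=n^3-\tfrac32\nu n^2+\tfrac12\nu^3$, the function to maximize over $n\ge0$ is
\[
 \phi(n)=y(n^2-\nu^2)-n^3+\tfrac32\nu n^2-\tfrac12\nu^3
 =-n^3+\beta n^2-y\nu^2-\tfrac12\nu^3,\qquad \beta=y+\tfrac32\nu .
\]
The $n$-dependent part is $-n^3+\beta n^2$. Its derivative is $n(2\beta-3n)$, so we split into two cases.

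If $\beta\le0$, then $\phi$ is nonincreasing on $[0,\infty)$. The maximum is therefore at $n=0$, with value $-y\nu^2-\nu^3/2$, and $(\beta)_+=0$.

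If $\beta>0$, the maximum on $[0,\infty)$ is attained at $n=2\beta/3$. There
\[
 -n^3+\beta n^2=\beta^3\Bigl(\tfrac49-\tfrac8{27}\Bigr)=\tfrac4{27}\beta^3 .
\]
Since $\phi(n)\to-\infty$ as $n\to\infty$ and this value exceeds $\phi(0)$, it is the global maximum on $n\ge0$. Combining the two cases gives the displayed formula for $\mathcal C(\nu,y)$, including the degenerate case $\nu=0$.

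For $\nu>0$, set $y=\nu x$. Each term is homogeneous of degree three in $(\nu,y)$:
\[
 -\nu x\,\nu^2-\tfrac12\nu^3+\tfrac4{27}\nu^3\bigl(x+\tfrac32\bigr)_+^3=\nu^3\psi(x).
\]
On $x\ge-\tfrac32$, expand $\tfrac4{27}(x+\tfrac32)^3=\tfrac4{27}x^3+\tfrac23x^2+x+\tfrac12$. Adding $-x-\tfrac12$ gives $\tfrac23x^2+\tfrac4{27}x^3$.

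For regularity, $\psi$ is affine on $x<-\tfrac32$. The function $t\mapsto t_+^3$ is $C^2$, with second derivative $6t_+$. Hence $\psi\in C^2$ and
\[
 \psi''(x)=\tfrac8{9}\bigl(x+\tfrac32\bigr)_+\ge0,
\]
which gives convexity. Alternatively, convexity follows because $\mathcal C(\nu,\cdot)$ is a supremum of affine functions of $y$.

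No step is a real obstacle. The only care needed is at the boundary $n=0$ of the constraint, that is, checking that the interior critical point beats $n=0$ exactly when $\beta>0$; this is immediate because the difference $\tfrac4{27}\beta^3$ is positive.
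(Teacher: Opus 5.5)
Your proof is correct and follows essentially the paper's argument: expand $\rho$, maximize $-n^3+\beta n^2$ on $[0,\infty)$ using the derivative $n(2\beta-3n)$ to locate the maximizer $2\beta_+/3$, rescale by $\nu^3$, and read off convexity from $\psi''(x)=\tfrac89(x+\tfrac32)_+\ge0$. Your explicit case split $\beta\le0$ versus $\beta>0$ just spells out the paper's one-line statement that the maximum is attained at $n=2c_+/3$.
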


\begin{proof}[Proof of \Cref{lem:conjugate}]\label{proof:lem:conjugate}
Expanding \eqref{eq:cubic-loss} gives
$y(n^2-\nu^2)-\rho=-n^3+cn^2-y\nu^2-\nu^3/2$, where
$c=y+3\nu/2$. Its derivative is $n(2c-3n)$, so the maximum on
$[0,\infty)$ occurs at $n=2c_+/3$. Substitution gives the stated formula.
For $\nu>0$, divide by $\nu^3$ and set $x=y/\nu$. Expansion on
$x\ge-3/2$ gives the polynomial form, while
$\psi''(x)=\tfrac89(x+\tfrac32)_+$ is continuous and nonnegative.
\end{proof}

\begin{proof}[Proof of \Cref{thm:master}]\label{proof:thm:master}
Since $d_\kappa=\kappa\nu_\kappa^2-Q$,
$\frac{\eta}\kappa(\kappa n^2-Q)=\eta(n^2-\nu_\kappa^2)+\frac{\eta d_\kappa}\kappa$, and
$\mathsf b_\Psi n^2=\mathsf b_\Psi(n^2-\nu_\kappa^2)+\mathsf b_\Psi\nu_\kappa^2$. Hence
\[
 \mathscr R_\kappa(Y,n)=-\bigl[y(n^2-\nu_\kappa^2)-\rho(n,\nu_\kappa)\bigr]
 +\frac{\eta d_\kappa+\tau Q}\kappa+\mathsf b_\Psi\nu_\kappa^2-\mathsf d_\Psi,\qquad
 y=a-\eta-\mathsf b_\Psi ,
\]
and $\inf_{n\ge0}\mathscr R_\kappa=-\mathcal C(\nu_\kappa,y)+(\eta d_\kappa+\tau Q)/\kappa
+\mathsf b_\Psi\nu_\kappa^2-\mathsf d_\Psi$. The replacement changes $\mathsf b_\Psi$ to
$\mathsf b_\Psi-\kappa F'(q)$ and $\mathsf d_\Psi$ to $\mathsf d_\Psi-Q F'(q)$, because the
first components of $\mathsf b$ and $\mathsf d$ are $\kappa$ and $Q$. Thus $y$ is
unchanged, the right side changes by
$F'(q)(d_\kappa-\kappa\nu_\kappa^2+Q)=0$, and $\mathcal P_\kappa$ changes by
$F(2)-F(0)=0$. For the last claim,
$(\eta d_\kappa+\tau Q)/\kappa=\nu_\kappa ed_\kappa/\kappa$, and
\[
 \int\bar m_\kappa=\int_0^2\frac{\nu_\kappa^2}{d_\kappa}\dd q
 =\frac1\kappa\int_0^2\frac{d_\kappa+Q}{d_\kappa}\dd q=\frac1\kappa\Bigl(2+\int_0^2\frac Q{d_\kappa}\dd q\Bigr)
\]
gives $\frac1\kappa\int_0^2\eta+\frac{\tau(\kappa-2)}\kappa
=\frac1\kappa\int_0^2\nu_\kappa e-\frac\tau\kappa\int_0^2\frac Q{d_\kappa}+\frac{\tau(\kappa-2)}\kappa
=\frac1\kappa\int_0^2\nu_\kappa e+\tau\bigl(1-\int\bar m_\kappa\bigr)$; here $Q/d_\kappa\le\nu_\kappa/(2b)$
is integrable (\Cref{lem:local-profile}).
\end{proof}

\subsection{Uniform transfer}\label{sec:transport}

\begin{theorem}[Transfer of an admissible certificate]
\label{thm:certificate-transfer}
Let $\Theta$ be a dual certificate admissible at $k$. There is
$\delta_*>0$ such that $\Theta$ is admissible at every $\kappa$ with
$|\kappa-k|<\delta_*$, and
\[
 \sup_{J\in\overline{\mathcal Y}_N}\bigl|\ell_{\kappa(m_J)}(\Theta;m_J)-\ell_*(\Theta)\bigr|
 \longrightarrow0\qquad(N\to\infty).
\]
\end{theorem}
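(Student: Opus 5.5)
The proof has two parts: persistence of admissibility, and uniform convergence of the value. In both we use that every ingredient of the certificate depends continuously on $\kappa$ in a neighborhood of $k$. \Cref{lem:base-identity,lem:local-profile,lem:filter-continuity} already give joint continuity of $\nu_\kappa(q)$, of $d_\kappa^{-1}$ in $L^1$, and of the comparison filters $\bar r_s,\bar r_\circ$ in $(q,\kappa)$. Under the symmetry of $\Gamma_\varsigma$ from \Cref{thm:contact}, the right filters $R_s,R_\circ$ inherit this continuity.

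\textbf{Admissibility near $k$.} We would first show that the functions $\mathfrak b_\sigma,\mathfrak c_\sigma,\mathfrak l$ of \eqref{eq:closed-criterion-main} are jointly continuous on $[0,1]^3\times(0,\infty)$. This is asserted by \Cref{lem:regular-criterion}, which we take as the source. In checking it, the delicate point is the end $t=0$ of each side, where $\nu_\kappa=t\hat\nu_\kappa$ vanishes. There one needs $\hat\nu_\kappa(t)$ to be continuous and positive up to $t=0$, locally uniformly in $\kappa$. It also needs the normalized potential $\Psi=\Psi_{\rm raw}-\Psi_{\rm raw}(\bar Y(q))$, divided by $\nu_\kappa^3$, to stay regular. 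For this we would use the bound $w_s,w_\circ\le Cq^{7/3}$ of \Cref{lem:filter-continuity}, uniform on compact $\kappa$-sets, together with the factors $(2-q)^2$ and $q$ in \eqref{eq:transport-potential}.

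Given this continuity, consider on each cell $\mathsf C$ and end $\sigma$ the function
\[
\Phi_{\mathsf C,\sigma}(t,U,T,\kappa)=\psi(\mathfrak b_\sigma-e_{\mathsf C})-\mathfrak c_\sigma-e_{\mathsf C}(1-\mathfrak l).
\]
It is continuous on $\mathsf C\times[0,1]^2\times[k/2,2k]$, hence uniformly continuous there. Its maximum over the compact set $\mathsf C\times[0,1]^2$ is therefore continuous in $\kappa$, and it is negative at $\kappa=k$. There are finitely many cells, so a common $\delta_*$ exists. This proves the first claim.

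\textbf{Uniform convergence of the value.} Write $\ell_\kappa(\Theta;m)$ as in \eqref{eq:dual-value}, so that it depends on $m$ only through $\kappa$, $H_m(\varsigma\kappa)$, $H_m(s_0)$, $H_m(s_1)$ and $S(m)$, as in \eqref{eq:base-value} and \eqref{eq:base-observable}. We then check the pieces in turn:
\begin{itemize}
\item $\int_0^2\nu_\kappa\dd q$ is continuous in $\kappa$ by joint continuity of $\nu_\kappa$ on a compact set.
\item $\mathcal J_\varsigma(2)$ does not depend on $\kappa$.
\item $H_{\bar m_\kappa}(s_i)$ and $S(\bar m_\kappa)$ are continuous in $\kappa$. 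Using the symmetric-density formulas of \Cref{lem:symmetric-values}, the integrals over the left half become $\int_0^1\bar r_s\,d_\kappa^{-1}\dd q$ and $\int_0^1\bar r_\circ^2\,d_\kappa^{-1}\dd q$. Continuity then follows from joint continuity of the filters and $L^1$-continuity of $d_\kappa^{-1}$.
\item $\int\bar m_\kappa=\kappa^{-1}(2+\int_0^2 Q/d_\kappa)$ is continuous for the same reason.
\item The price cost $\mathcal P_\kappa$ in \eqref{eq:closed-price-main} is continuous, since $t^3\hat\nu_\kappa=t^2\nu_\kappa$ is jointly continuous and bounded.
\end{itemize}
So $\ell_\kappa$ is a continuous function $\Lambda(\kappa,\mathbf o)$ of $\kappa$ and the observation vector, with $\ell_*(\Theta)=\Lambda(k,\mathbf o^*_\varsigma)$.

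\textbf{Applying the spectral transfer.} By \Cref{thm:transfer}, uniformly in $J\in\overline{\mathcal Y}_N$ we have $|\kappa(m_J)-k|\le4\delta_N/\Lambda_0^2$, $|H_{m_J}(s_i)-H_*(s_i)|\le s_i\delta_N/2$ and $|\tr K_{m_J}^2-\frac23|\le2\delta_N$. The definition $S=\frac{\kappa^2}{4}\tr K^2$ then gives $S(m_J)\to k^2/6$ uniformly. For the third determinant we use the bound in the proof of \Cref{prop:baseline}, which gives $H_{m_J}(\varsigma\kappa(m_J))\to H_*(\varsigma k)$ uniformly. Since $\Lambda$ is uniformly continuous on a compact neighborhood of $(k,\mathbf o^*_\varsigma)$, the supremum over $J$ of $|\ell_{\kappa(m_J)}(\Theta;m_J)-\ell_*(\Theta)|$ tends to $0$. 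The expected main obstacle is the endpoint regularity in the admissibility step, namely continuity of $\mathfrak b_\sigma,\mathfrak c_\sigma$ at $t=0$ uniformly in $\kappa$.
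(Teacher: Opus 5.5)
Your proposal is correct and follows the paper's proof essentially step for step. Admissibility persists by uniform continuity of the witness bracket on $\mathsf C\times[0,1]^2\times[k/2,2k]$, with \Cref{lem:regular-criterion}(ii) supplying the continuity at $t=0$. The value converges by term-by-term continuity of $\ell_\kappa$ in $\kappa$, combined with \Cref{thm:transfer} and the bound from the proof of \Cref{prop:baseline}.

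The only difference is cosmetic: you compute $H_{\bar m_\kappa}(s)$ and $S(\bar m_\kappa)$ from the half-line formulas of \Cref{lem:symmetric-values}, whereas the paper integrates over the whole support and adds the explicit right tail. If you use the half-line formulas, keep the midpoint terms $\log(1+\bar r_s(1))$ and $\bar r_\circ(1)^2$; they are continuous in $\kappa$ by the same joint continuity of the filters.
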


\begin{proof}\label{proof:thm:certificate-transfer}
{Admissibility near $k$.} By \Cref{lem:regular-criterion}(ii), and
since $e_{\mathsf C}$ and $\psi$ are continuous, for each side, cell $\mathsf C$ and end
$\sigma$, the bracket in \eqref{eq:witness} is a continuous function on the
compact set $\mathsf C\times[0,1]^2\times[k/2,2k]$, hence uniformly continuous.
Its maximum over $\mathsf C\times[0,1]^2$ at $\kappa=k$ is negative, so there is
$\delta_{\mathsf C,\sigma}>0$ such that the maximum is negative for
$|\kappa-k|<\delta_{\mathsf C,\sigma}$. There are finitely many pairs
$(\mathsf C,\sigma)$; $\delta_*$ is the smallest of these numbers and $k/2$.

{The value.} Write $\kappa_J=\kappa(m_J)$. By \Cref{thm:transfer},
$\sup_J|\kappa_J-k|\le4\delta_N/\Lambda_0^2\to0$, and every $m_J$ has
$\int m_J=1$. Compare the terms of \eqref{eq:dual-value} at
$(\kappa_J,m_J)$ with those of \eqref{eq:dual-target-value}.
\begin{enumerate}
\item By \Cref{thm:transfer}, $|H_{m_J}(s_i)-H_*(s_i)|\le\frac{s_i}2\delta_N$ and
$|S(m_J)-\frac{k^2}6|=|\frac{\kappa_J^2}4\tr K_{m_J}^2-\frac{k^2}4\cdot\frac23|
\le\frac{|\kappa_J^2-k^2|}4+\frac{k^2}2\delta_N$, using $\tr K_{m_J}^2\le1$.
\item In $\mathcal B_{\kappa_J}(m_J)$: $\kappa\mapsto\int_0^2\nu_\kappa\dd q$ is continuous
by joint continuity and boundedness of $\nu_\kappa$ (\Cref{lem:local-profile});
$\mathcal J_\varsigma(2)$ does not depend on $\kappa$; and by
\eqref{eq:base-observable}, $S_\varsigma(m_J)=\kappa_J/\varsigma-2H_{m_J}(\varsigma\kappa_J)/\varsigma^2$,
where
$|H_{m_J}(\varsigma\kappa_J)-H_*(\varsigma k)|\le\frac\varsigma2|\kappa_J-k|+\frac{\varsigma k}2\delta_N$,
because $0\le\frac{\dd}{\dd s}H_m(s)=\sum_j\lambda_j/(2+s\lambda_j)\le\frac12\int m$.
\item The price \eqref{eq:closed-price-main} is continuous in $\kappa$: by
\Cref{lem:regular-criterion}(ii), $\hat\nu_\kappa(t)$ is jointly continuous on
$[0,1]\times[k/2,2k]$, hence uniformly continuous; and
$\int\bar m_\kappa=\int_0^2\nu_\kappa^2/d_\kappa\dd q$ is continuous by dominated
convergence with the majorant $3/k$ for $\kappa\ge k/2$.
\item $H_{\bar m_\kappa}(s)$ and $S(\bar m_\kappa)$ are continuous in $\kappa$. The left
filters of $\bar m_\kappa$ vanish to the left of its support; on the support
$\dd x=\dd q/d_\kappa$; and to the right of it they solve $r'=-2r-r^2$, respectively
$r'=-2r$, whose solutions with initial value $r_0$ have
$\int_0^\infty r=\log(1+r_0/2)$, respectively $\int_0^\infty r^2=r_0^2/4$. By
\Cref{lem:filter}, with $\kappa(\bar m_\kappa)=\kappa$,
\[
 H_{\bar m_\kappa}(s)=\int_0^2\frac{\bar r_s}{d_\kappa}\dd q+\log\Bigl(1+\frac{\bar r_s(2)}2\Bigr),
 \qquad
 S(\bar m_\kappa)=\int_0^2\frac{\bar r_\circ^2}{d_\kappa}\dd q+\frac{\bar r_\circ(2)^2}4 .
\]
By \Cref{lem:filter-continuity} and its proof, $\bar r_s$ and $\bar r_\circ$ are
bounded on $[0,2]\times[k/2,2k]$ and continuous in $\kappa$ uniformly in $q$, and
by \Cref{lem:local-profile} $d_\kappa^{-1}$ is continuous into $L^1(0,2)$; hence
both quantities are continuous in $\kappa$.
\end{enumerate}
The value is a continuous function of $\kappa$ and of these quantities, with
coefficients fixed by $\Theta$; since $\sup_J|\kappa_J-k|\to0$ and $\delta_N\to0$, the
differences tend to zero uniformly in $J$.
\end{proof}

\begin{proof}[Proof of \Cref{thm:dual}]\label{proof:thm:dual}
We first show, for every $\kappa>0$ at which $\Theta$ is admissible, that
$E(m)\ge\ell_\kappa(\Theta;m)$ for all $m\in\mathcal M^1_c$ with $\kappa(m)=\kappa$. Let
$Y\in\Omega_\kappa$ with $0<q<2$. There
are a side and $t\in(0,1]$ with $q=q_{\mathrm s}(t)$, a cell $\mathsf C\ni t$, and,
by \Cref{lem:regular-criterion}(iii), $(U,T)\in[0,1]^2$ and
$\vartheta\in[\vartheta_-,\vartheta_+]$ such that $Y$ has the normalized coordinates
$(U,V,\vartheta)$. By \eqref{eq:witness},
$\psi(\mathfrak b_\sigma-e)-\mathfrak c_\sigma-e(1-\mathfrak l)<0$ at $(t,U,T)$ for both
$\sigma$; by convexity in $\vartheta$ (\Cref{lem:regular-criterion}(iii)),
\eqref{eq:closed-criterion} holds at $\vartheta$, hence \eqref{eq:master} holds at
$Y$, and by \Cref{thm:master} $\mathscr R_\kappa(Y,n)\ge0$ for all $n\ge0$. At
$q\in\{0,2\}$, $\mathscr R_\kappa(Y,0)=0$ by \Cref{lem:regular-criterion}(iv), and
$\Psi\in C^1(\R^4)$ with $\Psi(Y_-)=\Psi(Y_+)$. By
\Cref{prop:certificate-mechanism} and \eqref{eq:centered-price}, every
$m\in\mathcal M^{1}_c$ with $\kappa(m)=\kappa$ satisfies $E(m)\ge\ell_\kappa(\Theta;m)$.
For $m\in\mathcal A_{\mathrm{obs}}(\varsigma)$ one has $\kappa(m)=k$ and
$\mathbf o_\varsigma(m)=\mathbf o_\varsigma^*$, so
$S_\varsigma(m)=k/\varsigma-2H_*(\varsigma k)/\varsigma^2$,
$\mathcal B_k(m)=\mathcal B_*$, $S(m)=k^2/6$, and $\ell_k(\Theta;m)=\ell_*(\Theta)$.
This proves (i).

By \Cref{thm:certificate-transfer}, there is $\delta_*>0$ such that
$\Theta$ is admissible at every $\kappa$ with $|\kappa-k|<\delta_*$, and
$\ell_{\kappa(m_J)}(\Theta;m_J)\to\ell_*(\Theta)$ uniformly over
$J\in\overline{\mathcal Y}_N$ as $N\to\infty$, where $m_J$ is the packed density of
\Cref{thm:packing}. By \Cref{thm:transfer}, $|\kappa(m_J)-k|<\delta_*$
for all large $N$, and then
$N^{-3}\E_N(J)=E(m_J)\ge\ell_{\kappa(m_J)}(\Theta;m_J)$. This proves the
first claim of (ii); the second follows from the definition of
$\underline{\mathsf e}$, and the third from \Cref{thm:gain}.
\end{proof}
\section{The normalized criterion and the verification}\label{app:verification}

The certificate inequality must remain regular at the endpoints before
interval arithmetic can verify it on closed boxes. We first derive that
regular form and prove its continuity, including the cancellations at
$t=0$. The enclosure lemmas then specify the error bounds for roots,
flows, quadrature, and box maxima. Applying those bounds to the fixed
certificate data proves the computed propositions and the main theorems.

\subsection{The normalized criterion}\label{sec:normalized-criterion}

The criterion \eqref{eq:master} is divided by $\nu_\kappa^3$ and written on the two
sides in the variable $t$. All functions of $t$ that occur are given by
cubic roots and by ordinary differential equations (items (R), (G),
(C), (F) below), and \Cref{lem:regular-criterion} identifies them with the
objects of \Cref{sec:mechanism}, proves their regularity on closed
intervals, and gives the normalized criterion.

On each side $\mathrm s\in\{\mathrm L,\mathrm R\}$ write $q=q_{\mathrm s}(t)$
and $\mathscr D P=P/q$ for $\mathrm s=\mathrm L$, $\mathscr DP=P/(2-q)$ for
$\mathrm s=\mathrm R$, for polynomials $P$ divisible by the indicated
factor. Thus $\mathscr DP=P/t^3$ for $P$ divisible by $t^3$ as a function of
$t$. Put
\[
 \nu_\kappa=t\hat\nu,\qquad d_\kappa=t^2\mathfrak d,\qquad
 \mathfrak l=\frac{t(2-t^3)}{\kappa\hat\nu^2},\qquad
 R_{s_i}=(2-q)\widehat R_i,\qquad R_\circ=(2-q)\widehat R_\circ ,
\]
where $R_{s_i},R_\circ$ are the right filters of $\bar m_\kappa$, and let
$\bar U_i=\kappa\bar u_i/q$ and $\bar\vartheta=\bar h/q$ be the normalized
states of the comparison trajectory $\bar Y(q)$. For $(U,T)\in[0,1]^2$ let
$V,y_\pm,\vartheta_\pm$ be the functions of \Cref{lem:range}, and evaluate the
state at $(U_0,U_1)=(U,V)$, $u_i=qU_i/\kappa$ and $h=q\vartheta$ with
$\vartheta=\vartheta_\sigma$, $\sigma\in\{-,+\}$.

Write $\Psi_{\rm raw}$ as a sum of terms $P(q)h$ and $P(q)u_0^iu_1^j$
($i+j\ge1$), where $(2-q)^2$ divides every coefficient $P$, and $q$
divides $P$ when $i+j=1$; in \eqref{eq:transport-potential} the
coefficients of $h,u_0,u_1,u_0^2,u_0u_1,u_1^2$ are $(2-q)^2P_h$,
$q(2-q)^2P_0$, $q(2-q)^2P_1$, $(2-q)^2P_{00}$, $(2-q)^2P_{01}$, $(2-q)^2P_{11}$.
For a term $P(q)h$ put
\begin{equation}\label{eq:regular-h}
 b_P=-\frac{\kappa t^2}{\hat\nu}\mathscr D(P'q)\vartheta,\qquad
 c_P=\frac{t^2\mathscr D(P'q)\mathfrak d\vartheta+\mathscr D(Pq)(q-2\vartheta)}{\hat\nu^3}.
\end{equation}
For a term $P(q)u_0^iu_1^j$ put $l=i+j$, $\upsilon_0=U_0/\kappa$,
$\upsilon_1=U_1/\kappa$,
$\mathsf m=\upsilon_0^i\upsilon_1^j$,
$\mathsf m_1=\partial_{\upsilon_0}\mathsf m+\partial_{\upsilon_1}\mathsf m$ and
$\mathsf m_2=s_0\upsilon_0^2\partial_{\upsilon_0}\mathsf m+s_1\upsilon_1^2\partial_{\upsilon_1}\mathsf m$, and
\begin{equation}\label{eq:regular-u}
\begin{aligned}
 b_P&=-\frac{t^2}{\hat\nu}\bigl[\kappa\mathscr D(P'q^l)\mathsf m
 +\mathscr D(Pq^{l-1})\mathsf m_1\bigr],\\
 c_P&=\frac1{\hat\nu^3}\Bigl\{t^2\bigl[\mathscr D(P'q^l)\mathfrak d\mathsf m
 +\mathscr D(Pq^{l-1})\mathsf m_1\hat\nu^2\bigr]
 -2l\,\mathscr D(Pq^l)\mathsf m-\mathscr D(Pq^{l+1})\mathsf m_2\Bigr\}.
\end{aligned}
\end{equation}
Let $\bar c_P$ be $c_P$ evaluated at the comparison state
$(\bar U_0,\bar U_1,\bar\vartheta)$, and
$b_{P,c}=b_P+\kappa\hat\nu^2\bar c_P/\mathfrak d$,
$c_{P,c}=c_P-\bar c_P$. For multipliers $\xi_0,\xi_1,\chi,\tau$ define
\[
\begin{aligned}
 \mathfrak b_\sigma={}&\frac{\tau(2-t^3)}{\mathfrak d\hat\nu}
 -\sum_{i=0}^1\frac{\xi_is_i(2-t^3)(1+U_i\widehat R_i)}
 {\mathfrak d\hat\nu\,[2+s_iqU_i/\kappa+(2-q)\widehat R_i]}\\
 &+\frac{\chi\kappa(2-t^3)}{2\mathfrak d\hat\nu}
 \bigl[2(1-\vartheta)\widehat R_\circ-q(1-\vartheta)-(2-q)\widehat R_\circ\bigr]
 +\sum_Pb_{P,c},\qquad
 \mathfrak c_\sigma=\sum_Pc_{P,c},
\end{aligned}
\]
the sums running over the terms of $\Psi_{\rm raw}$, at
$\vartheta=\vartheta_\sigma$.

{Regrouped form.} For a term $P(q)u_0^iu_1^j$ with $l=i+j$ put
\[
\begin{gathered}
 b^W_P=-\frac{\kappa t^2\mathscr D(P'q^l)}{\hat\nu},\qquad
 b^1_P=-\frac{t^2\mathscr D(Pq^{l-1})}{\hat\nu},\qquad
 c^W_P=\frac{t^2\mathscr D(P'q^l)\mathfrak d-2l\mathscr D(Pq^l)}{\hat\nu^3},\\
 c^1_P=\frac{t^2\mathscr D(Pq^{l-1})}{\hat\nu},\qquad
 c^2_P=-\frac{\mathscr D(Pq^{l+1})}{\hat\nu^3};
\end{gathered}
\]
then $b_P=b^W_P\mathsf m+b^1_P\mathsf m_1$ and
$c_P=c^W_P\mathsf m+c^1_P\mathsf m_1+c^2_P\mathsf m_2$ by \eqref{eq:regular-u}. For a term
$P(q)h$ put $b^W_P=-\kappa t^2\mathscr D(P'q)/\hat\nu$,
$c^W_P=(t^2\mathscr D(P'q)\mathfrak d-2\mathscr D(Pq))/\hat\nu^3$ and $c^0_P=q\mathscr D(Pq)/\hat\nu^3$;
then $b_P=b^W_P\vartheta$ and $c_P=c^W_P\vartheta+c^0_P$ by \eqref{eq:regular-h}. With
$\bar{\mathsf m},\bar{\mathsf m}_1,\bar{\mathsf m}_2$ the values of $\mathsf m,\mathsf m_1,\mathsf m_2$
at $(\bar U_0,\bar U_1)$,
\[
 c_{P,c}=c^W_P(\mathsf m-\bar{\mathsf m})+c^1_P(\mathsf m_1-\bar{\mathsf m}_1)+c^2_P(\mathsf m_2-\bar{\mathsf m}_2),
 \qquad\text{respectively}\qquad c_{P,c}=c^W_P(\vartheta-\bar\vartheta).
\]
We call the functions of $t$ alone that occur here and in
$\mathfrak b_\sigma$, $\mathfrak c_\sigma$, $\mathfrak l$ the {$t$-coefficients}: $q$, $\hat\nu$, $\mathfrak d$,
$\mathfrak l$, $\bar U_i$, $\bar\vartheta$, $\widehat R_i$, $\widehat R_\circ$, $(2-t^3)/(\mathfrak d\hat\nu)$, and,
for each term, those of $b^W_P$, $b^1_P$, $c^W_P$, $c^1_P$, $c^2_P$, $\bar{\mathsf m}$,
$\bar{\mathsf m}_1$, $\bar{\mathsf m}_2$ defined for it, and $\kappa\hat\nu^2\bar c_P/\mathfrak d$.

The following functions of $t\in[0,1]$ describe the reference and
comparison densities on the left side; by the symmetry of both densities,
the right side is described by the same functions. Let $h_t=2-t^3$ and
$Q_t=t^3h_t$.
\begin{enumerate}
\item[(R)] $z_\circ(t)$ is the positive root of
$z^3-\frac{3th_t}kz-\frac{2b_\circ(Q_t)h_t}k=0$, and
$\mathfrak p_\circ=3/(2th_t+2b_\circ(Q_t)h_t/z_\circ)$. On $v\in[0,2]$, with $t=v$ for $v\le1$ and
$t=2-v$ for $v\ge1$, $\varphi_\circ$ solves
$\frac{\dd\varphi_\circ}{\dd v}=\mathfrak p_\circ(kt^2z_\circ^2-2\varphi_\circ-\varsigma\varphi_\circ^2)$, $\varphi_\circ(0)=0$; and on
$[0,1]$, $\epsilon_\circ$ solves
$\frac{\dd\epsilon_\circ}{\dd t}=\mathfrak p_\circ[(1+\varsigma)t^6-2(1+\varsigma t^3)\epsilon_\circ+\varsigma\epsilon_\circ^2]$,
$\epsilon_\circ(0)=0$.
\item[(G)] $\displaystyle\gamma(t)=\frac{t^3(h_t-\varphi_\circ(2-t))+(\epsilon_\circ(t)/t^3)\bigl(2\varphi_\circ(2-t)-Q_t\bigr)}
{h_t\bigl[2+\varsigma(\varphi_\circ(t)+\varphi_\circ(2-t))\bigr]}$.
\item[(C)] $\hat\nu(t)$ is the positive root of
$z^3-\frac{3th_t}\kappa z-\frac{2(b+\kappa\omega\gamma(t))h_t}\kappa=0$,
$\mathfrak d=2th_t+2(b+\kappa\omega\gamma)h_t/\hat\nu$, and $\mathfrak p=3/\mathfrak d$.
\item[(F)] For $s>0$, on $v\in[0,2]$, $\varrho_s$ solves
$\frac{\dd\varrho_s}{\dd v}=\mathfrak p(st^2\hat\nu^2-2\varrho_s-\varrho_s^2)$ and $\varrho_\circ$ solves
$\frac{\dd\varrho_\circ}{\dd v}=\mathfrak p(\kappa t^2\hat\nu^2-2\varrho_\circ)$, with zero initial values; on
$[0,1]$, $w_s$ solves
$\frac{\dd w_s}{\dd t}=\mathfrak p[(1+\frac s\kappa)t^6-2(1+\frac{st^3}\kappa)w_s+\frac s\kappa w_s^2]$ and
$w_\circ$ solves $\frac{\dd w_\circ}{\dd t}=\mathfrak p(t^6-2w_\circ)$, with zero initial values.
\end{enumerate}

\begin{lemma}[The normalized criterion]\label{lem:regular-criterion}
Let $\Theta$ be a dual certificate and $\kappa>0$.
\begin{enumerate}
\item With $v=t$ on the left side, $f_\circ=\varphi_\circ$, $q_\circ-f_\circ=h_c=\epsilon_\circ$
and $\check f_\circ(t)=\varphi_\circ(2-t)$ at $q_\circ=t^3$; $\Gamma_\varsigma(q)/Q(q)=\gamma(t)$ at
$q=t^3$; $\nu_\kappa(t^3)=t\hat\nu(t)$, $d_\kappa(t^3)=t^2\mathfrak d(t)$, and
$\frac{\dd x}{\dd t}=\mathfrak p$ along $\bar m_\kappa$; the left filters of $\bar m_\kappa$ at
$q=t^3$ are $\bar r_s=\varrho_s(t)=\frac s\kappa(t^3-w_s(t))$ and
$\bar r_\circ=\varrho_\circ(t)=t^3-w_\circ(t)$, and at $q=2-t^3$ they are $\varrho_s(2-t)$ and
$\varrho_\circ(2-t)$. Consequently, on the left side
\[
 \bar U_i=1-\frac{w_{s_i}}{t^3},\quad \bar\vartheta=\frac{w_\circ}{t^3},\quad
 \widehat R_i=\frac{\varrho_{s_i}(2-t)}{h_t},\quad \widehat R_\circ=\frac{\varrho_\circ(2-t)}{h_t},
\]
and on the right side
\[
 \bar U_i=\frac{\kappa\varrho_{s_i}(2-t)}{s_ih_t},\quad \bar\vartheta=1-\frac{\varrho_\circ(2-t)}{h_t},\quad
 \widehat R_i=\frac{s_i}\kappa\Bigl(1-\frac{w_{s_i}}{t^3}\Bigr),\quad \widehat R_\circ=1-\frac{w_\circ}{t^3}.
\]
\item The functions $z_\circ,\mathfrak p_\circ,\gamma,\hat\nu,\mathfrak d,\mathfrak p$ are $C^\infty$ on
$[0,1]$, and the restrictions of $\varphi_\circ,\varrho_s,\varrho_\circ$ to $[0,1]$ and to
$[1,2]$ are $C^\infty$ on these closed intervals; $\epsilon_\circ,w_s,w_\circ$ vanish to order seven at
$t=0$, and $\epsilon_\circ/t^3$, $w_s/t^3$, $w_\circ/t^3$ are $C^\infty$ on $[0,1]$. The
functions $\hat\nu,\mathfrak d,\widehat R_i,\widehat R_\circ,\bar U_i,\bar\vartheta$ are jointly
continuous in $(t,\kappa)\in[0,1]\times(0,\infty)$, $\hat\nu>0$, $\mathfrak d>0$,
$\bar U_i,\bar\vartheta,\widehat R_\circ\in[0,1]$, $\widehat R_i\in[0,s_i/\kappa]$, and
$\varphi_\circ,\epsilon_\circ,\gamma\ge0$. All quotients
$\mathscr D(\cdot)$ above are polynomials. Consequently $\mathfrak b_\sigma$, $\mathfrak c_\sigma$ and
$\mathfrak l$ are continuous in $(t,U,T,\kappa)\in[0,1]^3\times(0,\infty)$.
\item For $0<t\le1$, the normalized states in $\Omega_\kappa$ over $q=q_{\mathrm s}(t)$
are exactly the points $(U,V,\vartheta)$ with $(U,T)\in[0,1]^2$ and
$\vartheta\in[\vartheta_-,\vartheta_+]$. With
$\Psi=\Psi_{\rm raw}-\Psi_{\rm raw}(\bar Y(q))$ and $\eta=-\tau Q/d_\kappa+\nu_\kappa e$,
the criterion \eqref{eq:master} at such a state, divided by $\nu_\kappa^3$, reads
\begin{equation}\label{eq:closed-criterion}
 \psi\bigl(\mathfrak b-e\bigr)-\mathfrak c-e(1-\mathfrak l)\le0,
\end{equation}
where $\mathfrak b,\mathfrak c$ are the expressions above at the given $\vartheta$. Both
are affine in $\vartheta$, so the maximum of the left side over
$\vartheta\in[\vartheta_-,\vartheta_+]$ is attained at $\vartheta_\pm$.
\item The function $F(q)=\Psi_{\rm raw}(\bar Y(q))$ belongs to $C^1[0,2]$ with
$F(0)=F(2)=F'(0)=F'(2)=0$; extended by zero it belongs to $C^1(\R)$, and
$\Psi\in C^1(\R^4)$ with $\Psi(Y_-)=\Psi(Y_+)=0$. At $q\in\{0,2\}$,
$\mathscr R_\kappa(Y,0)=0$ for all $Y\in\Omega_\kappa$.
\end{enumerate}
\end{lemma}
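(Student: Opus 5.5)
The proof will go through the four parts in order; each reduces to a change of variables plus a regularity check at $t=0$.

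For (i), I will work on the left side with $q=t^3$ and $\dd q/\dd t=3t^2$. Along the reference density, $\dd x=\dd q/d_\circ$. Writing $\nu_\circ=tz_\circ$ turns the cubic of the reference amplitude into the cubic (R) after division by $t^3$, using $Q_t=t^3h_t$. It also gives $d_\circ=k\nu_\circ^2-Q=t^2(2th_t+2b_\circ h_t/z_\circ)$, so $\dd x/\dd t=\mathfrak p_\circ$. The filter equation $f_\circ'=km_\circ-2f_\circ-\varsigma f_\circ^2$ becomes the $\varphi_\circ$ equation; on $[1,2]$ I use the variable $v=2-t$ together with the symmetry of $m_\circ$. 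The contact trajectory $h_c=q_\circ-f_\circ$ satisfies $h_c'=q^2+\varsigma f^2-2h_c$, and rewriting it via $f=q-h_c$ gives the $\epsilon_\circ$ equation. Substituting these into \eqref{eq:contact} gives (G) for $\Gamma_\varsigma/Q$. Then \eqref{eq:base-root} divided by $t^3$ is (C). The left filters of $\bar m_\kappa$ satisfy (F) in the same way, with $w_s=q-\frac\kappa s\bar r_s$ as in \Cref{lem:filter-continuity}. The right filters follow from \Cref{lem:filter-continuity}, $R_s(q)=\bar r_s(2-q)$, using the symmetry of $\Gamma_\varsigma$ from \Cref{thm:contact}. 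The formulas for $\bar U_i,\bar\vartheta,\widehat R$ on the two sides are then direct rewriting, since $q=t^3$ on one side and $2-q=t^3$ on the other.

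Part (ii) is where I expect the real work, namely regularity at $t=0$. The roots $z_\circ,\hat\nu$ are simple positive roots of cubics whose constant terms are $-2b_\circ h_t/k<0$ and $-2(b+\kappa\omega\gamma)h_t/\kappa<0$, with coefficients smooth in $t$. By the implicit function theorem they are $C^\infty$ on $[0,1]$, provided $\gamma$ is smooth. This creates a circular dependence, because $\gamma$ is built from $\varphi_\circ,\epsilon_\circ$, which depend on $z_\circ$ only. I therefore order the argument as R $\to$ G $\to$ C $\to$ F, which removes the circularity.

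The ODEs in (R) and (F) have smooth right-hand sides, so their solutions are $C^\infty$ on closed intervals. For $\epsilon_\circ$ and $w_s$, the forcing is $O(t^6)$ and the solution starts at zero, so a Taylor argument gives vanishing to order seven. Then $\epsilon_\circ/t^3$ and $w_s/t^3$ are smooth. Dividing by $t^3$ in (G) is also harmless, because the bracket in the numerator of $\gamma$ carries a factor $t^3$ or an $\epsilon_\circ/t^3$ term.

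Joint continuity in $\kappa$ follows from continuous dependence of ODE solutions and roots on parameters. The range bounds come from \Cref{lem:filter} and \Cref{lem:right-filters}. The quotients $\mathscr D(\cdot)$ are polynomials because of the factors $(2-q)^2$ and $q$ in \eqref{eq:transport-potential}. With all this, $\mathfrak b_\sigma,\mathfrak c_\sigma,\mathfrak l$ are continuous.

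For (iii), the state description is \Cref{prop:state-region}. I will compute $\mathsf b_\Psi,\mathsf d_\Psi$ term by term from \eqref{eq:state-flow}, substitute $\eta=-\tau Q/d_\kappa+\nu_\kappa e$ into \eqref{eq:master}, and divide by $\nu_\kappa^3$. This uses $\mathcal C=\nu^3\psi(y/\nu)$ and the identity $d_\kappa/(\kappa\nu_\kappa^2)=1-\mathfrak l$. The term $a/\nu_\kappa$ becomes the multiplier part of $\mathfrak b$ through the formulas of \Cref{thm:comparisons}. The subtraction of $\Psi_{\rm raw}(\bar Y)$ is the gauge of \Cref{thm:master}, which produces $b_{P,c},c_{P,c}$. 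In $h=q\vartheta$ only $\mathsf b_\Psi,\mathsf d_\Psi$ and $a_\circ$ enter, and they enter linearly; hence the expressions are affine in $\vartheta$, and by convexity of $\psi$ the maximum over $\vartheta$ is attained at $\vartheta_\pm$.

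For (iv), $F(q)$ is smooth on $(0,2)$. Near $q=2$ the factor $(2-q)^2$ makes $F$ and $F'$ vanish. Near $q=0$, $\bar u_i,\bar h=O(q)$, and the linear terms carry an extra factor $q$, so $F=O(q^2)$ and $F'\to0$. At $q\in\{0,2\}$ the residual with $n=0$ vanishes because $\nu_\kappa=0$, $Q=0$, $\eta$ is bounded, and $\mathsf d_\Psi=0$ at $Y_\pm$ (both the gradient factor and the drift vanish there).
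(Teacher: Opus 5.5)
Your treatment of (i)--(iii) follows the paper's proof closely and is fine. The gap is in (iv), in two places.

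\textbf{The endpoint residual.} You check $\mathsf d_\Psi=0$ only at $Y_\pm$, on the grounds that ``the drift vanishes there''. Over $q=0$ the only state in $\Omega_\kappa$ is indeed $Y_-$. Over $q=2$, however, the fiber is the whole family $(2,2U/\kappa,2V/\kappa,2\vartheta)$. Such states do occur: to the right of the support of any density, $q=2$ while $u_i=r_{s_i}/s_i>0$ and $h=2-r_\circ<2$. So \Cref{prop:certificate-mechanism} needs the claim on this whole fiber. At such a state $\mathsf d(Y)=(0,\,2u_0+s_0u_0^2,\,2u_1+s_1u_1^2,\,2h-4)\neq0$, so your reason fails.

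The correct argument is structural. Write $\mathsf d_\Psi=\nabla\Psi_{\rm raw}\cdot\mathsf d-Q(q)F'(q)$. At $q=2$ the $q$-component of $\mathsf d$ is $Q(2)=0$, and $\partial_{u_0}\Psi_{\rm raw}$, $\partial_{u_1}\Psi_{\rm raw}$, $\partial_h\Psi_{\rm raw}$ all carry the factor $(2-q)^2$. Together with $\nu_\kappa=Q=0$ and $a=0$, this gives $\mathscr R_\kappa(Y,0)=-\mathsf d_\Psi(Y)=0$ on the entire fiber. This is exactly what the prefactor $(2-q)^2$ in \eqref{eq:transport-potential} is for.

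\textbf{The derivative of $F$.} The step ``$F=O(q^2)$, hence $F'\to0$'' is a non sequitur. Near $q=2$ the factor $(2-q)^2$ is working against derivatives that blow up. Since $\bar u_i(2)>0$ and $d_\kappa\sim(2-q)^{2/3}$, one has $\dd\bar u_i/\dd q=(\nu_\kappa^2-2\bar u_i-s_i\bar u_i^2)/d_\kappa\sim-c(2-q)^{-2/3}$, so you need a rate, not just the vanishing factor. Near $0$, note also that the $h$-term of $\Psi_{\rm raw}$ carries no extra factor $q$; $F=O(q^2)$ there relies on $\bar h=w_\circ=O(q^{7/3})$, not on $\bar h=O(q)$.

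The clean route is the chain rule along $\bar m_\kappa$, where $n=\nu_\kappa(q)$ and $\dd q/\dd x=d_\kappa$:
\[
F'(q)=\frac{(\mathsf b_{\rm raw}\nu_\kappa^2-\mathsf d_{\rm raw})(\bar Y(q))}{d_\kappa}=\frac{\nu_\kappa^3}{d_\kappa}\sum_P\bar c_P .
\]
By (ii), $\sum_P\bar c_P$ is bounded on each side. By \Cref{lem:local-profile}, $\nu_\kappa^3/d_\kappa\le\frac3{2\kappa}\nu_\kappa\to0$ at both ends, so $F'$ extends continuously by $0$.

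You need this formula in (iii) as well. It is what turns the subtraction of $\Psi_{\rm raw}(\bar Y)$ into $b_{P,c}=b_P+\kappa\hat\nu^2\bar c_P/\mathfrak d$ and $c_{P,c}=c_P-\bar c_P$. Invoking ``the gauge of \Cref{thm:master}'' does not supply it, since here $\eta$ is not shifted.
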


\begin{proof}\label{proof:lem:regular-criterion}
{The reference density.} At $q=t^3$, $Q=Q_t$, and dividing
$k\nu^3-3Q\nu-2b_\circ(Q)Q=0$ by $kt^3$ shows that $\nu_\circ(t^3)=tz_\circ(t)$. By
\Cref{lem:jets}, $3z_\circ^2-\frac{3th_t}k=\frac{6th_t}k+\frac{6b_\circ h_t}{kz_\circ}>0$ at the root,
and the coefficients are $C^\infty$ in $t\in[0,1]$, with
$2b_\circ(Q_t)h_t/k>0$; the implicit function theorem shows that $z_\circ$ is
$C^\infty$ and positive on $[0,1]$. Then
$d_\circ=k\nu_\circ^2-Q=t^2(kz_\circ^2-th_t)=t^2(2th_t+2b_\circ h_t/z_\circ)$ by the cubic, and
along $m_\circ$, $\frac{\dd x}{\dd t}=\frac{3t^2}{d_\circ}=\mathfrak p_\circ$. By
\Cref{thm:contact}, $m_\circ$ and $q_\circ$ are symmetric; in the coordinate $v$,
with $x$ the corresponding point of $[0,X_\circ(2)]$, $\frac{\dd x}{\dd v}=\mathfrak p_\circ$ on both
halves, because $\frac{\dd q}{\dd v}=3t^2$ there. The filter equation of $f_\circ$
in the variable $v$ is the equation for $\varphi_\circ$, with the initial value
$f_\circ(0)=0$; so $f_\circ=\varphi_\circ$, and $\check f_\circ$ at $v$ is $\varphi_\circ$ at $2-v$.
The coefficient $\mathfrak p_\circ(kt^2z_\circ^2-2\varphi-\varsigma\varphi^2)$ is $C^\infty$ on each closed
half, so $\varphi_\circ$ is $C^\infty$ on $[0,1]$ and on $[1,2]$. The function
$h_c=q_\circ-f_\circ$ satisfies, by $q_\circ'=km_\circ-Q$ and the filter equation,
$h_c'=(1+\varsigma)q_\circ^2-2(1+\varsigma q_\circ)h_c+\varsigma h_c^2$ (the computation of
\Cref{lem:filter-continuity} with $s/\kappa=\varsigma$), which in the variable $t$ is
the equation for $\epsilon_\circ$; so $h_c=\epsilon_\circ$. Write
$\epsilon_\circ'=\alpha(t)\epsilon_\circ+\mathfrak p_\circ(1+\varsigma)t^6$ with
$\alpha=\mathfrak p_\circ(-2-2\varsigma t^3+\varsigma\epsilon_\circ)$ bounded; variation of constants
from $\epsilon_\circ(0)=0$ gives $|\epsilon_\circ(t)|\le Ct^7$, so
$\epsilon_\circ(0)=\epsilon_\circ'(0)=\epsilon_\circ''(0)=0$ and $\epsilon_\circ/t^3$ is $C^\infty$ by
\Cref{lem:cubic-quotient}. By \eqref{eq:contact} and $f_\circ=q-h_c$,
\[
 \frac{Q(f_\circ+\check f_\circ)-2f_\circ\check f_\circ}{Q}
 =q+\check f_\circ-\frac{2\check f_\circ}{2-q}+\frac{h_c(2\check f_\circ-Q)}{Q}
 =\frac{q(2-q-\check f_\circ)+(h_c/q)(2\check f_\circ-Q)}{2-q},
\]
which divided by $2+\varsigma(f_\circ+\check f_\circ)$ is $\gamma$; so
$\Gamma_\varsigma(t^3)/Q_t=\gamma(t)$, a $C^\infty$ function on $[0,1]$, independent of
$\kappa$, with $\gamma\ge0$.

{The comparison density.} Dividing \eqref{eq:base-root} by
$\kappa t^3$ shows $\nu_\kappa(t^3)=t\hat\nu(t)$; as for $z_\circ$, $\hat\nu$ is $C^\infty$
and positive on $[0,1]$, and jointly continuous in $(t,\kappa)$ by the
argument in the proof of \Cref{lem:base-identity} applied to the cubic in
$z$, whose constant term $2(b+\kappa\omega\gamma)h_t/\kappa$ is positive. The cubic gives
$\kappa\hat\nu^2-th_t=2th_t+2(b+\kappa\omega\gamma)h_t/\hat\nu=\mathfrak d>0$ and
$d_\kappa=t^2\mathfrak d$. By \Cref{lem:local-profile}, $\bar m_\kappa$ is symmetric,
and $\frac{\dd x}{\dd v}=\mathfrak p$ on both halves as for $m_\circ$. The filter
equations in $v$ are those for $\varrho_s,\varrho_\circ$, so $\bar r_s=\varrho_s$ and
$\bar r_\circ=\varrho_\circ$, and these are $C^\infty$ on each closed half. The
functions $w_s=t^3-\frac\kappa s\varrho_s$ and $w_\circ=t^3-\varrho_\circ$ satisfy the equations
of (F) by the computation in the proof of \Cref{lem:filter-continuity}, and
vanish to order seven by the argument used for $\epsilon_\circ$; hence $w_s/t^3$ and
$w_\circ/t^3$ are $C^\infty$ on $[0,1]$. By \Cref{lem:filter-continuity},
$R_s(q)=\bar r_s(2-q)$ and $R_\circ(q)=\bar r_\circ(2-q)$; at $q=t^3$ the point $2-q$
corresponds to $v=2-t$. This gives (i) and the smoothness in (ii).

{Ranges and continuity.} By \Cref{lem:filter}, $f_\circ\ge0$ and
$h_c=q_\circ-f_\circ\ge0$, and $\gamma=\Gamma_\varsigma/Q\ge0$ by \Cref{lem:base-identity};
also $0\le\bar r_s\le\frac s\kappa q$
and $0\le\bar r_\circ\le q$, and by \Cref{lem:right-filters}, $0\le R_s\le\frac s\kappa(2-q)$ and
$0\le R_\circ\le2-q$; these give the ranges. Joint continuity in $(t,\kappa)$
follows from \Cref{lem:filter-continuity}: away from the end $t=0$ of a
side, the functions are continuous functions of $\bar r_s,\bar r_\circ$, $q$
and $1/q$, $1/(2-q)$; at $t=0$, the bounds $0\le w_s,w_\circ\le Ct^7$, uniform on
compact $\kappa$-sets, give the continuity of $1-w_s/t^3$ and $w_\circ/t^3$ with
the value $1$, respectively $0$, at $t=0$. Since $(2-q)^2$ divides $P$,
and $q$ divides $P$ when $l=1$, the polynomials $P'q$, $Pq$, $P'q^l$,
$Pq^{l-1}$, $Pq^l$, $Pq^{l+1}$ are divisible by $q$ and by $2-q$; so all
quotients $\mathscr D(\cdot)$ are polynomials. All denominators in
$\mathfrak b_\sigma,\mathfrak c_\sigma,\mathfrak l$ are positive and continuous: $\hat\nu$, $\mathfrak d$,
$2+s_iqU_i/\kappa+(2-q)\widehat R_i\ge2$, and those of $V,\vartheta_\pm$
(\Cref{lem:range}). This proves (ii).

{Coefficients of the potential.} For a term $\Psi_P$ of $\Psi_{\rm raw}$
write its derivative along a density as $\mathsf b_Pn^2-\mathsf d_P$ by
\eqref{eq:state-flow}, and put $b_P=-\mathsf b_P/\nu$ and
$c_P=(\mathsf b_P\nu^2-\mathsf d_P)/\nu^3$ with $\nu=\nu_\kappa(q)$. For $P(q)h$,
$(P(q)h)'=P'(\kappa n^2-Q)h+P(q^2-2h)$, so $\mathsf b_P=\kappa P'h$ and
$\mathsf b_P\nu^2-\mathsf d_P=P'hd_\kappa+P(q^2-2h)$. With $h=q\vartheta$, $\nu=t\hat\nu$,
$d_\kappa=t^2\mathfrak d$ and $q/t^3=\mathscr Dq$, this is \eqref{eq:regular-h}. For
$\Psi_P=P(q)\mathsf M$, $\mathsf M=u_0^iu_1^j$, with $u_l'=n^2-2u_l-s_lu_l^2$,
\[
 \mathsf b_P=\kappa P'\mathsf M+P(\partial_{u_0}\mathsf M+\partial_{u_1}\mathsf M),\qquad
 \mathsf d_P=P'Q\mathsf M+P\sum_{l'=0}^1\partial_{u_{l'}}\mathsf M\,(2u_{l'}+s_{l'}u_{l'}^2).
\]
With $u_{l'}=q\upsilon_{l'}$, $\mathsf M=q^l\mathsf m$, $\partial_{u_0}\mathsf M+\partial_{u_1}\mathsf M=q^{l-1}\mathsf m_1$
and, by Euler's identity $\sum\upsilon_{l'}\partial_{\upsilon_{l'}}\mathsf m=l\mathsf m$,
$\sum\partial_{u_{l'}}\mathsf M(2u_{l'}+s_{l'}u_{l'}^2)=2lq^l\mathsf m+q^{l+1}\mathsf m_2$. Hence
$\mathsf b_P\nu^2-\mathsf d_P=P'q^l\mathsf md_\kappa+Pq^{l-1}\mathsf m_1\nu^2-2lPq^l\mathsf m-Pq^{l+1}\mathsf m_2$,
and division by $\nu^3=t^3\hat\nu^3$ gives \eqref{eq:regular-u}.

{The spectral coefficients.} With $r_{s_i}=s_iqU_i/\kappa$,
$R_{s_i}=(2-q)\widehat R_i$, $r_\circ=q(1-\vartheta)$ and $R_\circ=(2-q)\widehat R_\circ$,
\[
 s_iQ+\kappa r_{s_i}R_{s_i}=s_iQ(1+U_i\widehat R_i),\qquad
 2r_\circ R_\circ-Q(r_\circ+R_\circ)=Q\bigl[2(1-\vartheta)\widehat R_\circ-q(1-\vartheta)-(2-q)\widehat R_\circ\bigr],
\]
and $Q/(d_\kappa\nu)=(2-t^3)/(\mathfrak d\hat\nu)$ on both sides, since $Q=t^3(2-t^3)$.
Hence, with $a=\xi_0a_{s_0}+\xi_1a_{s_1}+\chi a_\circ$ and
$\eta/\nu=-\tau(2-t^3)/(\mathfrak d\hat\nu)+e$, the quantity $(a-\eta)/\nu$ equals the
first three terms of $\mathfrak b_\sigma$ minus $e$.

{Centering.} The function $\bar Y(q)$ is absolutely continuous on
$[0,2]$, and $\frac{\dd\bar Y}{\dd q}=(\mathsf b\nu^2-\mathsf d)(\bar Y)/d_\kappa$ is
continuous on $(0,2)$. Hence $F\in C^1(0,2)$ with
\[
 F'(q)=\frac{\mathsf b_{\rm raw}(\bar Y)\nu^2-\mathsf d_{\rm raw}(\bar Y)}{d_\kappa}
 =\frac{\nu^3}{d_\kappa}\sum_P\bar c_P .
\]
By (ii), $\sum_P\bar c_P$ is bounded on each side, and
$\nu^3/d_\kappa=\nu\cdot\nu^2/d_\kappa\le\frac3{2\kappa}\nu\to0$ at $q\to0,2$; so $F'$
extends continuously by $0$, and $F\in C^1[0,2]$. At $Y_-=0$ every term of
$\Psi_{\rm raw}$ vanishes, and at $q=2$ the factor $(2-q)^2$ vanishes, so
$F(0)=F(2)=0$ and $\Psi(Y_\pm)=0$. For $\Psi=\Psi_{\rm raw}-F(q)$,
$\mathsf b_\Psi=\mathsf b_{\rm raw}-\kappa F'$ and $\mathsf d_\Psi=\mathsf d_{\rm raw}-QF'$, so
\[
 -\frac{\mathsf b_\Psi}\nu=\sum_P\Bigl(b_P+\frac{\kappa\hat\nu^2}{\mathfrak d}\bar c_P\Bigr),
 \qquad
 \frac{\mathsf b_\Psi\nu^2-\mathsf d_\Psi}{\nu^3}=\sum_Pc_P-\frac{F'd_\kappa}{\nu^3}=\sum_P(c_P-\bar c_P),
\]
using $\kappa\nu^2-Q=d_\kappa$ and $\nu^2/d_\kappa=\hat\nu^2/\mathfrak d$.

{The criterion.} By \Cref{lem:range}, the normalized states over
$q=q_{\mathrm s}(t)$ are the stated points. By the previous two steps,
$(a-\eta-\mathsf b_\Psi)/\nu=\mathfrak b-e$ and
\[
 \frac{\eta d_\kappa+\tau Q}{\kappa\nu^3}+\frac{\mathsf b_\Psi\nu^2-\mathsf d_\Psi}{\nu^3}
 =\frac{ed_\kappa}{\kappa\nu^2}+\mathfrak c=e\frac{\mathfrak d}{\kappa\hat\nu^2}+\mathfrak c=e(1-\mathfrak l)+\mathfrak c ,
\]
since $\mathfrak d=\kappa\hat\nu^2-t(2-t^3)$. For $0<q<2$, $\nu>0$, and
$\mathcal C(\nu,y)=\nu^3\psi(y/\nu)$ (\Cref{lem:conjugate}); dividing \eqref{eq:master}
by $\nu^3$ gives \eqref{eq:closed-criterion}. The functions $\vartheta\mapsto\mathfrak b$,
$\vartheta\mapsto\mathfrak c$ are affine, $\psi$ is convex, so the left side of
\eqref{eq:closed-criterion} is convex in $\vartheta$ and its maximum over an interval is
attained at an end point. This proves (iii).

{The ends.} At $q\in\{0,2\}$, $\nu_\kappa=Q=0$, $a=0$, and
$\mathscr R_\kappa(Y,0)=-\mathsf d_\Psi(Y)=-\nabla\Psi_{\rm raw}(Y)\cdot\mathsf d(Y)$. At $q=0$,
$Y=Y_-$ and $\mathsf d(Y_-)=0$. At $q=2$, the first component of $\mathsf d$ is
$Q(2)=0$, and the partial derivatives of $\Psi_{\rm raw}$ in $u_0,u_1,h$ contain
the factor $(2-q)^2$. Hence $\mathscr R_\kappa(Y,0)=0$. Finally $F'(0)=F'(2)=0$
and $F(0)=F(2)=0$, so the extension of $F$ by zero is $C^1$ on $\R$, and
$\Psi\in C^1(\R^4)$. This proves (iv).
\end{proof}

\begin{lemma}[Two cancellation identities]\label{lem:mobius}
For $q\in[0,2]$, $U\in[0,1]$, $\alpha>0$, $R\ge0$, $\delta_q=2-q$ and
$D=2+q\alpha U+\delta_qR$, one has $D\ge2$ and
\[
 \frac{1+UR}D=\frac12+\frac{U(R-\alpha)+\frac12\delta_q(\alpha U-R)}D ,
\]
and, for real $V,R_\circ$,
$2VR_\circ-qV-\delta_qR_\circ=2V(R_\circ-1)+\delta_q(V-R_\circ)$.
\end{lemma}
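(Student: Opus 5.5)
Both assertions are elementary algebraic identities, and the plan is to verify each by clearing denominators and comparing polynomials.

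For the first identity, start with the lower bound on $D$. All three quantities $q\alpha U$, $\delta_qR$ and $2$ are nonnegative, because $q\ge0$, $\alpha>0$, $U\ge0$, $\delta_q=2-q\ge0$ and $R\ge0$. Hence $D\ge2>0$, and multiplying through by $D$ is legitimate. The claimed identity then becomes
\[
 1+UR=\tfrac12D+U(R-\alpha)+\tfrac12\delta_q(\alpha U-R).
\]
Insert $D=2+q\alpha U+\delta_qR$ on the right and expand. This gives
\[
 1+\tfrac12q\alpha U+\tfrac12\delta_qR+UR-\alpha U+\tfrac12\delta_q\alpha U-\tfrac12\delta_qR .
\]
The two terms $\pm\tfrac12\delta_qR$ cancel. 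The remaining $\alpha U$ terms carry the coefficient $\tfrac12q-1+\tfrac12\delta_q=\tfrac12(q+2-q)-1=0$, so they cancel as well. What is left is $1+UR$, which is the left side.

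For the second identity, expand the right side:
\[
 2VR_\circ-2V+\delta_qV-\delta_qR_\circ .
\]
Since $-2V+\delta_qV=-qV$, this equals $2VR_\circ-qV-\delta_qR_\circ$, the left side.

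There is no real obstacle; the only point to check is the sign condition that gives $D\ge2$. These identities are presumably used in \Cref{lem:regular-criterion} to write the spectral terms of $\mathfrak b_\sigma$. In that setting one takes $\alpha=s_i/\kappa$, $R=\widehat R_i$ and $V=1-\vartheta$. The point of the rewriting is to split off constants, so that the remaining pieces vanish along the comparison trajectory.
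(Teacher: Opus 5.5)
Your proof is correct and is the same direct algebraic verification as the paper's: the paper expands $2(1+UR)-D$, substitutes $q\alpha U=2\alpha U-\delta_q\alpha U$, and uses $q=2-\delta_q$ for the second identity, while you also spell out the sign argument for $D\ge2$ that the paper leaves implicit. Your closing remark on the purpose is off, though: the paper uses both forms to intersect interval enclosures of the determinant and second-trace coefficients in $\mathfrak b_\sigma$, not to make terms vanish along the comparison trajectory; the rewritten form exposes the cancellation at the right end, where $\delta_q\to0$, $\widehat R_i\to s_i/\kappa$ and $\widehat R_\circ\to1$ as $t\to0$.
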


\begin{proof}\label{proof:lem:mobius}
$2(1+UR)-D=2UR-q\alpha U-\delta_qR$, and $q\alpha U=2\alpha U-\delta_q\alpha U$; this
gives the first identity. The second follows from $q=2-\delta_q$.
\end{proof}

\subsection{Enclosures}\label{sec:enclosures}

An {enclosure} of a real number $y$ is a closed interval
$[\alpha,\beta]\ni y$; an enclosure of a function $f$ on a set $S$ is an
interval containing $f(S)$. For $f\in C^N$ on an interval and $j\le N$
write $f_j=f^{(j)}/j!$ for the normalized derivatives; a {jet
enclosure} of order $N$ of $f$ on $S$ is a list of intervals $F_0,\dots,F_N$
with $f_j(t)\in F_j$ for all $t\in S$ and $j\le N$.

In interval arithmetic with outward rounding, each operation $+,-,\times,/$,
each monotone function ($\sqrt\cdot$, $\sqrt[3]\cdot$, $\exp$, $\log$) and the
functions $x^2$, $|x|$, $x_+$ are evaluated on intervals so that the result
contains the exact range \cite{Rump}; by induction on the number of
operations, evaluating an expression in enclosures of its arguments gives
an enclosure of its values, and the intersection of two enclosures of the
same quantity is an enclosure. All enclosures below are obtained in this
way, or by
\Cref{lem:cubic-root,lem:quadrature,lem:flow-step,lem:cubic-quotient,lem:box-bound}.

\begin{lemma}[Jet recurrences]\label{lem:jets}
Let $f,g,a,b,C,A,B\in C^N$ on an interval and $0\le j\le N$. Then
\[
 (fg)_j=\sum_{i=0}^jf_ig_{j-i},\qquad
 (1/f)_0=\frac1{f_0},\quad (1/f)_j=-\frac1{f_0}\sum_{i=1}^jf_i(1/f)_{j-i}\quad(f\ne0),
\]
\[
 (e^f)_0=e^{f_0},\quad j(e^f)_j=\sum_{i=1}^jif_i(e^f)_{j-i};\qquad
 z=\sqrt f>0:\quad z_0=\sqrt{f_0},\quad z_j=\frac{f_j-\sum_{i=1}^{j-1}z_iz_{j-i}}{2z_0}\ (j\ge1) .
\]
If $z>0$ solves $z^3-az-b=0$ with $a\ge0$, $b>0$, then $3z^2-a=2a+3b/z>0$,
$z\in C^N$ by the implicit function theorem, and at each point $t$, with
$s_l=\sum_{i=1}^{l-1}z_iz_{l-i}$ and the normalized derivatives $a_i,b_i,z_i$ at $t$,
\[
 z_j=\frac{b_j+\sum_{i=1}^ja_iz_{j-i}-3z_0s_j-\sum_{i=1}^{j-2}z_is_{j-i}}{2a_0+3b_0/z_0}
 \qquad(j\ge1).
\]
If $y'=C+Ay+By^2$, then
$(j+1)y_{j+1}=C_j+\sum_{i=0}^jA_iy_{j-i}+\sum_{i=0}^jB_i(y^2)_{j-i}$ for
$j<N$. Moreover, for every real $c$, with $\tilde C=C+Ac+Bc^2$,
$\tilde A=A+2Bc$, $\varrho_0=y_0-c$ and $\varrho_j=y_j$ ($j\ge1$),
$(j+1)y_{j+1}=\tilde C_j+\sum_{i=0}^j\tilde A_i\varrho_{j-i}+\sum_{i=0}^jB_i(\varrho^2)_{j-i}$,
where $(\varrho^2)_l=\sum_{i=0}^l\varrho_i\varrho_{l-i}$; in particular, for $c=y_0$, $\varrho_0=0$.
Evaluating these formulas in jet enclosures on a set $S$ gives jet
enclosures on $S$.
\end{lemma}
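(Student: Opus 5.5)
The plan is to prove each of the listed identities by differentiating, using the Leibniz rule for Taylor coefficients. Write $f_j=f^{(j)}/j!$. The product rule $(fg)_j=\sum_{i\le j}f_ig_{j-i}$ is the Leibniz formula divided by $j!$. The reciprocal recurrence follows by applying the product rule to $f\cdot(1/f)=1$ and solving for the top coefficient, which is possible because $f_0\ne0$. For the exponential, differentiate $e^f$ to get $(e^f)'=f'e^f$. The coefficient form of $f'$ is $(f')_{j-1}=jf_j$, so the product rule gives $j(e^f)_j=\sum_{i=1}^jif_i(e^f)_{j-i}$. For $z=\sqrt f$, apply the product rule to $z^2=f$ and isolate the two terms $2z_0z_j$; this requires $z_0>0$. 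The Riccati recurrence is the coefficient of order $j$ in $y'=C+Ay+By^2$, using $(y')_j=(j+1)y_{j+1}$ and the product rule twice.

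The shifted form with $\varrho=y-c$ uses the same argument. Substituting $y=c+\varrho$ gives $y'=\varrho'=\tilde C+\tilde A\varrho+B\varrho^2$, with $\tilde C=C+Ac+Bc^2$ and $\tilde A=A+2Bc$. Here $c$ is a constant, so $\varrho_j=y_j$ for $j\ge1$, and taking order-$j$ coefficients gives the displayed identity. The choice $c=y_0$ is made pointwise: at a fixed point $t$ it is a real number. Evaluating the coefficient identity at $t$ is still legitimate, because the identity holds for every constant $c$.

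For the cubic root, first note that at a positive root $z^3=az+b$, so $3z^2-a=2a+3b/z>0$, since $a\ge0$ and $b,z>0$. The implicit function theorem then gives $z\in C^N$, because the coefficients are $C^N$ and the $z$-derivative of $z^3-az-b$ is nonzero. For the recurrence, take order-$j$ coefficients in $z^3=az+b$. Set $p=z^2$, so that $p_l=2z_0z_l+s_l$ with $s_l=\sum_{i=1}^{l-1}z_iz_{l-i}$. Then
\[
(z^3)_j=\sum_{i=0}^j z_ip_{j-i}=z_0p_j+z_jp_0+\sum_{i=1}^{j-1}z_ip_{j-i}.
\]
Expand this, and collect every term containing $z_j$. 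These come from $z_0p_j$, from $z_jp_0=z_jz_0^2$, and from the term $i=j-1$ paired with $p_1$ together with the terms in the $s$'s. The result should be $3z_0^2z_j$ plus terms in $z_1,\dots,z_{j-1}$ only, namely $3z_0s_j+\sum_{i=1}^{j-2}z_is_{j-i}$. On the right side, $(az)_j=a_0z_j+\sum_{i\ge1}a_iz_{j-i}$. Moving $a_0z_j$ to the left leaves the coefficient $3z_0^2-a_0=2a_0+3b_0/z_0$, which gives the stated formula.

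The main obstacle is this bookkeeping in $(z^3)_j$. Double-counting between $z_0p_j$ and $\sum z_ip_{j-i}$ is easy to get wrong, so I would verify the identity $(z^3)_j=3z_0^2z_j+3z_0s_j+\sum_{i=1}^{j-2}z_is_{j-i}$ by writing $(z^3)_j$ as the sum of $z_az_bz_c$ over ordered triples with $a+b+c=j$. The triples split into three classes. Those containing an index $j$ give $3z_0^2z_j$. Those with exactly one zero index and the other two in $[1,j-1]$ give $3z_0s_j$. Those with all three indices positive give $\sum_{i\ge1}z_is_{j-i}$, where $s_{j-i}$ is empty unless $j-i\ge2$, so $i\le j-2$.

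The final sentence of the lemma follows from the inclusion property of interval arithmetic. Each recurrence expresses the order-$j$ coefficient as a rational expression in lower-order coefficients, with denominators $f_0$, $2z_0$ or $2a_0+3b_0/z_0$ that are nonzero where the hypotheses hold. Evaluating these expressions on enclosures of the inputs, by induction on $j$, contains the true values at every $t\in S$. For the shifted Riccati form, the value $c=y_0(t)$ varies with $t$, so I would justify it pointwise. At each $t$, the quantity $\varrho_0$ is exactly $0$ and the remaining inputs lie in their enclosures, hence the evaluated expression contains $y_{j+1}(t)$.
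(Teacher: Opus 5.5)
Your proposal is correct and follows the paper's proof. The product, reciprocal, exponential, square-root and Riccati recurrences are the standard Taylor-arithmetic identities. The shift $C+Ay+By^2=\tilde C+\tilde A(y-c)+B(y-c)^2$, with $c$ frozen at the point, is the same device the paper uses, and the enclosure claim follows pointwise from inclusion monotonicity, as you say.

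For the cubic, the paper writes $z^3=z_0^3+3z_0^2u+3z_0u^2+u^3$ with $u=z-z_0$. Since $u_0=0$, the product rule gives $(u^2)_j=s_j$ and $(u^3)_j=\sum_{i=1}^{j-2}z_is_{j-i}$ directly. This is equivalent to your ordered-triple count, which is correct. Your preliminary remark that the $i=j-1$ term and the $s$'s carry $z_j$ is wrong, since only $z_0p_j$ and $z_jp_0$ contain $z_j$, but the triple count replaces it and gives the right identity.
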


\begin{proof}\label{proof:lem:jets}
The formulas for products, reciprocals, exponentials and square roots
are the standard recurrences of Taylor arithmetic \cite[Chapter~13]{GriewankWalther}.
For the cubic, $z_0^3=a_0z_0+b_0$ gives $3z_0^2-a_0=2a_0+3b_0/z_0>0$, and
comparing the coefficients of order $j$ in $z^3=az+b$, with
$z^3=z_0^3+3z_0^2u+3z_0u^2+u^3$ for $u=z-z_0$, gives the formula, since the
coefficient of $u^2$ is $s_j$ and that of $u^3$ is $\sum_{i=1}^{j-2}z_is_{j-i}$. The
equation $y'=F$ gives $(j+1)y_{j+1}=F_j$, and $C+Ay+By^2=\tilde C+\tilde A(y-c)+B(y-c)^2$.
Each formula expresses the jet at $t$ through jets at the same $t$ (and $c$,
which may be taken equal to $y(t)$); so evaluation in enclosures valid on
$S$ gives enclosures valid on $S$.
\end{proof}

\begin{lemma}[The positive cubic root]\label{lem:cubic-root}
For $a\ge0$ and $b>0$ the equation $z^3-az-b=0$ has a unique positive root
$z(a,b)$, and $\max\{\sqrt a,\sqrt[3]b\}\le z(a,b)\le\sqrt a+\sqrt[3]b$. The
root increases in $a$ and in $b$. If $X$ is an interval containing
$z(a,b)$ with $3x^2-a>0$ on $X$ and $m\in X$, then
$z(a,b)\in m-(m^3-am-b)/(3X^2-a)$, the quotient being taken over $X$.
\end{lemma}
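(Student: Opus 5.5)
The plan is to treat $p(z)=z^3-az-b$ on $[0,\infty)$ by elementary calculus, and then obtain the enclosure from the mean value theorem (the interval Newton step).

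\emph{Existence and uniqueness.} For $z>0$, $p(z)=0$ is equivalent to $z^2-a-b/z=0$. The function $z\mapsto z^2-a-b/z$ is strictly increasing on $(0,\infty)$, tends to $-\infty$ as $z\downarrow0$ (since $b>0$) and to $+\infty$ as $z\to\infty$. Hence there is exactly one positive root.

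\emph{Bounds.} Both bounds follow from the sign of $p$, since $p<0$ on $(0,z(a,b))$ and $p>0$ beyond it.
\begin{itemize}
\item From $z^3=az+b$ we get $z^3\ge b$, so $z\ge\sqrt[3]b$. We also get $z^3>az$, so $z^2>a$ and $z\ge\sqrt a$.
\item For the upper bound put $w=\sqrt a+\sqrt[3]b$. Expanding gives $w^3\ge w\cdot(\sqrt a)^2+(\sqrt[3]b)^3=aw+b$: indeed $w^3=w\cdot w^2$ with $w^2\ge a$ yields $w^3\ge aw$, but we need the sharper split. Write $w^3=w^2\sqrt a+w^2\sqrt[3]b$. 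Then $w^2\sqrt a\ge a\sqrt a+\dots$, and more cleanly $w^3-aw=w(w^2-a)\ge w\cdot(\sqrt[3]b)^2\ge(\sqrt[3]b)^3=b$, because $w^2-a\ge(\sqrt[3]b)^2$ and $w\ge\sqrt[3]b$.
\end{itemize}
So $p(w)\ge0$, which gives $z\le w$.

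\emph{Monotonicity.} Regard the positive root as defined by $\Phi(z)=z^2-a-b/z=0$. The function $\Phi$ is strictly increasing in $z$ and decreasing in $a$ and in $b$. Increasing $a$ or $b$ lowers $\Phi$ at the old root, so the new root moves right. This argument uses only monotonicity and avoids the implicit function theorem.

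\emph{Interval Newton step.} Let $X\ni z^*=z(a,b)$ with $3x^2-a>0$ on $X$, and let $m\in X$. Since $X$ is an interval containing both $m$ and $z^*$, the mean value theorem gives some $\xi$ between them, hence $\xi\in X$, with
\[
 0-p(m)=p(z^*)-p(m)=(3\xi^2-a)(z^*-m).
\]
Because $3\xi^2-a\ne0$, this gives $z^*=m-p(m)/(3\xi^2-a)$. This value lies in $m-p(m)/(3X^2-a)$, where the quotient range over $X$ is a well-defined interval because the denominator is bounded away from $0$. In interval arithmetic, the enclosure of $\{3x^2-a:x\in X\}$ contains $3\xi^2-a$, and the claimed containment follows.

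No step is a real obstacle. The only care needed is the upper bound, where one must use $w^2-a\ge(\sqrt[3]b)^2$ rather than expand $w^3$ naively.
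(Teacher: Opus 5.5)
Your proof is correct and follows the paper's argument. It uses the same strictly increasing function $z^2-a-b/z$ for existence and uniqueness, the same inequalities $w^2-a\ge(\sqrt[3]{b})^2$ and $w\ge\sqrt[3]{b}$ for the upper bound (you conclude directly from the sign of $p$, the paper by contradiction), and the same mean value step for the Newton enclosure. The one difference is monotonicity: you compare $z^2-a-b/z$ at the old root instead of differentiating implicitly ($\partial_az=z/(3z^2-a)>0$, $\partial_bz=1/(3z^2-a)>0$), which is slightly more elementary. In a final write-up, drop the abandoned expansion attempts in the upper-bound paragraph.
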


\begin{proof}\label{proof:lem:cubic-root}
The positive roots are the zeros of $z^2-a-b/z$, which increases strictly
from $-\infty$ to $+\infty$ on $(0,\infty)$. At the root, $z^3=az+b\ge b$ and
$z^2=a+b/z\ge a$. If $z>\sqrt a+\sqrt[3]b$, then
$z^2-a>2\sqrt a\sqrt[3]b+\sqrt[3]{b^2}\ge\sqrt[3]{b^2}$ and
$z(z^2-a)>\sqrt[3]b\cdot\sqrt[3]{b^2}=b$, which is impossible. Implicit
differentiation gives $\partial_az=z/(3z^2-a)>0$ and
$\partial_bz=1/(3z^2-a)>0$. Finally, by the mean value theorem,
$m^3-am-b=(3\xi^2-a)(m-z)$ for some $\xi$ between $m$ and $z$, so
$\xi\in X$ and $z=m-(m^3-am-b)/(3\xi^2-a)$.
\end{proof}

\begin{lemma}[Quadrature]\label{lem:quadrature}
Let $f\in C^N([c-h,c+h])$, $N\ge2$, and $M_N\ge\sup|f_N|$ on
$[c-h,c+h]$. Then
\[
 \Bigl|\int_{c-h}^{c+h}f(t)\dd t
 -\sum_{\substack{0\le j<N\\j\text{ even}}}\frac{2h^{j+1}}{j+1}f_j(c)\Bigr|
 \le\frac{2h^{N+1}}{N+1}M_N .
\]
\end{lemma}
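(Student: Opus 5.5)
We expand $f$ in its Taylor polynomial of order $N-1$ about the center $c$, with remainder of order $N$, and integrate term by term over the symmetric interval. Writing $f(t)=\sum_{j<N}f_j(c)(t-c)^j+\mathcal R(t)$, Taylor's theorem in Lagrange form gives, for each $t\in[c-h,c+h]$,
\[
 \mathcal R(t)=f_N(\xi_t)(t-c)^N
\]
for some $\xi_t$ between $c$ and $t$. This uses only $f\in C^N$ and the normalization $f_N=f^{(N)}/N!$.

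The polynomial terms integrate exactly. For odd $j$ the integral of $(t-c)^j$ over $[c-h,c+h]$ vanishes by antisymmetry. For even $j$ it equals $2h^{j+1}/(j+1)$. Summing these gives exactly the displayed sum over even $j<N$.

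It remains to bound the integral of the remainder. We have $|\mathcal R(t)|\le M_N|t-c|^N$, and $\int_{c-h}^{c+h}|t-c|^N\dd t=2h^{N+1}/(N+1)$, which is the stated bound.

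The only point needing care is measurability and integrability of $\mathcal R$. This is immediate because $\mathcal R=f-(\text{polynomial})$ is continuous, so only the pointwise bound on $\mathcal R$ is used, not the intermediate points $\xi_t$ themselves. The hypothesis $N\ge2$ is not essential to the argument; it only ensures the sum contains the $j=0$ term together with a nontrivial remainder order. No step here is a real obstacle: the estimate is the standard Taylor remainder bound, with the odd moments cancelling by symmetry.
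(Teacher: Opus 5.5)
Your proposal is correct and follows the paper's proof essentially verbatim. Both expand $f$ by Taylor's formula about $c$ with Lagrange remainder $f_N(\xi_t)(t-c)^N$, integrate the monomials exactly (odd powers vanish by symmetry, even powers give $2h^{j+1}/(j+1)$), and bound the remainder using $\int_{c-h}^{c+h}|t-c|^N\,\mathrm{d}t=2h^{N+1}/(N+1)$.
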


\begin{proof}\label{proof:lem:quadrature}
By Taylor's formula with Lagrange remainder,
$f(t)=\sum_{j<N}f_j(c)(t-c)^j+f_N(\xi_t)(t-c)^N$ with $|f_N(\xi_t)|\le M_N$.
The integral of $(t-c)^j$ over $[c-h,c+h]$ is $2h^{j+1}/(j+1)$ for even
$j$ and zero for odd $j$, and $\int|t-c|^N=2h^{N+1}/(N+1)$.
\end{proof}

\begin{lemma}[A validated flow step]\label{lem:flow-step}
Let $F(t,y)=C(t)+A(t)y+B(t)y^2$ with $C,A,B\in C^{N+1}([a,a+h])$, $N\ge1$.
Let $Y_0$ be an interval, $m\in\R$, $r\ge\sup_{y\in Y_0}|y-m|$, and let
$Y$ be a closed interval such that
\begin{equation}\label{eq:picard}
 y_0+\theta F(t,y)\in Y\qquad\text{for all }y_0\in\operatorname{hull}(Y_0\cup\{m\}),\
 \theta\in[0,h],\ t\in[a,a+h],\ y\in Y .
\end{equation}
Then for every $y_0\in\operatorname{hull}(Y_0\cup\{m\})$ the solution of $y'=F(t,y)$,
$y(a)=y_0$, exists on $[a,a+h]$ and stays in $Y$. Let $z$ be the solution
with $z(a)=m$, $R\ge\sup_{[a,a+h]}|z_{N+1}|$ and
$L\ge\sup\{A(t)+2B(t)y:t\in[a,a+h],y\in Y\}$. Then every solution with
$y(a)\in Y_0$ satisfies, for $0\le v\le h$,
\[
 \Bigl|y(a+v)-\sum_{j=0}^Nz_j(a)v^j\Bigr|\le Rv^{N+1}+re^{Lv},\qquad y(a+v)\in Y .
\]
The number $R$ may be taken as an upper bound for the modulus of the
interval obtained from either recurrence of \Cref{lem:jets} with the jet
enclosures of $C,A,B$ on $[a,a+h]$ and with $y_0$ replaced by $Y$,
respectively with $\varrho_0=0$ and $c$ replaced by $Y$.
\end{lemma}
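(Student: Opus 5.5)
The argument combines a Picard-type enclosure for existence with a Gronwall estimate for the error.

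\emph{Existence and invariance.} Condition \eqref{eq:picard} says the interval map $y\mapsto y_0+\theta F(t,y)$ sends $Y$ into itself for all admissible $y_0$, $\theta\in[0,h]$ and $t$. Consider the Picard operator $(\mathcal Ty)(t)=y_0+\int_a^tF(\tau,y(\tau))\dd\tau$ on continuous functions $[a,a+h]\to Y$. Since $\int_a^tF(\tau,y(\tau))\dd\tau=(t-a)\bar F$, with $\bar F$ an average of values of $F$ over $[a,t]\times Y$, it lies in $(t-a)\cdot\operatorname{hull}F([a,a+h]\times Y)$. By convexity of $Y$ and the hypothesis applied with $\theta=t-a$, the result lies in $Y$. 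The set $Y$ is compact and $F$ is Lipschitz in $y$ on it, since it is quadratic with continuous coefficients. Hence $\mathcal T$ maps the closed set of $Y$-valued continuous functions into itself, and its iterates converge (Picard--Lindelöf with the standard weighted norm). This gives a solution staying in $Y$. Uniqueness holds by the local Lipschitz property, so this solution is the solution.

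\emph{Taylor part.} Apply this with $y_0=m$ to obtain $z$ on $[a,a+h]$ with values in $Y$, and $z\in C^{N+1}$ because $C,A,B\in C^{N+1}$. Taylor's formula with Lagrange remainder gives $|z(a+v)-\sum_{j\le N}z_j(a)v^j|\le Rv^{N+1}$ whenever $R\ge\sup|z_{N+1}|$. For $R$ computed from the recurrences of \Cref{lem:jets}, note that $z(t)\in Y$ for all $t$. Hence at each $t$ the jet of $z$ is produced by the recurrence from the exact jets of $C,A,B$ at $t$ (contained in their enclosures) and $y_0=z(t)\in Y$. In the shifted version one takes $c=z(t)\in Y$ and $\varrho_0=0$. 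So the enclosure obtained by evaluating in intervals contains $z_{N+1}(t)$ for every $t$, and its modulus bounds $R$.

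\emph{Perturbation part.} For $y$ with $y(a)\in Y_0$, both $y$ and $z$ stay in $Y$. Their difference satisfies $(y-z)'=(A+B(y+z))(y-z)$, and by convexity of $Y$ the factor $A+B(y+z)=A+2B\xi$ with $\xi\in Y$ is at most $L$. Hence $|y-z|(a+v)\le|y(a)-m|e^{Lv}\le re^{Lv}$, by integrating $\frac{\dd}{\dd v}(y-z)=\lambda(v)(y-z)$ exactly as $\exp\int\lambda$. This uses an upper bound on $\lambda$ only; $L$ may be negative, and no absolute value is needed. The triangle inequality then gives the claim.

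The main obstacle is the first step, namely justifying that the one-parameter condition with $\theta\in[0,h]$ and pointwise $y$ really yields invariance for integrals along curves. This rests on the averaging/convex-hull observation, which I expect to be the only delicate point.
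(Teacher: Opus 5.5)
Your proof is correct and follows the paper's argument. Invariance of $Y$ comes from the same averaging/convexity observation. The bound $R$ is justified by running the jet recurrence of \Cref{lem:jets} along $z(t)\in Y$, either directly or in the shifted form with $c=z(t)$. The error $e=y-z$ is integrated exactly, using $A+2B\frac{y+z}{2}\le L$.

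The only difference is how the existence step is packaged. The paper solves the projected equation $\tilde y'=F(t,\pi_Y(\tilde y))$, whose right side is bounded and globally Lipschitz, and then uses the averaging to show the projection is never active. You instead run the Picard contraction directly on $C([a,a+h],Y)$. Both versions rely on $Y$ being a bounded interval.
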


\begin{proof}\label{proof:lem:flow-step}
Let $\pi_Y$ be the nearest-point projection onto $Y$. The equation
$\tilde y'=F(t,\pi_Y(\tilde y))$ has a bounded globally Lipschitz
right-hand side and therefore a solution on $[a,a+h]$.
For fixed $t$, condition \eqref{eq:picard} places every value
$y_0+(t-a)F(\tau,\pi_Y(\tilde y(\tau)))$, $a\le\tau\le t$, in $Y$.
Their average is $\tilde y(t)$ and also belongs to the interval $Y$.
Thus the projection is inactive, and $\tilde y$ is the unique solution
of the original equation on the whole step.

Taylor's formula gives
$z(a+v)=\sum_{j\le N}z_j(a)v^j+z_{N+1}(\xi)v^{N+1}$ with $\xi\in[a,a+v]$. At every
$t\in[a,a+h]$ the normalized derivatives of $z$ satisfy the recurrence of
\Cref{lem:jets} with the normalized derivatives of $C,A,B$ at $t$ and
with $z(t)\in Y$ (in the shifted form with $c=z(t)\in Y$); by \Cref{lem:jets}
the interval recurrence with the enclosures on $[a,a+h]$ and $Y$ contains
$z_{N+1}(t)$, which justifies the
choice of $R$. For a solution $y$ with $y(a)\in Y_0$, the difference
$e=y-z$ satisfies $e'=F(t,y)-F(t,z)=[A+B(y+z)]e=[A+2B\tfrac{y+z}2]e$, and
$\frac{y+z}2\in Y$ by convexity, so $A+B(y+z)\le L$ and
$|e(a+v)|=|e(a)|\exp\int_a^{a+v}[A+B(y+z)]\le re^{Lv}$, for either sign of $L$.
\end{proof}

\begin{lemma}[Removing a cubic zero]\label{lem:cubic-quotient}
Let $f\in C^{N+4}([0,\beta])$ with $f(0)=f'(0)=f''(0)=0$, and let
$g(t)=f(t)/t^3$ for $t>0$, $g(0)=f_3(0)$. Then $g\in C^{N+1}([0,\beta])$,
\[
 g_j(t)=\frac{(j+3)!}{2\,j!}\int_0^1(1-v)^2v^jf_{j+3}(vt)\dd v\qquad(0\le j\le N+1),
\]
the weight $\frac{(j+3)!}{2j!}(1-v)^2v^j$ being nonnegative with integral one, and
for $0\le t\le\beta$ and $j\le N$
\[
 g_j(t)\in\sum_{l=j}^N\binom ljf_{l+3}(0)t^{l-j}
 +\binom{N+1}jt^{N+1-j}\,\mathrm{hull}\,f_{N+4}([0,\beta]) .
\]
\end{lemma}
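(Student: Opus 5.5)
The plan is to derive everything from the integral (Peano) form of Taylor's remainder of order three at the origin. Since $f(0)=f'(0)=f''(0)=0$ and $f\in C^{N+4}$, Taylor's formula with integral remainder gives
\[
 f(t)=\frac{t^3}{2}\int_0^1(1-v)^2f'''(vt)\dd v ,
\]
so for $t>0$ one has $g(t)=3\int_0^1(1-v)^2f_3(vt)\dd v$, using $f'''=3!\,f_3$. At $t=0$ the same expression equals $3f_3(0)\int_0^1(1-v)^2\dd v=f_3(0)$, which is the prescribed value $g(0)$. The integral formula therefore holds on all of $[0,\beta]$.

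Next I will differentiate under the integral sign. The integrand $(1-v)^2f_3(vt)$ has $t$-derivatives up to order $N+1$ that are continuous on $[0,1]\times[0,\beta]$, because $f_3\in C^{N+1}$. Hence $g\in C^{N+1}([0,\beta])$. The $j$th derivative is
\[
 g^{(j)}(t)=3\int_0^1(1-v)^2v^jf_3^{(j)}(vt)\dd v ,
\]
and $f_3^{(j)}=\frac{(j+3)!}{3!}f_{j+3}$. Dividing by $j!$ gives $g_j(t)=\frac{(j+3)!}{2\,j!}\int_0^1(1-v)^2v^jf_{j+3}(vt)\dd v$. The weight integrates to one because $\int_0^1(1-v)^2v^j\dd v=\frac{2\,j!}{(j+3)!}$, a Beta integral, and it is clearly nonnegative.

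For the enclosure, a shortcut is to expand $f$ itself. Taylor's formula of order $N+3$ at $0$ with Lagrange remainder gives
\[
 f(t)=\sum_{l=0}^{N}f_{l+3}(0)t^{l+3}+\tilde f(t),
\]
with remainder $|\tilde f(t)|\le\sup|f_{N+4}|\,t^{N+4}$. Dividing by $t^3$ writes $g$ as a polynomial plus a remainder, but differentiating that remainder is not directly controlled. I therefore use the integral formula instead.

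I apply Taylor's formula in $t$ to $g$ itself, at order $N+1$ around $0$, combined with the integral representation of $g_{N+1}$:
\[
 g_j(t)=\sum_{l=j}^{N}\binom ljg_l(0)t^{l-j}+\binom{N+1}{j}g_{N+1}(\xi)t^{N+1-j},
\]
for some $\xi\in[0,t]$. This is Taylor's theorem applied to $g^{(j)}/j!$, and the identity $\frac{1}{j!}\frac{g^{(l)}(0)}{(l-j)!}=\binom ljg_l(0)$ produces the binomial factors. From the integral formula, $g_l(0)=f_{l+3}(0)$, because the weight has mass one. Also $g_{N+1}(\xi)$ is a weighted average of values $f_{N+4}(v\xi)$ with $v\xi\in[0,\beta]$, so it lies in the convex hull of $f_{N+4}([0,\beta])$. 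This gives the stated enclosure.

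The main obstacle is only bookkeeping: getting the normalization constants and binomial factors right, and justifying differentiation under the integral. The latter is routine by continuity on a compact set.
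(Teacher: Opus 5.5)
Your proposal is correct and follows the paper's proof essentially step for step. Both arguments use the integral Taylor remainder at $0$, then differentiate under the integral with the Beta integral $\int_0^1(1-v)^2v^j\,\mathrm{d}v=2\,j!/(j+3)!$, and finally apply Taylor's formula to $g_j$ at $0$ with Lagrange remainder $\binom{N+1}{j}g_{N+1}(\xi)t^{N+1-j}$, where $g_{N+1}(\xi)$ is a probability-weighted average of $f_{N+4}$ on $[0,\xi]$.
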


\begin{proof}\label{proof:lem:cubic-quotient}
Taylor's formula with integral remainder at $0$ gives
$f(t)=\frac{t^3}2\int_0^1(1-v)^2f'''(vt)\dd v$, so
$g(t)=\frac12\int_0^1(1-v)^2f'''(vt)\dd v$, and differentiation under the
integral gives $g^{(j)}(t)=\frac12\int_0^1(1-v)^2v^jf^{(j+3)}(vt)\dd v$, which
is the formula, with $\int_0^1(1-v)^2v^j\dd v=2j!/(j+3)!$. Hence $g_l(0)=f_{l+3}(0)$
and $g_{N+1}(t)$ lies in the hull of the values of $f_{N+4}$ on $[0,t]$.
Taylor's formula for $g_j$ at $0$ of order $N-j$, whose remainder
coefficient is $\binom{N+1}jg_{N+1}(\xi)$, gives the last claim.
\end{proof}

\begin{lemma}[Polynomial translation]\label{lem:translation}
Let $p(q)=\sum_{j=0}^na_jq^j$ and $c\in\R$. Starting with $b_j=a_j$, for
$r=n-1,\dots,0$ replace $b_j$ by $b_j+cb_{j+1}$ for $j=r,\dots,n-1$ in
this order. Then $b_j=\sum_{i=j}^n\binom ija_ic^{i-j}$ and
$p(q)=\sum_jb_j(q-c)^j$.
\end{lemma}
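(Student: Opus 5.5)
We prove the two assertions of the lemma separately, by induction on the loop structure of the algorithm; the second follows from the first.

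\emph{Invariant.} Index the outer passes by $r=n-1,\dots,0$. After the pass with index $r$ the coefficients are
\[
 b^{(r)}_j=\sum_{i=j}^{n}\binom{i-r}{j-r}a_ic^{i-j}\qquad(j\ge r),
\]
and $b^{(r)}_j=a_j$ for $j<r$. At $r=0$ this is exactly the formula claimed in the lemma.

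\emph{The invariant is a Taylor expansion.} A cleaner way to see why this holds is to write $p(q)=\sum_{j<r}a_jq^j+q^r\,p_r(q)$. Here $p_r(q)=\sum_{i\ge r}a_iq^{i-r}$ is the tail polynomial. The invariant then says that the passes down to index $r$ have replaced the coefficients of $p_r$ by its coefficients in powers of $(q-c)$, in the slots $j=r,\dots,n$.

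\emph{Inductive step.} One pass with index $r$, updating $j=r,\dots,n-1$ in increasing order, is one full sweep of synthetic (Horner) division by $(q-c)$. It is applied to the top coefficients, while lower slots are left untouched. Following the classical Horner shift, the update $b_j\leftarrow b_j+cb_{j+1}$ in increasing $j$ uses the not-yet-updated $b_{j+1}$. A direct check on coefficients then gives the Pascal-type recursion
\[
 \binom{i-r}{j-r}=\binom{i-r-1}{j-r}+\binom{i-r-1}{j-r-1}.
\]
Its first term comes from the old $b_j$. Its second term comes from $c\,b_{j+1}$, which contributes one extra factor of $c$. Together these advance the invariant from $r+1$ to $r$. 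For the new slot $j=r$, note that $b^{(r+1)}_r=a_r$ was untouched, which supplies the $i=r$ term.

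\emph{Main obstacle.} The one delicate point is the bookkeeping of which $b_{j+1}$, old or new, enters each update. I will check this first on $n=2$: starting from $(a_0,a_1,a_2)$, the passes give $b_1=a_1+2ca_2$ and $b_0=a_0+ca_1+c^2a_2$, as required. If the indexing does not match the displayed recursion, I will instead prove directly the final-form identity $b_j=\sum_i\binom ij a_ic^{i-j}$. The route there is to show that after all passes the vector equals the coefficients of $p(q+c)$. I would do this by viewing the sequence of passes as repeated Horner division.

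\emph{Second claim.} Once $b_j=\sum_{i\ge j}\binom ij a_ic^{i-j}$ is established, the identity $p(q)=\sum_jb_j(q-c)^j$ is immediate. Write $q^i=((q-c)+c)^i$, expand by the binomial theorem, and exchange the order of summation over $i$ and $j$. This exhibits the $b_j$ as the Taylor coefficients $p^{(j)}(c)/j!$.
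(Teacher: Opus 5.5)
Your argument is correct and follows the paper's proof. It uses the same invariant after the pass with index $r$, the observation that increasing $j$ uses the previous-pass value of $b_{j+1}$, Pascal's identity (with the untouched $a_r$ supplying the slot $j=r$), and the binomial expansion of $((q-c)+c)^i$ for the second claim.

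Two slips should be fixed. First, in the Pascal step the attributions are reversed: the old $b_j$ contributes $\binom{i-r-1}{j-r-1}$, and $c\,b_{j+1}$ contributes $\binom{i-r-1}{j-r}$. Second, a single pass is not a synthetic division by $q-c$, since that runs in decreasing $j$ with updated values. In your own Taylor reading of the invariant, a pass is the step $p_r=a_r+q\,p_{r+1}$ carried out in the basis $(q-c)^k$ via $q=(q-c)+c$. So drop the ``repeated Horner division'' fallback and rely on the direct coefficient check.
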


\begin{proof}\label{proof:lem:translation}
After the pass with index $r$, the invariant is
$b_j=a_j$ for $j<r$ and
$b_j=\sum_{i=j}^n\binom{i-r}{j-r}a_ic^{i-j}$ for $j\ge r$.
It holds before the first pass with $r=n$. Processing $j$ in increasing
order uses the previous-pass value of $b_{j+1}$; Pascal's identity
therefore proves the invariant at the next pass, including $j=r$.
At $r=0$ it is the stated coefficient formula. The expansion of
$q^i=((q-c)+c)^i$ gives the polynomial identity.
\end{proof}

For the criterion, functions $\Phi$ of three variables $(U,T,t)$ on a box
are represented by enclosures of $\Phi$, its gradient and its Hessian on
the box. The rules
\[
 (\Phi\Upsilon)_i=\Phi_i\Upsilon+\Phi\Upsilon_i,\quad
 (\Phi\Upsilon)_{il}=\Phi_{il}\Upsilon+\Phi_i\Upsilon_l+\Phi_l\Upsilon_i+\Phi\Upsilon_{il},\quad
 (1/\Phi)_i=-\frac{\Phi_i}{\Phi^2},\quad
 (1/\Phi)_{il}=\frac{2\Phi_i\Phi_l}{\Phi^3}-\frac{\Phi_{il}}{\Phi^2},
\]
and, for the function $\psi$ of \Cref{lem:conjugate},
$(\psi\circ\Phi)_i=\psi'(\Phi)\Phi_i$ and
$(\psi\circ\Phi)_{il}=\psi''(\Phi)\Phi_i\Phi_l+\psi'(\Phi)\Phi_{il}$, with
$\psi'(x)=-1+\frac49(x+\frac32)_+^2$ and $\psi''(x)=\frac89(x+\frac32)_+$, are
the product, quotient and chain rules; evaluated in enclosures on a box,
they give enclosures on the box. A function of $t$ alone has
$\Phi_U=\Phi_T=0$, $\Phi_t=f_1$, $\Phi_{tt}=2f_2$ in terms of its jet.

\begin{lemma}[Upper bounds on a box]\label{lem:box-bound}
Let $\Phi\in C^2$ on $\mathsf B=I_U\times I_T\times I_t$, let $(c_U,c_T)\in I_U\times I_T$,
$t_c\in I_t$, and $d_U=I_U-c_U$, $d_T=I_T-c_T$, $d_t=I_t-t_c$ as intervals.
\begin{enumerate}
\item[(0)] $\max_{\mathsf B}\Phi$ is at most the upper end of any enclosure of $\Phi$
on $\mathsf B$.
\item If $\Phi_U\ge0$ on $\mathsf B$, then $\max_{\mathsf B}\Phi$ is attained on
$U=\sup I_U$; if $\Phi_U\le0$, on $U=\inf I_U$; likewise for $T$.
\item With $\beta_0=\Phi(c,t_c)+\Phi_t(c,t_c)d_t+\frac12\max\{0,\sup_{I_t}\Phi_{tt}(c,\cdot)\}d_t^2$,
$\Phi(c,t)\le\sup\beta_0$ for $t\in I_t$, where $c=(c_U,c_T)$; hence
$\beta=\min\{\sup\beta_0,\sup\Phi(c,I_t)\}$, with $\Phi(c,I_t)$ any enclosure of
$\Phi(c,\cdot)$ on $I_t$, satisfies $\beta\ge\Phi(c,t)$ for $t\in I_t$.
\item Let $\beta\ge\Phi(c,t)$ for $t\in I_t$, let $G_U,G_T$ enclose
$\Phi_U(c,t),\Phi_T(c,t)$ for $t\in I_t$, and $H$ enclose the Hessian in $(U,T)$
on $\mathsf B$. Then $\max_{\mathsf B}\Phi$ is at most each of
\[
\begin{gathered}
 \sup\Bigl(\beta+G_Ud_U+G_Td_T+\tfrac12\sup H_{UU}\,d_U^2+H_{UT}d_Ud_T+\tfrac12\sup H_{TT}\,d_T^2\Bigr),\\
 \sup\bigl(\beta+\Phi_U(\mathsf B)d_U+\Phi_T(\mathsf B)d_T\bigr).
\end{gathered}
\]
\item If moreover $\alpha_U=-\sup H_{UU}>0$, $\alpha_T=-\sup H_{TT}>0$,
$\gamma=\sup|H_{UT}|$, $\alpha_U\alpha_T>\gamma^2$ and $g_U=\sup|G_U|$,
$g_T=\sup|G_T|$, then
$\max_{\mathsf B}\Phi\le\beta+\frac{\alpha_Tg_U^2+\alpha_Ug_T^2+2\gamma g_Ug_T}{2(\alpha_U\alpha_T-\gamma^2)}$.
If $I_U$ is a point and $\alpha_T>0$, then $\max_{\mathsf B}\Phi\le\beta+g_T^2/(2\alpha_T)$, and
symmetrically; if both $I_U,I_T$ are points, $\max_{\mathsf B}\Phi\le\beta$.
\end{enumerate}
\end{lemma}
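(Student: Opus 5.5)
The plan is to prove each of the five items separately, using second-order Taylor expansion on the box and the convexity of a quadratic upper model. Throughout, I write $(U,T)=c+\delta$ with $\delta=(\delta_U,\delta_T)\in d_U\times d_T$ and $t=t_c+\delta_t$ with $\delta_t\in d_t$.

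Items (0) and (i) are immediate. For (0), an enclosure contains every value of $\Phi$ on $\mathsf B$. For (i), if $\Phi_U\ge0$ on the box, then for fixed $(T,t)$ the function $U\mapsto\Phi$ is nondecreasing. Hence any point can be moved to $U=\sup I_U$ without decreasing $\Phi$, and the same argument applies to $T$ and to the opposite sign.

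For (ii), apply Taylor's formula with Lagrange remainder in $t$ to $\Phi(c,\cdot)$:
\[
\Phi(c,t)=\Phi(c,t_c)+\Phi_t(c,t_c)\delta_t+\tfrac12\Phi_{tt}(c,\xi)\delta_t^2 .
\]
Bound the last term by $\tfrac12\max\{0,\sup\Phi_{tt}\}\delta_t^2$, which is valid because $\delta_t^2\ge0$. Since $\delta_t$ lies in $d_t$ and $\delta_t^2$ in the interval square $d_t^2$, the right side lies in $\beta_0$ by inclusion monotonicity. Taking the minimum with an enclosure of $\Phi(c,I_t)$ is then trivially valid.

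For (iii), fix $t$ and expand in $(U,T)$ about $c$ to second order:
\[
\Phi(c+\delta,t)=\Phi(c,t)+\nabla\Phi(c,t)\cdot\delta+\tfrac12\delta^TH(\xi)\delta .
\]
Here $\Phi(c,t)\le\beta$, $\nabla\Phi(c,t)\in(G_U,G_T)$ and $H(\xi)\in H$. The diagonal terms $\tfrac12H_{UU}\delta_U^2$ are bounded by $\tfrac12\sup H_{UU}\,\delta_U^2$ with $\delta_U^2\in d_U^2$, and the cross term lies in $H_{UT}d_Ud_T$. This gives the first bound. The second bound is the first-order mean value form with the gradient enclosed on the whole box.

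For (iv), start from the first bound in (iii) and maximize over $\delta$. The quadratic upper model $\beta+g_U|\delta_U|+g_T|\delta_T|-\tfrac12\alpha_U\delta_U^2+\gamma|\delta_U||\delta_T|-\tfrac12\alpha_T\delta_T^2$ dominates it, using $G_U\delta_U\le g_U|\delta_U|$ and the analogous bounds. This model is a concave quadratic in $x=(|\delta_U|,|\delta_T|)$, because the matrix $\begin{pmatrix}\alpha_U&-\gamma\\-\gamma&\alpha_T\end{pmatrix}$ is positive definite. Its unconstrained maximum over $\mathbb R^2$ is $\tfrac12 g^TM^{-1}g$ with $g=(g_U,g_T)$. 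Computing
\[
M^{-1}=\frac{1}{\alpha_U\alpha_T-\gamma^2}\begin{pmatrix}\alpha_T&\gamma\\\gamma&\alpha_U\end{pmatrix}
\]
gives exactly the stated expression. The degenerate cases follow in the same way: if $I_U$ is a point, then $\delta_U=0$ and one maximizes $g_T|\delta_T|-\tfrac12\alpha_T\delta_T^2$, obtaining $g_T^2/(2\alpha_T)$; if both intervals are points, the bound is $\beta$.

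The only delicate point I foresee is in (iii). The paper's interval evaluation is written as $H_{UT}d_Ud_T$ rather than as an enclosure of the product $\delta_U\delta_T$, so I must check that interval multiplication of $d_U$ and $d_T$ indeed contains $\delta_U\delta_T$ for all admissible $\delta$. This holds by the inclusion property of interval arithmetic.
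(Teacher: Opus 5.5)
Your proposal is correct and follows the paper's proof essentially step for step. Item (0) follows from the definition of an enclosure, and (i) from monotonicity in $U$ for fixed $(T,t)$. Items (ii) and (iii) use Taylor's formula: in $t$ for (ii), and in $(U,T)$ with the Hessian at an intermediate point for (iii), plus the mean value theorem for the second bound. Item (iv) bounds the expansion by the concave quadratic $g\cdot v-\frac12 v^TMv$ in $v=(|\delta_U|,|\delta_T|)$ and uses its unconstrained maximum $\frac12 g^TM^{-1}g$. Your inclusion-monotonicity remark for interval terms such as $H_{UT}d_Ud_T$ makes explicit what the paper leaves implicit.
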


\begin{proof}\label{proof:lem:box-bound}
(0) is the definition of an enclosure. (i) For fixed $(T,t)$, $\Phi$ is
monotone in $U$. (ii) Taylor's formula in $t$ at fixed $c$ gives
$\Phi(c,t)=\Phi(c,t_c)+\Phi_t(c,t_c)(t-t_c)+\frac12\Phi_{tt}(c,\xi)(t-t_c)^2$ with $\xi\in I_t$,
which is at most
$\Phi(c,t_c)+\Phi_t(c,t_c)(t-t_c)+\frac12\max\{0,\sup_{I_t}\Phi_{tt}(c,\cdot)\}(t-t_c)^2$, and
$t-t_c\in d_t$.
(iii) For fixed $t$, Taylor's formula in $(U,T)$ at $c$ with the Hessian at
an intermediate point, and the bounds $\Phi_{UU}(U-c_U)^2\le\sup H_{UU}(U-c_U)^2$,
$\Phi_{TT}(T-c_T)^2\le\sup H_{TT}(T-c_T)^2$, give the first bound; the mean value
theorem gives the second. (iv) With $v_U=|U-c_U|$, $v_T=|T-c_T|$, the
first expansion gives
$\Phi-\beta\le g_Uv_U+g_Tv_T-\frac12(\alpha_Uv_U^2-2\gamma v_Uv_T+\alpha_Tv_T^2)$. The matrix
$M=\bigl(\begin{smallmatrix}\alpha_U&-\gamma\\-\gamma&\alpha_T\end{smallmatrix}\bigr)$ is
positive definite, and the maximum of $g\cdot v-\frac12v^TMv$ over $v\in\R^2$
is $\frac12g^TM^{-1}g$, which is the displayed bound. If $I_U$ is a point,
$v_U=0$ and $\max_{v_T\in\R}(g_Tv_T-\frac12\alpha_Tv_T^2)=g_T^2/(2\alpha_T)$; the case of a point
$I_T$ is symmetric; if both are points, $\Phi(U,T,t)=\Phi(c,t)\le\beta$.
\end{proof}

\subsection{The verified computations}\label{sec:verified}

\Cref{ca:transport,ca:sine,ca:primal-second} are proved by the programs of
\cite{Code}, which evaluate the defining formulas as follows.

{Arithmetic.} All quantities are enclosed in interval arithmetic with
outward rounding: in binary floating point of $128$ bits for
$\Theta_{\mathrm{cert}}$ and of $160$ bits for the primal certificate and the sine
energy, with the correctly rounded operations, roots, exponential,
logarithm and $\pi$ of MPFR \cite{MPFR}; and, for the bounds on rectangles,
in binary64 with every result rounded to nearest and then widened to its
floating-point predecessor and successor, which enclose the exact value
\cite{Rump}. Printed decimal and dyadic data are
exact rationals.

{Functions of $t$.} The cubic roots of \Cref{lem:regular-criterion} are
enclosed by \Cref{lem:cubic-root}, and their jets by \Cref{lem:jets}. Each
differential equation in items (R) and (F) before \Cref{lem:regular-criterion}
and each equation of the filters of the primal family (\Cref{prop:primal-jacobian}) has the form
$y'=C+Ay+By^2$ and is integrated by \Cref{lem:flow-step} on a partition of
its interval into cells, with Taylor order $12$ ($14$ for the primal
certificate) \cite{NJC}; on $[1,2]$ the coefficients are those of
\Cref{lem:regular-criterion} at $t=2-v$. Near $t=0$ the function $\epsilon_\circ$ is
taken from its equation and further on as $t^3-\varphi_\circ(t)$
(\Cref{lem:regular-criterion}(i)), and the quotients by $t^3$ are enclosed
by \Cref{lem:cubic-quotient}. Integrals are enclosed by \Cref{lem:quadrature}
with $N=12$ ($N=14$). The programs integrate an equation over a partition of an interval into
cells; on each cell, \Cref{lem:flow-step} is applied with $Y_0$ the
enclosure of the solution at the left end obtained from the previous
cell, and the enclosure at the right end is the intersection of $Y$ with
the bound of the lemma at $v=h$. The solution value at the initial point
of the first cell is exact. An enclosure of the solution on a subinterval
of a cell is obtained from the same bound with $v$ ranging over the
subinterval, and jet enclosures of the solution are then obtained from
\Cref{lem:jets}.

{Rectangles.} For each cell $\mathsf C$, end $\sigma$ and rectangle $\mathsf R$,
the left side of \eqref{eq:witness} is a function of $(U,T,t)$ on
$\mathsf R\times\mathsf C$. Enclosures of its value, gradient and Hessian are
computed by the rules of \Cref{sec:enclosures}, with $\mathfrak b_\sigma,\mathfrak c_\sigma$ in
the regrouped form. Each $t$-coefficient is enclosed both on $\mathsf C$ and
by its second-order Taylor expansion at a point $t_c\in\mathsf C$, the
polynomials of $q$ both directly and in the translated form of
\Cref{lem:translation}, and the determinant and second-trace coefficients
in both forms of \Cref{lem:mobius}; the enclosures are intersected with
each other and with the ranges of \Cref{lem:range,lem:regular-criterion}(ii).
\Cref{lem:box-bound} then gives an upper bound on $\mathsf R\times\mathsf C$. The
rectangles arise from $[0,1]^2$ by repeated bisection, and the verification
checks that, for each cell and each $\sigma$, the bisections form a complete
binary tree, so that the rectangles cover $[0,1]^2$. The divisibility
conditions of \Cref{lem:regular-criterion} on the polynomials of $\Psi_{\rm raw}$
are checked in exact rational arithmetic.

{Data and verification.} The cells, prices, rectangles and flow-step
witnesses are retained in \texttt{results/proof.tar.xz}. Its embedded
\texttt{claims.json} contains the exact defining inputs and interval
reports; \texttt{run.json} fixes the arithmetic settings; and a manifest
records the sizes and SHA-256 digests of the data and checker sources.
The checker validates these digests, the exact polynomial identities and
the complete cover geometry, recompiles the bundled C++ sources, and
re-establishes the flow predicates, every rectangle bound, the price
integrals, and the residual and Jacobian bounds on the entire primal cube.
Fresh enclosures must be contained in the saved intervals (or improve
the saved upper bounds for norm estimates). The exact audit in
Python assembles the lower bound, contraction inequalities and energy
comparisons, and independently re-evaluates the sine series. Its conclusions
are accepted only together with the successful native replay.
The native interval kernels are shared by construction and verification:
this is a separate replay of all acceptance predicates, not a second
implementation of the native mathematics. The trust basis includes these
kernels, the analytical lemmas, the compiler and runtimes, GMP/MPFR and
the checked IEEE binary64 arithmetic environment.

{Reproduction protocol.} The complete experiment starts with candidate
search, constructs all witnesses, packages them and performs full
verification:
\begin{verbatim}
python3 -B compute.py reproduce --jobs 8 --output results
\end{verbatim}
The output directory must be new. This command needs no pre-existing
\texttt{inputs/}: it invokes \texttt{programs\_search/generate.py} with
\texttt{programs\_search/settings.json}, uses NumPy and SciPy, and records
its generated exact inputs and provenance in \texttt{results/inputs/}.
A successful run retains these inputs, \texttt{proof.tar.xz},
\texttt{verification.json}, the short \texttt{results.txt} and the run
record \texttt{run.json}; intermediate construction files are removed.
An interrupted run can be continued with \texttt{--resume}, provided
its sources, settings and selected inputs are unchanged.

To construct and verify from the exact inputs of an existing run, without
candidate search, use a new output directory:
\begin{verbatim}
python3 -B compute.py certify --inputs results/inputs \
  --output results-fixed --jobs 8
\end{verbatim}
To verify only its proof archive, use
\begin{verbatim}
python3 -B compute.py verify --proof results/proof.tar.xz \
  --output verification-new.json --jobs 8
\end{verbatim}
Alternatively, after extracting the proof archive, its standalone
\texttt{verify.py} accepts \texttt{--jobs 8 --output verification-new.json}
under \texttt{python3 -I -B}, without the search or construction package.
Verification requires Python $\ge3.10$, C++17 and the GMP/MPFR development
files. The status \texttt{PASS\_FULL\_NUMERICS} means that all numerical
premises and their exact consequences passed; the checker reports achieved
values rather than testing hard-coded publication thresholds. The
outward endpoints in this article are taken from that report. Repeating
the floating-point search on another platform need not reproduce identical
candidates; the exact inputs saved with a run fix its certificate.

{Settings and recorded run.} The flow and quadrature tolerances are
$10^{-20}$ and $10^{-14}$ for the reference and transport calculations,
and $10^{-24}$ and $10^{-18}$ for the primal calculation. The transport
price accuracy parameter is $2^{-21}$, with an initial state work budget
of $4096$ splits and $16$ construction regions per spatial half; the
completed cover is rechecked in eight regions. The sine tail tolerance
is $10^{-18}$ and selects truncation index $K=3$, giving a polynomial
of degree $126$. These are work and refinement parameters; acceptance
uses the enclosed remainders and strict inequalities. The primal
quadrature partition has $2442$ cells.

The recorded full experiment, including a fresh candidate search, ran on
4 October 2026 on an arm64 Apple M4 Max with $14$ CPU cores
($10$ performance and $4$ efficiency cores) and $36$ GiB RAM.
The run records macOS $15.8.1$, Python $3.13.14$, NumPy $2.3.5$
and SciPy $1.17.0$, with eight requested worker processes and the
BLAS/OpenMP thread limits set to one. The native verifier used Apple Clang
$17.0.0$ (\texttt{clang-1700.6.3.2}), GMP $6.3.0$ and MPFR $4.2.2$,
with flags \texttt{-std=c++17 -O3 -ffp-contract=off -fno-fast-math}.
The recorded wall-clock times, in seconds, are
\begin{center}
\begin{tabular}{@{}lr@{}}
\toprule
phase&seconds\\\midrule
candidate search&$1291.413$\\
certificate construction&$509.245$\\
archive packaging&$60.276$\\
full verification&$356.891$\\\midrule
complete invocation&$2217.950$\\
\bottomrule
\end{tabular}
\end{center}
The total includes orchestration outside the timed phases.
The verification phase includes archive extraction ($2.113$ seconds);
the internal checker reports $354.640$ seconds, including manifest and
geometry checks, compilation ($6.418$ seconds), native replay and the
exact audit. These nested times are not additive. All four phases
completed, and the final status is \texttt{PASS\_FULL\_NUMERICS}.
The proof archive has SHA-256 digest
\begin{center}\small\ttfamily
efaf4681c909c9a3eac175e63687c9bb\\
3ce24101e9f32e11507f27081eafbbe8
\end{center}

\begin{proof}[Proof of \Cref{ca:transport}]\label{proof:ca:transport}
Verified computation (\Cref{sec:verified}). For (i), for every side, cell
$\mathsf C$ and end $\sigma$, the rectangles of the data cover $[0,1]^2$, and the
upper bound computed on each rectangle $\mathsf R$ for the left side of
\eqref{eq:witness} on $\mathsf C\times\mathsf R$ is negative; so the maximum over
$\mathsf C\times[0,1]^2$ is negative; the largest of these upper bounds is at most
$-2^{-43}$. For (ii), by \Cref{thm:contact}
$\Gamma_\varsigma$ is symmetric, so $\bar m_k$ is symmetric about $X_k(1)=0$
(\Cref{lem:local-profile}), and with $\dd x=\mathfrak p\dd t$ on the left half
(\Cref{lem:regular-criterion}(i)):
$\mathcal B_*$ is \eqref{eq:baseline-value} with
$\frac3{2k}\int_0^2\nu_k\dd q=\frac9k\int_0^1t^3\hat\nu\dd t$ and $\mathcal J_\varsigma(2)$ from
\Cref{lem:J-value}, in which $\dd x=\mathfrak p_\circ\dd t$, $f_\circ=\varphi_\circ$,
$\check f_\circ=\varphi_\circ(2-t)$, $\Gamma_\varsigma=Q_t\gamma$ and $f_{\rm end}=\varphi_\circ(2)$
(\Cref{lem:regular-criterion}(i)); $\bar M=2\int_0^1t^2\hat\nu^2\mathfrak p\dd t$;
$H_{\bar m_k}(s_i)=2\int_0^1\varrho_{s_i}\mathfrak p\dd t+\log(1+\varrho_{s_i}(1))$ and
$S(\bar m_k)=2\int_0^1\varrho_\circ^2\mathfrak p\dd t+\varrho_\circ(1)^2$ by \Cref{lem:symmetric-values};
$H_*(s)=\sqrt s+\log(1+e^{-2\sqrt s})-\log2$; and $\Pi$ is the sum of the
integrals of \eqref{eq:closed-price-main} over the cells.
\end{proof}

\begin{proof}[Proof of \Cref{cor:numerical}]\label{proof:cor:numerical}
By \Cref{tab:cert-data}, $\xi_0<0$, $\xi_1>0$, $\chi<0$ and $\tau<0$,
and $-\tau(1-\bar M)=\tau(\bar M-1)$. Each term of \eqref{eq:dual-target-value} is
therefore minimized and maximized at the displayed endpoints, and exact
rational arithmetic gives $c^\pm$. Applying the same assembly to the
unrounded rational endpoints in the verification report gives
\eqref{eq:archived-bound}; its displayed endpoints are rounded outward.
The remaining assertions follow from
\Cref{ca:transport} and \Cref{thm:dual}.
\end{proof}

\begin{proof}[Proof of \Cref{ca:sine}]\label{proof:ca:sine}
\Cref{lem:sine-sum} gives $|\cs-E_3|\le R_3$. The integers $C_n$ are the
coefficients of the displayed product, computed exactly, and $E_3$, $R_3$
are enclosed by evaluating the formulas of \Cref{lem:sine-sum}, which
involve only $\pi$ and the exponential function, in interval arithmetic
(\Cref{sec:verified}).
\end{proof}

\subsection{Proofs of the main theorems}\label{app:main-proofs}

\begin{proof}[Proof of \Cref{thm:energy}]\label{proof:thm:energy}
\Cref{cor:numerical} gives $c_{\mathrm{cert}}>1.14268353624$ and
$\underline{\mathsf e}\ge c_{\mathrm{cert}}$. \Cref{thm:sine-finite} gives the sine limit,
and \Cref{ca:sine} gives $\cs<1.14269218$.
\end{proof}

\begin{proof}[Proof of \Cref{thm:main-centered}]
\label{proof:thm:main-centered}
By \Cref{thm:dual}(ii) for $\Theta_{\mathrm{cert}}$, for every
$\varepsilon'>0$ there is $N_0$ such that
$N^{-3}\E_N(J)\ge c_{\mathrm{cert}}-\varepsilon'$ for $N\ge N_0$ and $J\in\overline{\mathcal Y}_N$;
this does not involve $\mu$ and $L$. By \Cref{thm:jacobi,thm:gain}, every
$\mathcal R\in\mathfrak E_N(\mu,L)$ is $\mathcal R_J$ for some $J\in\overline{\mathcal Y}_N$
and satisfies $A_N(\mathcal R)\ge2\epsN\Delta^{-1}\E_N(J)^{1/2}$, which
gives the first inequality in \eqref{eq:main-A-lower}. By
\Cref{thm:sine-finite},
$A_N(\mathcal R_N^\star)=2\epsN\Delta^{-1}\E_N(J_N^\star)^{1/2}$ for every
$0<\mu<L$ and $N^{-3}\E_N(J_N^\star)\to\cs$, which gives the second.
Dividing the two bounds and letting $\varepsilon\downarrow0$ gives
\eqref{eq:main-centered-ratio}, uniformly in $0<\mu<L$. Finally
$\cs<(1.000004)^2\cdot\num{certificate/bound/lo24}\le(1.000004)^2c_{\mathrm{cert}}$ by
\Cref{ca:sine,cor:numerical}.
\end{proof}

\subsection{Data of the certificate}\label{sec:certificate-data}

\begin{table}[ht]
\centering
\caption{Data of $\Theta_{\mathrm{cert}}$: reference profile
$b_\circ(Q)=p_0+\sum_{j=1}^5p_jQ/(Q+\zeta_j)$ and multipliers. The
scalars $\varsigma,b,\omega$ are in \eqref{eq:cert-scalars}.}
\label{tab:cert-data}
\small
\begin{tabular}{@{}rll@{}}
\toprule
$j$&$p_j$&$\zeta_j$\\\midrule
0&$0.11104619364409615$&\\
1&$0.036541761123835864$&$0.0002733662328804715$\\
2&$0.2725840545911301$&$0.002305100467697839$\\
3&$0.6766436180078876$&$0.012283728048983792$\\
4&$0.15964121932327846$&$0.062212277013292634$\\
5&$0.06784196413046893$&$0.5577071733518729$\\
\bottomrule
\end{tabular}\qquad
\begin{tabular}{@{}cr@{}}
\toprule
multiplier&value\\\midrule
$\xi_0$&$-0.15179495982154212$\\
$\xi_1$&$0.0599781840952077$\\
$\chi$&$-0.043940463594331326$\\
$\tau$&$-0.5723718152513751$\\
\bottomrule
\end{tabular}
\end{table}

\begin{table}[ht]
\centering
\caption{Coefficients $[q^j]P$ of the potential polynomials of
$\Theta_{\mathrm{cert}}$ in \eqref{eq:transport-potential}.}
\label{tab:transport-polynomials}
\footnotesize
\begin{tabular}{@{}crrrr@{}}
\toprule
$P$&$j=0$&$j=1$&$j=2$&$j=3$\\\midrule
$P_h$&$-0.00070089055213150747$&$0.0015674747425538555$&$-0.001144545815888650005$&$0.000269612693526983475$\\
$P_0$&$0.2710507810834969475$&$0.015419641463045405$&$-0.10137127981782688875$&$0.01665381639550413125$\\
$P_1$&$-0.021247530037117474$&$-0.325390035632602646$&$0.232867306470547665$&$-0.03585160759317273$\\
$P_{00}$&$-0.39310093331232166$&$-0.69174781211615728$&$-0.14281571278757247$&$-0.04048480783226955$\\
$P_{01}$&$0.09788231448377985$&$1.1313931929699583$&$0.508831908865809825$&$0.293940344319217375$\\
$P_{11}$&$-0.01558192959906196$&$-0.20447724830709404$&$-0.2712192281957868$&$-0.3614804839686326$\\
\bottomrule
\end{tabular}
\end{table}

\section{The primal certificate}\label{app:primal}

Prescribing the ground-coordinate speed fixes the ground parameter
exactly. The exponential speed family leaves a smooth finite-dimensional
system for the remaining observations. We derive its Jacobian and use a
preconditioned contraction to turn the numerical candidate into an exact
feasible density, with an explicit energy bound.

The candidate generator uses sequential quadratic programming over all
$17$ coefficients, with $192$ Gauss nodes on the left half, first without
the equation for $S$ and then with all six equations. At the resulting
numerical candidate, a column-pivoted QR factorization of the
$6\times17$ constraint Jacobian selects the six coordinates $(j_i)$;
$C$ is the inverse of the corresponding $6\times6$ submatrix, rounded to
exact dyadic rationals. A native interval calculation evaluates the
residual at the dyadic center and applies chord Newton corrections in
these coordinates when necessary. If $\eta$ is its upper bound for
$\|CF(0)\|_\infty$, the proposed radius is
$\varrho=\operatorname{round}_{\uparrow,160}(2\eta+2^{-160})$, where the
rounding is to $160$ binary significant bits. The positive extra term
also covers an exactly zero residual. The archived radius is
\[
 \varrho=27684846429260304880501991746477461659874142439\cdot2^{-207}
 =1.3459626741759871\ldots\cdot10^{-16}.
\]
The data are in \Cref{tab:primal-second-coefficients,tab:primal-second-matrix}.

\begin{proof}[Proof of \Cref{thm:primal-map}]\label{proof:thm:primal-map}
{The coordinate.} Put $B=\|D\|_\infty$. The function $X$ is
absolutely continuous and strictly increasing on $[0,2]$, with
$X(q_2)-X(q_1)\ge(q_2-q_1)/B$, and has finite values $X(0),X(2)$. Its
inverse $q$, extended by $0$ on $(-\infty,X(0)]$ and by $2$ on
$[X(2),\infty)$, is Lipschitz with constant $B$. Let $\mathsf N\subset(0,2)$ be
the null set where $X$ is not differentiable with $X'=1/D$; since $X$ is
absolutely continuous, $X(\mathsf N)$ is a null set. At every $x\in(X(0),X(2))$
outside $X(\mathsf N)$ at which $q$ is differentiable, differentiation of
$X(q(x))=x$ gives $q'(x)=D(q(x))$. By the definition of $m$,
$q'=D(q)=\kappa m-Q(q)$ almost everywhere on $(X(0),X(2))$, and outside this
interval $q'=0=\kappa m-Q(q)$. The change of variables for the absolutely
continuous increasing $X$ gives
$\int_{X(0)}^{X(2)}\phi(q(x))\dd x=\int_0^2\phi(p)D(p)^{-1}\dd p$ for measurable
$\phi\ge0$, which proves \eqref{eq:primal-masses}; also $m\le(B+1)/\kappa$.

{The ground state.} Put $u(x)=\exp\int_0^x(1-q)$. Then $u>0$,
$u'=(1-q)u$ is locally absolutely continuous, and
$u''=(-q'+(1-q)^2)u=(1-\kappa m)u$ almost everywhere, since
$(1-q)^2=1-Q(q)$. On $(-\infty,X(0)]$, $u=c_-e^x$, and on $[X(2),\infty)$,
$u=c_+e^{-x}$; so $u,u'\in L^2$ and $u\in H^1$. The computation in the proof
of \Cref{thm:ground}, which uses only $u>0$, $|u'/u|\le1$ and
$-u''+u=\kappa mu$, gives for every $f\in H^1$
\[
 \int(f'^2+f^2)-\kappa\int mf^2=\int\Bigl(f'-\frac{u'}uf\Bigr)^2\ge0,
\]
with equality for $f=u$. For the feature operator $\mathcal K_m$,
$\ip{\mathcal K_mf}f_\HH=\int m(x)\ip f{v_x}_\HH^2\dd x=2\int mf^2$ by
\Cref{lem:features}(i), so
$\lambda_0(K_m)=\|\mathcal K_m\|=\sup_{f\ne0}2\int mf^2/\|f\|_\HH^2$. The displayed
inequality gives $\lambda_0(K_m)\le2/\kappa$, and equality at $f=u$ gives
$\lambda_0(K_m)\ge2/\kappa$. Hence $\kappa(m)=\kappa$. The ground state $u_m$ of
\Cref{thm:ground} also gives equality in this identity, with the same
$\kappa$, so $u=cu_m$ and the ground coordinate of $m$ is $1-u'/u=q$.
\end{proof}

\begin{lemma}[Exact feasibility by contraction]\label{lem:contraction}
Let $\varrho>0$, let $F$ be continuously differentiable on the cube
$K=\{\|a\|_\infty\le\varrho\}$ in $\R^d$, and let $C$ be a $d\times d$ matrix with
\[
 \theta_F:=\sup_{a\in K}\|I-C\,DF(a)\|_\infty<1,
 \qquad \varepsilon_F:=\|CF(0)\|_\infty,\qquad \varepsilon_F+\theta_F\varrho\le\varrho ,
\]
where $\|\cdot\|_\infty$ is the maximum norm and the induced matrix norm. Then
$F$ has exactly one zero $a_*$ in $K$; the iterates $a^{(0)}=0$,
$a^{(j+1)}=a^{(j)}-CF(a^{(j)})$ lie in $K$, and
$\|a^{(j)}-a_*\|_\infty\le\theta_F^j\varepsilon_F/(1-\theta_F)$.
\end{lemma}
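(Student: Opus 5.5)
The plan is to run the standard Newton--Kantorovich contraction argument for the map $\Phi(a)=a-CF(a)$ on the cube $K$.

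First, $\Phi$ maps $K$ into itself. For $a\in K$, the fundamental theorem of calculus along the segment from $0$ to $a$ gives
\[
 \Phi(a)=\Phi(0)+\int_0^1\bigl(I-C\,DF(ta)\bigr)a\dd t .
\]
The segment lies in the convex set $K$. Hence $\|\Phi(a)\|_\infty\le\varepsilon_F+\theta_F\|a\|_\infty\le\varepsilon_F+\theta_F\varrho\le\varrho$.

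Second, $\Phi$ is a contraction on $K$. For $a,b\in K$ the same integral along the segment $[a,b]\subset K$ gives
\[
 \|\Phi(a)-\Phi(b)\|_\infty\le\theta_F\|a-b\|_\infty .
\]
Here the induced $\infty$-norm of the integrand is bounded pointwise by $\theta_F$. This uses only continuity of $DF$ on $K$, so that the integral exists.

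Third, apply Banach's fixed-point theorem on the complete set $K$. It gives a unique fixed point $a_*$ and convergence of the iterates $a^{(j)}=\Phi^j(0)$, which stay in $K$ by the first step. The estimate follows from $\|a^{(1)}-a^{(0)}\|_\infty=\varepsilon_F$ together with the usual geometric series bound, giving $\|a^{(j)}-a_*\|_\infty\le\theta_F^j\varepsilon_F/(1-\theta_F)$.

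It remains to identify fixed points with zeros. The main point is that $C$ is invertible: if $Cv=0$ then $(I-C\,DF(0))v=v$, so $\|v\|_\infty\le\theta_F\|v\|_\infty$ and $v=0$. Therefore $\Phi(a)=a$ if and only if $F(a)=0$, and $F$ has exactly one zero in $K$. No step is a real obstacle; the only care needed is this invertibility of $C$ and the convexity of $K$ used in the integral estimates.
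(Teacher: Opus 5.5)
Your proposal is correct and follows essentially the same route as the paper: the contraction $a\mapsto a-CF(a)$ on $K$, the mean-value (integral) estimate for both the self-map bound and the Lipschitz constant, the Banach fixed-point theorem with its a priori error bound, and invertibility of $C$ to identify fixed points with zeros. The only cosmetic difference is that you prove invertibility of $C$ by a kernel argument, whereas the paper uses a Neumann series for $C\,DF(0)$.
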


\begin{proof}\label{proof:lem:contraction}
For $G(a)=a-CF(a)$, the mean value inequality gives
$\|G(a)-G(b)\|_\infty\le\theta_F\|a-b\|_\infty$ and
$\|G(a)\|_\infty\le\varepsilon_F+\theta_F\varrho\le\varrho$ on $K$.
The contraction theorem \cite[Theorem~2.1]{TeschlODE} gives a unique
fixed point, the stated iterates, and the error bound, since
$\|G(0)\|_\infty=\varepsilon_F$.
Moreover, $\|I-CDF(0)\|_\infty<1$ makes $CDF(0)$ invertible by a
Neumann series; hence $C$ is invertible, and fixed points of $G$ are
exactly zeros of $F$.
\end{proof}

\begin{proposition}[Derivatives of the primal family]\label{prop:primal-jacobian}
Let $\phi_0,\dots,\phi_d$ be polynomials, $\phi_a=\phi_0+\sum_ia_i\phi_i$,
$\kappa>0$, and let $m_a$ be the density of \Cref{thm:primal-map} with
the symmetric speed $D_a(t^3)=t^2e^{\phi_a(t)}$, $D_a(2-q)=D_a(q)$. On the
left side write $Q=t^3(2-t^3)$, $X_a'(t)=3e^{-\phi_a(t)}$ for $\dd x/\dd t$, and
$n_a=t\hat n_a$, $\hat n_a=\sqrt{(e^{\phi_a}+t(2-t^3))/\kappa}$, for $\sqrt{m_a}$. Then the
masses, the energy, $H_{m_a}(s)$ for $s>0$, and $S(m_a)$ are continuously
differentiable in $a\in\R^d$, with
\begin{align}
 \partial_i\int m_a&=-\frac2\kappa\int_0^1X_a'Q\phi_i\dd t,
 &\partial_i\int\sqrt{m_a}&=-\int_0^1X_a'\frac{D_a+2Q}{\kappa n_a}\phi_i\dd t,
 \label{eq:primal-jacobian-mass}\\
 \partial_iH_{m_a}(s)&=-2\int_0^1X_a'\frac{sQ/\kappa+r_sR_s}{2+r_s+R_s}\phi_i\dd t,
 &\partial_iS(m_a)&=\int_0^1X_a'\bigl[2r_\circ R_\circ-Q(r_\circ+R_\circ)\bigr]\phi_i\dd t,
 \label{eq:primal-jacobian-spec}
\end{align}
where $r_s,r_\circ$ are the left filters of $m_a$ and $R_s,R_\circ$ its right
filters, as functions of $t$ on the left side, and $D_a=t^2e^{\phi_a}$;
also $\int m_a^{j/2}=2\int_0^1X_a'n_a^j\dd t$ for $j=1,2,3$. Moreover $E(m_a)\le E(m_0)\exp\bigl(\|a\|_\infty\sum_i\|\phi_i\|_\infty\bigr)$, the
norms being taken on $[0,1]$.
\end{proposition}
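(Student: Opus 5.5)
The plan is to reduce everything to the left half $t\in[0,1]$, using the symmetry $D_a(2-q)=D_a(q)$, and to obtain the spectral derivatives from the exact comparison identities of \Cref{lem:comparison-general}, taking a neighbouring member of the family as the comparison profile.

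\emph{Masses and energy.} By \Cref{thm:primal-map}, $\int m_a^{j/2}=\int_0^2((D_a+Q)/\kappa)^{j/2}\dd q/D_a$. With $q=t^3$, $\dd q=3t^2\dd t$ and $D_a=t^2e^{\phi_a}$ one gets $\dd q/D_a=3e^{-\phi_a}\dd t=X_a'\dd t$ and $(D_a+Q)/\kappa=t^2\hat n_a^2=n_a^2$. Symmetry doubles the left-half integral, which gives $\int m_a^{j/2}=2\int_0^1X_a'n_a^j\dd t$. The integrands are smooth in $a$ and bounded uniformly for $a$ in compact sets; near $t=0$, $\hat n_a$ is bounded below by $(e^{\min\phi_a}/\kappa)^{1/2}$. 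So differentiation under the integral is legitimate. Using $\partial_iX_a'=-X_a'\phi_i$ and $\partial_i(n_a^2)=D_a\phi_i/\kappa$:
\begin{itemize}
\item for the mass, $2\int X_a'\phi_i(-n_a^2+D_a/\kappa)=-\frac2\kappa\int X_a'Q\phi_i$;
\item for the root mass, $2\int X_a'\phi_i(-n_a+D_a/(2\kappa n_a))=-\int X_a'\phi_i(D_a+2Q)/(\kappa n_a)$.
\end{itemize}
These are \eqref{eq:primal-jacobian-mass}. For the energy bound write $X_a'n_a^3=3\kappa^{-3/2}(y+c)^{3/2}/y$ with $y=e^{\phi_a}$ and $c=t(2-t^3)\ge0$. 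Its logarithmic derivative in $\phi_a$ is $\frac32\,y/(y+c)-1\in[-1,\frac12]$. Hence pointwise $X_a'n_a^3\le X_0'n_0^3\,e^{|\phi_a-\phi_0|}$, and $|\phi_a-\phi_0|\le\|a\|_\infty\sum_i\|\phi_i\|_\infty$; integrating gives the stated bound.

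\emph{Spectral derivatives.} Fix $b$ and let $\nu=\sqrt{m_b}$, viewed as a function of $q$. I first check that $\nu$ is a comparison profile at $\kappa$ in the sense of \Cref{def:comparison-profile}. Indeed $d=\kappa\nu^2-Q=D_b>0$ and $1/d\sim t^{-2}\in L^1(\dd q)$. Near the ends, $\nu^2/d=(D_b+Q)/(\kappa D_b)$, $Q/(d\nu)$ and $\nu^3/Q$ are all bounded, because $D_b\sim t^2$, $\nu\sim t$ and $Q\sim t^3$. Its density is $m_b$.

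Then \Cref{lem:comparison-general} with $m=m_a$ gives the exact identity
\[
 H_{m_a}(s)-H_{m_b}(s)=\int a_s(q,r_s^a)\bigl(n_a^2-\nu(q)^2\bigr)\dd x .
\]
Here $r_s^a$ is the left filter of $m_a$, and $a_s$ is built from $d=D_b$ and the right filter $R_s^b$ of $m_b$. Along $m_a$, $\dd x=X_a'\dd t$. Take $a=b+he_i$. Then $n_a^2-\nu^2=(D_a-D_b)/\kappa=h D_b\phi_i/\kappa+O(h^2)\,t^2$. Multiplying by $a_s=-(sQ+\kappa rR)/(D_b(2+r+R))$ cancels $D_b$, and the left-half integrand becomes $-h\,X_a'\phi_i\,(sQ/\kappa+r_s^aR_s^b)/(2+r_s^a+R_s^b)$. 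On the right half, $r$ and $R$ exchange roles under $q\mapsto2-q$, by symmetry of the densities and \Cref{lem:right-filters}. The expression is symmetric in $(r,R)$ and $Q$ is symmetric, so the two halves contribute equally; this produces the factor $2$.

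Dividing by $h$ and letting $h\to0$ gives \eqref{eq:primal-jacobian-spec}, provided $r_s^{b+he_i}\to r_s^b$ uniformly in $t$. I would prove this as in \Cref{lem:filter-continuity}: write the Riccati equation in the variable $t$, with coefficients $X_a'$ and $t^2\hat n_a^2$ that are smooth in $(t,a)$, and apply Gronwall's inequality. The same uniform continuity, applied to the right filters, shows that the resulting derivative is continuous in $b$. Together with dominated convergence (the integrands are bounded on compact $a$-sets), this gives $C^1$. The second-trace formula is proved identically, using $a_\circ$ and the potential $W_\circ$: $a_\circ\cdot D_b\phi_i/\kappa=\frac12[2r_\circ R_\circ-Q(r_\circ+R_\circ)]\phi_i$, and the factor $2$ from symmetry removes the $\frac12$.

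The main obstacle I expect is this limit step. It requires uniform control of the $O(h^2)$ remainder and of the filter differences near the singular end $t=0$, where $D_b\to0$. My plan is to keep every quantity in the regularized variables $t$, $\hat n$ and $X_a'=3e^{-\phi_a}$, in which all coefficients are smooth on the closed interval $[0,1]$.
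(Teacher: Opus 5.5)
Your proposal is correct and is essentially the paper's proof: the masses and energy by differentiating $2\int_0^1X_a'n_a^j\,\dd t$ under the integral, and the spectral derivatives from \Cref{lem:comparison-general} with a neighbouring member $m_b$ as comparison profile, followed by uniform continuity of the filters in $a$ and dominated convergence. The remaining differences are cosmetic. The paper writes the integrand exactly, using $(D_a-D_b)X_a'/D_b=(1-e^{-h\phi_i})X_b'$ instead of your first-order expansion. The two halves agree only after $h\to0$, since at finite $h$ the right half carries $R_s^{a}r_s^{b}$ rather than $r_s^{a}R_s^{b}$. Finally, $X_a'n_a^3$ has an extra factor $t^3$, which does not affect your logarithmic derivative.
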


\begin{proof}\label{proof:prop:primal-jacobian}
{The family.} On a cube $\|a\|_\infty\le\varrho$, $c\le e^{\phi_a}\le C$ on
$[0,1]$ for constants $0<c\le C$. Hence $D_a\le C$,
$\int_0^2D_a^{-1}\dd q=2\int_0^13t^2(t^2e^{\phi_a})^{-1}\dd t=2\int_0^1X_a'\dd t<\infty$, and
\Cref{thm:primal-map} applies; $m_a$ is symmetric, $\sqrt{m_a}=n_a$ and
$\dd x=X_a'\dd t$ on the left half. Moreover $n_a$ is a comparison profile at
$\kappa$ (\Cref{def:comparison-profile}): with $\nu=n_a$ and $d=D_a$,
$\nu^2/d=\hat n_a^2e^{-\phi_a}$, $Q/(d\nu)=(2-t^3)/(e^{\phi_a}\hat n_a)$ and
$\nu^3/Q=\hat n_a^3/(2-t^3)$ are bounded on the cube.

{Masses and energy.} By \eqref{eq:primal-masses} and symmetry,
$\int m_a^{j/2}=2\int_0^1X_a'n_a^j\dd t$ for $j=1,2,3$. The integrands are
$C^\infty$ in $(t,a)$, and $\partial_iX_a'=-X_a'\phi_i$,
$\partial_i(n_a^2)=t^2e^{\phi_a}\phi_i/\kappa=D_a\phi_i/\kappa$, so
\[
 \partial_i(X_a'n_a^j)=X_a'n_a^j\Bigl[-1+\frac{jD_a}{2(D_a+Q)}\Bigr]\phi_i ,
\]
using $n_a^2=(D_a+Q)/\kappa$. For $j=2$ this is $-X_a'Q\phi_i/\kappa$; for $j=1$ it is
$-X_a'n_a\frac{D_a+2Q}{2(D_a+Q)}\phi_i=-X_a'\frac{D_a+2Q}{2\kappa n_a}\phi_i$, where
$(D_a+2Q)/n_a=t(e^{\phi_a}+2t(2-t^3))/\hat n_a$ is bounded. Differentiation under
the integral, justified by these bounded continuous derivatives, gives
\eqref{eq:primal-jacobian-mass}. For $j=3$ the bracket equals
$(D_a-2Q)/(2(D_a+Q))\in[-1,\frac12]$; along $\lambda\mapsto\lambda a$,
\[
 \Bigl|\frac{\dd}{\dd\lambda}\log(X_{\lambda a}'n_{\lambda a}^3)\Bigr|
 \le\Bigl|\sum_ia_i\phi_i\Bigr|\le\|a\|_\infty\sum_i\|\phi_i\|_\infty ,
\]
and integration over $\lambda\in[0,1]$ and then over $t$ gives the energy bound.

{Filters.} In the variable $v\in[0,2]$ of \Cref{lem:regular-criterion}
(with $t=v$ or $t=2-v$), the filter $r_s$ of $m_a$ solves
$\frac{\dd r}{\dd v}=X_a'(sn_a^2-2r-r^2)$, $r(0)=0$, with values in
$[0,\frac s\kappa\cdot2]$. On each closed half the right-hand side is $C^\infty$ in
$(v,r,a)$; by differentiable dependence on parameters
\cite[Theorem~2.11]{TeschlODE}, applied on $[0,1]$ and then on $[1,2]$ with
the initial value at $v=1$ obtained from the first half, $r_s$ is
continuously differentiable in $a$, uniformly in $v$. The same holds for
$r_\circ$. By symmetry, the right filters at $q$ are the left filters at
$2-q$, that is, at $2-v$.

{Spectral derivatives.} Fix $a$, $i$ and $\epsilon\ne0$ small, and apply
\Cref{lem:comparison-general} with the comparison profile $\nu=n_a$ (density
$m_a$) and $m=m_{a+\epsilon e_i}$, whose ground parameter is $\kappa$. In the
ground coordinate $q$ of $m$, $\dd x=\dd q/D_{a+\epsilon e_i}(q)$ on its support, and
$n^2-\nu(q)^2=(D_{a+\epsilon e_i}(q)-D_a(q))/\kappa$; outside the support both
vanish. Hence
\[
 \frac{H_{m_{a+\epsilon e_i}}(s)-H_{m_a}(s)}\epsilon
 =\int_0^2a_s^{(a)}\bigl(q,r_s^{(a+\epsilon e_i)}(q)\bigr)
 \frac{1-e^{-\epsilon\phi_i}}{\kappa\epsilon}\dd q ,
\]
where $a_s^{(a)}$ is the coefficient of the profile $n_a$, $r_s^{(a')}(q)$ is
the filter of $m_{a'}$ at its ground coordinate $q$, and $\phi_i$ is
evaluated at the $t$ corresponding to $q$; here
$(D_{a+\epsilon e_i}-D_a)/D_{a+\epsilon e_i}=1-e^{-\epsilon\phi_i}$. As $\epsilon\to0$,
$r_s^{(a+\epsilon e_i)}\to r_s^{(a)}$ uniformly, $(1-e^{-\epsilon\phi_i})/\epsilon\to\phi_i$
boundedly, $a_s^{(a)}(q,\cdot)$ is continuous, and
$|a_s^{(a)}|\le sQ/(2D_a)$ is integrable in $q$. Dominated convergence gives
\[
 \partial_iH_{m_a}(s)=\frac1\kappa\int_0^2a_s^{(a)}(q,r_s)\phi_i\dd q
 =-\int_0^2\frac{sQ/\kappa+r_sR_s}{D_a(2+r_s+R_s)}\phi_i\dd q ,
\]
and the substitution $\dd q/D_a=X_a'\dd t$ on each half, with the symmetry
that exchanges $r_s$ and $R_s$, gives \eqref{eq:primal-jacobian-spec}. The
same argument with $a_\circ$, for which $a_\circ/\kappa=[2r_\circ R_\circ-Q(r_\circ+R_\circ)]/(2D_a)$,
gives the formula for $\partial_iS$. The right-hand sides are continuous in $a$,
since the filters are, with the integrable majorants above; hence the
partial derivatives are continuous and the functions are continuously
differentiable.
\end{proof}

\begin{proof}[Proof of \Cref{thm:primal}]\label{proof:thm:primal}
By \Cref{prop:primal-jacobian}, the map $F$ is continuously
differentiable on $\R^6$, and
\[
 E(m_a)\le E(m_0)\exp\Bigl(\|a\|_\infty\sum_i\|\phi_i\|_\infty\Bigr),
\]
where $\|\phi_i\|_\infty\le1$ because $|T_j|\le1$ on $[-1,1]$. \Cref{lem:contraction}
gives the unique zero $a_*$ with $\|a_*\|_\infty\le\varrho$ and the convergence of the
iterates. By \Cref{thm:primal-map}, $m_{a_*}$ is bounded with compact
support and $\kappa(m_{a_*})=k$; the six components of $F(a_*)$ vanish, and
$S(m)=k^2\tr K_m^2/4$ for $\kappa(m)=k$. Hence
$m_{a_*}\in\mathcal A_{\mathrm{obs}}(\varsigma_{\mathrm{cal}})\cap\Mc$, and
$E(m_{a_*})\le E(m_0)e^{6\varrho}$.
\end{proof}

\begin{proof}[Proof of \Cref{ca:primal-second}]\label{proof:ca:primal-second}
Verified computation (\Cref{sec:verified}). The components of $F(0)$ are
enclosed with the formula $\int m_a^{j/2}=2\int_0^1X_a'n_a^j\dd t$ of
\Cref{prop:primal-jacobian} for the masses,
\Cref{lem:symmetric-values} for $H_{m_0}(s)$ and $S(m_0)$, and the formula for
$H_*$ in the proof of \Cref{ca:transport}; the filters of $m_0$ solve the
equations in the proof of \Cref{prop:primal-jacobian} with $a=0$. For the
derivatives, the coefficients $c_{j_i}$ of the directions are replaced by
the intervals $c_{j_i}+[-\varrho,\varrho]$; the resulting jet, flow and quadrature
enclosures hold for every $a$ in the cube simultaneously, and the formulas
\eqref{eq:primal-jacobian-mass}, \eqref{eq:primal-jacobian-spec} give
enclosures of $\partial_jF_i(a)$ valid for all $\|a\|_\infty\le\varrho$. Each of these
enclosures is contained in $[g_{ij}-10^{-11}|g_{ij}|,\,g_{ij}+10^{-11}|g_{ij}|]$, as
checked in exact rational arithmetic from its endpoints. The energy $E(m_0)=2\int_0^1X_0'n_0^3\dd t$ is
enclosed by \Cref{lem:quadrature}. Applying the exact matrix $C$ directly
to the unrounded residual and whole-cube Jacobian intervals gives the
sharper norm bounds in the proposition. The energy ceiling is evaluated
with outward rounding and checked again using a rational enclosure of
the exponential in the final audit.
\end{proof}

\begin{proof}[Proof of \Cref{cor:primal}]\label{proof:cor:primal}
Interval arithmetic with the exact entries of $C$ and the enclosures of
\Cref{tab:primal-values} gives
\[
 \max_i\Bigl|\sum_jC_{ij}F_j(0)\Bigr|<6.7299\cdot10^{-17},\qquad
 \max_i\sum_j\max\Bigl|\delta_{ij}-\sum_lC_{il}\,\partial_jF_l(a)\Bigr|
 <1.818\cdot10^{-7},
\]
where the inner maximum is over the enclosures of
$\partial_jF_l(a)$, which hold for all $\|a\|_\infty\le\varrho$. Then
$\theta_F<1$ and $\varepsilon_F+\theta_F\varrho<\varrho$, and
\Cref{thm:primal} applies. The outward enclosure of
$E(m_0)e^{6\varrho}$ in \Cref{ca:primal-second} gives $U_{\mathrm{feas}}$,
which is below the lower bound for $\cs$ in \Cref{ca:sine}. The left
inequality is \Cref{cor:numerical}.
\end{proof}

\begin{table}[ht]
\centering
\caption{Enclosures for the primal certificate (\Cref{ca:primal-second}),
with $i$ indexing the components of $F$ in the order
$\int m-1$, $\int\sqrt m-1$, $H(s_0)$, $H(s_1)$, $H(\varsigma_{\mathrm{cal}}k)$,
$S-k^2/6$, and $j$ the coordinates of $a$. Left: $F_i(0)$. Right: centers $g_{ij}$ with
$|\partial_jF_i(a)-g_{ij}|\le10^{-11}|g_{ij}|$ for $\|a\|_\infty\le\varrho$.}
\label{tab:primal-values}
\scriptsize
\begin{tabular}{@{}c@{\;\;}rr@{}}
\toprule
$i$&$10^{18}F_i(0)\ge$&$10^{18}F_i(0)\le$\\\midrule
1&$9.23750001$&$9.23750342$\\
2&$16.4295193$&$16.4297556$\\
3&$82.733744$&$82.733752$\\
4&$125.703781$&$125.703798$\\
5&$1349.64206$&$1349.64382$\\
6&$1.7673299$&$1.76733011$\\
\bottomrule
\end{tabular}\hfill
\begin{tabular}{@{}c@{\;\;}r@{\;\;}r@{\;\;}r@{\;\;}r@{\;\;}r@{\;\;}r@{}}
\toprule
$i$&$g_{i1}$&$g_{i2}$&$g_{i3}$&$g_{i4}$&$g_{i5}$&$g_{i6}$\\\midrule
1&$-0.189430530861$&$-0.105059758071$&$0.0256719247609$&$0.0610832934111$&$0.00996175138787$&$0.00700234895938$\\
2&$-0.588515286928$&$-0.199337084665$&$0.172896503558$&$0.10885975732$&$0.0207773327508$&$0.0152728567795$\\
3&$-1.97420562285$&$-0.994606449927$&$0.409204438536$&$0.620343347838$&$0.0930418433669$&$0.0666175212127$\\
4&$-3.2109737551$&$-1.54553571926$&$0.7526842381$&$0.974605886577$&$0.151664000634$&$0.102850651675$\\
5&$-45.9185397669$&$-16.9210847869$&$13.8807366346$&$9.19547375495$&$1.4112224423$&$1.32252174375$\\
6&$-0.0269825144198$&$-0.0181652591126$&$-0.00119920942717$&$0.00865479505692$&$0.00202926414165$&$0.0012220978259$\\
\bottomrule
\end{tabular}
\end{table}

\begin{table}[ht]
\centering
\caption{Coefficients $c_j$ of $\phi_0$ in the primal certificate
(\Cref{def:primal}).}
\label{tab:primal-second-coefficients}
\small
\begin{tabular}{@{}rl@{\qquad}rl@{\qquad}rl@{}}
\toprule
$j$&$c_j$&$j$&$c_j$&$j$&$c_j$\\\midrule
0&$\dyad{70572439499915}{-46}$&6&$\dyad{-7500767715826011}{-55}$&12&$\dyad{-1322856700011339}{-55}$\\
1&$\dyad{4053730959683531}{-52}$&7&$\dyad{5975462116023045}{-55}$&13&$\dyad{194617143379707}{-53}$\\
2&$\dyad{-912801861921127}{-50}$&8&$\dyad{-4809289143949099}{-55}$&14&$\dyad{-6357166955042095}{-59}$\\
3&$\dyad{8044123430117893}{-54}$&9&$\dyad{3781750787021907}{-55}$&15&$\dyad{1280750309650007}{-58}$\\
4&$\dyad{-1579507580532745}{-52}$&10&$\dyad{-5705414633228235}{-56}$&16&$\dyad{-5433390126452791}{-62}$\\
5&$\dyad{4698918251782431}{-54}$&11&$\dyad{8082726916495781}{-57}$&&\\
\bottomrule
\end{tabular}
\end{table}

\begin{table}[ht]
\centering
\caption{Entries $C_{ij}$ of the preconditioner of the primal certificate.}
\label{tab:primal-second-matrix}
\small
\begin{tabular}{@{}cl@{\qquad}cl@{\qquad}cl@{}}
\toprule
$ij$&$C_{ij}$&$ij$&$C_{ij}$&$ij$&$C_{ij}$\\\midrule
11&$\dyad{-1970922577736555}{-44}$&31&$\dyad{-4678803866072809}{-47}$&51&$\dyad{-8851563200498859}{-42}$\\
12&$\dyad{-6942178203035777}{-48}$&32&$\dyad{-5006373462643775}{-47}$&52&$\dyad{569832512970729}{-44}$\\
13&$\dyad{35795423859617}{-45}$&33&$\dyad{-6451692739882533}{-48}$&53&$\dyad{6338161405103213}{-45}$\\
14&$\dyad{682910787273371}{-47}$&34&$\dyad{8874711208097771}{-49}$&54&$\dyad{-8684969203656933}{-49}$\\
15&$\dyad{4337850365508349}{-54}$&35&$\dyad{7342059655627969}{-54}$&55&$\dyad{-1915318052260205}{-51}$\\
16&$\dyad{3971283601413753}{-44}$&36&$\dyad{8211305181331181}{-46}$&56&$\dyad{1938555127353725}{-39}$\\
21&$\dyad{-2443205397068073}{-41}$&41&$\dyad{-598599239723907}{-40}$&61&$\dyad{-6499908752602999}{-39}$\\
22&$\dyad{8180596473448049}{-47}$&42&$\dyad{2506693653473975}{-46}$&62&$\dyad{-378479574116267}{-44}$\\
23&$\dyad{6750685731695211}{-45}$&43&$\dyad{3884194600727785}{-45}$&63&$\dyad{3894327050650959}{-41}$\\
24&$\dyad{-4542844580359847}{-46}$&44&$\dyad{-662463102184701}{-44}$&64&$\dyad{-4791080514467945}{-43}$\\
25&$\dyad{-3178333096450425}{-52}$&45&$\dyad{-5472740375705655}{-53}$&65&$\dyad{8483029119226405}{-53}$\\
26&$\dyad{1514763362115155}{-40}$&46&$\dyad{2125021405106387}{-42}$&66&$\dyad{4705240194168321}{-38}$\\
\bottomrule
\end{tabular}
\end{table}

\section{Finitely many observations}\label{app:obstruction}

The obstruction comes from incompatible decay rates. The sine amplitude
$n_*$ decays faster than every exponential, whereas each observation
gradient is a constant plus a multiple of $e^{-2x}$, with remainder
$o(n_*)$. A locally sharp differentiable bound would force the energy
gradient into the span of these gradients and the two normalization
gradients. The asymptotic expansions rule out that identity.

Throughout, $m=m_*$ and $n=n_*$. By \Cref{prop:target}, $n$ is even,
continuous and positive, and $m$ is bounded and integrable. For $x\ge0$
put $\varpi=e^{2x}\ge1$; by the bounds for $\Sigma_1,\Sigma_2$ in the proof of
\Cref{prop:target} and with $M_1,M_2$ of \Cref{lem:sine-sum},
$1\le\Sigma_1(e^{-\pi\varpi})\le M_1$ and
$0<\beta_*:=1-2e^{-\pi}/(1-e^{-3\pi})\le\Sigma_2(e^{-\pi\varpi})\le M_2$. By
\eqref{eq:sine-theta}, with $c=8\beta_*^2$ and $C=8M_1^2M_2^2$,
\[
 c\,\varpi e^{-\pi\varpi/2}\le n(x)\le C\varpi e^{-\pi\varpi/2},\qquad
 m(x)\le C^2\varpi^2e^{-\pi\varpi}\qquad(x\ge0).
\]
In particular $\int e^{2|y|}m(y)\dd y<\infty$. With $\sigma=e^{2y}$,
$\dd y=\dd\sigma/(2\sigma)$, for $x\ge0$,
\[
 \int_x^\infty m\le\frac{C^2}2\int_\varpi^\infty\sigma e^{-\pi\sigma}\dd\sigma
 =\frac{C^2}2e^{-\pi\varpi}\Bigl(\frac\varpi\pi+\frac1{\pi^2}\Bigr),\qquad
 e^{-2x}\int_x^\infty e^{2y}m\le\frac{C^2}{2\varpi}\int_\varpi^\infty\sigma^2e^{-\pi\sigma}\dd\sigma ,
\]
and the last integral is $e^{-\pi\varpi}(\varpi^2/\pi+2\varpi/\pi^2+2/\pi^3)$. Since
$\varpi\ge1$, the sum of the two bounds is at most
$C^2e^{-\pi\varpi}(\varpi/\pi+3/(2\pi^2)+1/\pi^3)\le C^2(1+\varpi)e^{-\pi\varpi}$. Hence
\begin{equation}\label{eq:tail-J}
 \mathsf J(x):=\int_x^\infty m+e^{-2x}\int_x^\infty e^{2y}m(y)\dd y
 \le C^2(1+\varpi)e^{-\pi\varpi}=o(n(x)),\qquad n(x)e^{ax}\to0\ (a>0),
\end{equation}
as $x\to+\infty$, because
$\mathsf J/n\le C^2(1+\varpi)e^{-\pi\varpi/2}/(c\varpi)\le2C^2e^{-\pi\varpi/2}/c\to0$ and
$n(x)e^{ax}\le C\exp(2x+ax-\frac\pi2e^{2x})\to0$.

\begin{lemma}[Gradients of the observations]\label{lem:gradients}
For $s>0$ let $V_L$ be the function $V_s$ of \Cref{lem:filter}(ii) for $m$,
$V_R(x)=V_L(-x)$, $u_L=e^xV_L$, $u_R=e^{-x}V_R$, $W=u_L'u_R-u_Lu_R'$ and
$G_s(x,y)=u_L(\min(x,y))u_R(\max(x,y))/W$. Let $u$ be the ground state of
$m$ (\Cref{thm:ground}). For real $h\in L^1(\R)$ write $\mathcal K_h$ as in
\Cref{sec:embedding}, and put $W_2(x)=\int e^{-2|x-y|}m(y)\dd y$. Let
$v_1,\dots,v_d\in C_c^\infty(\R)$ and $m_\epsilon=m+\sum_l\epsilon_lv_l$ for
$\epsilon\in\R^d$.
\begin{enumerate}
\item There is $\epsilon_0>0$ such that $m_\epsilon\ge0$ for $|\epsilon|<\epsilon_0$, and
$(\epsilon,s)\mapsto H_{m_\epsilon}(s)$ is real analytic on
$\{|\epsilon|<\epsilon_0\}\times(0,\infty)$. For $d=1$, $v=v_1$ and $s>0$,
\[
 \partial_\epsilon H_{m_\epsilon}(s)\big|_{\epsilon=0}=s\int G_s(x,x)v,\qquad
 \partial_\epsilon\partial_sH_{m_\epsilon}(s)\big|_{\epsilon=0}
 =\int\bigl[G_s(x,x)+s\partial_sG_s(x,x)\bigr]v,
\]
with $\partial_sG_s(x,x)=-\int G_s(x,y)^2m(y)\dd y$ and
$0<G_s(x,y)\le\frac12e^{-|x-y|}$.
\item The function $\epsilon\mapsto\kappa(m_\epsilon)$ is continuously differentiable
near $0$, and for $d=1$,
$\partial_\epsilon\kappa(m_\epsilon)|_{\epsilon=0}=-\kappa\int u^2v/\int u^2m$.
\item For $d=1$, $\partial_\epsilon\tr K_{m_\epsilon}^2|_{\epsilon=0}=2\int W_2v$.
\item The functions $G_s(x,x)$, $\partial_sG_s(x,x)$, $u^2$ and $W_2$ are
continuous on $\R$, and as $x\to+\infty$, with constants depending on $s$,
\[
\begin{aligned}
 G_s(x,x)&=\tfrac12+C_se^{-2x}+o(n),&\qquad
 \partial_sG_s(x,x)&=C_s'e^{-2x}+o(n),\\
 u(x)^2&=C_ue^{-2x}+o(n),&\qquad
 W_2(x)&=C_2e^{-2x}+o(n).
\end{aligned}
\]
\end{enumerate}
\end{lemma}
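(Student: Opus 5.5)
We prove the four parts in order, working throughout with the Jost solutions $u_L,u_R$ of $-y''+(1+sm_\epsilon)y=0$ and the first-order perturbation formulas they give.

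\textbf{(i) Nonnegativity and analyticity.} Nonnegativity of $m_\epsilon$ is immediate: $n_*$ is continuous and positive, so $m$ is bounded below by a positive constant on the compact union of the supports of the $v_l$. For analyticity, note that $K_{m_\epsilon}$ is not linear in $\epsilon$ because of the square roots. We therefore pass to the $H^1$ feature operator $\mathcal K_{m_\epsilon}=\mathcal K_m+\sum_l\epsilon_l\mathcal K_{v_l}$. This is affine in $\epsilon$ in the trace class by \Cref{lem:features}(ii), and it has the same nonzero spectrum as $K_{m_\epsilon}$ by \Cref{lem:features}(iii). Consequently
\[
 H_{m_\epsilon}(s)=\log\det(I+\tfrac s2\mathcal K_{m_\epsilon}),
\]
and the Fredholm determinant of a trace-class operator that is affine in $(\epsilon,s)$ is entire. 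Its logarithm is real analytic wherever the determinant is positive, which holds for $m_\epsilon\ge0$. The first variation is
\[
 \partial_\epsilon H=\tfrac s2\tr\bigl[(I+\tfrac s2\mathcal K_m)^{-1}\mathcal K_v\bigr].
\]
We identify $(I+\frac s2\mathcal K_m)^{-1}$ with the resolvent $(-\partial^2+1+sm)^{-1}$ read through the reproducing kernel $\mathsf k_x$. Then $\tr[(\cdot)\mathcal P_x]=2G_s(x,x)$, which gives $s\int G_s(x,x)v$. The $s$-derivative follows by differentiating this formula, using the resolvent identity $\partial_sG_s=-G_smG_s$. The bound $0<G_s\le\frac12e^{-|x-y|}$ is the maximum principle for $sm\ge0$.

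\textbf{(ii) and (iii).} For (ii), $\lambda_0$ is simple by \Cref{thm:ground}, so analytic perturbation theory applies to $\mathcal K_{m_\epsilon}$. The Hellmann--Feynman formula with eigenvector $u$ gives
\[
 \partial\lambda_0=\frac{\int u^2v}{\tfrac12\|u\|_\HH^2}\cdot\tfrac12,
\]
and by the square identity $\|u\|_\HH^2=\kappa\int mu^2$. Differentiating $\kappa=2/\lambda_0$ then produces the stated formula. Part (iii) is bilinearity of $\iint m m' e^{-2|x-y|}$.

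\textbf{(iv) Asymptotics.} To the right of any compact set, $m=O(\varpi^2e^{-\pi\varpi})$. We write $u_L=\alpha e^x+\beta e^{-x}+{}$error and $u_R=e^{-x}V_R$. By variation of constants, the errors are bounded by $\mathsf J(x)$ in \eqref{eq:tail-J} times exponentials, and $\mathsf J(x)=o(n)$. Normalizing with $W$ gives
\[
 G_s(x,x)=\tfrac12+\tfrac{\beta}{2\alpha}e^{-2x}+o(n).
\]
The same argument applies to $u$, using $u=c_+e^{-x}+{}$tail. For $W_2$ we write
\[
 W_2(x)=e^{-2x}\int e^{2y}m-\int_x^\infty(\cdots),
\]
where the integral term is bounded by $\mathsf J$. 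For $\partial_sG_s$ we insert the expansion of $G_s(x,y)^2$ and split the integral at $y=x$: the part with $y<x$ contributes $e^{-2x}\int e^{2y}G$-type constants, and the part with $y>x$ is $O(\mathsf J)$.

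The main obstacle is (iv), specifically controlling the remainders uniformly as $o(n)$ rather than merely $o(e^{-2x})$. This requires the tail bound \eqref{eq:tail-J} together with a Gronwall estimate for the Jost solutions on $[x,\infty)$.
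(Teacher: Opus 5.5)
Your proposal follows the paper's proof essentially step for step. Analyticity and the first variation of $H$ come from $\log\det(I+\frac s2\mathcal K_{m_\epsilon})$ on $\HH$. The key identification is $\ip{(I+\frac s2\mathcal K_m)^{-1}v_x}{v_x}_\HH=2G_s(x,x)$; the paper justifies it by checking that $\mathsf g=\sqrt2\,G_s(\cdot,x)$ lies in $\HH$, has derivative jump $-\sqrt2$ at $x$, and hence satisfies $(I+\frac s2\mathcal K_m)\mathsf g=v_x$ in weak form. Part (ii) uses Kato's theory for the simple isolated top eigenvalue, and part (iv) uses $\mathsf J$-tail estimates.

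The remaining differences are harmless:
\begin{itemize}
\item The paper gets $0<G_s\le\frac12e^{-|x-y|}$ from $u_L'/u_L=1+r_s\ge1$, $-u_R'/u_R\ge1$ and $W\ge2u_Lu_R$, rather than from a maximum principle.
\item No Gronwall step is needed in (iv), because $V_L,V_R\le e^{s\int m/2}$ is already available from \Cref{lem:filter}.
\item In the $y<x$ part of $\partial_sG_s$, you should say explicitly that completing $\int_{-\infty}^xu_L^2m$ to $\int_\R u_L^2m$ costs $u_R(x)^2W^{-2}\int_x^\infty u_L^2m=O(\mathsf J)$.
\end{itemize}

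One computation in (ii) is wrong as written. Your displayed expression simplifies to $\int u^2v/\|u\|_\HH^2$. With $e=u/\|u\|_\HH$, however, $\ip{\mathcal K_ve}{e}_\HH=\int v(x)\ip{e}{v_x}_\HH^2\dd x=2\int ve^2$, so $D\lambda_0[v]=2\int u^2v/\|u\|_\HH^2$. With your value, differentiating $\kappa=2/\lambda_0$ and using $\|u\|_\HH^2=\kappa\int mu^2$ gives $-\kappa\int u^2v/(2\int u^2m)$, which is half the stated derivative. You should also add that the analytic eigenvalue branch remains the largest eigenvalue for small real $\epsilon$, by the spectral gap. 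With the factor corrected and that remark added, your argument is the paper's.
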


\begin{proof}\label{proof:lem:gradients}
{Solutions.} Let $s>0$ and $r_s$ be the filter of $m$. By
\Cref{lem:filter}(ii), $V_L$ is $C^1$ with locally absolutely continuous
derivative and $V_L''+2V_L'=smV_L$, so $-u_L''+(1+sm)u_L=0$ almost
everywhere, and $u_L'/u_L=1+r_s$. Since $m$ is even, $u_R(x)=u_L(-x)$
solves the same equation, and $u_R'/u_R(x)=-(1+r_s(-x))$. Hence $W$ is
constant, $u_L'/u_L\ge1$ and $-u_R'/u_R\ge1$; so $u_L(y)\le u_L(x)e^{y-x}$
for $y\le x$, $u_R(y)\le u_R(x)e^{x-y}$ for $y\ge x$, and, evaluating $W$ at
$x$, $W=u_Lu_R(u_L'/u_L-u_R'/u_R)\ge2u_L(x)u_R(x)$. Hence for $y\le x$,
$0<G_s(x,y)=u_L(y)u_R(x)/W\le u_L(y)/(2u_L(x))\le\frac12e^{-(x-y)}$, and, since
$G_s(x,y)=G_s(y,x)$, the same bound holds for $y\ge x$. By the substitution
$y\mapsto-y$ in the equation of \Cref{lem:filter}(ii),
$V_R(x)=1+\frac s2\int_x^\infty(1-e^{-2(y-x)})m(y)V_R(y)\dd y$, so
$1\le V_R\le e^{s\int m/2}$ and $|V_R(x)-1|\le\frac s2e^{s\int m/2}\int_x^\infty m$.

{(i).} Let $\mathsf S=\bigcup_l\supp v_l$, a compact set, and
$c_{\mathsf S}=\min_{\mathsf S}m>0$ ($m$ is continuous and positive). For
$|\epsilon|\sum_l\|v_l\|_\infty<c_{\mathsf S}/2$, $m_\epsilon\ge c_{\mathsf S}/2$ on $\mathsf S$ and
$m_\epsilon=m$ elsewhere; this defines $\epsilon_0$. By
\Cref{lem:features}(ii),(iii), the operators $\mathcal K_m$ and $\mathcal K_{v_l}$ are
trace class, $A(\epsilon,s)=\frac s2(\mathcal K_m+\sum_l\epsilon_l\mathcal K_{v_l})=\frac s2\mathcal K_{m_\epsilon}$
is positive for $|\epsilon|<\epsilon_0$, $s\ge0$, and its nonzero eigenvalues are those
of $\frac s2K_{m_\epsilon}$, so $H_{m_\epsilon}(s)=\log\det(I+A(\epsilon,s))$. The trace-class determinant is analytic in its operator argument,
and its logarithmic derivative at an invertible $I+A$ is
$D\log\det(I+A)[B]=\tr((I+A)^{-1}B)$
\cite[Chapter~3]{SimonTrace}. Here $A(\epsilon,s)$ is a polynomial
family in trace norm and $\det(I+A)>0$ for real $|\epsilon|<\epsilon_0$
and $s\ge0$, so $H_{m_\epsilon}(s)$ is jointly real analytic.
With $d=1$ and
$B=\frac s2\mathcal K_v$,
\[
 \partial_\epsilon H|_{\epsilon=0}=\frac s2\tr\Bigl[\Bigl(I+\frac s2\mathcal K_m\Bigr)^{-1}\mathcal K_v\Bigr]
 =\frac s2\int v(x)\Bigl\langle\Bigl(I+\frac s2\mathcal K_m\Bigr)^{-1}v_x,v_x\Bigr\rangle_\HH\dd x ,
\]
the trace commuting with the Bochner integral (\Cref{lem:features}(ii)).

Fix $x$ and put $\mathsf g=\sqrt2G_s(\cdot,x)$. By the bounds above and
$|u_L'/u_L|,|u_R'/u_R|\le1+\|r_s\|_\infty$, $\mathsf g$ and $\mathsf g'$ are bounded by
multiples of $e^{-|t-x|}$, so $\mathsf g\in\HH$. On $(-\infty,x)$ and on
$(x,\infty)$, $\mathsf g$ is a multiple of $u_L$, respectively $u_R$, so
$-\mathsf g''+(1+sm)\mathsf g=0$ there; $\mathsf g$ is continuous at $x$, and
$\mathsf g'(x+)-\mathsf g'(x-)=\sqrt2(u_Lu_R'-u_L'u_R)(x)/W=-\sqrt2$. Consequently
$-\mathsf g''+(1+sm)\mathsf g=\sqrt2\delta_x$ distributionally.
Its weak form is
\[
 \ip{\mathsf g}h_\HH+s\int m\mathsf gh=\sqrt2h(x)=\ip{v_x}h_\HH
 \qquad(h\in\HH).
\]
Here the extension from smooth test functions follows by density and
the bound $|\int m\mathsf gh|\le\|m\|_1\|\mathsf g\|_\infty\|h\|_\infty$.
By \Cref{lem:features}(i),(ii), this is
$(I+\tfrac s2\mathcal K_m)\mathsf g=v_x$.
Therefore
$\ip{(I+\frac s2\mathcal K_m)^{-1}v_x}{v_x}_\HH=\ip{\mathsf g}{v_x}_\HH=\sqrt2\mathsf g(x)=2G_s(x,x)$,
which gives the first formula. Since
$\frac{\dd}{\dd s}(I+\frac s2\mathcal K_m)^{-1}=-(I+\frac s2\mathcal K_m)^{-1}\frac12\mathcal K_m(I+\frac s2\mathcal K_m)^{-1}$,
\[
 \partial_s\bigl(2G_s(x,x)\bigr)=-\tfrac12\ip{\mathcal K_m\mathsf g}{\mathsf g}_\HH
 =-\int m\mathsf g^2=-2\int G_s(\cdot,x)^2m .
\]
Hence $|\partial_sG_s(x,x)|\le\frac14\int m$, and by the mean value theorem the
difference quotients of $s\mapsto sG_s(x,x)$ are bounded uniformly in $x$
on compact $s$-intervals; since $v$ has compact support, dominated
convergence gives
$\partial_s\partial_\epsilon H|_{\epsilon=0}=\int[G_s(x,x)+s\partial_sG_s(x,x)]v$, and for the real
analytic $H$ this is $\partial_\epsilon\partial_sH|_{\epsilon=0}$.

{(ii).} On the fixed space $\HH$, the family
$\mathcal K_{m_\epsilon}=\mathcal K_m+\sum_l\epsilon_l\mathcal K_{v_l}$
is analytic in trace norm by \Cref{lem:features}(ii).
The eigenvalue $\lambda_0=2/\kappa$ is simple and isolated
(\Cref{thm:ground}). Analytic perturbation theory for an isolated simple
eigenvalue \cite[Chapter~VII, \S1]{Kato1995} gives a differentiable
eigenvalue and eigenprojection; the spectral gap ensures that this
branch remains the largest eigenvalue for small real $\epsilon$.
Its normalized eigenvector at $0$ is $e=u/\|u\|_\HH$, so
\[
 D\lambda_0(m)[v]=\ip{\mathcal K_ve}e_\HH
 =\frac{2\int vu^2}{\|u\|_\HH^2},\qquad
 \|u\|_\HH^2=\kappa\int mu^2.
\]
Differentiating $\kappa=2/\lambda_0$ yields
$D\kappa(m)[v]=-\kappa\int u^2v/\int mu^2$. The same argument applies
to all the finitely many directions, which proves (ii).

{(iii).} $\tr K_{m_\epsilon}^2=\iint m_\epsilon(x)m_\epsilon(y)e^{-2|x-y|}$
(\Cref{lem:filter}(i)) is a quadratic polynomial in $\epsilon$ with linear
coefficient $2\int vW_2$.

{(iv).} The function $G_s(x,x)=u_L(x)u_R(x)/W$ is continuous;
$G_s(x,y)$ is continuous in $x$ for fixed $y$ and bounded by $\frac12$, so
$\partial_sG_s(x,x)=-\int G_s(x,y)^2m(y)\dd y$ is continuous by dominated
convergence; $u^2$ is continuous; and $W_2$ is continuous by dominated
convergence. Let $x\to+\infty$. With
$\alpha=V_L(+\infty)=1+\frac s2\int mV_L=e^{H_m(s)}$ and
$\beta=-\frac s2\int e^{2y}m(y)V_L(y)\dd y$, the equation of
\Cref{lem:filter}(ii) gives
\[
 V_L(x)=\alpha+\beta e^{-2x}-\frac s2\int_x^\infty mV_L+\frac s2e^{-2x}\int_x^\infty e^{2y}mV_L
 =\alpha+\beta e^{-2x}+O(\mathsf J(x)),
\]
and $V_L'(x)=se^{-2x}\int_{-\infty}^xe^{2y}mV_L=-2\beta e^{-2x}+O(\mathsf J(x))$. Also
$V_R=1+O(\mathsf J)$ and $V_R'(x)=-s\int_x^\infty e^{-2(y-x)}mV_R=O(\mathsf J)$. Since
$u_L=e^xV_L$ and $u_R=e^{-x}V_R$,
$W=(V_L+V_L')V_R+V_L(V_R-V_R')\to2\alpha$, and $W$ is constant, so $W=2\alpha$.
Hence
\[
 G_s(x,x)=\frac{V_L(x)V_R(x)}{2\alpha}=\frac12+\frac\beta{2\alpha}e^{-2x}+O(\mathsf J(x)).
\]
Consider $\partial_sG_s(x,x)=-\int G_s(x,y)^2m(y)\dd y$. With $C''=e^{s\int m}$,
$u_L(x)^2\le C''e^{2x}$ and $u_R(y)^2\le C''e^{-2y}$, so the part $y>x$ is at most
\[
 u_L(x)^2W^{-2}\int_x^\infty u_R^2m\le(C'')^2W^{-2}\int_x^\infty m ;
\]
and the part $y<x$ equals
$-u_R(x)^2W^{-2}\bigl(\int_\R u_L^2m-\int_x^\infty u_L^2m\bigr)$, where
$\int_\R u_L^2m<\infty$, $\int_x^\infty u_L^2m\le C''\int_x^\infty e^{2y}m$ and
$u_R(x)^2=e^{-2x}(1+O(\mathsf J))$. Hence
$\partial_sG_s(x,x)=-W^{-2}\int u_L^2m\,e^{-2x}+O(\mathsf J(x))$. For the ground
state, the proof of \Cref{thm:ground} gives
$u(x)=e^{-x}\int_{-\infty}^xe^yj+e^x\int_x^\infty e^{-y}j$ with $j=\kappa mu/2$, $u$
bounded. Hence $e^xu(x)=c_u+\delta(x)$ with $c_u=\int_\R e^yj$ and
\[
 |\delta(x)|\le\int_x^\infty e^yj+e^{2x}\int_x^\infty e^{-y}j\le2\int_x^\infty e^yj
 \le\kappa\|u\|_\infty e^{-x}\int_x^\infty e^{2y}m\le\kappa\|u\|_\infty e^x\mathsf J(x),
\]
so $u(x)^2=e^{-2x}(c_u+\delta)^2=c_u^2e^{-2x}+O(\mathsf J(x))$. Finally, with
$C_2=\int e^{2y}m(y)\dd y$,
\[
 W_2(x)-C_2e^{-2x}=e^{2x}\int_x^\infty e^{-2y}m(y)\dd y-e^{-2x}\int_x^\infty e^{2y}m(y)\dd y,
\]
whose modulus is at most $\mathsf J(x)$. By \eqref{eq:tail-J}, every $O(\mathsf J)$
is $o(n)$.
\end{proof}

\begin{proof}[Proof of \Cref{thm:no-touching}]\label{proof:thm:no-touching}
Suppose, to the contrary, that for some $\varepsilon>0$ every $m'=m+v$ with
$v\in C_c^\infty$, $\|v\|_\infty<\varepsilon$, $m'\ge0$, $\int m'=\int\sqrt{m'}=1$ and
$\mathbf O(m')\in U$ satisfies $E(m')\ge\Phi(\mathbf O(m'))$.

{A feasible curve.} Let $\mathfrak m_0(v)=\int v$ and
$\mathfrak m_1(v)=\int v/(2n)$ on $C_c^\infty$. The function $1/(2n)$ is continuous
and unbounded, since $n$ is continuous, positive and tends to $0$ at
$\pm\infty$; so there are points $a,b$ with $1/(2n(a))\ne1/(2n(b))$. Let
$w_1,w_2\in C_c^\infty$ be nonnegative with integral one, supported in
intervals around $a$ and $b$ so short that the oscillation of $1/(2n)$ on
each is less than $\frac13|1/(2n(a))-1/(2n(b))|$; then $\mathfrak m_0(w_i)=1$ and
$\mathfrak m_1(w_1)\ne\mathfrak m_1(w_2)$. Let $v\in C_c^\infty$ with
$\mathfrak m_0(v)=\mathfrak m_1(v)=0$, let $\mathsf S$ be the union of the supports of
$v,w_1,w_2$, and $c_{\mathsf S}=\min_{\mathsf S}m>0$. For $|\epsilon|+|z|$ small,
$m_{\epsilon,z}=m+\epsilon v+z(w_1-w_2)\ge c_{\mathsf S}/2$ on $\mathsf S$, and
$F(\epsilon,z)=\int_{\mathsf S}(\sqrt{m_{\epsilon,z}}-\sqrt m)$ is $C^\infty$, with $F(0,0)=0$,
$\partial_\epsilon F(0,0)=\mathfrak m_1(v)=0$ and
$\partial_zF(0,0)=\mathfrak m_1(w_1)-\mathfrak m_1(w_2)\ne0$. The implicit function theorem
gives a $C^\infty$ function $z(\epsilon)$ with $z(0)=0$, $z'(0)=0$ and
$F(\epsilon,z(\epsilon))=0$. The densities $m_\epsilon=m_{\epsilon,z(\epsilon)}$ satisfy
$\int m_\epsilon=1+\epsilon\mathfrak m_0(v)+z\mathfrak m_0(w_1-w_2)=1$ and $\int\sqrt{m_\epsilon}=1$, and
$\|m_\epsilon-m\|_\infty\to0$ as $\epsilon\to0$.

{Differentiability.} By \Cref{lem:gradients}(i)--(iii), applied with
the two directions $v$ and $w_1-w_2$, each coordinate of
$\mathbf O(m_{\epsilon,z})$ is continuously differentiable in $(\epsilon,z)$ near $0$; if
the scales are functions $\sigma_j(\kappa)$, the same holds by the chain rule,
because $H$ and $\partial_sH$ are real analytic in $(\epsilon,z,s)$ and
$\sigma_j(\kappa(m))>0$. In particular $\mathbf O(m_\epsilon)\to\mathbf O(m)$, so for small $|\epsilon|$,
$\|m_\epsilon-m\|_\infty<\varepsilon$, $\mathbf O(m_\epsilon)\in U$, and
$g(\epsilon)=E(m_\epsilon)-\Phi(\mathbf O(m_\epsilon))\ge0$, while $g(0)=0$. Also
$E(m_{\epsilon,z})=\int(m_{\epsilon,z})^{3/2}$ is $C^1$ with $\partial_\epsilon E|_0=\frac32\int nv$, since
$m_{\epsilon,z}=m$ outside $\mathsf S$ and $m_{\epsilon,z}\ge c_{\mathsf S}/2$ on $\mathsf S$. Since
$z'(0)=0$, $g$ is differentiable at $0$ with $g'(0)=\ell(v)$, where
\[
 \ell(v)=\int\Bigl(\tfrac32n-\mathfrak g\Bigr)v,\qquad
 \mathfrak g=c_\kappa u^2+c_{\rm tr}W_2
 +\sum_j\bigl\{a_j\sigma_jG_{\sigma_j}(x,x)+b_j\bigl[G_{\sigma_j}(x,x)+\sigma_j\partial_sG_{\sigma_j}(x,x)\bigr]\bigr\},
\]
with constants $c_\kappa,c_{\rm tr},a_j,b_j$ determined by the partial derivatives of
$\Phi$ at $\mathbf O(m)$, by $\int u^2m$, and, for moving scales, by $\sigma_j'$ and the
derivatives of $H_m$ in $s$; these last terms are multiples of $u^2$, and
are included in $c_\kappa$. Omitted coordinates of $\mathbf O$ correspond to zero
constants. Since $g\ge0$ near $0$ and $g(0)=0$, $\ell(v)=g'(0)=0$.

{The pointwise identity.} The functionals $\mathfrak m_0,\mathfrak m_1$ are linearly
independent, since $\mathfrak m_0(w_1-w_2)=0\ne\mathfrak m_1(w_1-w_2)$ and $\mathfrak m_0(w_1)=1$.
Choose $\varphi_0,\varphi_1\in C_c^\infty$ with $\mathfrak m_i(\varphi_l)=\delta_{il}$. For every
$v\in C_c^\infty$, $v-\mathfrak m_0(v)\varphi_0-\mathfrak m_1(v)\varphi_1$ lies in
$\ker\mathfrak m_0\cap\ker\mathfrak m_1$, so
$\ell(v)=\alpha\mathfrak m_0(v)+\zeta\mathfrak m_1(v)$ with $\alpha=\ell(\varphi_0)$, $\zeta=\ell(\varphi_1)$. The
function $\frac32n-\mathfrak g-\alpha-\zeta/(2n)$ is continuous (\Cref{lem:gradients}(iv))
and integrates to zero against every $v\in C_c^\infty$; hence, for every $x$,
\[
 \tfrac32n(x)=\alpha+\frac\zeta{2n(x)}+\mathfrak g(x).
\]

{The contradiction.} By \Cref{lem:gradients}(iv), as $x\to+\infty$,
$\mathfrak g(x)=c_0+c_1e^{-2x}+o(n(x))$ with constants $c_0=\frac12\sum_j(a_j\sigma_j+b_j)$ and
$c_1$; in particular $\mathfrak g$ is bounded near $+\infty$. Multiplying the
identity by $n(x)\to0$ gives $\zeta/2=\lim_{x\to\infty}n(x)[\frac32n-\alpha-\mathfrak g]=0$.
Then $\frac32n=\alpha+c_0+c_1e^{-2x}+o(n)$, and $x\to\infty$ gives $\alpha+c_0=0$. Then
$\frac32ne^{2x}=c_1+o(ne^{2x})$, and $n(x)e^{2x}\to0$ by \eqref{eq:tail-J} gives
$c_1=0$. Finally $\frac32n=o(n)$ contradicts $n>0$.
\end{proof}
\end{document}